\numberwithin{equation}{subsection}
\newtheorem{thm}[subsubsection]{Theorem}
\newtheorem{conj}[subsubsection]{Conjecture}
\newtheorem*{thm*}{Theorem}
\newtheorem*{thmA}{Theorem A}
\newtheorem*{thmAprime}{Theorem A'}
\newtheorem*{thmB}{Theorem B}
\newtheorem*{thmC}{Theorem C}
\newtheorem{cor}[subsubsection]{Corollary}
\newtheorem{lem}[subsubsection]{Lemma}
\newtheorem{prop}[subsubsection]{Proposition}
\theoremstyle{definition}
\newtheorem{defn}[subsubsection]{Definition}
\theoremstyle{remark}
\newtheorem{rem}[subsubsection]{Remark}
\DeclareMathOperator{\edyn}{EDynk}
\DeclareMathOperator{\dyn}{Dynk}
\DeclareMathOperator{\ad}{ad}
\DeclareMathOperator{\Aut}{Aut}
\DeclareMathOperator{\BC}{BC}
\DeclareMathOperator{\PP}{\mathcal P}
\DeclareMathOperator{\Gal}{Gal}
\DeclareMathOperator{\Hom}{Hom}
\DeclareMathOperator{\N}{N}
\DeclareMathOperator{\Gr}{Gr}
\DeclareMathOperator{\Res}{Res}
\DeclareMathOperator{\id}{id}
\DeclareMathOperator{\Stab}{Stab}
\DeclareMathOperator{\topp}{top}
\DeclareMathOperator{\vol}{vol}
\DeclareMathOperator{\un}{un}
\newcommand{\lprod}[1]{\langle #1  \rangle}
\newcommand{\C}{\ensuremath{\mathbb{C}}}
\newcommand{\Z}{\ensuremath{\mathbb{Z}}}
\newcommand{\Q}{\ensuremath{\mathbb{Q}}}
\newcommand{\Ok}{\ensuremath{\mathcal{O}}}
\newcommand{\abs}[1]{\left\vert#1\right\vert}
\newcommand{\set}[1]{\left\{#1\right\}}
\newcommand{\hatS}{\widehat{\mathcal S}}
\newcommand{\To}{\longrightarrow}
\newcommand{\isom}{\overset{\sim}{\To}}
\newcommand{\ignore}[1]{{}}
\newcommand{\lang}{\leftidx^{L}}
\setlist[enumerate]{leftmargin=*}
\setlist[itemize]{leftmargin=*}
\title[Twisted orbital integrals and irreducible components]{Twisted orbital integrals and irreducible components of affine Deligne--Lusztig varieties}
\author[Rong Zhou]{Rong Zhou}\email{r.zhou@imperial.ac.uk}
\address{Imperial College London Department of Mathematics\\Huxley Building \\ 180 Queen's Gate \\
	London SW7 2AZ, UK}
\author[Yihang Zhu]{Yihang Zhu}\email{yihang@math.columbia.edu}
\address{Columbia University  Department of Mathematics \\ 2990 Broadway \\ New York, NY 10027, USA}
\subjclass[2010]{11G18, 22E35}
\keywords{Twisted orbital integrals, affine Deligne--Lusztig varieties}
\begin{document}
	\begin{abstract}
		We analyze the asymptotic behavior of certain twisted orbital integrals arising from the study of affine Deligne--Lusztig varieties. The main tools include the Base Change Fundamental Lemma and $q$-analogues of the Kostant partition functions. As an application we prove a conjecture of Miaofen Chen and Xinwen Zhu, relating the set of irreducible components of an affine Deligne--Lusztig variety modulo the action of the $\sigma$-centralizer group to the Mirkovi\'c--Vilonen basis of a certain weight space of a representation of the Langlands dual group.
	\end{abstract}
\maketitle

 \setcounter{tocdepth}{1}
 \tableofcontents{}

\section{Introduction}
\subsection{The main result}
First introduced by Rapoport \cite{rapoportguide},  affine Deligne--Lusztig varieties play an important role in arithmetic geometry and the Langlands program.
One of the main motivations to study affine Deligne--Lusztig varieties comes from the theory of $p$-adic uniformization, which was studied by various authors including \v{C}erednik \cite{Cerednik}, Drinfeld \cite{Drinfeld}, Rapoport--Zink \cite{RZ96}, and more recently Howard--Pappas \cite{Howard2015} and Kim \cite{kim}. In this theory, a $p$-adic formal scheme known as the Rapoport--Zink space uniformizes a tubular neighborhood in an integral model of a Shimura variety around a Newton stratum. The reduced subscheme of the Rapoport--Zink space is a special example of affine Deligne--Lusztig varieties. Thus in many cases, understanding certain cycles on Shimura varieties reduces to understanding the geometry of the affine Deligne--Lusztig variety.

\ignore{In a parallel story over function fields, affine Deligne--Lusztig varieties also arise naturally in the study of local shtukas, see for instance Hartl--Viehmann \cite{HartlViehmann}.}

\ignore{Understanding of basic geometric properties of affine Deligne--Lusztig varieties has proved fruitful for arithmetic applications. For instance an understanding of the connected components \cite{CKV} was applied to the proof of a version of the Langlands--Rapoport Conjecture by Kisin \cite{kisin2012modp}. The geometry of the supersingular locus of Hilbert modular varieties, which is also a question closely related to affine Deligne--Lusztig varieties via $p$-adic uniformization, was applied to arithmetic level raising in the recent work of Liu--Tian \cite{LiuTian}.  }

In this paper, we concern the problem of parameterizing the irreducible components of affine Deligne--Lusztig varieties. We introduce some notations. Let $F$ be a non-archimedean local field with valuation ring $\Ok_F$ and residue field $k_F= \mathbb{F}_q$. Fix a uniformizer $\pi_F \in F$. Let $L$ be the completion of the maximal unramified extension of $F$, and let $\sigma$ be the Frobenius automorphism of $L$ over $F$. Let $G$ be a connected reductive group scheme over $\Ok_F$. We fix $T\subset G$ to be the centralizer of a maximal $\Ok_F$-split torus, and fix a Borel subgroup $B\subset G$ containing $T$. For $\mu\in X_*(T)^+$ and $b\in G(L)$, the \emph{affine Deligne--Lusztig variety} associated to $(G,\mu,b)$ is defined to be $$X_\mu(b)=\{g\in G(L)/G(\Ok_L)\mid g^{-1}b\sigma(g)\in G(\Ok_L)\mu(\pi_F)G(\Ok_L)\}.$$ More precisely, the above set is the set of $\overline {\mathbb F}_q$-points of a scheme or a perfect scheme, depending on whether $F$ has equal or mixed characteristic. See \cite{BS} and \cite{Zhu} for the result in mixed characteristic.

Let $\Sigma^{\topp}$ be the set of top-dimensional irreducible components of $X_\mu(b)$. The group $$J := J_b(F)=\{g\in G(L)\mid g^{-1}b\sigma(g)=b\}$$ naturally acts on $X_\mu(b)$. Our goal is to understand the set
$J\backslash\Sigma^{\topp}$ of $J$-orbits in $\Sigma ^{\topp}$.

The motivation for studying this set is to understand cycles in the special fiber of Shimura varieties; in particular cycles arising from the basic, or supersingular locus. In \cite{xiaozhu}, the authors used the description of $J\backslash\Sigma^{\topp}$ in some special cases to prove certain cases of the Tate conjecture for Shimura varieties. In a different situation, a description of the components in the supersingular locus was used to study certain arithmetic level-raising phenomena in \cite{LiuTian}. After the work of Xiao--Zhu \cite{xiaozhu}, Miaofen Chen and Xinwen Zhu formulated a general conjecture relating $J\backslash\Sigma^{\topp}$ to the Mirkovi\'{c}--Vilonen cycles in the affine Grassmannian.
To state the conjecture we introduce some more notations.

Let $\widehat{G}$ denote the Langlands dual group of $G$ over $\C$, equipped with a Borel pair $\widehat{T}\subset \widehat B$, where $\widehat T$ is a maximal torus dual to $T$ and equipped with an algebraic action by $\sigma$. Let $\hatS$ be the identity component of the $\sigma$-fixed points of $\widehat T$. In $ X^*(\hatS)$, there is a distinguished element $\lambda_b$, determined by $b$. It is the ``best integral approximation'' of the Newton cocharacter of $b$, but we omit its precise definition here (see Definition \ref{defn:lambda_b}). The fixed $\mu$ determines a highest weight representation $V_{\mu}$ of $\widehat{G}$. We write $V_\mu(\lambda_b)_{\mathrm{rel}}$ for the $\lambda_b$-weight space in $V_{\mu}$ with respect to the action of $\hatS$.

\begin{conj}[Miaofen Chen, Xinwen Zhu]\label{conj:chenzhu1}There exists a natural bijection between $J \backslash\Sigma^{\topp} $ and the Mirkovi\'{c}--Vilonen basis of $V_{\mu} (\lambda_b)_{\mathrm{rel}}$. In particular,\begin{align}\label{eq:intro numerical}
	\abs{J\backslash\Sigma^{\topp} }  = \dim V_{\mu} (\lambda_b)_{\mathrm {rel}}
	.\end{align}
\end{conj}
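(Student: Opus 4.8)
The plan is to convert the numerical identity \eqref{eq:intro numerical} into a statement about twisted orbital integrals, to evaluate those via the Base Change Fundamental Lemma together with the combinatorics of $q$-analogue Kostant partition functions, and then to promote the numerical statement to the conjectured bijection. \emph{Step 1 (from geometry to twisted orbital integrals).} After reductions via the Hodge--Newton decomposition for affine Deligne--Lusztig varieties, together with $z$-extensions and products --- all compatible with the $J$-action, with $\dim X_\mu(b)$ and with $\Sigma^{\topp}$, and, on the dual side, with passage to the relevant weight space of $V_\mu$ --- one may assume $b$ is basic (possibly in a proper Levi subgroup). The core of this step is a formula, proved by a point-counting argument over the finite fields $\mathbb F_{q^r}$ in the spirit of the Langlands--Kottwitz method applied to a $J$-rigidified version of $X_\mu(b)$, expressing $\abs{J\backslash\Sigma^{\topp}}$ through twisted orbital integrals $\mathrm{TO}_{b\sigma}\bigl(\mathbf 1_{G(\Ok_L)\mu(\pi_F)G(\Ok_L)}\bigr)$; here the normalization is dictated by the known dimension formula for $X_\mu(b)$, and the top-dimensional components are precisely those contributing to the relevant (leading) term. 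Carrying this out requires enough structure theory of $\Sigma^{\topp}$ to control the $J$-action --- finiteness of $J\backslash\Sigma^{\topp}$ and of stabilizers, and the fact that one genuinely obtains $J$-orbits --- and to see that the top-dimensional locus is the one detected by the count.

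\emph{Step 2 (Base Change Fundamental Lemma and $q$-analogue Kostant partition functions).} The Base Change Fundamental Lemma rewrites $\mathrm{TO}_{b\sigma}\bigl(\mathbf 1_{G(\Ok_L)\mu(\pi_F)G(\Ok_L)}\bigr)$ as an ordinary orbital integral $\orb_\gamma\bigl(\mathbf 1_{G(\Ok_F)\mu(\pi_F)G(\Ok_F)}\bigr)$ on $G(F)$, where $\gamma$ is a norm of $b$; since $b$ is basic the position of $\gamma$ is governed by the cocharacter $\lambda_b$, which is why the $\lambda_b$-weight space appears, and the subscript $\mathrm{rel}$ reflects the centralizer of $\gamma$. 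One then analyzes the asymptotic behavior of this orbital integral in the appropriate dilation/level parameter: by the Kato--Lusztig formula relating spherical functions, and hence such orbital integrals, to Lusztig's $q$-analogue of weight multiplicities --- alternating sums of $q$-analogue Kostant partition functions --- the pertinent term of the integral is exactly $\dim V_\mu(\lambda_b)_{\mathrm{rel}}$. Combined with Step 1, this yields \eqref{eq:intro numerical}.

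\emph{Step 3 (the bijection with the Mirkovi\'c--Vilonen basis).} To obtain the full conjecture I would construct a natural map from $J\backslash\Sigma^{\topp}$ to the Mirkovi\'c--Vilonen basis of $V_\mu(\lambda_b)_{\mathrm{rel}}$: using the geometric Satake equivalence to identify Mirkovi\'c--Vilonen cycles with irreducible components of the relevant fibers, and a degeneration of the affine Grassmannian of Beilinson--Drinfeld type (or a nearby-cycles argument) to link these components with the irreducible components of $X_\mu(b)$; one checks the map is well-defined and injective on $J$-orbits, and the numerical identity of Step 2 then forces it to be bijective.

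The main obstacle is Step 1: extracting the intrinsically geometric --- and, crucially, not equidimensional --- count $\abs{J\backslash\Sigma^{\topp}}$ as a term of a clean twisted orbital integral, with the correct normalizing power of $q$ coming from the dimension formula, and with the $J$-action (finiteness, stabilizers, the precise orbit equivalence) fully under control. The asymptotic analysis in Step 2 --- uniform control of the $q$-polynomials produced by the Kato--Lusztig formula over the relevant family, and identification of the pertinent term with the ``relative'' weight multiplicity attached to the centralizer of $\gamma$ --- is the other technically substantial ingredient.
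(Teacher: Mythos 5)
Your overall plan—Lang--Weil style point counting leading to twisted orbital integrals, the Base Change Fundamental Lemma, and the Kato--Lusztig formula with $q$-analogue Kostant partition functions—is the correct framework and matches the paper's strategy. However, there are several genuine gaps.

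The central one: Step 1 does \emph{not} produce $\abs{J\backslash\Sigma^{\topp}}$ as the leading coefficient of the twisted orbital integral. Instead you get a weighted sum $\sum_{Z\in J\backslash\Sigma^{\topp}}\vol(\Stab_Z J)^{-1}$ in the leading term (this is Proposition \ref{prop:point counting}), because each $J$-orbit of a component contributes with multiplicity governed by the volume of its stabilizer. Combining with Steps 2 yields a relation of the form $\dim V_\mu(\lambda_b)_{\mathrm{rel}}\cdot\mathscr L_b=\sum_Z\vol(\Stab_Z J)^{-1}$ with an unknown $\mu$-independent constant $\mathscr L_b$, and this is \emph{not} yet the numerical identity. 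The mechanism the paper uses to close this gap is Theorem B (\S\ref{sec:action}): the stabilizers are parahoric subgroups of $J$, their volumes are explicit rational functions $R(q)$ in the residue cardinality $q$ with $R(0)$ a sign, and the pair $(J\backslash\Sigma^{\topp},\{\Stab_Z\})$ is \emph{independent of the local field $F$} (in a precise combinatorial sense). This independence lets one vary $q$ to pin down an $F$-independent rational function $S_{\mu,b}(t)\in\mathbb Q(t)$ satisfying $S_{\mu,b}(q)=e(J_b)\sum_Z\vol(\Stab_Z J)^{-1}$ and $S_{\mu,b}(0)=\mathscr N(\mu,b)$; one also needs a bootstrap from a known case (e.g.\ $\mu$ minuscule via Hamacher--Viehmann, or $\mu$ a sum of minuscule coweights via Nie) to determine $\mathscr L_b$. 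Your proposal contains no analogue of this ``independence-of-$F$ plus evaluate at $t=0$'' mechanism, and without it the argument cannot extract the cardinality from the twisted orbital integral.

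Second, your asymptotic claim in Step 2 (``the pertinent term of the integral is exactly $\dim V_\mu(\lambda_b)_{\mathrm{rel}}$'') is not something that the Kato--Lusztig formula hands you; it is the Key Estimate (Proposition \ref{key-bound}), a delicate case-by-case combinatorial bound on the polynomials $\mathfrak M^0_{s\lambda}(\mathbf q)$ for all $\lambda\ne\lambda_b^+$, occupying \S\ref{sec:proof of key est}--\S\ref{sec:part III}. Moreover it fails in the stated form for split $E_6$, where a single ``bad'' $\lambda$ survives and must be cancelled using a second test cocharacter $\mu_2$; the paper devises a separate workaround for this. Third, the Base Change Fundamental Lemma as you invoke it is only available for $p$-adic $F$; the paper again needs Theorem B to transfer the conclusion to equal characteristic. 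Finally, Step 3 of your proposal essentially re-proposes Nie's construction of a surjection from the Mirkovi\'c--Vilonen basis to $J\backslash\Sigma^{\topp}$, which the paper simply cites (Theorem \ref{thm:nie orginal} (2)) and combines with the numerical equality. Constructing such a map and proving it well-defined on $J$-orbits is a nontrivial piece of work, not a routine check, so you should either carry it out or cite the existing result.
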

Our main result is the following
\begin{thmA}[Corollary \ref{cor:CZ}]Conjecture \ref{conj:chenzhu1} holds.
\end{thmA}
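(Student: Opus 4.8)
The plan is to deduce Theorem~A from the numerical identity~\eqref{eq:intro numerical}. A natural map from $J\backslash\Sigma^{\topp}$ to the Mirkovi\'c--Vilonen basis of $V_\mu(\lambda_b)_{\mathrm{rel}}$, compatible with the relevant structures, is available from the work of Xiao--Zhu~\cite{xiaozhu} (and, in general, of Hamacher--Viehmann and of Nie), and is known to be surjective; granting this, proving $\abs{J\backslash\Sigma^{\topp}}=\dim V_\mu(\lambda_b)_{\mathrm{rel}}$ forces it to be a bijection. Before the main computation I would make two preparatory normalizations: replace $b$ by a decent representative of its $\sigma$-conjugacy class defined over $F$, so that $J_b$ is an inner form of a Levi defined over $F$ and $X_\mu(b)$ acquires a model over a finite extension of $\F_q$ carrying a $J$-action; and note that the twisted orbital integrals occurring below converge precisely when $X_\mu(b)$ is $J$-cocompact, which holds for basic $b$ and must be addressed (e.g.\ via a reduction to the basic case) in general.

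Next, I would extract $\abs{J\backslash\Sigma^{\topp}}$ from a point count. By the Lang--Weil estimates together with the dimension formula for affine Deligne--Lusztig varieties---which guarantees that each closed stratum $X_{\mu'}(b)$, $\mu'\prec\mu$, and each intersection of top-dimensional components of $X_\mu(b)$ has dimension strictly less than $\dim X_\mu(b)$---the quantity $\#\bigl(J\backslash X_\mu(b)\bigr)(\F_{q^r})$ is, for $r$ divisible by the period of $b$, a polynomial in $q^r$ whose leading term is $\abs{J\backslash\Sigma^{\topp}}\,q^{r\dim X_\mu(b)}$; equivalently, $\abs{J\backslash\Sigma^{\topp}}=\lim_r q^{-r\dim X_\mu(b)}\#\bigl(J\backslash X_\mu(b)\bigr)(\F_{q^r})$. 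On the other hand, the Langlands--Kottwitz / Rapoport--Zink counting dictionary identifies this point count, up to a fixed volume factor, with a twisted orbital integral $\mathrm{TO}_{b\sigma}(\mathbf 1_{K\mu(\pi_F)K})$ over the unramified extension $F_r/F$. The problem is thereby reduced to the asymptotics, as $r\to\infty$, of these twisted orbital integrals.

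To analyze them I would use the Base Change Fundamental Lemma, which identifies $\mathrm{TO}_{b\sigma}(\mathbf 1_{K_r\mu K_r})$ with the orbital integral $\mathrm{O}_{Nb}\bigl(\mathrm{bc}_r(\mathbf 1_{K_r\mu K_r})\bigr)$ on $G(F)$, where $\mathrm{bc}_r(\mathbf 1_{K_r\mu K_r})$ is an explicit element of the spherical Hecke algebra of $G(F)$, described via the Satake isomorphism, and the norm $Nb$ is a semisimple element whose ``coweight'' is the Newton cocharacter $r\nu_b$. Orbital integrals of spherical functions at such torus-type elements are computed by the Kato--Macdonald formula in terms of $q$-analogues of weight multiplicities, i.e.\ of alternating sums of $q$-analogues of Kostant partition functions, and the core of the argument is the asymptotic analysis of the resulting $q$-series: one must show that after the base change the top-order contribution is governed by the character $\chi_\mu$ of $V_\mu$ rather than by $\mathbf 1_{K\mu K}$ itself, so that the leading coefficient comes out exactly as the weight multiplicity $\dim V_\mu(\lambda_b)_{\mathrm{rel}}$. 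This rests on delicate identities for $q$-Kostant partition functions, applied inductively over all $\mu'\preceq\mu$ and combined with the dimension formula to match $q$-degrees. I expect the hardest part to be this combinatorial/representation-theoretic step in the \emph{relative} setting: when $G$ is not split $\sigma$ acts nontrivially on $\widehat T$, $\lambda_b$ lies in $X^*(\hatS)$ with $\hatS\subsetneq\widehat T$, and $Nb$ need not be regular, so one has to set up and exploit a twisted version of the Kato--Macdonald formula and of the pertinent $q$-partition-function identities. Further technical obstacles are the precise counting dictionary and the convergence statement for a general---possibly ramified, non-quasi-split---group $G$. Once~\eqref{eq:intro numerical} is proved for all pairs $(\mu,b)$, Theorem~A follows.
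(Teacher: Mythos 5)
Your overall framework is the one the paper follows: Lang--Weil on the components of $X_\mu(b)$ to relate $J\backslash\Sigma^{\topp}$ to the asymptotics of $TO_{b\sigma}(\mathbf 1_{K_s\mu K_s})$ as the unramified extension grows, then Base Change Fundamental Lemma and the Kato--Lusztig transition matrices to turn this into a question about $\mathbf q$-analogues of Kostant partition functions. You also correctly identify the reduction to a numerical identity via Nie's surjection, and correctly flag the combinatorial estimate (which the paper handles case-by-case on root systems) as the hard representation-theoretic core. However, there is a genuine gap in your step extracting $\abs{J\backslash\Sigma^{\topp}}$ from the point count.

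What Lang--Weil together with Lemma~\ref{lem:TO} actually gives is a \emph{volume-weighted} count: the leading coefficient of $TO_b(f_{\mu,s})$ in $q_s^{\dim X_\mu(b)}$ is $\sum_{Z}\vol(\Stab_Z J)^{-1}$, \emph{not} $\abs{J\backslash\Sigma^{\topp}}$. These agree only if every stabilizer has volume~$1$, which is generally false. Your proposal has no mechanism to remove the volume factors, and this is precisely what \S\ref{sec:action} and Theorem B are for: one shows that $\Stab_Z J$ is a parahoric subgroup, that $\vol(\Stab_Z J)=R_Z(q)$ for a rational function $R_Z$ with $R_Z(0)=\pm1$ independent of the local field $F$, and that $J\backslash\Sigma^{\topp}$ itself is independent of $F$. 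Even after the key estimate one only lands on $\dim V_\mu(\lambda_b)_{\mathrm{rel}}\cdot\mathscr L_b=\sum_Z\vol(\Stab_Z J)^{-1}$ with a $\mu$-independent but unknown constant $\mathscr L_b$; determining $\mathscr L_b$ requires the bootstrap of \S\ref{sec:main}, where one writes the right side as $S_{\mu,b}(q)$ for a rational function $S_{\mu,b}(t)$ with $S_{\mu,b}(0)=\mathscr N(\mu,b)$ and compares against known minuscule cases (Hamacher--Viehmann, Nie) after varying $F$. This ``rational-function-in-$q$'' machinery is the substantial missing piece of your argument.

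Two further inaccuracies worth flagging. First, you write that convergence of the twisted orbital integrals requires $J$-cocompactness of $X_\mu(b)$; in fact convergence here follows from decency of $b$, which makes $b\rtimes\sigma$ semisimple and the twisted orbit closed (cf.\ Clozel), and the reduction to basic $b$ is a reduction at the level of the conjecture via Nie, not at the level of convergence. Second, you phrase the computation via the Macdonald formula for spherical functions at a ``torus-type'' element $Nb$ which ``need not be regular''; but once $b$ is normalized to be basic and decent, the norm $N_s b=(s\nu_b)(\pi_F)$ is \emph{central}, so the stable orbital integral in the BCFL degenerates to evaluation of $\BC_s f_{\mu,s}$ at a central element. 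That is why the paper uses the Kato--Lusztig inverse-Satake formula rather than the Macdonald formula, and it sidesteps much of the ``twisted Kato--Macdonald'' delicacy you anticipate.
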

When the group $G$ is quasi-split but not necessarily unramified, we are able to prove an analogous result, see Appendix \ref{app} for the details.
\ignore{In Appendix \ref{app}, we prove the following
	\begin{thmAprime}[Theorem \ref{thm: main quasi-split}] Conjecture \ref{conj:chenzhu1} generalizes to quasi-split groups that are not necessarily unramified, in a way that is compatible with the ramified geometric Satake in \cite{Zhuram}.
	\end{thmAprime} In the rest of the introduction we shall only discuss Theorem A.}
\subsection{Previous results}
Previously, partial results towards Conjecture \ref{conj:chenzhu1} have been obtained by Xiao--Zhu \cite{xiaozhu}, Hamacher--Viehmann \cite{HV}, and Nie \cite{nie}, based on a common idea of reduction to the superbasic case (which goes back to \cite{GHKR}).

More precisely, Xiao--Zhu \cite{xiaozhu} proved the conjecture for general $G$, general $\mu$, and \emph{unramified} $b$, meaning that $J_b$ and $G$ are assumed to have equal $F$-rank.

Hamacher--Viehmann \cite{HV} proved the conjecture under either of the following two assumptions:
\begin{itemize}
	\item  The cocharacter $\mu$ is minuscule, and $G$ is split over $F$.
	\item The cocharacter $\mu$ is minuscule, and $b$ is superbasic in $M$, where $M$ is the largest Levi of $G$ inside which $b$ is basic. (In particular if $b$ is basic then they assume that $b$ is superbasic).
\end{itemize}

More recently, Nie \cite{nienew} proved the conjecture for arbitrary $G$ under the assumption that $\mu$ is a sum of dominant minuscule coweights. In particular it holds when the Dynkin diagram of $G_{\overline{F}}$ only involves factors of type $A$.
Moreover, Nie constructed a surjection from the Mirkovi\'{c}--Vilonen basis to the set $J\backslash\Sigma^{\topp}$ in all cases. Thus in order to prove the conjecture, it suffices to prove the numerical relation (\ref{eq:intro numerical}) for groups besides type $A$.

After we finished this work, Nie uploaded online new versions of the preprint \cite{nienew}, in which he proved Conjecture \ref{conj:chenzhu1} in full generality, using methods independent of ours. Our work only uses the weaker results of his as stated in the above paragraph.

\subsection{Some features of the method}\label{subsec:novelty}
Our proof of Conjecture \ref{conj:chenzhu1} is based on an approach completely different from the previous works. The key idea is to use the  Lang--Weil estimate to relate the cardinality of $J\backslash \Sigma ^{\topp}$ to the asymptotic behavior of the number of points on $X_{\mu} (b)$ over a finite field, as the finite field grows.

We show that the number of points over a finite field, when counted suitably, is given by a twisted orbital integral. Thus we reduce the problem to the asymptotic behavior of twisted orbital integrals. We study the latter using explicit methods from local harmonic analysis and representation theory, including the Base Change Fundamental Lemma and the Kato--Lusztig formula.

In our proof, polynomials that are linear combinations of the $\mathbf{q}$-analogue of Kostant partition functions appear, and the key computation is to estimate their sizes. These polynomials (denoted by $\mathfrak M^0_{\lambda} (\mathbf q)$ in the paper) can be viewed as a non-dominant generalization of the $\mathbf{q}$-analogue of Kostant weight multiplicity. Some properties of them are noted in \cite{Pan}, but beyond this there does not seem to have been a lot of study into these objects. From our proof, it seems reasonable to expect that a more thorough study of the combinatorial and geometric properties of these polynomials would shed new light on the structure of affine Deligne--Lusztig varieties, as well as the structure of twisted orbital integrals.

An interesting point in our proof is that we need to apply the Base Change Fundamental Lemma, which is only available in general for mixed characteristic local fields. In fact, the proofs of this result by Clozel \cite{clozelFL} and Labesse \cite{labFL} rely on methods only available over characteristic zero, for example the trace formula of Deligne--Kazhdan. Thus our method crucially depends on the geometric theory of mixed characteristic affine Grassmannians as in \cite{BS} and \cite{Zhu}. To deduce Conjecture \ref{conj:chenzhu1} also for equal characteristic local fields, we apply results of He \cite{He} to prove the following.
\begin{thmB}[Theorem \ref{thm:parahoric}, Theorem \ref{prop:ind of p}] For any $Z \in \Sigma ^{\topp}$, the stabilizer $Stab_Z J$ is a parahoric subgroup of $J$. Moreover, these parahoric subgroups, as well as the quotient set $J \backslash \Sigma ^{\topp}$, are independent of the local field $F$ in a precise sense. In particular, the truth of Conjecture \ref{conj:chenzhu1} transfers between different local fields.
\end{thmB}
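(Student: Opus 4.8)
The plan is to reduce both assertions of Theorem B to purely combinatorial statements about the Iwahori--Weyl group $\tilde W$ of $G$ with its $\sigma$-action, statements that depend on $F$ only through the root datum (and at most the residue cardinality $q$). The first step is to pass from hyperspecial to Iwahori level: writing $\pi\colon \mathrm{Fl}\to \mathrm{Gr}$ for the projection from the affine flag variety, one has $\pi^{-1}(X_\mu(b)) = \bigcup_{w\in\Adm(\mu)}X_w(b)$, so $\Sigma^{\topp}$ and the $J$-action on it are governed by the Iwahori-level affine Deligne--Lusztig varieties $X_w(b)$ with $w$ in the admissible set. These are amenable to He's reduction method \cite{He}: length-reducing elementary operations in $\tilde W$ produce a reduction tree whose leaves are minimal-length elements of their $\sigma$-conjugacy classes, and the top-dimensional components of $X_w(b)$, as a $J$-set, are assembled from iterated fibrations over classical Deligne--Lusztig varieties attached to those leaves; combined with the reduction to the superbasic case in a Levi when $b$ is not basic (as in \cite{GHKR}), this expresses $\Sigma^{\topp}$ as a $J$-set entirely in terms of $\tilde W$, $\sigma$, and the Kottwitz invariants of $[b]$. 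Every combinatorial ingredient occurring here --- admissible sets, class polynomials, reduction trees, dimension formulas --- is visibly insensitive to the characteristic of $F$, provided the affine flag variety is available in both settings, which it is by \cite{BS} and \cite{Zhu}.

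\textbf{Parahoricity of the stabilizers.} Unwinding this description, a top-dimensional $Z\in\Sigma^{\topp}$ contains a distinguished locally closed piece that maps, under $\pi$ followed by the affine Grassmannian projection, to a Deligne--Lusztig variety for the reductive quotient of a parahoric subgroup $\mathcal P\subset G(L)$, and the subgroup of $J$ preserving $Z$ gets identified with $g\mathcal P g^{-1}\cap J$ for a suitable $g\in G(L)$. Since $J=J_b(F)$ is the group of $F$-points of an inner form of a $\sigma$-stable Levi of $G_L$ and acts on the enlarged building $\mathcal B(J_b,F)$, which embeds $\sigma$-equivariantly into $\mathcal B(G,L)$ compatibly with the action on $X_\mu(b)$, the facet of $g\mathcal P g^{-1}$ cuts out a facet of $\mathcal B(J_b,F)$; then $\Stab_Z J = g\mathcal P g^{-1}\cap J$ is exactly its connected Bruhat--Tits stabilizer, hence a parahoric subgroup of $J$.

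\textbf{Independence of $F$ and transfer of the conjecture.} Now fix two local fields $F_1,F_2$ with residue field $\F_q$, the group $G$ spread out over $\Ok_{F_1}$ and $\Ok_{F_2}$ compatibly, a common dominant cocharacter $\mu$, and $\sigma$-conjugacy classes $[b_1],[b_2]$ having the same Newton point and Kottwitz point. Then $J_{b_1}$ over $F_1$ and $J_{b_2}$ over $F_2$ are inner forms of one and the same combinatorially determined twisted Levi, so their enlarged buildings are canonically identified and their parahoric subgroups correspond facet by facet; and by the above there is a $J$-equivariant bijection $X_\mu(b_1)_{F_1}\cong X_\mu(b_2)_{F_2}$, compatible with this identification of buildings, carrying $\Sigma^{\topp}$ to $\Sigma^{\topp}$ and $\{\Stab_Z J\}_Z$ to $\{\Stab_Z J\}_Z$ --- this is the ``precise sense'' intended in the statement. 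Finally, the right-hand side of Conjecture \ref{conj:chenzhu1}, namely the Mirkovi\'{c}--Vilonen basis of $V_\mu(\lambda_b)_{\mathrm{rel}}$ together with its dimension, involves only $\widehat G$, $\mu$, and $\lambda_b$ and is therefore manifestly independent of $F$; hence the bijection asserted in Conjecture \ref{conj:chenzhu1}, and in particular the identity \eqref{eq:intro numerical}, passes between $F_1$ and $F_2$, and combining this with the proof of the conjecture for mixed-characteristic $F$ given in the body of the paper (which uses the Base Change Fundamental Lemma) settles it for arbitrary $F$.

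\textbf{The main obstacle.} The difficulty is bookkeeping rather than any isolated inequality: one must make the comparison isomorphism between the two affine-flag-variety settings fully $J$-equivariant and compatible with each step of He's reduction, and pin down the identification of the Bruhat--Tits theory of $J_{b_1}(F_1)$ with that of $J_{b_2}(F_2)$ finely enough that the parahoric stabilizers are seen to match. The most delicate point is that ``top-dimensional'' has to be preserved component by component, which forces one to verify that the dimensions produced by He's formula for the individual $X_w(b)$ --- not merely $\dim X_\mu(b)$ in the aggregate --- agree over $F_1$ and $F_2$; this is once more a characteristic-independent combinatorial fact, but it must be tracked carefully through the fibration decompositions.
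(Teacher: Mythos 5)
Your outline follows the paper's route closely — pass to Iwahori level, use He's reduction trees and the minimal-length case, deduce everything from data that lives in $(\tilde W,\sigma)$ — but there are two places where the proposal overclaims in a way that would not survive scrutiny.

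First, and most seriously, in the independence-of-$F$ step you assert a ``$J$-equivariant bijection $X_\mu(b_1)_{F_1}\cong X_\mu(b_2)_{F_2}$.'' No such bijection exists: these are (perfect) schemes over residue fields of different local fields, and even their $\overline{\mathbb F}_q$-point sets are quotients of different topological groups, so there is no natural identification of the varieties or their points. What can actually be transported along He's reduction tree is the \emph{$J$-orbit set of top-dimensional irreducible components}, together with the conjugacy classes of the parahoric stabilizers — and that is exactly what the paper establishes (Lemma \ref{lem:bijection for Iwahori} and Theorem \ref{prop:ind of p}). The construction is a step-by-step induction on $\ell(w)$: at each Deligne--Lusztig reduction step one chooses a universal homeomorphism $X_w(b)\cong X_{w_1}(b)$ and a fibration to $X_{sw_1\sigma(s)}(b)$ or $X_{sw_1}(b)$, and one shows these choices do not affect the resulting map on orbit sets; the bijection is defined on $J\backslash\Sigma^{\topp}$, not on $X_\mu(b)$. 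Your own paragraph on ``the main obstacle'' correctly identifies this bookkeeping as the hard part, but the preceding sentence collapses it into a statement that is simply false.

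Second, the parahoricity argument is missing its key ingredient. After reducing to $w=ux$ with $x$ a $\sigma$-straight element satisfying $x^{-1}\sigma(P)x=P$, the paper writes $X_{ux}(\dot x)\cong J_{\dot x}(F)\times_{J_{\dot x}(F)\cap\mathcal P}X^{\mathcal P}_{ux}(\dot x)$ and then needs $X^{\mathcal P}_{ux}(\dot x)$ to be \emph{irreducible}, which is obtained by choosing $P$ minimal subject to $u\in W_P$ (so that $u$ is not contained in any $\sigma_{\dot x}$-stable proper parabolic of $W_P$) and invoking \cite[Cor.~1.2]{Go}. Only then is the stabilizer of the component exactly $J_{\dot x}(F)\cap\mathcal P(\Ok_L)$, rather than some larger group. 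Your proposal asserts $\Stab_ZJ=g\mathcal Pg^{-1}\cap J$ without supplying this irreducibility, and without explaining why this intersection is a parahoric of $J$ (here one uses that $P$ is $\sigma_{\dot x}$-stable, so the intersection is the $F$-points of a parahoric group scheme of $J_{\dot x}$). Finally, a minor point: the phrase ``reduction to the superbasic case in a Levi (as in \cite{GHKR})'' is not part of this proof — that is the strategy of \cite{xiaozhu}, \cite{HV}, \cite{nienew}, which the paper explicitly avoids for Theorem B; the argument here works with He's class polynomials and reduction directly for arbitrary $b$.
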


Our method also allows us to show that the stabilizers of the action of $J$ on $\Sigma ^{\topp}$ are all hyperspecial, when $b$ is basic and unramified. This reproves a result of Xiao--Zhu \cite{xiaozhu}, see Remark \ref{rem:M} below. For ramified $b$, we obtain the following result, which refines Conjecture \ref{conj:chenzhu1} in that it provides information about the stabilizers.
\begin{thmC}[Corollary \ref{cor:CZ} and (\ref{eq:to cite in intro})] Assume $\mathrm{char} (F) =0$. Assume $G$ is $F$-simple and adjoint, without type $A$ or $E_6$ factors. Assume $b$ is basic and ramified. Then we have
	$$\abs{J \backslash \Sigma ^{\topp}}\cdot \mathscr L_b = \sum _{ Z\in J \backslash \Sigma^{\topp} } \vol (\Stab_Z J )^{-1},$$
	where the volumes $\vol (\Stab_Z J )$ are computed with a fixed Haar measure on $J$, and $\mathscr L_b$ is a constant that depends only on $b$ and not on $\mu$.
\end{thmC}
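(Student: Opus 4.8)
The plan is to deduce Theorem C from the computation underlying Theorem A by retaining the Haar-measure data that is invisible to a bare count of irreducible components. Recall that the proof of Theorem A proceeds, after base change to the degree-$n$ unramified extension $F_n/F$, by expressing a suitable $J$-equivariant count of the $\overline{\F}_q$-points of $X_\mu(b)$ as a twisted orbital integral $\torb_{b\sigma^n}(\mathbf 1_{G(\Ok_L)\mu(\pi_F)G(\Ok_L)})$, transferring it via the Base Change Fundamental Lemma to an ordinary orbital integral, evaluating that through the Kato--Lusztig formula as a linear combination $\sum_\lambda a_\lambda\mathfrak M^0_\lambda(\mathbf q)$ of $\mathbf q$-analogues of Kostant partition functions, and finally letting $\mathbf q = q^n\to\infty$. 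By Theorem B the set $J\backslash\Sigma^{\topp}$ and the $J$-conjugacy classes of the stabilizers $\Stab_Z J$ are the same for every $n$ (indeed for every local field), so the resulting quantity may legitimately be read as a polynomial in $q$ whose top-degree term is accessible both geometrically, via the Lang--Weil estimate, and representation-theoretically, via the $\mathfrak M^0_\lambda$.

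For Theorem C I would run the same argument while keeping the volume factors that appear when $\torb_{b\sigma^n}(\mathbf 1_{G(\Ok_L)\mu(\pi_F)G(\Ok_L)})$ is unwound as an integral over $J\backslash G(L)$: it becomes a sum over $J$-orbits of cosets $gG(\Ok_L)\in X_\mu(b)$, each weighted by the reciprocal volume of the stabilizer of the coset in $J$, and, using Theorem B together with the structure theory of $J$ from \cite{He}, only the orbits whose stabilizer is a parahoric — hence compact open — contribute to the leading term; over $\mathbb F_{q^n}$ these orbits organize into clusters along the top-dimensional components, the cluster along $J\cdot Z$ contributing total weight $q^{nd}\vol(\Stab_Z J)^{-1}$ up to lower order, where $d=\dim X_\mu(b)$. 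This produces the refined identity
\[
\torb_{b\sigma^n}\bigl(\mathbf 1_{G(\Ok_L)\mu(\pi_F)G(\Ok_L)}\bigr)
= c_b\, q^{nd}\!\!\sum_{Z\in J\backslash\Sigma^{\topp}}\!\!\vol(\Stab_Z J)^{-1} + O\!\left(q^{n(d-\frac12)}\right),
\]
with $c_b>0$ depending only on $b$ and on the fixed Haar measures. On the other hand, the Kato--Lusztig side expresses the top-degree coefficient of the same quantity as $\dim V_\mu(\lambda_b)_{\mathrm{rel}}$ times a $\mu$-independent constant $c_b'$. Equating, and invoking Theorem A (i.e. $|J\backslash\Sigma^{\topp}| = \dim V_\mu(\lambda_b)_{\mathrm{rel}}$), yields $\sum_{Z}\vol(\Stab_Z J)^{-1} = (c_b'/c_b)\,|J\backslash\Sigma^{\topp}|$, which is the asserted identity with $\mathscr L_b = c_b'/c_b$. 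Running the same bookkeeping with $b$ unramified basic shows every $\Stab_Z J$ is hyperspecial, reproving the result of Xiao--Zhu \cite{xiaozhu}.

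The heart of the matter, and the main obstacle, is the refined identity — concretely, the claim that the weight accumulated along $J\cdot Z$ is exactly $q^{nd}\vol(\Stab_Z J)^{-1}$, with no stray contribution from the finite quotient through which $\Stab_Z J$ acts on $Z$, and with $c_b,c_b'$ genuinely insensitive to $\mu$. This forces one to isolate, among the infinitely many $J$-orbits of cosets, the finitely many that survive in the $\mathbf q\to\infty$ limit — a generic coset in a top-dimensional component usually has non-compact stabilizer, hence contributes nothing — and to match the survivors with the top-dimensional components by means of Theorem B; and it forces one to pin down the top-degree coefficient of $\sum_\lambda a_\lambda\mathfrak M^0_\lambda(\mathbf q)$ precisely enough to exhibit its $\mu$-dependence as $\dim V_\mu(\lambda_b)_{\mathrm{rel}}$ alone. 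The latter is where the hypotheses are spent: $\mathrm{char}(F)=0$ makes the Base Change Fundamental Lemma available; $G$ being $F$-simple and adjoint, $b$ basic, and the absence of type-$A$ or type-$E_6$ factors keep the combinatorics of the $\mathfrak M^0_\lambda$ and of the Newton stratum of $b$ under control, so that the $\mu$-dependence factors out cleanly. The remaining work is the consistent normalization of Haar measures — on $G(L)$, on $J$, and on the groups produced by base change — across the various local fields, which is exactly what Theorem B makes possible.
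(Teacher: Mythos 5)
Your overall architecture matches the paper's: unwind the twisted orbital integral $TO_b(f_{\mu,s})$ as a weighted point count on $\mathbb X_\mu(b)(k_s)$, transfer to the group side via the Base Change Fundamental Lemma and the Kato--Lusztig formula, isolate the $\lambda=\lambda_b^+$ summand among the $\mathfrak M^0_\lambda$ in the limit, and then feed in Theorem A to replace $\dim V_\mu(\lambda_b)_{\mathrm{rel}}$ by $\abs{J\backslash\Sigma^{\topp}}$. This is exactly how the paper obtains Theorem~C from Corollary~\ref{cor:CZ} together with equations (\ref{eq:S and vol}) and (\ref{eq:to cite in intro}).

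However, the step you yourself flag as ``the heart of the matter'' — the refined identity
$TO_b(f_{\mu,s}) = \sum_{Z}\vol(\Stab_ZJ)^{-1}q_s^{d}+o(q_s^d)$
— is established in the paper by a mechanism quite different from the one you describe, and the one you describe is wrong. You claim that generic cosets in a top-dimensional component have non-compact stabilizer in $J$ and hence are discarded, and that ``only the orbits whose stabilizer is a parahoric — hence compact open — contribute to the leading term.'' Neither assertion holds. For \emph{any} $gG(\Ok_s)\in\mathbb X_\mu(b)(k_s)$, the stabilizer in $J_b(F)=G(F_s)_{b\sigma}$ is $G(F_s)_{b\sigma}\cap gG(\Ok_s)g^{-1}$, a compact open subgroup; there is nothing non-compact to throw away, and these point-stabilizers are typically \emph{not} parahoric. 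What is parahoric (by Theorem~\ref{thm:parahoric}) is the stabilizer of an irreducible \emph{component}, which is a different object. The finiteness of $J_b(F)\backslash\mathbb X_\mu(b)(k_s)$ comes from compactness of the closed twisted orbit (Lemma~\ref{lem:TO}), not from discarding infinitely many bad orbits. The leading-term identification in Proposition~\ref{prop:point counting} is then obtained by choosing a dense open $\mathbb U_i\subset Z_i$ for each representative component, applying the orbit--stabilizer relation for the action of $\mathbb J_i=\Stab_{Z_i}J$ on $\mathbb U_i(k_s)$ to convert the sum of $\vol_x^{-1}$ over orbits in $\mathbb U_i(k_s)$ into $\vol(\mathbb J_i)^{-1}\abs{\mathbb U_i(k_s)}$, and applying Lang--Weil. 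With the measures normalized so that $\vol(G(\Ok_s))=1$, the coefficient in front of $q_s^{d}$ is $1$, not an undetermined $c_b>0$; introducing a second constant $c_b$ and then taking a ratio $c_b'/c_b$ is an avoidable artifact of the same misreading. The rest of your outline (BCFL, Kato--Lusztig, the $\mathfrak M^0$-asymptotics excluding $E_6$, the $\mu$-independence of the limit, and the hyperspecial conclusion in the unramified case) tracks the paper accurately.
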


In a future work we shall explore the possibility of applying Theorems A, B, C to determine the stabilizers in general.

\subsection{Overview of the proof} We now explain in more detail our proof of Conjecture \ref{conj:chenzhu1}. A standard reduction allows us to assume that $b$ is basic, and that $G$ is adjoint and $F$-simple. Throughout we also assume that $G$ is not of type $A$, which is already sufficient by the work of Nie \cite{nienew}. To simplify the exposition, we also assume that $G$ is split and not of type $E_6$. Then $\hatS  = \widehat T$, and we drop the subscript ``$\mathrm{rel}$'' for the weight spaces in Conjecture \ref{conj:chenzhu1}.

For any $s \in \Z_{>0}$, we let $F_{s}$ be the unramified extension of $F$ of degree $s$. We denote by $\mathcal H_s$ the spherical Hecke algebra $\mathcal H(G(F_s)// G(\Ok_{F_s}) )$. We may assume without loss of generality that $b$ is $s_0$-decent for a fixed $s_0 \in \mathbb N$, meaning that $b \in G(F_{s_0})$ and $$b \sigma(b)\cdots \sigma ^{s_0-1} (b) =1.$$

As mentioned above, our idea is to use the Lang--Weil estimate to relate the number of irreducible components to the asymptotics of twisted orbital integrals. Since $X_\mu(b)$ is only locally of (perfectly) finite type and we are only counting $J$-orbits of irreducible components, we need a suitable interpretation of the Lang--Weil estimate. The precise output is the following.
\begin{prop}[Proposition \ref{prop:point counting}]\label{prop:intro TO} Let $s \in s_0 \mathbb N$. Let $f_{\mu,s} \in \mathcal H_s$ be the characteristic function of $G(\Ok_{F_s})\mu(\pi_F)G(\Ok_{F_s}),$ and let $TO_b(f_{\mu ,s})$ denote the twisted orbital integral of $f_{\mu,s}$ along $b\in G(F_s)$. We have
	\begin{align}\label{eq:intro TO}
	TO_b  (f_{\mu ,s}) = \sum _{ Z\in J\backslash \Sigma^{\topp} } \vol (\Stab_ZJ)^{-1} q^{s \dim X_{\mu} (b)} + o(q^{s \dim X_{\mu} (b) }), \quad s \gg 0 .
	\end{align}
\end{prop}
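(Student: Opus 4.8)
The plan is to recast both sides as counts of $\mathbb{F}_{q^s}$-points and then feed them into the Lang--Weil estimate; throughout we work in the reduced situation of the overview, so that $b$ is basic, $G$ (hence $J_b$) is adjoint, and $J$ acts with finitely many orbits on the set of irreducible components of $X_\mu(b)$.

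First I would unfold the twisted orbital integral. Since $b$ is $s_0$-decent and $s_0\mid s$, every element of $J=J_b(F)\subset G(F_{s_0})$ is fixed by $\sigma^s$, and $J=\{g\in G(F_s):g^{-1}b\sigma(g)=b\}$ is exactly the domain of the integral $TO_b(f_{\mu,s})=\int_{J\backslash G(F_s)}f_{\mu,s}(g^{-1}b\sigma(g))\,dg$. The integrand factors through $G(F_s)/G(\Ok_{F_s})$, and since conjugation by $G(\Ok_{F_s})$ and by $J$ preserves $G(\Ok_{F_s})\mu(\pi_F)G(\Ok_{F_s})$, it is constant on $(J,G(\Ok_{F_s}))$-double cosets, recording on $gG(\Ok_{F_s})$ only whether $g^{-1}b\sigma(g)$ lies in that double coset. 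Using that $G(F_s)/G(\Ok_{F_s})$ is the set of $\sigma^s$-fixed points of the (perfect) affine Grassmannian and that the Cartan stratification of $G(F_s)$ is the restriction of that of $G(L)$, these cosets are precisely the points $X_\mu(b)(\mathbb{F}_{q^s})$ of the natural $\mathbb{F}_{q^{s_0}}$-model of $X_\mu(b)$ attached to $b\in G(F_{s_0})$. Evaluating the volume of each double coset, with $\vol(G(\Ok_{F_s}))=1$ and a fixed Haar measure on $J$, turns the integral into the finite weighted point count
\[
TO_b(f_{\mu,s})=\sum_{x\in J\backslash X_\mu(b)(\mathbb{F}_{q^s})}\frac{1}{\vol(\Stab_x J)},
\]
finiteness being equivalent to convergence of $TO_b(f_{\mu,s})$ and reflecting that the stabilizers are compact open in $J$.

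Next I would organize this sum by irreducible components. Since $\sigma^{s_0}$ fixes $J$ pointwise, it acts on the finite set $J\backslash\Sigma$; for a top-dimensional component $Z$ one has $\sigma^{s_0}(Z)=j_Z Z$ with $j_Z\in J$, and comparing stabilizers gives $\Stab_{\sigma^{s_0}(Z)}J=\Stab_Z J=j_Z(\Stab_Z J)j_Z^{-1}$, so $j_Z$ normalizes $\Stab_Z J$ and hence, as $\Stab_Z J$ is a parahoric in the adjoint group $J$ (Theorem~\ref{thm:parahoric}), has finite order modulo it. Enlarging $s_0$ if necessary (harmless for $s_0$-decency) I may therefore assume $\sigma^{s_0}(Z)=Z$ for every $Z\in\Sigma^{\topp}$; fix $s\in s_0\mathbb N$. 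Writing $d=\dim X_\mu(b)$, let $X^\circ$ be the open $J$-stable locus of points lying on a unique irreducible component, that component being top-dimensional and smooth there; then $X^\circ=\bigsqcup_{Z\in\Sigma^{\topp}}Z^\circ$ with each $Z^\circ$ dense and $\Stab_Z J$-stable in $Z$, and $R:=X_\mu(b)\setminus X^\circ$ is closed, $J$-stable, of dimension $\le d-1$, with finitely many $J$-orbits of components. Splitting the point count along $X^\circ\sqcup R$, using $\Stab_y J\subseteq\Stab_Z J$ for $y\in Z^\circ$ and the orbit-counting identity $\sum_{[y]}\vol(\Stab_y J)^{-1}=\vol(\Stab_Z J)^{-1}\,\#Z^\circ(\mathbb{F}_{q^s})$ over the finitely many $\Stab_Z J$-orbits on $Z^\circ(\mathbb{F}_{q^s})$, yields
\[
TO_b(f_{\mu,s})=\sum_{Z\in J\backslash\Sigma^{\topp}}\frac{\#Z^\circ(\mathbb{F}_{q^s})}{\vol(\Stab_Z J)}+\sum_{x\in J\backslash R(\mathbb{F}_{q^s})}\frac{1}{\vol(\Stab_x J)}.
\]

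Finally, each such $Z$ is geometrically irreducible of dimension $d$ and, by the previous paragraph, defined over $\mathbb{F}_{q^s}$, so Lang--Weil applied to a deperfection of $Z$, minus the lower-dimensional complement of $Z^\circ$, gives $\#Z^\circ(\mathbb{F}_{q^s})=q^{sd}+O(q^{s(d-1/2)})$; meanwhile $R$ has dimension $\le d-1$ and only finitely many $J$-orbits of components, so the crude dimension bound on point counts makes the second sum $O(q^{s(d-1)})$. Summing over the finitely many $Z\in J\backslash\Sigma^{\topp}$ gives the assertion. I expect the main difficulty to lie in the first step: the measure-theoretic normalization relating a Haar measure on $J$ to $\vol(G(\Ok_{F_s}))=1$, the identification of the relevant cosets with $X_\mu(b)(\mathbb{F}_{q^s})$ together with the compatibility of the Cartan decomposition and of the integral model with the $J$-action, and the control of how Frobenius permutes the infinitely many irreducible components; once these are in place, the conclusion is a routine application of Lang--Weil and orbit counting.
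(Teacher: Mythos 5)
Your overall route coincides with the paper's: first rewrite the twisted orbital integral as a weighted count of $J$-orbits of $k_s$-points of a $k_{s_0}$-model $\mathbb X_\mu(b)$ (this is the paper's Lemmas~\ref{lem:finite field points}--\ref{lem:TO}), and then feed the count into Lang--Weil (the proof of Proposition~\ref{prop:point counting}). Your detailed justification for why the relevant cosets are the $\mathbb F_{q^s}$-points, and the argument that Frobenius eventually fixes each component because its twist $j_Z$ normalizes the parahoric $\Stab_Z J$ hence has finite order modulo it, match the paper's Lemma~\ref{lem:rational comp} (though note the paper needs it for \emph{all} components, not only top-dimensional ones, and so will you for the $R$-contribution).

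Where you diverge from the paper is the bookkeeping in the second step. The paper does not produce an exact identity: it chooses representatives $Z_1,\dots,Z_M$ of \emph{all} $J$-orbits of components, builds open dense $U_i\subset Z_i$, and proves the two inequalities $\sum_i \vol(\mathbb J_i)^{-1}\,\#\mathbb U_i(k_s)\le TO_b(f_{\mu,s})\le\sum_i \vol(\mathbb J_i)^{-1}\,\#\mathbb Z_i(k_s)$, then squeezes with Lang--Weil. You instead split $X_\mu(b)=X^\circ\sqcup R$ and get an exact equality for the $X^\circ$-part. That is a legitimate alternative, slightly cleaner for the main term (it avoids handling the ``too many preimages'' inequality (est1)), at the cost of having to control the $R$-part separately.

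That is where you have a real gap. You claim the $R$-contribution $\sum_{x\in J\backslash R(\mathbb F_{q^s})}\vol(\Stab_x J)^{-1}$ is $O(q^{s(d-1)})$ by ``the crude dimension bound on point counts.'' A dimension bound controls $\#R(\mathbb F_{q^s})$, but not the sum with inverse-volume weights: a priori the stabilizers $\Stab_x J$ can be arbitrarily small as $x$ ranges over $R(\mathbb F_{q^s})$, so the inverse volumes are not uniformly bounded, and the sum cannot be majorized by the point count times a constant. To close this, you must run the same orbit-versus-stabilizer argument again on $R$: let $Z_1',\dots,Z_N'$ represent $J\backslash\Sigma(R)$ (finite by Lemma~\ref{lem:finiteness}), with stabilizers $\mathbb J_i'$ (compact open by Theorem~\ref{thm:parahoric}); then the surjection $\coprod_i\mathbb Z_i'(k_s)\to J\backslash R(\mathbb F_{q^s})$ and the inequality $\vol_x\cdot\#\Pi^{-1}(x)\ge\vol(\mathbb J_i')$ for $x$ in the image of $\mathbb Z_i'(k_s)$ give $\sum_x\vol_x^{-1}\le\sum_i\vol(\mathbb J_i')^{-1}\#\mathbb Z_i'(k_s)=O(q^{s(d-1)})$. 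This is precisely the paper's inequality (est1), so the fix amounts to re-importing the mechanism you sidestepped by going exact on $X^\circ$.

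Two small remarks. The smoothness requirement in your definition of $X^\circ$ is unnecessary (Lang--Weil needs geometric irreducibility, not smoothness) and is slightly awkward to interpret in the perfect-scheme setting; dropping it loses nothing since the singular locus of a deperfection is lower-dimensional anyway. And be explicit that the Frobenius-rationality argument, and the finiteness of $J$-orbits, must apply to all components of $X_\mu(b)$, since $R$ contains both the lower-dimensional components and the boundary strata of top-dimensional ones.
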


To proceed, we apply the Base Change Fundamental Lemma to compute $TO_b (f_{\mu,s})$. There are two problems in this step. Firstly, the Base Change Fundamental Lemma can only be applied to stable twisted orbital integrals. This problem is solved because one can check that $TO_b(f_{\mu,s})$ is in fact equal to the corresponding  stable twisted orbital integral. Secondly, the general Base Change Fundamental Lemma is only available for $\mathrm{char} (F )=0$. The way to circumvent this was already discussed in \S \ref{subsec:novelty} above.

We define $
\delta : =(\mathrm{rk}_F J_b- \mathrm{rk}_F G)/2.$ Up to lower order error terms,  we may combine the  above-mentioned computation of $TO_b(f_{\mu,s})$ with asymptotics of the Kato--Lusztig formula \cite{Kato} to rewrite the left hand side of (\ref{eq:intro TO}), and we may use the dimension formula for $X_{\mu} (b)$ (by Hamacher \cite{Ham}
and Zhu \cite{Zhu}) to rewrite the right hand side of (\ref{eq:intro TO}). The result is the following:
\begin{align}\label{eq:BCFL} \sum _{\lambda \in X^*(\widehat T) ^+, \lambda \leq \mu} &  \dim V_{\mu} (\lambda) \cdot  \mathfrak M^0_{s\lambda} (q^{-1})  \\ \nonumber & = \pm \sum _{ Z\in J \backslash \Sigma^{\topp}} \vol (\Stab_ZJ )^{-1} q^{s\delta} + o(q^{s \delta}), \quad  s \gg 0,
\end{align}
where each $\mathfrak M^0_{s\lambda} (q^{-1})$ is the value at $\mathbf q= q^{-1}$ of a polynomial $\mathfrak M^0_{s\lambda} (\mathbf q) \in \mathbb C[\mathbf q]$, given explicitly in terms of the $\mathbf q$-analogues of Kostant's partition functions (see Definition \ref{defn:frakM} and \S \ref{subsec:Kostant partitions}).

The key computation needed to further analyze (\ref{eq:BCFL}) is summarized in the following.
\begin{prop}\label{prop:intro key} Let $\lambda_b ^+ \in X^*(\widehat T) ^+$ be the dominant conjugate of $\lambda_b$. For all $\lambda \in X^*(\widehat T) ^+ - \set{\lambda_b ^+}$, we have
	$$ \mathfrak M^0_{s\lambda} (q^{-1}) = o(q ^{s \delta}),~ s\gg 0 . $$
\end{prop}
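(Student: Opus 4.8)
The plan is to reduce the estimate on $\mathfrak M^0_{s\lambda}(q^{-1})$ to a purely combinatorial statement about the $\mathbf q$-analogue of Kostant's partition function, and then to control the leading exponent by a careful comparison of weights. Recall that $\mathfrak M^0_{s\lambda}(\mathbf q)$ is, by its definition (Definition \ref{defn:frakM}), a signed sum over the Weyl group $W$ of shifted values of the $\mathbf q$-Kostant partition function, of the shape $\sum_{w \in W} (-1)^{\ell(w)} \mathfrak K_{\mathbf q}\bigl(w(s\lambda + \rho) - \rho - \nu\bigr)$ for the appropriate target weight $\nu$ coming from $\lambda_b$; since we are in the split adjoint case we may take $\nu$ to be (a translate of) $s\lambda_b^+$ or $s\lambda_b$ up to $W$-conjugacy. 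The first step is therefore to write $\mathfrak M^0_{s\lambda}(\mathbf q)$ explicitly as such an alternating sum, and to observe that as a polynomial in $\mathbf q$ its degree — equivalently, the order of vanishing of $\mathfrak M^0_{s\lambda}(q^{-1})$ as $q \to \infty$, which is what governs the $o(q^{s\delta})$ claim — is bounded by the largest ``height'' (sum of coefficients in the simple coroots, or rather simple roots on the dual side) appearing among the arguments $w(s\lambda+\rho)-\rho-\nu$ that lie in the positive cone.

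The second step is the key numerical comparison. Scaling $\lambda$ by $s$ scales the relevant heights linearly in $s$, so $\deg_{\mathbf q}\mathfrak M^0_{s\lambda}(\mathbf q)$ grows like $s \cdot h(\lambda)$ for an explicit piecewise-linear function $h$ of $\lambda$ (and of $\lambda_b$), while $\delta = (\mathrm{rk}_F J_b - \mathrm{rk}_F G)/2$ is a fixed constant once $b$ is fixed. So the content of the proposition is the sharp inequality: for every dominant $\lambda \neq \lambda_b^+$ that can possibly contribute (i.e.\ with $V_\mu(\lambda)\neq 0$ for some relevant $\mu$, equivalently $\lambda \leq \mu$, but we should prove it for all dominant $\lambda\neq\lambda_b^+$), the exponent $s\cdot h(\lambda)$ is strictly less than $s\delta$, with equality forced only at $\lambda = \lambda_b^+$. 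I expect the natural way to see this is to express $\delta$ itself in root-theoretic terms attached to $\lambda_b$ — essentially $\delta$ should equal $h(\lambda_b^+)$, the value of the same height function at the distinguished weight — using that $b$ is basic so that $J_b$ is an inner form of a Levi and its $F$-rank is computed from the centralizer of the Newton point, which on the dual side is governed precisely by $\lambda_b$. Then the inequality $h(\lambda) < h(\lambda_b^+) = \delta$ for $\lambda \neq \lambda_b^+$ becomes a convexity/extremality statement: among the alternating-sum arguments, the one associated to $\lambda_b^+$ maximizes the height, and any other dominant $\lambda$ gives strictly smaller contributions because the cancellation in the Weyl alternating sum is more severe, or the positive cone is entered ``less deeply''.

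The main obstacle, and where the real work will lie, is precisely this strict inequality and the identification of the extremal case. Two subtleties must be handled. First, $\mathfrak M^0$ is a non-dominant generalization of the $\mathbf q$-Kostant weight multiplicity, so the clean positivity properties of the dominant case (Lusztig's theorem that $\mathbf q$-weight multiplicities have nonnegative coefficients) are not directly available; one must either invoke the partial results of \cite{Pan} on these polynomials or argue directly with the alternating sum, carefully tracking which Weyl-chamber the shifted weights fall into and bounding the degree without relying on positivity. Second, one must rule out ``accidental'' equality $h(\lambda) = \delta$ for $\lambda \neq \lambda_b^+$; I anticipate this uses the specific arithmetic of $\lambda_b$ — namely that $\lambda_b$ is the best integral approximation to the Newton cocharacter and hence sits in a canonical extremal position relative to the root system — together with the hypothesis that $G$ is not of type $A$ (and, in the simplified exposition, not $E_6$), which is what makes $\delta$ an integer and excludes the pathological rank-jump behavior that forces the type-$A$ case to be treated separately by Nie's methods. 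Once the strict inequality is established, the estimate $\mathfrak M^0_{s\lambda}(q^{-1}) = o(q^{s\delta})$ follows immediately by comparing degrees.
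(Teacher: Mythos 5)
Your overall framework — reducing the estimate to bounding the degree of a $\mathbf q$-Kostant-partition polynomial and then seeking a strict height inequality — is the right starting point and does match the paper's opening move (the ``norm method'' of \S\ref{subsubsec:classic}, which uses a $W_0$-invariant norm $|\cdot|$ to bound $\min_{\underline m\in\mathbb P(\psi_s)}|\underline m|$ from below linearly in $s$). However, there is a genuine gap at the step you identify as ``the real work'', and the convexity/extremality argument you propose does not carry it.

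The difficulty is that the strict inequality $h(\lambda) < \delta$ simply \emph{fails} for certain dominant $\lambda \neq \lambda_b^+$, so there is no uniform degree bound to compare. The paper's route is a case-by-case analysis over the Dynkin types (\S\ref{sec:proof of key est}--\S\ref{sec:part III}), and already under your stated simplification (split, not type $A$ or $E_6$), type $D_n$ with $n$ odd produces a specific weight $\lambda_{i,\mathrm{bad}}$ for which the naive degree count gives exactly the critical exponent, not better. The estimate for that $\lambda$ is saved not by a degree bound but by a nontrivial \emph{cancellation} in the alternating sum over $W_0$: the paper establishes this in Proposition \ref{prop:cancellation for Dn odd} via a bespoke combinatorial device (the admissible $(U,V)$-trees of \S\ref{subsec:combinatorics for Dn}), showing that the terms in a band of degrees near the threshold sum to exactly zero. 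A convexity argument cannot see this, since it is a statement about the signed combination, not about any single partition count. Similarly, for $E_7$ the paper has to replace the single $W_0$-invariant norm with a family of semi-norms $|\cdot|_i$, $|\cdot|_\tau$ chosen separately for each Weyl-orbit representative of the offending weight $\varpi_5$ — again not a uniform height function. And for $E_6$ (which the simplified introduction excludes, but which the real Proposition \ref{key-bound} must handle) the estimate genuinely fails at one $\lambda_{\mathrm{bad}}$, which is why the paper proves a strictly weaker statement there and bootstraps using two auxiliary coweights $\mu_1,\mu_2$.

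Two smaller points. First, the identity $\delta = h(\lambda_b^+)$ you posit is never established nor needed in the paper; what is used instead are explicit computations of $\mathrm{def}_G(b)$ (via orbit counts on the extended Dynkin diagram) and explicit computations of $\mathscr D(\Lambda(b)_{\mathrm{good}}) = \min_{\lambda}|\lambda|$, with the comparison $\mathscr D > \delta_{\mathrm{norm}}\cdot\mathrm{def}_G(b)/2$ checked type by type (here $\delta_{\mathrm{norm}} = \max_{\alpha^\vee}|\alpha^\vee|$, a different $\delta$). Second, your worry about lacking Lusztig positivity for the non-dominant $\mathfrak M^0$ is correct but is handled in the paper not by [Pan] but by Proposition \ref{prop:interpretation}(2), which supplies a crude $|\mathcal Q(\underline m,x)|\le 1$ bound sufficient for the degree count; [Pan] is cited only as background. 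So your plan would need to be supplemented by (i) the classification-driven norm computations and (ii) a genuinely new cancellation mechanism for the bad weights in odd $D_n$ before it could close.
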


When $G$ is the split adjoint $E_6$, we only prove a weaker form of Proposition \ref{prop:intro key}, which also turns out to be sufficient for our purpose.

Proposition \ref{prop:intro key} tells us that on the left hand side of (\ref{eq:BCFL}), only the summand indexed by $\lambda = \lambda _b^+$ has the ``right size''. Taking the limit, we obtain	\begin{align}\label{eq:in theorem C}
\dim V_{\mu} (\lambda_b)\cdot \mathscr L_b = \sum _{ Z\in J \backslash \Sigma^{\topp} } \vol (\Stab_Z J )^{-1},
\end{align} where $\mathscr L_b$ is independent of $\mu$.

In (\ref{eq:in theorem C}), we already see both the number $\dim V_{\mu} (\lambda_b)$ and the set $J\backslash \Sigma ^{\topp}$. In order to deduce the desired (\ref{eq:intro numerical}), one still needs some information on the volume terms $\vol (\mathrm{Stab}_ZJ)$. It turns out that even very weak information will suffice. In \S \ref{sec:action} we show that the right hand side of (\ref{eq:in theorem C}) is equal to $R(q)$, where $R(T)\in\mathbb{Q}(T)$ is a rational function which is independent of $F$ in a precise sense. Moreover we show that $|R(0)|=|J\backslash \Sigma^{\topp}|.$  Therefore, the desired (\ref{eq:intro numerical}) will follow from (\ref{eq:in theorem C}), if we can show that
\begin{align}\label{eq:intro to show}
\mathscr L_b =S(q),\quad \text{for some~}S(T) \in \mathbb Q (T) \text{~with~}|S(0)|=1.
\end{align}
A remarkable feature of the formulation (\ref{eq:intro to show}) is that it is independent of $\mu$. We recall that in the works of Hamacher--Viehmann and Nie, special assumptions on $\mu$ are made. Hence we are able to bootstrap from known cases of Conjecture \ref{conj:chenzhu1} (for example when $\mu=\lambda_b^+$) to establish (\ref{eq:intro to show}), and hence to establish Conjecture \ref{conj:chenzhu1} in general.

We end our discussion with the following remark.
\begin{rem}\label{rem:M}
	At the moment, we are unable to directly compute the rational functions $S(T)$ appearing in (\ref{eq:intro to show}) in general. To do this would require a much better understanding of the polynomials $\mathfrak{M}_{s\lambda}^0(\mathbf q)$. We are however able to compute $S(T)$ in a very special case. When $b$ is a basic unramified element in the sense of \cite{xiaozhu}, we show directly that (\ref{eq:intro to show}) is satisfied by $S(T) \equiv 1$, see \S \ref{sub:unr}. From this we deduce the conjecture for $b$, as well as the equality $\vol(\Stab_ZJ)=1$ for each $Z\in \Sigma^{\topp}$. This last equality implies (according to our normalization) that $\Stab_ZJ$ is a hyperspecial subgroup of $J$. This gives another proof of a result in \cite{xiaozhu}, avoiding their use of Littelmann paths.	
\end{rem}
\ignore{\begin{rem}
		Our proof of Proposition \ref{prop:intro key} in the case $G$ is split of type $D_n$ with $n$ odd turns out to be significantly different from the other cases. In this case we devise a combinatorial method to analyze the polynomials $\mathfrak{M}_{s\lambda}^0(\mathbf q)$, using certain binary trees whose vertices are decorated by pairs of roots in the root system, see \S \ref{subsec:combinatorics for Dn}. This method could possibly be generalized to analyze more instances of $\mathfrak{M}_{s\lambda}^0(\mathbf q)$.
	\end{rem}
}
\subsection{Organization of the paper}
In \S \ref{sec:prelim}, we introduce notations and state the Chen--Zhu conjecture. In \S \ref{sec:action}, we study the action of $J$ on $\Sigma ^{\topp}$, proving Theorem B. In \S \ref{sec:count}, we prove Proposition \ref{prop:intro TO}, and then apply the Base Change Fundamental Lemma to compute twisted orbital integrals. In \S \ref{sec:Sat} we review the relationship between the coefficients of the Satake transform and the $\mathbf q$-analogue of Kostant's partition functions, and draw some consequences. In \S \ref{sec:main} we state Proposition \ref{key-bound} as a more technical version of Proposition \ref{prop:intro key}. We then deduce Conjecture \ref{conj:chenzhu1} from Proposition \ref{key-bound}. The proof of Proposition \ref{key-bound} is given in \S \ref{sec:proof of key est}, \S \ref{sec:part II}, and \S \ref{sec:part III}, by analyzing the root systems case by case. In Appendix \ref{app}, we generalize our main result to quasi-split groups.
\subsection{Notations and conventions}
We order $\mathbb N$ by divisibility, and write $s \gg 0$ to mean ``for all sufficiently divisible $s \in \mathbb N$''. The notation $\lim_{s\to \infty}$ will always be understood as the limit taken with respect to the divisibility on $\mathbb N$. If $f(s),g(s)$ are $\mathbb C$-valued functions defined for all sufficiently divisible $s \in \mathbb N$, we write $$f(s) = o(g(s))$$ to mean that $\lim _{s\to \infty} f(s)/g(s)=0$, where the limit is understood in the above sense. We write $$f(s) = O(g(s))$$ to mean that $$\exists M > 0 ~ \exists s_0 \in \mathbb N~\forall s \in s_0 \mathbb N, ~\abs{f(s)/g(s)} < M.$$ In this case we do not require $f(s)/g(s)$ to be bounded, or even defined, for all $s \in \mathbb N$.

For any finitely generated abelian group $X$, we write $X_{\mathrm{free}}$ for the free quotient of $X$. We let $\C[X]$ be the group algebra of $X$ over $\C$, and denote by $e^x$ the element in $\C[X]$ corresponding to $x\in X$.

We use $\mathbf q$, or $\mathbf q^{-1}$, or sometimes $\mathbf q^{-1/2}$, to denote the formal variable in a polynomial or power series ring.

The following lemma is elementary and will be used repeatedly in the paper. We omit its proof.
\begin{lem}\label{lem:elementary about coinv} Let $\Gamma$ be a finite group.	Let $X$ be a $\Z[\Gamma]$-module which is a finite free $\Z$-module. As usual define the norm map $\N : X \to X, ~ x\mapsto \sum _{\gamma \in \Gamma } \gamma (x).$ Let $Y \subset X$ be a $\Gamma$-stable subgroup. Then the following statements hold.
	\begin{enumerate}
		\item\label{item:1 in coinv} The kernel of the map $Y \to X_{\Gamma,\mathrm{free}}$ is equal to $\set{y\in Y \mid \N (y) =0}.$ In particular, it is also equal to the kernel of $Y \to Y_{\Gamma, \mathrm{free}}$.
		\item\label{item:2 in coinv} Suppose $Y$ has a finite $\Z$-basis which is stable under $\Gamma$. Then the $\Gamma$-orbits in this $\Z$-basis define distinct elements of $Y_{\Gamma}$, which form a $\mathbb Z$-basis of $Y_{\Gamma}$. In particular $Y_{\Gamma}$ is a finite free $\Z$-module.
		\item\label{item:3 in coinv} The map $\N:X \to X$ factors through a map
		$ X_{\Gamma} \to X^{\Gamma}.$  We have a canonical isomorphism
		$ X_{\Gamma} \otimes \Q \isom X^{\Gamma} \otimes \Q $ given by $\frac{1}{\abs{\Gamma}} \N$. \qed
	\end{enumerate}	
\end{lem}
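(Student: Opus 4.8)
The plan is to reduce all three parts to the elementary interplay between the norm map $\N$, the \emph{augmentation submodule} $I_\Gamma X := \langle \gamma x - x : \gamma \in \Gamma,\ x\in X\rangle$ (so that by definition $X_\Gamma = X/I_\Gamma X$), and the torsion-freeness of the subgroups $X^\Gamma$ and $Y$ of the free $\Z$-module $X$. The single computation underlying everything is the congruence
\[
\N(x) \equiv \abs{\Gamma}\, x \pmod{I_\Gamma X}, \qquad x\in X,
\]
which holds because $\N(x) - \abs{\Gamma}\, x = \sum_{\gamma\in\Gamma}(\gamma x - x)$. The only step that carries genuine content is the identification of the torsion subgroup of $X_\Gamma$ with the kernel of the induced norm map, which needs both the torsion-freeness of $X^\Gamma$ and the rational isomorphism of part \eqref{item:3 in coinv}; everything else is bookkeeping with $I_\Gamma X$.

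First I would prove \eqref{item:3 in coinv}, since it feeds into \eqref{item:1 in coinv}. As $\N(\gamma x - x) = \N(x)-\N(x)=0$ and $\N(x)\in X^\Gamma$, the map $\N$ induces $\overline{\N}\colon X_\Gamma \to X^\Gamma$. Writing $p\colon X\to X_\Gamma$ for the projection and $\iota\colon X^\Gamma\hookrightarrow X$ for the inclusion, the displayed congruence gives $(p\circ\iota)\circ\overline{\N} = \abs{\Gamma}\cdot\mathrm{id}_{X_\Gamma}$, while $\overline{\N}\circ(p\circ\iota) = \abs{\Gamma}\cdot\mathrm{id}_{X^\Gamma}$ because $\N$ restricted to $X^\Gamma$ is multiplication by $\abs{\Gamma}$. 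Hence $\tfrac{1}{\abs{\Gamma}}\overline{\N}$ and $(p\circ\iota)\otimes\Q$ are mutually inverse after $\otimes\,\Q$, which is precisely the assertion of \eqref{item:3 in coinv}; the fact that the isomorphism is ``$\tfrac1{\abs\Gamma}\N$'' is just the definition of $\overline{\N}$.

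For \eqref{item:1 in coinv}: since $X^\Gamma$ is torsion-free, $\overline{\N}$ kills $X_\Gamma^{\tors}$; since $\overline{\N}\otimes\Q$ is injective by \eqref{item:3 in coinv}, $\ker\overline{\N}$ is torsion; hence $\ker\overline{\N} = X_\Gamma^{\tors}$. Therefore the kernel of $X\to X_{\Gamma,\mathrm{free}} = X_\Gamma/X_\Gamma^{\tors}$ equals $p^{-1}(X_\Gamma^{\tors}) = p^{-1}(\ker\overline{\N}) = \ker(\overline{\N}\circ p) = \ker(\N\colon X\to X)$. Intersecting with $Y$, and using that $Y\to X_{\Gamma,\mathrm{free}}$ factors through $Y\hookrightarrow X$, gives $\ker(Y\to X_{\Gamma,\mathrm{free}}) = \set{y\in Y : \N(y)=0}$. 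For the final clause, apply the same statement with $Y$ in place of $X$ (legitimate, as $Y$ is itself a finite free $\Z$-module with $\Gamma$-action): $\ker(Y\to Y_{\Gamma,\mathrm{free}})$ equals the kernel of the norm map of $Y$, and this norm map is literally the restriction of $\N$ to $Y$ since $Y$ is $\Gamma$-stable.

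Finally, \eqref{item:2 in coinv}: let $\set{e_i}_{i\in I}$ be the given $\Gamma$-stable $\Z$-basis of $Y$; then $\Gamma$ permutes the $e_i$, so writing $I = \bigsqcup_j O_j$ for the orbit decomposition we obtain a $\Gamma$-stable decomposition $Y = \bigoplus_j Y_j$ with each $Y_j := \bigoplus_{i\in O_j}\Z e_i$ a transitive permutation module $Y_j\cong\Z[\Gamma/H_j]$ for a point stabilizer $H_j$. Taking coinvariants commutes with finite direct sums, and $(\Z[\Gamma/H_j])_\Gamma\cong\Z$: all the basis elements $gH_j$ are identified in the quotient, so it is cyclic, and the augmentation $\Z[\Gamma/H_j]\to\Z$ (which kills $I_\Gamma$) shows the generator has infinite order. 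Hence $Y_\Gamma = \bigoplus_j(Y_j)_\Gamma$ is free of rank equal to the number of $\Gamma$-orbits, with basis the images in $Y_\Gamma$ of the $e_i$, one per orbit; in particular these images are well defined (independent of the chosen representative), pairwise distinct, and form the asserted $\Z$-basis, so $Y_\Gamma$ is finite free over $\Z$.
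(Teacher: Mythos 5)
Your proof is correct and complete, and the key structural observation — the congruence $\N(x)\equiv\abs{\Gamma}x\pmod{I_\Gamma X}$, together with torsion-freeness of $X^\Gamma\subset X$ to identify $\ker\overline{\N}$ with the torsion of $X_\Gamma$ — is exactly the right spine for all three parts. The paper itself states ``We omit its proof'' and supplies no argument to compare against, so this is a clean filling of the gap rather than an alternative route; in particular the reduction of \eqref{item:2 in coinv} to transitive permutation modules $\Z[\Gamma/H_j]$ with their augmentation maps is the standard and most economical way to see that statement.
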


\section{Notations and preliminaries}\label{sec:prelim}
\subsection{Basic notations}\label{subsec:basic notations}
Let $F$ be a non-archimedean local field with valuation ring $\Ok_F$ and residue field $k_F= \mathbb F_q$. Let $\pi_F\in F$ be a uniformizer. Let $p$ be the characteristic of $k_F$. Let $L$ be the completion of the maximal unramified extension of $F$, with valuation ring $\Ok_{L}$ and residue field $k =\overline{ k_F}$. Let $\Gamma=\Gal(\overline F/F)$ be the absolute Galois group. Let $\sigma$ be the Frobenius of $L$ over $F$.

Let $G$ be a connected reductive group over $\Ok_F$. In particular its generic fiber $G_F$ is an unramified reductive group over $F$, i.e.~is quasi-split and splits over an unramified extension of $F$. Then $G(\Ok_F)$ is a hyperspecial subgroup of $G(F)$. Fix a maximal $\Ok_F$-split torus $A$ of $G$. Let $T$ be the centralizer of $A_F$ in $G_F$, and fix a Borel subgroup $B\subset G_F$ containing $T$. Then $T$ is an unramified maximal torus in $G_F$. In the following we often abuse notation and simply write $G$ for $G_F$.

Note that $T_L$ is a split maximal torus in $G_L$. Let $V$ be the apartment of $G_L$ corresponding to $T_L$.
The hyperspecial vertex $\mathfrak{s}$ corresponding to $G(\Ok_L)$ is then contained in $V$. We have an identification $V\cong X_*(T)\otimes \mathbb{R}$ sending $\mathfrak s$ to $0$. Let $\mathfrak{a} \subset V$ be the alcove whose closure contains $\mathfrak s$, such that the image of $\mathfrak a$ under $V\cong X_*(T)\otimes \mathbb{R}$ is contained in the anti-dominant chamber. The action of $\sigma$ induces an action on $V$, stabilizing both $\mathfrak{a}$ and $\mathfrak{s}$. We let $\mathcal I$ be the Iwahori subgroup of $G(L)$ corresponding to $\mathfrak {a}$.

\subsection{The Iwahori--Weyl group}\label{subsec:IW}
The relative Weyl group $W_0$ over $L$ and the Iwahori--Weyl group $W$ are defined by
$$
W_0=N(L)/T(L), \qquad W=N(L)/T(L) \cap \mathcal{I},
$$
where $ N$ denotes the normalizer of $T$ in $G$. Note that $W_0$ is equal to the absolute Weyl group, as $T_L$ is split.

We have a natural exact sequence
\begin{align}\label{eq:exact seq for W}
1 \To X_*(T) \To W \To W_0 \To 1.
\end{align}
The canonical action of $N(L)$ on $V$ factors through an action of $W$, and we split the above exact sequence by identifying $W_0$ with the subgroup of $W$ fixing $\mathfrak s \in V$. See \cite[Proposition 13]{HainesRapoport} for more details. When considering an element $\lambda \in X_*(T)$ as an element of $W$, we write $t^\lambda \in W$.
For any $w \in W$, we choose  a representative $\dot w \in N(L)$.

Let $W_a$ be the associated affine Weyl group, and $\mathbb{S}$ be the set of simple reflections associated to $\mathfrak{a}$.  Since $\mathfrak{a}$ is $\sigma$-stable, there is a natural action of $\sigma$ on $\mathbb{S}$. We let $\mathbb{S}_0\subset \mathbb{S}$ be the set of simple reflections fixing $\mathfrak{s}$. Then $W$ contains $W_a$ as a normal subgroup, and we have a natural splitting $W=W_a \rtimes \Omega,$ where $\Omega$ is the stabilizer of $\mathfrak{a}$ in $W$ and is isomorphic to $\pi_1(G)$. The length function $\ell$ and the Bruhat order $\le$ on the Coxeter group $(W_a,\mathbb S)$ extend in a natural way to $W$.

For any subset $P$ of $\mathbb S$, we shall write $W_P$ for the subgroup of $W$ generated by $P$.

For $w, w' \in W$ and $s\in \mathbb S$, we write $w \xrightarrow{s}_\sigma w'$ if $w'=s w \sigma(s)$ and $\ell(w') \le \ell(w)$. We write $w \to_\sigma w'$ if there is a sequence $w=w_0, w_1, \dotsc, w_n=w'$ of elements in $W$ such that for any $i$, $w_{i-1} \xrightarrow{s_i}_\sigma w_i$ for some $s_i \in \mathbb S$. Note that if moreover, $\ell(w')<\ell(w)$, then there exists $i$ such that $\ell(w)=\ell(w_i)$ and $s_{i+1} w_i \sigma(s_{i+1})<w_i$.

We write $w \approx_\sigma w'$ if $w \to_\sigma w'$ and $w' \to_\sigma w$. It is easy to see that $w  \approx_\sigma w'$ if $w \to_\sigma w'$ and $\ell(w)=\ell(w')$. We write $w\ \tilde{\approx}_\sigma w'$ if there exists $\tau\in \Omega$ such that $w \approx_\sigma \tau w'\sigma(\tau)^{-1}$.

\subsection{The set $B(G)$}\label{subsec:B(G)} For any $b \in G(L)$, we denote by $[b]=\{g^{-1} b \sigma(g)\mid g \in G( L)\}$ its $\sigma$-conjugacy class. Let $B(G)$ be the set of $\sigma$-conjugacy classes of $G(L)$. The $\sigma$-conjugacy classes have been classified by Kottwitz in \cite{kottwitzisocrystal} and \cite{kottwitzisocrystal2}, in terms of the \emph{Newton map} $\bar \nu$ and the \emph{Kottwitz map} $\kappa$. The Newton map is a map
\begin{align}\label{eq:newton map}
\bar \nu: B(G) \to (X_*(T)^+_{\Q})^\sigma,
\end{align}
where $X_*(T)^+_{\Q}$ is the set of dominant elements in $X_*(T)_{\mathbb{Q}}:=X_*(T) \otimes \Q$. The Kottwitz map is a map $$\kappa = \kappa_G: B(G) \to \pi_1(G)_\Gamma.$$ By \cite[\S 4.13]{kottwitzisocrystal2}, the map
\begin{align}\label{eq:Kott invts}
(\bar \nu, \kappa): B(G) \to (X_*(T)^+_{ \Q})^\sigma \times \pi_1(G)_\Gamma
\end{align}
is injective.

Let $B(W,\sigma)$ denote the set of $\sigma$-conjugacy classes of $W$. The map $W \to G(L), w \mapsto \dot w$ induces a map $$\Psi: B(W, \sigma) \To B(G), $$ which is independent of the choice of the representatives $\dot w$. By \cite{He}, the map $\Psi$ is surjective. We again denote by $(\bar \nu, \kappa)$ the composition of (\ref{eq:Kott invts}) with $\Psi$. This composed map can be described explicitly, see \cite[\S 1.2]{HZ} for details.

The map $\Psi $ is not injective. However, there exists a canonical lifting to the set of \emph{straight $\sigma$-conjugacy classes}. By definition, an element $w \in W$ is called {\it $\sigma$-straight} if for any $n \in \mathbb N$, $$\ell(w \sigma(w) \cdots \sigma^{n-1}(w))=n \ell(w).$$ This is equivalent to the condition that $\ell(w)=\langle\bar \nu_w, 2 \rho\rangle$, where $\rho$ is the half sum of all positive roots. A $\sigma$-conjugacy class of $W$ is called {\it straight} if it contains a $\sigma$-straight element. It is easy to see that the minimal length elements in a given straight $\sigma$-conjugacy class are exactly the $\sigma$-straight elements.

\begin{thm}[{\cite[Theorem 3.7]{He}}]\label{str-bg}
	The restriction of $\Psi: B(W, \sigma) \to B(G)$ gives a bijection from the set of straight $\sigma$-conjugacy classes of $W$ to $B(G)$. \qed
\end{thm}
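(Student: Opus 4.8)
The plan is to prove the two halves of the assertion separately — that the restriction of $\Psi$ to the set of straight $\sigma$-conjugacy classes of $W$ is injective, and that it is surjective onto $B(G)$ — reducing both to the structure theory of minimal length elements in $\sigma$-conjugacy classes of $W$, which is the actual technical heart. As a first step I would record two reductions. Since $(\bar\nu,\kappa)\colon B(G)\to (X_*(T)^+_{\Q})^\sigma\times\pi_1(G)_\Gamma$ is injective (see (\ref{eq:Kott invts})) and these invariants factor through $\Psi$, it suffices to show: \textbf{(a)} the composite of $\Psi$ with $(\bar\nu,\kappa)$ is injective on straight classes, which gives injectivity of $\Psi|_{\mathrm{straight}}$; and \textbf{(b)} every $\sigma$-conjugacy class of $G(L)$ is $\Psi$ of a straight class. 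For (b), since $\Psi\colon B(W,\sigma)\to B(G)$ is already known to be surjective, it is enough to attach to every $\sigma$-conjugacy class $\mathcal O$ of $W$ a \emph{straight} class $\mathcal O^{\flat}$ with $\Psi(\mathcal O)=\Psi(\mathcal O^{\flat})$.

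The engine is the following input from the theory of minimal length elements (He, He--Nie): every element of a $\sigma$-conjugacy class $\mathcal O$ of $W$ can be carried by the operations $\to_\sigma$ to an element of minimal length; any two minimal length elements of $\mathcal O$ are related by $\tilde{\approx}_\sigma$; $\mathcal O$ contains a $\sigma$-straight element if and only if all its minimal length elements are $\sigma$-straight; and a minimal length element $w$ of $\mathcal O$ that is \emph{not} $\sigma$-straight can, up to $\tilde{\approx}_\sigma$, be taken to lie in $W_{\widetilde M}$ for a proper $\sigma$-stable standard Levi $\widetilde M$ with no inversions among the affine roots outside $\widetilde M$, so that $\Psi(\mathcal O)$ is the $\sigma$-conjugacy class of $G(L)$ induced from $\Psi_{W_{\widetilde M}}([w])\in B(\widetilde M)$ (using compatibility of the Newton and Kottwitz maps with $W_{\widetilde M}\hookrightarrow W$ and $\widetilde M\hookrightarrow G$).

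Granting this, (b) follows by induction on semisimple rank, the case of a torus being trivial: if a minimal length $w\in\mathcal O$ is $\sigma$-straight we take $\mathcal O^\flat=\mathcal O$; otherwise $\Psi(\mathcal O)$ is induced from a proper Levi $\widetilde M$, where by induction it equals $\Psi_{W_{\widetilde M}}([w'])$ for a straight class $[w']$ of $W_{\widetilde M}$. Choosing $\widetilde M=M_{\bar\nu_{w'}}$ makes $\bar\nu_{w'}$ central in $\widetilde M$, hence $\ell_{W_{\widetilde M}}(w')=\langle\bar\nu_{w'},2\rho_{\widetilde M}\rangle=0$ and $w'\in\Omega_{\widetilde M}$; a length computation shows the inversions of $w'$ among affine roots outside $\widetilde M$ contribute $\langle\bar\nu_{w'},2\rho_G-2\rho_{\widetilde M}\rangle=\langle\bar\nu_{w'},2\rho_G\rangle$, so $\ell_W(w')=\langle\bar\nu_{w'},2\rho_G\rangle$, i.e.\ $w'$ is $\sigma$-straight in $W$; by the compatibility above $\Psi([w']_W)=\Psi(\mathcal O)$, so $\mathcal O^\flat:=[w']_W$ works. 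The same analysis classifies the straight classes: a straight class with dominant Newton point $\nu$ is, up to $\tilde{\approx}_\sigma$, represented by a length-zero element of $\Omega_{M_\nu}$, and two such represent the same class of $W$ precisely when they have the same image in $\pi_1(M_\nu)_{\sigma}$, which is exactly the datum recorded by $\kappa$; hence $(\bar\nu,\kappa)$ separates straight classes, giving (a). Combining (a), (b), and the injectivity of $(\bar\nu,\kappa)$ on $B(G)$ yields the theorem.

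The main obstacle is the structure theorem for minimal length elements invoked as the engine: that a non-$\sigma$-straight minimal length element can be brought, up to $\tilde{\approx}_\sigma$, into a proper $\sigma$-stable Levi as a $\sigma$-$P$-alcove element, and dually that a length-zero element of $\Omega_{M_\nu}$ is $\sigma$-straight in the ambient $W$. These rest on a delicate analysis of the length function on $W$ and of the interaction between $\sigma$-conjugation and alcove walks, carried out in the twisted setting by He and He--Nie; once they are available, everything else above is bookkeeping with the Newton and Kottwitz invariants together with the induction on rank.
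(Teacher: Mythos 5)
The paper does not supply its own proof of this statement: it is quoted directly from He (\cite[Theorem 3.7]{He}) with a \qed in the theorem statement, so there is no internal argument to compare against. Your sketch is essentially a reconstruction of He's proof strategy: reduce injectivity to a computation of $(\bar\nu,\kappa)\circ\Psi$ on straight classes, reduce surjectivity to the already-known surjectivity of $\Psi$ plus the ability to replace any class by a straight one with the same image, and drive both halves with the structure theory of minimal length elements under $\to_\sigma$, $\tilde\approx_\sigma$, and the $\sigma$-$P$-alcove reduction of He and He--Nie. The decomposition into (a) and (b), the observation that $(\bar\nu,\kappa)\circ\Psi$ injective forces $\Psi|_{\mathrm{straight}}$ injective (without needing injectivity of $(\bar\nu,\kappa)$ on $B(G)$ for this implication), and the role of the reduction engine are all correctly identified.

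Two steps, however, are glossed over in a way that hides substantive content. First, in your argument for (a), you assert that a straight class with dominant Newton point $\nu$ is classified by its image in $\pi_1(M_\nu)_\sigma$, and that this is ``exactly the datum recorded by $\kappa$.'' But $\kappa$ takes values in $\pi_1(G)_\Gamma$, not $\pi_1(M_\nu)_\sigma$, so what you need — and must prove — is that the natural map $\pi_1(M_\nu)_\sigma\to\pi_1(G)_\Gamma$ is injective on the subset of length-zero classes whose Newton point is the fixed $\nu$. This is a genuine lattice-theoretic lemma (the Newton point pins down the class modulo torsion, and the Kottwitz invariant pins down the torsion), and it is the real content of He's Theorem 3.3; you cannot elide it. Second, in the induction for (b), the phrase ``Choosing $\widetilde M=M_{\bar\nu_{w'}}$'' is misplaced: $\widetilde M$ is already determined by the $\sigma$-$P$-alcove reduction before $w'$ exists, so you cannot re-choose it. The correct move is to observe that for the reduction Levi one automatically has $M_{\bar\nu_{w'}}\subset\widetilde M$, then further reduce within $\widetilde M$ to this smaller Newton-centralizer Levi, where $w'$ becomes a length-zero element, and finally carry out the length computation $\ell_W(w')=\langle\bar\nu_{w'},2\rho_G\rangle$ in $W$. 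Neither issue is fatal — both can be patched with arguments of the same flavor you are already using — but as written they are gaps in a proof, not merely details.
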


\subsection{The affine Deligne--Lusztig variety $X_{P,w}(b)$}\label{2}
Let $\mathcal{P}$ be a standard $\sigma$-invariant parahoric subgroup of $G(L)$, i.e.~a $\sigma$-invariant parahoric subgroup that contains $\mathcal{I}$. In the following, we will generally abuse of notation to use the same symbol to denote a parahoric subgroup and the underlying parahoric group scheme. We denote by $P \subset \mathbb{S}$ the set of simple reflections  corresponding to $\mathcal P$. Then $\sigma(P)=P$. We have $$G(L)=\bigsqcup_{w \in W_P \backslash W/W_P} \mathcal{P}(\Ok_L) \dot w \mathcal{P}(\Ok_L).$$

For any $w \in W_P \backslash W/W_P$ and $b \in G(L)$, we set $$X_{P, w}(b)(k) :=\{g \mathcal{P} (\Ok _L) \in G(L)/ \mathcal{P}(\Ok_L) \mid g^{-1} b \sigma(g) \in \mathcal{P}(\Ok_L) \dot w \mathcal{P}(\Ok_L)\}.$$ If $\mathcal{P}=\mathcal{I}$ (corresponding to $P =\emptyset$), we simply write $X_w(b)(k)$ for $X_{\emptyset,w}(b)(k)$.

We freely use the standard notations concerning loop groups and partial affine flag varieties, see \cite[\S 9]{BS} or \cite[\S 1.4]{Zhu}. When $\mathrm{char} (F)> 0$, it is known that $X_{P,w}(b)(k)$ could be naturally identified with the set of $k$-points of a locally closed sub-ind scheme $X_{P,w}(b)$ of the partial affine flag variety $\Gr _{\mathcal P}$. When $\mathrm{char} (F) =0$, thanks to the recent breakthrough by Bhatt--Scholze \cite[Corollary 9.6]{BS} (cf.~also \cite{Zhu}), we can again identify $X_{P,w}(b)(k)$ with the $k$-points of a locally closed perfect sub-ind scheme $X_{P,w} (b)$ of the Witt vector partial affine flag variety $\Gr_{\mathcal{P}}$. In both cases, the (perfect) ind-scheme $X_{P,w} (b)$ is called an \emph{affine Deligne--Lusztig variety}, and one could consider topological notions related to the Zariski topology on $X_{P,w} (b)$.

We are mainly interested in the case when $\mathcal P = G_{\Ok_L}$. In this case the corresponding set of simple reflections is $K : =\mathbb{S}_0$. Recall from \S \ref{subsec:IW} that we fixed a splitting of (\ref{eq:exact seq for W}) using the hyperspecial vertex $\mathfrak s$. According to this splitting, the subgroup $W_0$ of $W$ is the same as $W_K$, the subgroup generated by $K = \mathbb S_0$. We have identifications $$W_K\backslash W/W_K = W_0 \backslash (X_*(T) \rtimes W_0) / W_0 \cong  X_*(T)/W_0 \cong  X_*(T)^+.$$
For $\mu\in X_*(T)^+$, we write $X_\mu(b)$ for $X_{K,t^\mu}(b)$.

We simply write $\Gr _G$ for $\Gr _{G_{\Ok_L}}$.
The relationship between the hyperspecial affine Deligne--Lusztig variety $X_{\mu} (b) \subset \Gr _G$ and the Iwahori affine Deligne--Lusztig varieties $X_{w} (b) \subset \Gr _{\mathcal I}$ is as follows. We have a projection $
\pi:\mathcal{FL}\rightarrow \Gr_{G}
$ which exhibits $\mathcal{FL}:=\Gr_{\mathcal I}$ as an \'{e}tale fibration over $\Gr_{G}$. Indeed the fiber of this map is isomorphic to  the fpqc quotient $L^+G/L^+\mathcal{I}$ where $L^+G, L^+\mathcal I$ are the positive loop groups attached to $G$ and $\mathcal{I}$. More concretely, $L^+G/L^+\mathcal{I}$ is a finite type flag variety over $k$ when $\mathrm{char} (F)> 0$, and is the perfection of a finite type flag variety over $k$ when $\mathrm{char} (F) = 0$. We have $$\pi^{-1}(X_\mu(b) )=X(\mu,b)^K:= \bigcup_{w\in W_0t^\mu W_0}X_w(b).$$

\subsection{Basic information about $X_\mu(b)$} \label{subsec:nonemptiness pattern}  For $\lambda, \lambda' \in X_*(T)_\Q$, we write $\lambda \leq \lambda'$ if $\lambda'-\lambda$ is a non-negative rational linear combination of positive coroots. Let $\mu \in X_*(T)^+$. As in \cite{RV}, we set $$B(G, \mu)=\{ [b]\in B(G)\mid \kappa([b])=\mu^\natural, \bar \nu_b\leq \mu^\diamond \}.$$
Here $\mu^\natural$ denotes the image of $\mu$ in $\pi_1(G)_\Gamma$, and $\mu^\diamond \in X_*(T) _{\Q}$ denotes the Galois average of $\mu$.

The following result is proved by Kottwitz \cite{kottwitzHodgeNewton} and Gashi \cite{Gashi}, strengthening earlier results of Rapoport--Richartz \cite{RR}, Kottwitz--Rapoport \cite{KR}, and Lucarelli \cite{lucarelli}.
\begin{thm}\label{KR-Gas}For $b \in G(L)$, we have $X_\mu(b) \neq \emptyset$ if and only if $[b]\in B(G,\mu)$.	\qed
\end{thm}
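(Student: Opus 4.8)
The statement is the conjunction of two implications of rather different character, and I would establish them separately, following the route by which it is in fact known. The ``only if'' direction is the group-theoretic Mazur inequality of Rapoport--Richartz \cite{RR} and Kottwitz--Rapoport \cite{KR}; the ``if'' direction is the converse to Mazur's inequality, whose decisive reductions are due to Kottwitz \cite{kottwitzHodgeNewton}, with the classical-group cases handled by Kottwitz--Rapoport \cite{KR} and Lucarelli \cite{lucarelli} and the general case by Gashi \cite{Gashi}. Throughout one uses that $X_\mu(b)$ depends only on the $\sigma$-conjugacy class $[b]$ --- replacing $b$ by $h^{-1}b\sigma(h)$ translates $X_\mu(b)$ by left multiplication by $h$ --- so convenient representatives may be chosen freely.

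\emph{Necessity.} Suppose $X_\mu(b)\neq\emptyset$ and fix $g\in G(L)$ with $b':=g^{-1}b\sigma(g)\in G(\Ok_L)\mu(\pi_F)G(\Ok_L)$. The Kottwitz condition is immediate: $\kappa_G\colon B(G)\to\pi_1(G)_\Gamma$ is induced by the canonical projection $G(L)\twoheadrightarrow\pi_1(G)_\Gamma$ (through $\Omega\subset W$), which kills $G(\Ok_L)$ and sends $\mu(\pi_F)$ to $\mu^\natural$; since $b$ and $b'$ are $\sigma$-conjugate, $\kappa([b])=\kappa([b'])=\mu^\natural$. For the Newton inequality $\bar\nu_b\le\mu^\diamond$ I would reduce to $G=\GL_n$ via a faithful representation $\iota\colon G\hookrightarrow\GL(V)$ over $\Ok_F$: then $\iota(b')$ lies in $\GL_n(\Ok_L)\,\iota(\mu(\pi_F))\,\GL_n(\Ok_L)$, so the Newton polygon of $\iota(b')$ lies above the Hodge polygon determined by $\iota\circ\mu$ with the same endpoint, which is the classical Mazur inequality. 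Running this over all irreducible representations of $G$ and reading off the resulting inequalities converts the polygon comparisons into $\langle\omega,\mu^\diamond-\bar\nu_b\rangle\ge 0$ for every dominant weight $\omega$; combined with $\kappa([b])=\mu^\natural$, which places $\mu^\diamond-\bar\nu_b$ in the $\Q$-span of the coroots, this gives $\bar\nu_b\le\mu^\diamond$. This argument is carried out in \cite{RR} (see also \cite{KR}).

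\emph{Sufficiency.} Now assume $[b]\in B(G,\mu)$; I must exhibit a point of $X_\mu(b)$, and the argument proceeds by a chain of reductions. First, via central isogenies $G\to G_{\mathrm{ad}}$ and Weil restriction, reduce to the case that $G$ is adjoint and absolutely simple. Next comes the Hodge--Newton reduction of Kottwitz \cite{kottwitzHodgeNewton}: a combinatorial lemma shows that if $b$ is not basic, then $(\mu,b)$ is Hodge--Newton decomposable with respect to a proper standard Levi $M$ (i.e.\ $\mu^\diamond$ and $\bar\nu_b$ have the same image in $\pi_1(M)_\Q$), in which case the Hodge--Newton decomposition identifies $X_\mu(b)$ with the affine Deligne--Lusztig variety attached to $(M,\mu,b)$ inside $M(L)/M(\Ok_L)$; an induction on $\dim G$ then reduces us to the case that $b$ is basic. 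For $b$ basic, $\bar\nu_b$ is central and the inequality $\bar\nu_b\le\mu^\diamond$ is automatic once $\kappa([b])=\mu^\natural$ (then $\mu^\diamond-\bar\nu_b$ is a dominant element of the span of the coroots, hence a nonnegative rational combination of positive coroots), so what remains is to realize the basic class with Kottwitz invariant $\mu^\natural$ directly inside $X_\mu(b)$. This last step is done by explicit lattice-theoretic constructions --- essentially reducing to superbasic elements of $\PGL_n(L)$ in type $A$ --- and the verification, together with the remaining quasi-split but non-split classical cases, is carried out in \cite{KR}, \cite{lucarelli} and \cite{Gashi}.

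\emph{Main obstacle.} The genuinely hard half is sufficiency, and within it the weight is borne by two points: Kottwitz's combinatorial lemma, which must locate the proper Levi witnessing Hodge--Newton decomposability while keeping $\mu$ under control so that the induction closes; and the explicit base-case constructions --- writing down honest lattices adapted to a superbasic Frobenius in type $A$, and the parallel case analysis for the other quasi-split classical groups. The necessity direction, by contrast, is comparatively soft: after reduction to $\GL_n$ it is the classical ``Newton polygon above Hodge polygon'' comparison.
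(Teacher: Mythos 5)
The paper states this theorem with a \qed because it is a direct citation of Rapoport--Richartz, Kottwitz--Rapoport, Kottwitz, Lucarelli and Gashi; no proof is given, so you are really being asked to summarize the literature, and your attributions and the split into ``necessity $=$ Mazur'' and ``sufficiency $=$ converse Mazur'' are both correct. The necessity sketch (Kottwitz invariant is soft; Newton inequality via $\iota\colon G\hookrightarrow\GL_n$ and the classical polygon comparison, then extract $\langle\omega,\mu^\diamond-\bar\nu_b\rangle\ge 0$) matches the argument of \cite{RR}.

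There is, however, a concrete error in your sufficiency reduction. You assert that ``if $b$ is not basic, then $(\mu,b)$ is Hodge--Newton decomposable with respect to a proper standard Levi $M$,'' and then use the HN isomorphism $X_\mu(b)\cong X^M_\mu(b)$ to induct. This is false. Take $G=\GL_4$, $\mu=(3,3,1,1)$, and $[b]$ with Newton vector $\bar\nu_b=(5/2,5/2,3/2,3/2)$ (a valid Newton point: each slope has denominator $2$ dividing its multiplicity $2$). Then $\kappa(b)=8=\mu^\natural$ and $\mu-\bar\nu_b=(1/2,1/2,-1/2,-1/2)$ has nonnegative partial sums, so $[b]\in B(G,\mu)$ and $b$ is not basic. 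The centralizer of $\bar\nu_b$ is $M=\GL_2\times\GL_2$, the only proper standard Levi in which $\bar\nu_b$ can be central. Hodge--Newton decomposability with respect to $M$ would require a $W_M$-dominant conjugate $\mu_M$ of $\mu$ whose block sums match those of $\bar\nu_b$, namely $(5,3)$; but no splitting of the multiset $\{3,3,1,1\}$ into two pairs has sums $5$ and $3$. Equivalently, $\mu-\bar\nu_b=\tfrac12\alpha_1^\vee+\alpha_2^\vee+\tfrac12\alpha_3^\vee$ genuinely involves $\alpha_2^\vee\notin\Delta_M^\vee$. So $(\mu,b)$ is not Hodge--Newton decomposable, and your induction has nowhere to go in this case. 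The actual argument in \cite{kottwitzHodgeNewton} and \cite{Gashi} is more delicate than ``non-basic $\Rightarrow$ HN-decomposable'': Kottwitz's combinatorial lemma produces an $M$-dominant $\mu_M$ with weaker properties, and the case of $\GL_n$ (and the other classical cases in \cite{KR}, \cite{lucarelli}) is handled by a direct lattice construction valid for all $b$, not only basic $b$; Gashi's contribution is a combinatorial/representation-theoretic verification rather than a geometric HN induction. Your overall scaffolding is right, but the stated reduction step would fail and needs to be replaced by the correct lemma.
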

We now let $\mu\in X_*(T)^+$ and let $b \in G(L)$ such that $[b] \in B(G,\mu)$.
\begin{defn}
	We define $\mathrm{def}_G(b) : = \mathrm{rk}_F G - \mathrm{rk}_F J_b$, called the \emph{defect} of $b$.
\end{defn}

\begin{thm}\label{thm:dim}
	If $\mathrm{char} (F) > 0$, then $X_{\mu} (b)$ is a scheme locally of finite type over $k$. If $\mathrm{char} (F) = 0$, then $X_{\mu} (b)$ is a perfect scheme locally perfectly of finite type over $k$. In both cases the Krull dimension of $X_{\mu} (b)$ is equal to
	$$ \lprod{\mu - \bar \nu_b, \rho} - \frac{1}{2}\mathrm{def}_G(b). $$
\end{thm}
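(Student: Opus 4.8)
The plan is to assemble the statement from results already in the literature, treating its two halves separately — representability together with the (perfect) finite-type property on the one hand, and the dimension formula on the other — and handling the equal- and mixed-characteristic cases in parallel. For the geometric half, the essential content was already recalled in \S\ref{2}: taking $\mathcal P = G_{\Ok_L}$ and $w = t^\mu$ there, the set $X_\mu(b) = X_{K,t^\mu}(b)$ underlies a locally closed sub-ind-scheme of the affine Grassmannian $\Gr_G$ when $\mathrm{char}(F) > 0$ (classical loop-group formalism, cf.~\cite{rapoportguide}) and of the Witt vector affine Grassmannian $\Gr_G$ when $\mathrm{char}(F) = 0$ (by \cite{BS} and \cite{Zhu}). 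The remaining point is to upgrade this to a genuine scheme (resp. perfect scheme) locally of finite type: although $X_\mu(b)$ meets infinitely many Schubert cells of $\Gr_G$, its intersection with the closure of any single bounded Schubert variety is a finite-type (resp. perfectly finite-type) locally closed subscheme, and these exhaust $X_\mu(b)$, yielding a scheme locally (perfectly) of finite type; non-emptiness holds precisely when $[b] \in B(G,\mu)$ by Theorem \ref{KR-Gas}, which we have assumed.

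For the dimension formula, in the case $\mathrm{char}(F) > 0$ I would invoke Hamacher \cite{Ham}, where the identity $\dim X_\mu(b) = \langle \mu - \bar\nu_b, \rho\rangle - \frac{1}{2}\mathrm{def}_G(b)$ is proved for unramified $G$ — exactly the setting here. Its proof runs by a Hodge--Newton-style reduction of $X_\mu(b)$ to the case in which $b$ is superbasic in a Levi subgroup $M$ of $G$ (an idea going back to \cite{GHKR}), an explicit dimension count in that superbasic case, and a verification that $\langle\mu - \bar\nu_b,\rho\rangle$ and $\mathrm{def}_G(b)$ transform compatibly under the reduction. In the case $\mathrm{char}(F) = 0$ the same identity holds by \cite{Zhu}: the theory of the Witt vector affine Grassmannian matches the relative-position stratification of $\Gr_G$ with its equal-characteristic counterpart closely enough that Hamacher's argument carries over — equivalently, one invokes the principle that $X_\mu(b)$ depends on $F$ only through combinatorial data, in the spirit of Theorem B.

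The one genuinely hard input, were one to reprove the theorem from scratch, is the superbasic base case of the Hodge--Newton reduction, together with the bookkeeping of the defect term $\mathrm{def}_G(b)$ under passage to Levi subgroups; the transfer from equal to mixed characteristic is, by contrast, essentially formal once \cite{BS} and \cite{Zhu} are available. For the purposes of the present paper, both assertions of the theorem may be used as black boxes.
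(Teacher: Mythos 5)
Your proposal is correct and takes essentially the same route as the paper, which proves the theorem entirely by citation: local (perfectly) finite type is attributed to Hamacher--Viehmann \cite[Lemma 1.1]{HV} (cf.~\cite{HVfinite}), and the dimension formula to Hamacher \cite{Ham} and Zhu \cite{Zhu}, strengthening \cite{GHKR} and \cite{viehmanndim}. Your intermediate sketch of the local-finiteness argument (exhausting $X_\mu(b)$ by intersections with Schubert varieties) is not quite how \cite{HV} actually establishes it, but since you explicitly treat both assertions as black boxes this does not affect the proposal.
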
\begin{proof} The local (perfectly) finiteness is proved by Hamacher--Viehmann \cite[Lemma 1.1]{HV}, cf.~\cite{HVfinite}. The dimension formula is proved by Hamacher \cite{Ham} and Xinwen Zhu \cite{Zhu}, strengthening earlier results of G\"{o}rtz--Haines--Kottwitz--Reuman \cite{GHKR} and Viehmann \cite{viehmanndim}.
\end{proof}
\begin{defn}For any (perfect) scheme $X$, we write $\Sigma(X)$ for the set of irreducible components of $X$. When $X$ is of finite Krull dimension, we write $\Sigma ^{\topp} (X)$ for the set of top dimensional irreducible components of $X$.
\end{defn}

Define the group scheme $J_b$ over $F$ by
\begin{align}\label{eq:defn of J}
J_b(R) = \set{g\in G(R\otimes _F L) \mid g^{-1} b \sigma (g) = b }
\end{align}
for any $F$-algebra $R$. Then $J_b $ is an inner form of a Levi subgroup of $G$, see \cite[\S 1.12]{RZ96} or \cite[\S 1.11]{RR}. The group $J_b(F)$ acts on $X_{\mu}(b)$ via scheme automorphisms. In particular $J_b(F)$ acts on $\Sigma (X_{\mu} (b))$ and on $\Sigma ^{\topp} (X_{\mu} (b))$. The following finiteness result is proved in \cite[Theorem 1.1]{HVfinite}.
\begin{lem}\label{lem:finiteness} The set $J_b(F) \backslash \Sigma (X_{\mu } (b))$ is finite.\qed
\end{lem}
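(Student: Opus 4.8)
The plan is to find a (perfectly) finite-type subscheme $S \subseteq X_\mu(b)$ with $J_b(F)\cdot S = X_\mu(b)$, and then conclude by an elementary topological argument. Granting such an $S$: by Theorem \ref{thm:dim} the scheme $X_\mu(b)$ is locally (perfectly) of finite type, and $S$ is quasi-compact, so its underlying space is covered by finitely many affine opens $U_1,\dots,U_r$ of $X_\mu(b)$; each $U_i$ is Noetherian and hence meets only finitely many irreducible components of $X_\mu(b)$, so $S$ meets only finitely many, say $Z_1,\dots,Z_m$. Since $J_b(F)$ permutes $\Sigma(X_\mu(b))$ and every $Z\in\Sigma(X_\mu(b))$ meets some translate $jS$ — equivalently $j^{-1}Z$ meets $S$ — the class of $Z$ in $J_b(F)\backslash\Sigma(X_\mu(b))$ is that of some $Z_i$. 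This proves the lemma, modulo the construction of $S$ (and the same works with "perfect scheme perfectly of finite type" in the mixed characteristic case).

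To build $S$ I would pass to the Bruhat--Tits building $\mathcal B = \mathcal B(G,L)$. The set $G(L)/G(\Ok_L) = \Gr_G(k)$ is identified with the $G(L)$-orbit of the hyperspecial vertex $\mathfrak s$, via $gG(\Ok_L)\mapsto g\mathfrak s$. The isometry $\beta\colon x\mapsto b\,\sigma(x)$ of $\mathcal B$ is semisimple (here the decency of $b$ enters), so its minimal-displacement set $\mathcal M := \mathrm{Min}(\beta)$ is a nonempty closed convex subset, stable under $J_b(F)$ because elements of $J_b(F)\subseteq G(L)$ commute with $\beta$. Since $J_b$ is an inner form of a Levi subgroup of $G$, the group $J_b(F)$ acts cocompactly on $\mathcal M$; fix a compact fundamental domain $\Delta\subset\mathcal M$. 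Let $r\colon \mathcal B\to\mathcal M$ be the (distance-nonincreasing, $J_b(F)$-equivariant) nearest-point projection, and set $S := \{\,gG(\Ok_L)\in X_\mu(b) : r(g\mathfrak s)\in\Delta\,\}$. Equivariance of $r$ together with $J_b(F)$-stability of $X_\mu(b)$ gives $J_b(F)\cdot S = X_\mu(b)$, so it remains to see that $S$ is of finite type.

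The crucial point — and the step I expect to be the main obstacle — is the uniform bound $D := \sup\{\,d(g\mathfrak s,\mathcal M) : gG(\Ok_L)\in X_\mu(b)\,\} < \infty$. Granting it, $S$ consists of hyperspecial vertices lying in the $D$-neighbourhood of the compact set $\Delta$, hence in a bounded region of $\mathcal B$; such vertices form a finite-type subscheme of $\Gr_G$ (contained in a Schubert variety $\Gr_{\le\nu_0}$), and intersecting with the locally closed $X_\mu(b)$ keeps it of finite type. For the bound itself: if $gG(\Ok_L)\in X_\mu(b)$ then $g^{-1}b\,\sigma(g)\in G(\Ok_L)\mu(\pi_F)G(\Ok_L)$, so $g\mathfrak s$ and $\beta(g\mathfrak s)$ are in relative position $\mu$, whence $d(g\mathfrak s,\beta(g\mathfrak s))$ equals the constant $\|\mu\|$. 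A point of $\mathcal B$ moved a bounded amount by the semisimple isometry $\beta$ lies within bounded distance of $\mathrm{Min}(\beta)=\mathcal M$; this is a general fact about minimal-displacement sets of semisimple isometries of Bruhat--Tits buildings, arising from the structure of $\mathcal M$ as the product of a Euclidean factor (the Newton direction) with a building, and it is the geometric heart of the argument.

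Finally, I would record an alternative reduction isolating the essential case. The projection $\pi\colon\mathcal{FL}\to\Gr_G$ of \S\ref{2} is proper, $J_b(F)$-equivariant, and an \'etale fibration with irreducible fibre $L^+G/L^+\mathcal I$, so $Z\mapsto\pi^{-1}(Z)$ is a $J_b(F)$-equivariant bijection from $\Sigma(X_\mu(b))$ onto the set of irreducible components of $\pi^{-1}(X_\mu(b))=\bigcup_{w\in W_0t^\mu W_0}X_w(b)$, and the latter inject into $\bigsqcup_{w\in W_0t^\mu W_0}\Sigma(X_w(b))$. Thus the lemma reduces to finiteness of $J_b(F)\backslash\Sigma(X_w(b))$ for the finitely many relevant Iwahori-level elements $w$, which the building argument above also covers, with alcoves in place of hyperspecial vertices.
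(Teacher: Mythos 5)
The paper does not prove this lemma; it quotes it from Hamacher--Viehmann \cite{HVfinite}, so you are supplying an argument the paper itself omits. Your outline is the right one and matches the approach underlying that reference: produce a (perfectly) finite type $J_b(F)$-fundamental domain $S \subset X_\mu(b)$ by working in the Bruhat--Tits building, then use that $X_\mu(b)$ is locally (perfectly) of finite type to count orbits of components. The reduction at the start is correct, as is the closing reduction through $\pi^{-1}(X_\mu(b))=\bigcup_{w} X_w(b)$.

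Two places in the building argument are asserted where the actual work lives. The ``general fact'' that a point whose $\beta$-displacement is bounded lies within a uniformly bounded distance of $\mathrm{Min}(\beta)$ is, at this level of generality, precisely Rapoport--Zink's finiteness theorem for Bruhat--Tits buildings, proved via Landvogt's functorial embedding of buildings into that of $\mathrm{GL}_n$; this is a substantial theorem and you should cite it rather than present it as folklore. Likewise, the cocompactness of $J_b(F)$ on $\mathcal M = \mathrm{Min}(\beta)$ is only as strong as the identification of $\mathcal M$ (or of $\mathrm{Fix}(\beta)$ after raising to a decent power) with an affine translate of the extended Bruhat--Tits building of $J_b$ over $F$; that identification is again part of the Rapoport--Zink/Hamacher--Viehmann analysis and is not an immediate consequence of $J_b$ being an inner form of a Levi. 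One small technical point: in mixed characteristic the opens $U_i$ are perfections and so are never Noetherian; what you actually need is that each $U_i$ is homeomorphic to a Noetherian scheme, which is the content of ``locally perfectly of finite type'' and is exactly what the paper invokes in the proof of Lemma~\ref{lem:topology of irred comp}. With those citations supplied, your proof is sound and is essentially the one in the literature that the paper references.
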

\begin{defn} We write $\mathscr N(\mu,b)$ for the cardinality of $J_b(F) \backslash \Sigma^{\topp}(X_\mu(b)) $.
\end{defn}

\subsection{The Chen--Zhu conjecture}\label{subsec:CZ Conj}
In this paper we shall utilize the usual Langlands dual group (as a reductive group over $\C$ equipped with a pinned action by the Galois group), rather than the Deligne--Lusztig dual group which is used in \cite{HV}. As a result, our formulation of the Chen--Zhu conjecture below differs from \cite[Conjecture 1.3, \S 2.1]{HV}. However it can be easily checked that the two formulations are equivalent.

The Frobenius $\sigma$ acts on $X_*(T)$ via a finite-order automorphism, which we denote by $\theta$. Let $\widehat G$ be the usual dual group of $G$ over $\C$, which is a reductive group over $\C$ equipped with a Borel pair $(\widehat B, \widehat {T})$ and isomorphisms $X^*(\widehat{ T}) \cong X_*(T), X_*(\widehat{ T} ) \cong X^*(T)$. These last isomorphisms, which will be regarded as equalities, identify the positive roots (resp.~coroots) with the positive coroots (resp.~roots). For more details on the dual group see \S \ref{subsec:general facts} below.

\begin{defn}\label{defn:subtorus}
	Let $\hatS$ be the identity component of the $\hat \theta$-fixed points of $\widehat T$. Equivalently, $\hatS$ is the sub-torus of $\widehat T$ such that the map $X^*(\widehat T) \to X^*(\hatS)$ is equal to the map $X^*(\widehat T) \To X^* (\widehat T)_{\hat \theta , \mathrm{free}}.$
\end{defn}

\begin{defn}\label{defn:relative weight space}
	For $\mu \in X_*(T) ^+ = X^*(\widehat T)^+$, let $V_{\mu}$ be the highest weight representation of  $\widehat{G}$ of highest weight $\mu$. For all $\lambda' \in X^*(\widehat{T})$, we write $V_{\mu} (\lambda')$ for the $\lambda'$-weight space in $V_{\mu}$ as a representation of $\widehat{T}$. For all $\lambda \in X^*(\hatS)$, we write $V_{\mu} (\lambda)_{\mathrm{rel}}$ for the $\lambda$-weight space in $V_{\mu}$ as a representation of $\hatS$. \end{defn}

As in \S \ref{subsec:nonemptiness pattern}, let $\mu \in X_*(T)^+$, and let $[b]\in B(G,\mu)$. By Lemma \ref{lem:elementary about coinv} (\ref{item:3 in coinv}) we identify $X_*(T)_{\Q} ^{\theta}$ with $$ X_*(T) _{ \theta} \otimes \Q = X_*(T)_{\theta ,\mathrm{free} } \otimes \Q = X^*(\widehat T) _{\hat \theta , \mathrm{free}} \otimes \Q = X^*(\hatS) \otimes \Q, $$ and we shall view $\bar \nu_b$ (see (\ref{eq:newton map})) as an element of $X^*(\hatS) \otimes \Q$. We also have $\kappa(b) \in \pi _1 (G) _{\Gamma} = \pi_1 (G)_{\sigma}$, which is equal to the image of $\mu$.

Let $\widehat Q$ be the root lattice inside $X^*(\widehat T)$. Applying Lemma \ref{lem:elementary about coinv} to $X= X^*(\widehat T)$ and $Y = \widehat Q$, we obtain:
\begin{itemize}
	\item $\widehat Q_{\hat \theta}$ is a free $\Z$-module. It injects into $X^*(\widehat T) _{\hat \theta}$ and also injects into $X^*(\widehat T) _{\hat \theta, \mathrm{free}} = X^*(\hatS)$.
	\item The image of the simple roots in $\widehat Q$ in $\widehat Q_{\hat \theta}$ (as a set) is a $\Z$-basis of $\widehat Q_{\hat \theta}$. We call members of this $\Z$-basis the \emph{relative simple roots} in $\widehat Q_{\hat \theta}$.
\end{itemize}

\begin{lem}\label{lem:defn of lambda_b}There is a unique element $ \tilde \lambda_b \in X^*(\widehat T)_{\hat \theta}$ satisfying the following conditions:
	\begin{enumerate}
		\item The image of $\tilde \lambda_b$ in $\pi_1(G) _{\sigma}$ is equal to $\kappa(b)$.
		\item In $X^*(\hatS) \otimes \Q$, the element $ (\tilde \lambda_b)|_{\hatS} - \bar \nu_b$ is equal to a linear combination of the relative simple roots in $\widehat Q_{\hat \theta}$, with coefficients in $\Q \cap (-1,0]$. Here $(\tilde \lambda_b)|_{\hatS}$ denotes the image of $\tilde \lambda_b$ under the map $X^*(\widehat T) _{\hat \theta} \to X^*(\widehat T) _{\hat \theta,\mathrm{free}} = X^*(\hatS)$.
	\end{enumerate}
\end{lem}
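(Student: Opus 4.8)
The plan is to handle uniqueness and existence separately, both by elementary manipulations with $\hat\theta$-coinvariants; the only non-formal input is a standard compatibility between the Newton and Kottwitz invariants. The starting point is the exact sequence obtained by applying the right-exact functor $(-)_{\hat\theta}$ to $0 \to \widehat Q \to X^*(\widehat T) \to \pi_1(G) \to 0$, namely
\[ \widehat Q_{\hat\theta} \To X^*(\widehat T)_{\hat\theta} \To \pi_1(G)_\sigma \To 0. \]
By the bullet points preceding the lemma the first arrow is injective, so $\widehat Q_{\hat\theta}$ — with its distinguished $\Z$-basis of relative simple roots, which I denote $\bar\alpha_1,\dots,\bar\alpha_r$ — is precisely the kernel of the surjective map $X^*(\widehat T)_{\hat\theta} \to \pi_1(G)_\sigma$, and it injects into $X^*(\hatS)$ with the images of the $\bar\alpha_i$ as a $\Z$-basis of its image.

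For uniqueness, suppose $\tilde\lambda$ and $\tilde\lambda'$ both satisfy conditions (1) and (2). By (1), $\tilde\lambda-\tilde\lambda'$ lies in $\widehat Q_{\hat\theta}$, so its image in $X^*(\hatS)$ is an \emph{integral} combination of the $\bar\alpha_i$; subtracting the two instances of (2) shows that this same image is a combination of the $\bar\alpha_i$ with coefficients in $\Q\cap(-1,1)$. Since the $\bar\alpha_i$ are linearly independent in $X^*(\hatS)\otimes\Q$, all these coefficients are integers lying in $(-1,1)$, hence zero; injectivity of $\widehat Q_{\hat\theta}\hookrightarrow X^*(\hatS)$ then forces $\tilde\lambda=\tilde\lambda'$.

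For existence, I would first use surjectivity of $X^*(\widehat T)_{\hat\theta}\to\pi_1(G)_\sigma$ to pick some $\lambda_0\in X^*(\widehat T)_{\hat\theta}$ mapping to $\kappa(b)$, and then claim that $(\lambda_0)|_{\hatS}-\bar\nu_b$ lies in the $\Q$-span of the images of the relative simple roots inside $X^*(\hatS)\otimes\Q$. Under the identifications $X^*(\hatS)\otimes\Q = X^*(\widehat T)_{\hat\theta}\otimes\Q$, with $\widehat Q_{\hat\theta}\otimes\Q$ the span of the $\bar\alpha_i$ and quotient $\pi_1(G)_\sigma\otimes\Q$, this claim is exactly the standard fact from Kottwitz's classification that $\bar\nu_b$ and $\kappa(b)$ have the same image in $\pi_1(G)_\Gamma\otimes\Q = \pi_1(G)_\sigma\otimes\Q$. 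Granting it, write $(\lambda_0)|_{\hatS}-\bar\nu_b = \sum_{i} c_i\,\bar\alpha_i$ with $c_i\in\Q$ and set
\[ \tilde\lambda_b := \lambda_0 - \sum_{i} \lceil c_i\rceil\,\bar\alpha_i \ \in\ X^*(\widehat T)_{\hat\theta}, \]
each $\bar\alpha_i$ now viewed inside $X^*(\widehat T)_{\hat\theta}$ via $\widehat Q_{\hat\theta}\hookrightarrow X^*(\widehat T)_{\hat\theta}$. Then condition (1) is preserved, since an element of $\widehat Q_{\hat\theta}$ was subtracted; and $(\tilde\lambda_b)|_{\hatS}-\bar\nu_b = \sum_i (c_i-\lceil c_i\rceil)\,\bar\alpha_i$ with every $c_i-\lceil c_i\rceil \in \Q\cap(-1,0]$, which is condition (2).

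The only real obstacle is locating the one cited ingredient — the equality of the images of $\bar\nu_b$ and $\kappa(b)$ in $\pi_1(G)_\Gamma\otimes\Q$; everything else is formal bookkeeping with coinvariants and the rounding map $c\mapsto\lceil c\rceil$.
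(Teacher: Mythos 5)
The paper does not give a proof of this lemma at all: it simply notes that the statement is a reformulation of Lemma 2.1 in Hamacher--Viehmann \cite{HV} and cites that result. You, by contrast, give a complete, self-contained argument, so this is not a comparison of two proofs so much as a check that your proof is correct and an assessment of whether it is worth having.

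Your argument is correct. Uniqueness is handled exactly as it should be: if $\tilde\lambda,\tilde\lambda'$ both satisfy (1) then $\tilde\lambda-\tilde\lambda'\in\widehat Q_{\hat\theta}$ by the exact sequence $\widehat Q_{\hat\theta}\to X^*(\widehat T)_{\hat\theta}\to\pi_1(G)_\sigma\to 0$ together with the injectivity of the first arrow recorded in the bullet points before the lemma; condition (2) pins its coefficients in the $\bar\alpha_i$-basis to $\Z\cap(-1,1)=\{0\}$; and the injection $\widehat Q_{\hat\theta}\hookrightarrow X^*(\hatS)$ closes the loop. Existence is clean too: you pick any lift $\lambda_0$ of $\kappa(b)$, observe that $(\lambda_0)|_{\hatS}-\bar\nu_b$ lies in the $\Q$-span of the relative simple roots, and then subtract $\sum_i\lceil c_i\rceil\bar\alpha_i$ to land the fractional parts in $(-1,0]$. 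The one non-formal input you invoke — that $\bar\nu_b$ and $\kappa(b)$ have the same image in $\pi_1(G)_\Gamma\otimes\Q$ — is indeed a standard compatibility between the Newton map and the Kottwitz map; it is essentially \cite[\S 4.5]{kottwitzisocrystal2} (see also \cite[Theorem 1.15]{RR}), and it is exactly what is needed to make the $c_i$ exist. What your approach buys is a proof that lives entirely inside the notational framework the paper has already set up (the exact sequence of $\hat\theta$-coinvariants, the relative simple roots, Lemma \ref{lem:elementary about coinv}), whereas citing \cite{HV} requires the reader to translate between the Langlands dual group used here and the Deligne--Lusztig dual group used there — a translation the authors themselves flag as nontrivial at the start of \S\ref{subsec:CZ Conj}. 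A self-contained proof of this sort would be a reasonable thing to include; the only thing you should add is the precise reference for the Newton/Kottwitz compatibility rather than leaving it as "locate the one cited ingredient."
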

\begin{proof}
	This is just a reformulation of \cite[Lemma 2.1]{HV}.
\end{proof}
\begin{defn}\label{defn:lambda_b}
	Let $\tilde \lambda_b \in X^*(\widehat T) |_{\hat \theta}$ be as in Lemma \ref{lem:defn of lambda_b}. We write $\lambda_b$ for $(\tilde \lambda_b )|_{\hatS} \in X^*(\hatS)$.
\end{defn}
\begin{conj}[Miaofen Chen, Xinwen Zhu]\label{conj:chenzhu} Let $\mu \in X_*(T)^+$ and let $[b]\in B(G,\mu)$. There exists a natural bijection between $J_b(F) \backslash\Sigma ^{\topp} (X_{\mu} (b))$ and the Mirkovi\'{c}--Vilonen basis of $V_{\mu} (\lambda_b) _{\mathrm{rel}}$.
\end{conj}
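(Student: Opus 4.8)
The plan is to follow the route sketched in the introduction: prove the numerical identity (\ref{eq:intro numerical}) and then invoke Nie's surjection \cite{nienew} from the Mirkovi\'c--Vilonen basis onto $J_b(F)\backslash\Sigma^{\topp}(X_\mu(b))$ to promote it to a bijection. A chain of standard reductions --- passing to the adjoint group, decomposing into $F$-simple factors, using that Conjecture~\ref{conj:chenzhu1} is already known in type $A$ \cite{nienew}, and reducing a general $b$ to a basic one in a Levi --- lets one assume that $G$ is adjoint and $F$-simple, that $G$ is not of type $A$, and that $b$ is basic. To streamline the exposition I will first carry out the argument when $G$ is moreover split and not of type $E_6$, so that $\hatS=\widehat T$ and the identity to be proved reads $\mathscr N(\mu,b)=\dim V_\mu(\lambda_b)$; the split $E_6$ case and the non-split groups require only minor variants (with a slightly weaker form of the key estimate below).

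First I would relate $\mathscr N(\mu,b)$ to point counts. A Lang--Weil estimate adapted to the ind-scheme $X_\mu(b)$ gives, as in Proposition~\ref{prop:intro TO}, that the number of $\mathbb F_{q^s}$-points of $X_\mu(b)$, weighted by the reciprocal volumes of the stabilizers of the $J$-action, grows like $\mathscr N(\mu,b)\,q^{s\dim X_\mu(b)}$ up to a lower-order error; and this weighted count equals the twisted orbital integral $TO_b(f_{\mu,s})$ of the characteristic function of $G(\Ok_{F_s})\mu(\pi_F)G(\Ok_{F_s})$ along $b\in G(F_s)$. To evaluate $TO_b(f_{\mu,s})$ I would check that it coincides with the corresponding stable twisted orbital integral, and then apply the Base Change Fundamental Lemma to rewrite it as an ordinary orbital integral attached to a norm of $b$ over $G(F)$. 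Since this fundamental lemma is known in general only in characteristic zero, I would invoke Theorem~\ref{thm:parahoric} --- independence of the local field, together with the parahoric nature of the stabilizers --- to reduce the whole problem to the mixed-characteristic case.

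Next I would expand the orbital integral using the Kato--Lusztig formula \cite{Kato} for the Satake transform, which writes $f_{\mu,s}$ in the basis of double-coset characteristic functions with coefficients built from the $\mathbf q$-analogues of Kostant's partition functions. Combining this with the dimension formula of Hamacher and Zhu for $X_\mu(b)$ converts the asymptotic in Proposition~\ref{prop:intro TO} into the identity (\ref{eq:BCFL}), whose left-hand side involves the polynomials $\mathfrak M^0_{s\lambda}(\mathbf q)$ --- the non-dominant generalization of the $\mathbf q$-analogue of Kostant multiplicity --- evaluated at $\mathbf q=q^{-1}$. The crux is then Proposition~\ref{prop:intro key}: for every dominant $\lambda\neq\lambda_b^+$ one has $\mathfrak M^0_{s\lambda}(q^{-1})=o(q^{s\delta})$ with $\delta=(\mathrm{rk}_F J_b-\mathrm{rk}_F G)/2$, while the $\lambda_b^+$ term contributes at the exact order $q^{s\delta}$. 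Granting this and passing to the limit $s\to\infty$, only the $\lambda=\lambda_b^+$ summand survives on the left of (\ref{eq:BCFL}), and one obtains (\ref{eq:in theorem C}), namely $\dim V_\mu(\lambda_b)\cdot\mathscr L_b=\sum_{Z}\vol(\Stab_Z J)^{-1}$ with $\mathscr L_b$ independent of $\mu$. To pass from this to the bare cardinality I would argue at the level of rational functions in $q$: by Theorem~\ref{thm:parahoric} the right-hand side equals $R(q)$ for some $R(T)\in\mathbb Q(T)$ with $|R(0)|=\mathscr N(\mu,b)$, so it suffices to show $\mathscr L_b=S(q)$ for some $S(T)\in\mathbb Q(T)$ with $|S(0)|=1$, as in (\ref{eq:intro to show}). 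Since $\mathscr L_b$ does not depend on $\mu$, I can read off $S$ from any single $\mu$ for which Conjecture~\ref{conj:chenzhu1} is already known, for instance $\mu=\lambda_b^+$; this forces $|S(0)|=1$, hence $\mathscr N(\mu,b)=\dim V_\mu(\lambda_b)$ for all $\mu$, and Nie's surjection then upgrades the equality to the desired bijection.

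I expect the principal obstacle to be Proposition~\ref{prop:intro key}, the asymptotic bound on $\mathfrak M^0_{s\lambda}(q^{-1})$. These polynomials are genuinely delicate --- linear combinations of $\mathbf q$-analogues of Kostant's partition functions indexed by non-dominant weights --- and determining their degree and leading coefficient as the scaling $s$ grows seems to demand a careful, root-system-by-root-system analysis, with the split $E_6$ case and the non-split groups requiring separate weaker estimates that nonetheless suffice for the point-counting argument. A secondary technical point is the reduction to characteristic zero needed to invoke the Base Change Fundamental Lemma, which rests on the independence-of-$F$ assertions in Theorem~\ref{thm:parahoric}.
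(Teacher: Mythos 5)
Your proposal reproduces the paper's argument essentially step for step: Lang--Weil turning $\mathscr N(\mu,b)$ into twisted orbital integrals, the Base Change Fundamental Lemma (transferred to mixed characteristic via the independence-of-$F$ results), the Kato--Lusztig expansion and the key asymptotic bound on $\mathfrak M^0_{s\lambda}(q^{-1})$, the rational-function bootstrap from a known minuscule case, and finally Nie's surjection to upgrade the numerical equality to a bijection. This is the same approach as the paper; the only small inaccuracy is that the known input case is taken to be a minuscule $\mu_1$ with $\mathscr M(\mu_1,b)=1$ via Hamacher--Viehmann (not $\mu=\lambda_b^+$ per se, which need not be dominant in $X^*(\widehat T)^+$), and in the split $E_6$ case two auxiliary coweights are needed, but these are details within the same strategy.
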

\begin{defn}
	For $\mu \in X_*(T)^+$ and $[b]\in B(G,\mu)$, we write $\mathscr N(\mu,b)$ for the cardinality of $J_b(F)\backslash\Sigma^{\topp}(X_\mu(b))$, and write $\mathscr M(\mu,b)$ for $\dim V_{\mu} (\lambda_b)_{\mathrm{rel}}$.
\end{defn}
Conjecture \ref{conj:chenzhu} has the following numerical consequence:

\begin{conj}[Numerical Chen--Zhu]\label{conj:numerical Chen-Zhu} Let $\mu \in X_*(T)^+$ and let $[b]\in B(G,\mu)$. We have $$\mathscr{N}(\mu,b)=\mathscr M(\mu,b).$$
\end{conj}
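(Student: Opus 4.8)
The plan is to establish the numerical identity $\mathscr N(\mu,b)=\mathscr M(\mu,b)$ by the strategy outlined in the introduction, reducing everything to the asymptotic analysis of twisted orbital integrals. First I would make the standard reductions: using that $J_b$ is an inner form of a Levi $M$ and that $X_\mu(b)$ for superbasic/basic $b$ inside $M$ controls the general case (the Hodge--Newton type reduction going back to \cite{GHKR}, and the reductions in \cite{xiaozhu}, \cite{HV}, \cite{nie}), I may assume $G$ is adjoint and $F$-simple, $b$ is basic, and—invoking \cite{nienew}—that $G$ is not of type $A$; for exposition I would further assume $G$ split and not $E_6$, so $\hatS=\widehat T$ and the ``$\mathrm{rel}$'' subscripts disappear. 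I also fix $s_0$ so that $b$ is $s_0$-decent, and work over the unramified extensions $F_s$ for $s\in s_0\mathbb N$, ordered by divisibility.

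The first main step is Proposition \ref{prop:intro TO}: via a suitable form of the Lang--Weil estimate applied to the locally (perfectly) finite-type scheme $X_\mu(b)$, I would show that for $s\gg 0$,
\begin{equation}
TO_b(f_{\mu,s}) = \sum_{Z\in J\backslash\Sigma^{\topp}}\vol(\Stab_Z J)^{-1}\, q^{s\dim X_\mu(b)} + o(q^{s\dim X_\mu(b)}),\nonumber
\end{equation}
where $f_{\mu,s}$ is the characteristic function of $G(\Ok_{F_s})\mu(\pi_F)G(\Ok_{F_s})$ and $TO_b$ is the twisted orbital integral along $b$; the left side is literally the point count of $X_\mu(b)$ over $\mathbb F_{q^s}$ weighted by the $J$-action, so the estimate identifies the leading term with the number of top-dimensional $J$-orbits (weighted by stabilizer volumes). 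Next I would compute $TO_b(f_{\mu,s})$ by harmonic analysis: check that this twisted orbital integral equals the corresponding \emph{stable} one (so the Base Change Fundamental Lemma of Clozel \cite{clozelFL} and Labesse \cite{labFL} applies—available in characteristic zero, which is why the mixed-characteristic geometry of \cite{BS}, \cite{Zhu} is essential, and Theorem B then transfers the conclusion to equal characteristic), base-change to a (stable) orbital integral on $G(F_s)$, and expand using the Kato--Lusztig formula \cite{Kato}. Combining this with the dimension formula of Theorem \ref{thm:dim} (\cite{Ham}, \cite{Zhu}) for the right-hand side leads to the identity \eqref{eq:BCFL}: with $\delta=(\mathrm{rk}_FJ_b-\mathrm{rk}_FG)/2$,
\begin{equation}
\sum_{\lambda\in X^*(\widehat T)^+,\ \lambda\le\mu}\dim V_\mu(\lambda)\cdot\mathfrak M^0_{s\lambda}(q^{-1}) = \pm\sum_{Z\in J\backslash\Sigma^{\topp}}\vol(\Stab_Z J)^{-1}\, q^{s\delta} + o(q^{s\delta}),\quad s\gg 0,\nonumber
\end{equation}
where $\mathfrak M^0_{s\lambda}(\mathbf q)$ is the explicit polynomial built from $\mathbf q$-analogues of Kostant partition functions.

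The heart of the proof is Proposition \ref{prop:intro key}: for every $\lambda\in X^*(\widehat T)^+$ other than the dominant conjugate $\lambda_b^+$ of $\lambda_b$, one has $\mathfrak M^0_{s\lambda}(q^{-1})=o(q^{s\delta})$, so that only the $\lambda=\lambda_b^+$ term on the left of \eqref{eq:BCFL} has the right order of growth. I would prove this by a careful root-system-by-root-system analysis of the leading asymptotics of the $\mathbf q$-analogue Kostant-type polynomials $\mathfrak M^0_{s\lambda}(\mathbf q)$—estimating the degree in $\mathbf q^{-1}$ against $\delta$—with the type $D_n$, $n$ odd, and $E_6$ cases requiring extra combinatorial work (and only a weaker statement needed for split $E_6$). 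Granting this, taking $s\to\infty$ in \eqref{eq:BCFL} gives $\dim V_\mu(\lambda_b)\cdot\mathscr L_b=\sum_{Z\in J\backslash\Sigma^{\topp}}\vol(\Stab_Z J)^{-1}$ with $\mathscr L_b$ independent of $\mu$.

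Finally, to pass from this weighted identity to the clean count $\mathscr N(\mu,b)=\mathscr M(\mu,b)$, I would use Theorem B and the analysis of \S\ref{sec:action}: the stabilizers $\Stab_Z J$ are parahoric subgroups of $J$, and the right-hand side equals $R(q)$ for a rational function $R(T)\in\mathbb Q(T)$ with $|R(0)|=|J\backslash\Sigma^{\topp}|=\mathscr N(\mu,b)$; similarly $\mathscr L_b=S(q)$ for some $S(T)\in\mathbb Q(T)$. Since the whole formulation $\mathscr L_b=S(q)$, $|S(0)|=1$ is \emph{independent of $\mu$}, I would bootstrap from a known case of the conjecture—most naturally $\mu=\lambda_b^+$, where $\mathscr M(\mu,b)=1$ and the conjecture is already available (or, for basic unramified $b$, compute $S(T)\equiv1$ directly as in Remark \ref{rem:M})—to conclude $|S(0)|=1$, and hence $\mathscr N(\mu,b)=|R(0)|=\mathscr M(\mu,b)$ in general. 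The main obstacle I anticipate is Proposition \ref{prop:intro key}: controlling the growth in $s$ of linear combinations of $\mathbf q$-analogue Kostant partition functions evaluated at $\mathbf q=q^{-1}$, uniformly over non-dominant $\lambda$, is delicate and seems to genuinely require the case-by-case root-theoretic estimates; the extraction of the stabilizer information (Theorem B and the rationality of $R(T)$) and the comparison between local fields are the other substantial ingredients, but feel more structural than the polynomial estimate.
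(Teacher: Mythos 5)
Your proposal reproduces the paper's own strategy faithfully: the reduction to $G$ adjoint, $F$-simple, $b$ basic and (via Nie) non-type-$A$; the Lang--Weil estimate relating $J\backslash\Sigma^{\topp}$ to the asymptotics of the twisted orbital integral $TO_b(f_{\mu,s})$; the Base Change Fundamental Lemma together with the Kato--Lusztig formula to rewrite this as a linear combination of the $\mathfrak M^0_{s\lambda}(q^{-1})$; the key estimate isolating $\lambda=\lambda_b^+$; and the bootstrap via the rational functions $R(T)$, $S(T)$ independent of $F$, anchored by a minuscule $\mu_1$ with $\mathscr M(\mu_1,b)=1$ (or Nie's result for split $E_6$). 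This matches the paper's proof of Theorem \ref{thm:CZ} and Corollary \ref{cor:CZ} step for step, so there is nothing substantive to add.
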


In \cite{nie}, Nie obtained the following results:
\begin{thm}[Nie]\label{thm:nie orginal}
	$\quad$
	\begin{enumerate}
		\item In order to prove Conjecture \ref{conj:chenzhu}, it suffices to prove it when $G$ is adjoint and $b$ is basic.
		\item There is a natural surjective map from the Mirkovi\'{c}--Vilonen basis of $V_\mu(\lambda_b)_{\mathrm{rel}}$ to the set $J_b(F)\backslash\Sigma^{\topp}(X_\mu(b))$. Thus in order to prove Conjecture \ref{conj:chenzhu}, it suffices to prove Conjecture \ref{conj:numerical Chen-Zhu}.
		\item Conjecture \ref{conj:chenzhu} holds if $\mu$ is a sum of  dominant minuscule elements. In particular, it holds if all absolute simple factors of $G^{\ad}$ are of type $A$.
	\end{enumerate}
\end{thm}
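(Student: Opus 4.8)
The plan is to prove the three parts in order of increasing depth, each time reducing to a more special case.

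\emph{Part (1): reduction to $G$ adjoint and $b$ basic.} First I would check that both sides of Conjecture \ref{conj:chenzhu} are compatible with the standard structural operations on $(G,\mu,b)$. For a product $G = G_1 \times G_2$ the affine Deligne--Lusztig variety, the group $J_b$, the representation $V_\mu$, and hence the Mirkovi\'c--Vilonen basis all factor as external products; the same holds for Weil restriction along an unramified extension, which on dual groups corresponds to an induced torus and on the geometric side to a product indexed by the embeddings into $L$. For a central isogeny $G \to G^{\ad}$ I would invoke Kottwitz's classification: the Newton and Kottwitz invariants, the set $B(G,\mu)$, and the dimension formula of Theorem \ref{thm:dim} are governed by their images in $G^{\ad}$; the morphism $X_\mu(b) \to X_{\mu^{\ad}}(b^{\ad})$ identifies suitable unions of connected components; the kernel of $J_b \to J_{b^{\ad}}$ is a central torus and so acts trivially on $J_b(F) \backslash \Sigma^{\topp}$; and on the dual side $\widehat{G^{\ad}} = \widehat{G}_{\mathrm{sc}}$ has the same root system, so $\dim V_\mu(\lambda_b)_{\mathrm{rel}}$ is unchanged. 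This reduces to $G$ absolutely simple adjoint. If $b$ is then not basic, set $M = M_{\bar\nu_b}$, the centralizer of the Newton cocharacter: it is a proper standard Levi in which $b$ \emph{is} basic, and the Hodge--Newton-type reduction of $X_\mu(b)$ in the form used in \cite{GHKR} and \cite{HV} expresses $J_b(F)\backslash\Sigma^{\topp}(X_\mu(b))$ in terms of the analogous sets for the $X_{\mu'}^M(b)$ over $M$-dominant $\mu' \le \mu$, matched on the dual side by the branching rule $V_\mu|_{\widehat M} = \bigoplus_{\mu'} V_{\mu'}^{\widehat M}$ together with the fact that $\lambda_b$ may be computed inside $\widehat M$. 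Applying the product and central-isogeny reductions again to $M$, induction on the semisimple rank closes Part (1).

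\emph{Part (2): the natural surjection.} Here I would construct the map from Mirkovi\'c--Vilonen cycles. By geometric Satake for the (Witt-vector, in mixed characteristic) affine Grassmannian $\Gr_G$ over $k$, the Mirkovi\'c--Vilonen basis of $V_\mu(\lambda_b)_{\mathrm{rel}}$ is identified with the set of irreducible components of $\overline{\Gr_G^{\le\mu}} \cap S_{\lambda_b}$, where $S_{\lambda_b}$ is the semi-infinite orbit attached to $\lambda_b$ (interpreted in the relative sense when $G$ is not split); the nonemptiness criterion $[b]\in B(G,\mu)$ of Theorem \ref{KR-Gas} guarantees this sees the $b$-locus. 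The key geometric input, extending the construction of \cite{xiaozhu} beyond the unramified case, is that relating the top-dimensional components of $X_\mu(b)$ to these cycles via the semi-infinite stratification of $\Gr_G$ and the twisting by $b$ produces a well-defined map, canonical up to the $J_b(F)$-action, from the Mirkovi\'c--Vilonen basis onto $J_b(F)\backslash\Sigma^{\topp}(X_\mu(b))$. Surjectivity is the substance: after reducing as in Part (1) to the superbasic case, one applies the Deligne--Lusztig reduction method for affine Deligne--Lusztig varieties of \cite{He} to show that every top-dimensional component meets some semi-infinite orbit in the expected dimension, which forces it into the image. Granting the surjection, the ``Thus'' clause is formal: a surjection between two finite sets is a bijection exactly when they have equal cardinality, i.e.\ when Conjecture \ref{conj:numerical Chen-Zhu} holds. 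The hardest point is precisely this uniform construction: geometric Satake and the Mirkovi\'c--Vilonen formalism are most natural over $\C$, so one must transport them to the Witt-vector affine Grassmannian, and, more seriously, prove surjectivity --- that no top-dimensional component of $X_\mu(b)$ escapes the cycle map --- which appears to need the full force of the superbasic reduction together with He's reduction method.

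\emph{Part (3): $\mu$ a sum of minuscules, and type $A$.} For $\mu = \mu_1 + \cdots + \mu_r$ with each $\mu_i$ dominant minuscule, each $\Gr_G^{\mu_i}$ is closed (a partial flag variety), so the convolution morphism $\Gr_G^{\mu_1} \,\widetilde{\times}\, \cdots \,\widetilde{\times}\, \Gr_G^{\mu_r} \to \overline{\Gr_G^{\le\mu}}$ is proper and an isomorphism over the open stratum $\Gr_G^{\mu}$. Restricting over the $b$-locus exhibits $X_\mu(b)$ as the image of an iterated fibration whose successive fibers are minuscule affine Deligne--Lusztig varieties $X_{\mu_i}(b^{(i)})$, which are small enough that their top-dimensional components and the $J$-action are accessible directly (the minuscule case); on the dual side the multiplicity-one inclusion $V_\mu \hookrightarrow V_{\mu_1} \otimes \cdots \otimes V_{\mu_r}$ is compatible with Mirkovi\'c--Vilonen bases by the fusion/associativity of geometric Satake. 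Tracking the $\lambda_b$-weight through both the convolution and the tensor product yields a compatible bijection, hence Conjecture \ref{conj:chenzhu} for such $\mu$; by Part (2) it would suffice to check the numerical statement, but in this situation one can in fact exhibit the bijection. The ``in particular'' is then immediate: in type $A$ every fundamental coweight is minuscule, so every dominant coweight of an adjoint group of type $A$ is a nonnegative integral combination of dominant minuscule coweights, and Part (1) reduces the general type-$A$ case to this one. Of the three parts, Parts (1) and (3) are comparatively formal --- bookkeeping with Kottwitz invariants plus the Hodge--Newton reduction, respectively a convolution argument layered onto the minuscule case --- while Part (2) is the genuine obstacle.
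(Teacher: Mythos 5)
This theorem is not proved in the paper; it is quoted directly from Nie \cite{nie} (refined in \cite{nienew}), so there is no in-paper proof to compare your attempt against. Your sketch does identify the architecture Nie's arguments follow: structural reductions (product, central isogeny, Weil restriction) plus a Hodge--Newton reduction for part (1), an MV-cycle construction with a further superbasic reduction for part (2), and convolution layered over the minuscule case for part (3), with (2) correctly flagged as the genuine obstacle.

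At the level of detail you give, however, the proof does not close, and the gap is exactly where you say the substance lies. In part (2), the claim that "every top-dimensional component meets some semi-infinite orbit in the expected dimension, which forces it into the image" is not a surjectivity argument for the proposed map from MV cycles to $J_b(F)$-orbits of components: knowing a component meets a semi-infinite orbit transversely is a property of the component, not a construction of an MV cycle hitting exactly that $J_b(F)$-orbit, and you have not ruled out that two distinct orbits land under one cycle or that an orbit is missed. Nie's surjection is built by an explicit combinatorial analysis in the superbasic case (extended semi-modules and related devices), not by a soft appeal to geometric Satake over the Witt-vector Grassmannian; that machinery is precisely what is elided here. Two smaller issues: in part (1) the Hodge--Newton step must be an honest $J_b(F)$-equivariant bijection on $\Sigma^{\topp}$ (using that $J_b$ does not change when passing from $G$ to the Newton Levi $M$), matched against the branching of $V_\mu$ to $\widehat M$ together with a check that only one $M$-dominant piece contributes at $\lambda_b$; and in part (3) the convolution morphism covers the union of Schubert varieties indexed by every $\mu'$ occurring in $V_{\mu_1}\otimes\cdots\otimes V_{\mu_r}$, not just $\overline{\Gr_G^{\le\mu}}$, so isolating the $V_\mu$-component of the MV basis on both sides is a further input beyond fusion associativity.
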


\begin{rem}\label{rem:history}
	After the present paper was finished, Nie uploaded online new versions of the preprint \cite{nienew}, in which he proved Conjecture \ref{conj:chenzhu} in full generality. His methods are independent of ours. The present paper  depends logically only on Nie's results stated in Theorem \ref{thm:nie orginal}, see Remark \ref{rem:logical} for more details.
\end{rem}

A further standard argument, for example \cite[\S6]{HZ}, shows that one can also reduce the proof of Conjecture \ref{conj:numerical Chen-Zhu} to the case where $G$ is $F$-simple. Therefore in view of Theorem \ref{thm:nie orginal} (1) (2), we have:
\begin{prop}\label{prop:reduction}In order to prove Conjecture \ref{conj:chenzhu}, it suffices to prove Conjecture \ref{conj:numerical Chen-Zhu} when $G$ is adjoint, $F$-simple, and $b\in G(L)$ represents a basic $\sigma$-conjugacy class.	\qed
\end{prop}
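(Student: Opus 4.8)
The plan is to combine the three reduction steps already isolated in the text: Theorem \ref{thm:nie orginal}(1) reduces Conjecture \ref{conj:chenzhu} to the case $G$ adjoint, $b$ basic; Theorem \ref{thm:nie orginal}(2) reduces Conjecture \ref{conj:chenzhu} to the numerical statement Conjecture \ref{conj:numerical Chen-Zhu}; and a further ``standard argument, for example \cite[\S6]{HZ}'' reduces Conjecture \ref{conj:numerical Chen-Zhu} to the case $G$ being $F$-simple. So the real content of the proof is the last reduction, and I would spell it out as follows. Given an adjoint $G$ over $\Ok_F$, write it as a product $G = \prod_{i} \Res_{F_i/F} G_i$ where each $G_i$ is an absolutely simple adjoint unramified group over a finite unramified extension $F_i/F$ and $\Res$ denotes Weil restriction of scalars along $\Ok_{F_i}/\Ok_F$; each factor $\Res_{F_i/F}G_i$ is $F$-simple. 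A basic class $[b] \in B(G)$ and a dominant cocharacter $\mu$ decompose accordingly, $b = (b_i)$, $\mu = (\mu_i)$, with $[b_i]$ basic in $B(\Res_{F_i/F}G_i)$ and $\mu_i$ dominant.

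Next I would check that all the objects in Conjecture \ref{conj:numerical Chen-Zhu} are multiplicative along this decomposition. On the geometric side, $X_\mu(b) = \prod_i X_{\mu_i}(b_i)$ as (perfect) schemes over $k$, and $J_b(F) = \prod_i J_{b_i}(F)$; since a top-dimensional irreducible component of a finite product of (perfectly) finite-type schemes is a product of top-dimensional irreducible components of the factors, we get $\Sigma^{\topp}(X_\mu(b)) = \prod_i \Sigma^{\topp}(X_{\mu_i}(b_i))$ compatibly with the group actions, whence
\[
\mathscr N(\mu,b) = \prod_i \mathscr N(\mu_i, b_i).
\]
On the dual side, $\widehat G = \prod_i \Ind_{\Gamma_{F_i}}^{\Gamma_F}\widehat{G_i}$, the torus $\hatS$ and the element $\lambda_b$ decompose as products/tuples over $i$ (using that $X^*(\hatS)\otimes\Q \cong X_*(T)_\Q^\theta$ respects the product and that $\lambda_b$ is defined by the conditions in Lemma \ref{lem:defn of lambda_b}, which are coordinate-wise), and $V_\mu = \boxtimes_i V_{\mu_i}$ as a representation of $\widehat G$, so $V_\mu(\lambda_b)_{\mathrm{rel}} = \bigotimes_i V_{\mu_i}(\lambda_{b_i})_{\mathrm{rel}}$ and therefore
\[
\mathscr M(\mu,b) = \prod_i \mathscr M(\mu_i,b_i).
\]
Hence $\mathscr N(\mu,b) = \mathscr M(\mu,b)$ follows once we know the equality for each $F$-simple factor, which is exactly the asserted reduction. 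Combining with Theorem \ref{thm:nie orginal}(1), (2), one gets the proposition.

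I expect the only mildly delicate point to be the compatibility of $\lambda_b$ with the product decomposition, since $\lambda_b$ is not literally a ``product'' but is characterized by the normalization conditions of Lemma \ref{lem:defn of lambda_b}; one checks that the tuple of the $\lambda_{b_i}$'s satisfies those conditions for $G$, using that $\pi_1(G)_\sigma = \prod_i \pi_1(\Res_{F_i/F}G_i)_\sigma$, that $\bar\nu_b = (\bar\nu_{b_i})_i$, and that the relative simple roots of $\widehat Q_{\hat\theta}$ are the disjoint union of those of the factors, so that both conditions decouple over $i$; uniqueness then forces $\lambda_b = (\lambda_{b_i})_i$. Everything else is formal, and in fact this is precisely the kind of bookkeeping carried out in \cite[\S6]{HZ}, which is why the text is content to call it ``standard.'' The one real input beyond bookkeeping is Nie's Theorem \ref{thm:nie orginal}, which is cited as known.
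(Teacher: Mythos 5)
Your proposal is correct and follows exactly the same route as the paper: invoke Theorem \ref{thm:nie orginal}(1)(2) to reduce to $G$ adjoint, $b$ basic, and the numerical statement, and then reduce to the $F$-simple case by decomposing $G$ into its $F$-simple (Weil-restriction) factors and checking multiplicativity of $\mathscr N$ and $\mathscr M$. The paper is content to cite \cite[\S6]{HZ} for that last step; you have simply written out the bookkeeping it refers to, and the details you give (including the compatibility of $\lambda_b$ with the product, via uniqueness in Lemma \ref{lem:defn of lambda_b}) are sound.
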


\section{The action of $J_b(F)$}
\label{sec:action}
\subsection{The stabilizer of a component}In this section we study the stabilizer in $J_b(F)$ of an irreducible component of $X_\mu(b) $. Here as before we let $\mu \in X_*(T) ^+$ and $[b] \in B(G,\mu)$. The first main result is the following.

\begin{thm}\label{thm:parahoric}
	The stabilizer in $J_b(F)$ of each $Z\in \Sigma(X_\mu(b))$ is a parahoric subgroup of $J_b(F)$.
\end{thm}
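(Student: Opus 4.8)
The plan is to use the theory of the "$J$-stratification" of affine Deligne--Lusztig varieties, following the work of He and collaborators. First I would pass from the hyperspecial level to Iwahori level: recall from \S\ref{2} that $\pi^{-1}(X_\mu(b)) = X(\mu,b)^K = \bigcup_{w\in W_0 t^\mu W_0} X_w(b)$, and $\pi$ is an \'etale fibration with proper fibers (a perfection of a finite type flag variety). Since $\pi$ is $J_b(F)$-equivariant, the stabilizer of $Z\in\Sigma(X_\mu(b))$ coincides with the stabilizer of any irreducible component of $\pi^{-1}(Z)$ lying above it; thus it suffices to analyze stabilizers of irreducible components of the Iwahori-level varieties $X_w(b)$. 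Actually, I would work directly with $X(\mu,b)^K$ and its irreducible components.

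The key input is He's description (in \cite{He}, see also related work) of the irreducible components of $X_w(b)$ and the action of $J_b(F)$ on them via the "class polynomials"/reduction-method combinatorics, together with the fact that for a minimal length element $w'$ in its $\sigma$-conjugacy class (equivalently a $\sigma$-straight element, up to the $\tilde\approx_\sigma$ relation), $X_{w'}(b)$ is isomorphic to a disjoint union of copies of $J_b(F)/(J_b(F)\cap {}^{\dot g}\mathcal{I})$, i.e. a homogeneous space under $J_b(F)$ with stabilizers conjugate to $J_b(F)\cap \dot g \mathcal{I}\dot g^{-1}$, which is a parahoric (in fact Iwahori-type) subgroup. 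The general reduction method of He expresses $X_w(b)$ via iterated fibrations (by $\mathbb{A}^1$'s and $\mathbb{G}_m$'s) and closed/open immersions over such straight pieces, compatibly with the $J_b(F)$-action. I would use this to show that every irreducible component $Z$ of $X_\mu(b)$ "dominates" (after passing to the closure and to Iwahori level) a straight piece, and that the stabilizer $\Stab_Z J_b(F)$ acts on that piece fixing a point, hence is contained in a parahoric; conversely, the stabilizer is open (since the $J_b(F)$-orbit of $Z$ is finite by Lemma~\ref{lem:finiteness} and $J_b(F)$ acts continuously) and closed, hence it is exactly a parahoric subgroup — here I would invoke that an open bounded subgroup of $J_b(F)$ containing... no: rather, that a subgroup which is the full stabilizer and which contains a parahoric with finite index and is contained in the normalizer must be parahoric. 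A cleaner route: show $\Stab_Z J_b(F)$ is open and bounded, and that it equals its own "parahoric closure"; bounded open subgroups of a $p$-adic group need not be parahoric, so the real content is to pin down the conjugacy class via the straight-piece description.

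Concretely, the steps in order: (1) reduce to Iwahori level via $\pi$; (2) for each $w$ with $X_w(b)\neq\emptyset$, use the reduction method $w\to_\sigma w'$ to a minimal length / $\sigma$-straight $w'$, giving a $J_b(F)$-equivariant chain of morphisms relating $\Sigma^{\topp}(X_w(b))$ (or all of $\Sigma$) to $\Sigma(X_{w'}(b))$; (3) identify $X_{w'}(b)$ with $\coprod J_b(F)/(J_b(F)\cap\dot x\,\mathcal{I}\,\dot x^{-1})$ for suitable $x$, so that stabilizers there are conjugates of $J_b(F)\cap\dot x\,\mathcal{I}\,\dot x^{-1}$ — an Iwahori subgroup of $J_b(F)$ — and then track how the fibration steps (which are by affine spaces and tori, equivariantly) can only enlarge the stabilizer to a parahoric containing that Iwahori; (4) assemble at hyperspecial level: an irreducible component $Z$ of $X_\mu(b)$ pulls back to a union of components at Iwahori level permuted transitively by... (here one uses connectedness of the fiber of $\pi$), and $\Stab_Z J_b(F)$ is generated by/commensurable with the Iwahori-level stabilizers, hence parahoric.

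The main obstacle I anticipate is step (3)--(4): controlling precisely what happens to the $J_b(F)$-stabilizer under the reduction-method fibrations, i.e. showing the stabilizer of a component upstairs is parahoric given that it is parahoric on a straight piece downstairs, and that passing back through the \'etale cover $\pi$ with its proper fibers does not destroy the parahoric property. This requires a careful equivariant bookkeeping of the reduction method, and it is presumably where the bulk of \S\ref{sec:action} is spent; the companion independence-of-$F$ statement (Theorem~\ref{prop:ind of p}) strongly suggests the argument is purely combinatorial in $W$, $\sigma$, and $b$, so I would aim to phrase everything in terms of $B(W,\sigma)$, straight elements, and the $\tilde\approx_\sigma$ relation.
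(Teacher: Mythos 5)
Your overall architecture matches the paper's: reduce to the Iwahori level via $\pi$ (Proposition~\ref{prop:projection}), then induct on $\ell(w)$ using the Deligne--Lusztig reduction method (Proposition~\ref{DL-reduction}), with the base case being minimal-length elements in their $\sigma$-conjugacy class. But your step (3) misstates the base case, and this hides the actual content of the theorem. A minimal-length element $w$ in its $\sigma$-conjugacy class is \emph{not} in general $\sigma$-straight, nor $\tilde{\approx}_\sigma$-equivalent to a $\sigma$-straight element: the reduction method drives the length down only until one reaches a minimal-length element, and the $\sigma$-conjugacy class of $w$ (with $X_w(b)\neq\emptyset$) need not be a straight class. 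Consequently, for such $w$, $X_w(b)$ is \emph{not} generally a discrete homogeneous space $\coprod J_b(F)/(J_b(F)\cap \dot g\mathcal{I}\dot g^{-1})$; and the parahorics that arise as stabilizers are groups of the form $J_b(F)\cap\mathcal{P}(\Ok_L)$ for genuinely larger parahorics $\mathcal{P}\supsetneq\mathcal I$, not merely Iwahori conjugates.

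The correct base case, following \cite[Theorem 2.3]{He}, writes $w$ (up to $\tilde{\approx}_\sigma$) as $ux$ with $x$ $\sigma$-straight, $u\in W_P$, $W_P$ finite, and $x^{-1}\sigma(P)x=P$; then $X_{ux}(\dot x)\cong J_{\dot x}(F)\times_{J_{\dot x}(F)\cap\mathcal P(\Ok_L)} X^{\mathcal P}_{ux}(\dot x)$, where $X^{\mathcal P}_{ux}(\dot x)$ is a classical finite-type Deligne--Lusztig variety inside the reductive quotient $\overline{\mathcal P}$. The nontrivial input your proposal omits is the \emph{irreducibility} of this finite-type Deligne--Lusztig variety, which is \cite[Corollary 1.2]{Go} and requires choosing $P$ minimal. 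Without it, the stabilizer of a component would only be known to be \emph{contained} in $J_b(F)\cap\mathcal P(\Ok_L)$, with no reason to be parahoric; this irreducibility is precisely what pins down the stabilizer as the full parahoric. Once this base case is in place, the inductive step is simpler than you suggest: the $J_b(F)$-equivariant fibrations of Proposition~\ref{DL-reduction} have irreducible one-dimensional fibers, so the stabilizer of a component is \emph{preserved} (not ``enlarged'') when passing down to $X_{sw\sigma(s)}(b)$ or $X_{sw}(b)$; and since the fiber of $\pi$ is an irreducible flag variety, $\pi^{-1}(Z)$ is irreducible and the passage back to hyperspecial level costs nothing, so your worries in step (4) do not materialize.
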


We first reduce this statement to a question about the Iwahori affine Deligne--Lusztig varieties $X_w(b), w \in W_0 t^{\mu} W_0$. Note that $J_b(F)$ acts on each $X_w(b)$ via automorphisms.
\begin{prop}\label{prop:projection} The projection $
	\pi:\mathcal{FL}\rightarrow \Gr_{G}
	$ induces a bijection between $
	\Sigma(X_\mu(b) )$ and $\Sigma(X(\mu,b)^K)$ compatible with the action of $J_b(F)$. Moreover, this bijection maps $\Sigma ^{\topp} (X_{\mu} (b))$ onto $\Sigma ^{\topp} (X(\mu, b) ^K)$.  \end{prop}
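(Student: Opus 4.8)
The plan is to work with the étale fibration $\pi : \mathcal{FL} \to \Gr_G$ and analyze how it restricts to the affine Deligne--Lusztig varieties. Recall from \S\ref{2} that $\pi^{-1}(X_\mu(b)) = X(\mu,b)^K = \bigcup_{w \in W_0 t^\mu W_0} X_w(b)$, and that the fiber of $\pi$ is (the perfection of) the finite-type flag variety $L^+G/L^+\mathcal{I}$, which is irreducible of dimension $\ell(w_0)$ where $w_0$ is the longest element of $W_0$. First I would observe that since $\pi$ is an étale fibration with irreducible fibers of constant dimension $d := \dim L^+G/L^+\mathcal{I}$, it restricts to a morphism $\pi : X(\mu,b)^K \to X_\mu(b)$ which is again a fibration with the same irreducible fibers; consequently the assignment $Z \mapsto \overline{\pi^{-1}(Z)}$ is a bijection from $\Sigma(X_\mu(b))$ to $\Sigma(X(\mu,b)^K)$, with inverse given by $Z' \mapsto \overline{\pi(Z')}$. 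Concretely: taking preimages under a fibration with irreducible fibers preserves irreducibility and commutes with closure, so irreducible components correspond; and since $\dim \pi^{-1}(Z) = \dim Z + d$ for every subvariety $Z$, a component of $X(\mu,b)^K$ is top-dimensional if and only if the corresponding component of $X_\mu(b)$ is, giving the last assertion. The compatibility with the $J_b(F)$-action is immediate because $\pi$ is $J_b(F)$-equivariant (the action on $\mathcal{FL}$ and on $\Gr_G$ are both induced by left translation on $G(L)$), so $\pi^{-1}$ and its inverse carry $J_b(F)$-orbits to $J_b(F)$-orbits.

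The one genuine subtlety — and the step I expect to be the main obstacle — is that $X(\mu,b)^K$ is a \emph{union} of the locally closed subschemes $X_w(b)$ for $w$ ranging over the (finite) double coset $W_0 t^\mu W_0$, rather than a single stratum, so "fibration with irreducible fibers" needs justification at the level of this union and one must be careful about which $X_w(b)$ are nonempty and what their closures look like. I would handle this by noting that $\pi$ restricted to $X(\mu,b)^K$ is still flat (even étale-locally trivial) with fiber $L^+G/L^+\mathcal{I}$ over all of $X_\mu(b)$: indeed $X(\mu,b)^K = \pi^{-1}(X_\mu(b))$ by the displayed identity in \S\ref{2}, and base change of an étale fibration along the locally closed immersion $X_\mu(b) \hookrightarrow \Gr_G$ is again an étale fibration. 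Hence no stratum-by-stratum analysis is actually needed: the union structure is automatically accounted for because $\pi^{-1}$ of the (possibly reducible, non-equidimensional) scheme $X_\mu(b)$ is computed all at once. The remaining point is the purely topological lemma that for a surjective open map (or flat morphism, or fibration) $f : Y \to X$ of finite-dimensional Noetherian schemes whose fibers are all irreducible of the same dimension $d$, the map $Z \mapsto \overline{f^{-1}(Z)}$ is a dimension-shifting bijection on irreducible components; this I would either cite or prove in two lines using that $f^{-1}$ of an irreducible set is irreducible (fibers irreducible + $f$ open) and that $\overline{f^{-1}(Z)} = f^{-1}(\overline Z)$ when $f$ is open.

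Finally I would assemble these pieces: the bijection $\Sigma(X_\mu(b)) \leftrightarrow \Sigma(X(\mu,b)^K)$ from the topological lemma, its $J_b(F)$-equivariance from equivariance of $\pi$, and the preservation of top-dimensionality from the constancy of the fiber dimension $d$. In the mixed-characteristic case one works with perfect schemes throughout, but since perfection is a homeomorphism on topological spaces and preserves the relevant dimension theory, every step goes through verbatim; I would remark on this rather than repeat the argument. This completes the proof of Proposition \ref{prop:projection}, and Theorem \ref{thm:parahoric} is thereby reduced to the corresponding statement about the Iwahori-level varieties $X_w(b)$.
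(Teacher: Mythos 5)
Your proposal is correct and takes essentially the same approach as the paper: the paper's proof is a one-line appeal to the fact that the fiber of $\pi$ is (the perfection of) a flag variety, and your argument simply spells out the standard topological consequences of that fact (irreducibility of preimages under an open map with irreducible fibers, constant dimension shift, and equivariance of $\pi$). The "subtlety" you flag about $X(\mu,b)^K$ being a union of strata evaporates exactly as you say, since $\pi^{-1}(X_\mu(b)) = X(\mu,b)^K$ is the preimage of a single locally closed subscheme, so nothing extra is needed.
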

\begin{proof}
	This follows from the fact that the fiber of $\pi$ is (the perfection of) a flag variety.
\end{proof}
In view of Proposition \ref{prop:projection}, the proof of Theorem \ref{thm:parahoric} reduces to showing that the stabilizer of each irreducible component of $X(\mu,b)^K$ is a parahoric subgroup of $J_b(F)$.

Now let $Y \in \Sigma ( X(\mu,b)^K)$. Then since each $X_w(b)$ is locally closed in $\mathcal{FL}$, there exists $w\in W_0t^\mu W_0$ such that $ Y\cap X_w(b) $ is open dense in $Y$ and is an irreducible component of $X_w(b)$. Since the action of $J_b(F)$ on $X(\mu,b)^K$ preserves $X_w(b)$, it follows that $j\in J_b(F)$ stabilizes $Y$ if and only if $j$ stabilizes $Y\cap X_w(b)$. Hence we have reduced to showing that the stabilizer in $J_b(F)$ of any element of $\Sigma(X_w(b))$ is a parahoric subgroup. We will show that this is indeed the case in Proposition \ref{prop:stab parahoric} below.

One important tool needed in our proof is the following result, which is \cite[Corollary 2.5.3]{GoHe}.
\begin{prop}\label{DL-reduction}Let $w\in W$,  and let $s\in\mathbb{S}$ be a simple reflection. \begin{enumerate}
		\item\label{item:1 in DL} If $\ell(sw\sigma(s))=\ell (w)$, then there exists a universal homeomorphism $X_w(b)\rightarrow X_{sw\sigma(s)}(b)$.
		\item\label{item:2 in DL} If $\ell(sw\sigma(s))<\ell(w)$, then there is a decomposition $X_{w}(b)=X_1\sqcup X_2$, where $X_1$ is closed and $X_2$ is open, and such that there exist morphisms $X_1\rightarrow X_{sw\sigma(s)}(b)$ and $X_2\rightarrow X_{sw}(b)$, each of which is the composition of a Zariski-locally trivial fiber bundle with one-dimensional fibers and a universal homeomorphism.
	\end{enumerate}
	Moreover the universal homeomorphism in (\ref{item:1 in DL}) and the morphisms $X_1\rightarrow X_{sw\sigma(s)}(b)$ and $X_2\rightarrow X_{sw}(b)$ in (\ref{item:2 in DL}) are all equivariant for the action of $J_b(F)$. \qed
\end{prop}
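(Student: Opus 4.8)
The plan is to establish this by the affine, $\sigma$-twisted version of the Deligne--Lusztig reduction method; in this setting the result is due to He, and the precise form together with the stated $J_b(F)$-equivariance is \cite[Corollary 2.5.3]{GoHe}, so I will only indicate the strategy.

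The first step is to set up the relevant fibration. For $s\in\mathbb S$ let $\mathcal P_s\supset\mathcal I$ be the parahoric with $\mathcal P_s=\mathcal I\sqcup\mathcal I\dot s\mathcal I$; then $\pi_s\colon\mathcal{FL}=\Gr_{\mathcal I}\to\Gr_{\mathcal P_s}$ is a Zariski-locally trivial $\mathbb P^1$-bundle (the perfection of one, in the mixed characteristic case), with fiber through $g\mathcal I$ equal to $g\mathcal P_s/\mathcal I$. The reduction takes place inside these fibers: for $g\mathcal I$ lying over a fixed point of $\Gr_{\mathcal P_s}$, put $x:=g^{-1}b\sigma(g)$; then a point $gp\mathcal I$ of the fiber (with $p\in\mathcal P_s$, well-defined modulo $\mathcal I$) lies in $X_w(b)$ if and only if $p^{-1}x\sigma(p)\in\mathcal I\dot w\mathcal I$. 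Since $\sigma(p)$ ranges over $\mathcal P_{\sigma(s)}$, the element $p^{-1}x\sigma(p)$ stays in $\mathcal P_s\,x\,\mathcal P_{\sigma(s)}$, and this double coset breaks into at most four $\mathcal I$-double cosets, indexed by $\{w,\ sw,\ w\sigma(s),\ sw\sigma(s)\}$, via the elementary rules $\mathcal I\dot s\mathcal I\cdot\mathcal I\dot v\mathcal I=\mathcal I\dot{sv}\mathcal I$ if $sv>v$ and $=\mathcal I\dot{sv}\mathcal I\sqcup\mathcal I\dot v\mathcal I$ if $sv<v$ (and symmetrically on the right, with $\sigma(s)$). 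I would therefore reduce the whole statement to understanding, fiber by fiber, the partition of $\mathbb P^1$ according to which of these four $\mathcal I$-double cosets contains $p^{-1}x\sigma(p)$.

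The heart of the argument is then the rank-one computation: working in $\mathcal P_s$ and $\mathcal P_{\sigma(s)}$ modulo their pro-unipotent radicals (rank-one groups over $k$ of type $\mathrm{SL}_2$ or $\mathrm{PGL}_2$), one parametrizes the fiber $\mathbb P^1$ and determines, as $p$ varies, the relative-position double coset of $p^{-1}x\sigma(p)$. The outcome, which mirrors the classical $\mathrm{SL}_2$ calculation of Deligne--Lusztig, is that $\mathbb P^1$ is partitioned into finitely many locally closed strata of the expected dimensions, and that: if $\ell(sw\sigma(s))=\ell(w)$, then $\pi_s$ restricts to a universal homeomorphism from each of $X_w(b)$ and $X_{sw\sigma(s)}(b)$ onto a common locally closed subscheme of $\Gr_{\mathcal P_s}$ (the ``isomorphism up to Frobenius twist'' is what forces a universal homeomorphism rather than an isomorphism of schemes), and composing these gives (\ref{item:1 in DL}); if $\ell(sw\sigma(s))<\ell(w)$, then the stratification of the fibers cuts $X_w(b)$ into a closed part $X_1$, which projects to $X_{sw\sigma(s)}(b)$, and an open part $X_2$, which projects to $X_{sw}(b)$, and the $\mathbb P^1$-bundle structure exhibits each of these projections as the composite of a Zariski-locally trivial one-dimensional fiber bundle with a universal homeomorphism, giving (\ref{item:2 in DL}). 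The $J_b(F)$-equivariance is automatic throughout, since $J_b(F)$ acts on $\mathcal{FL}$ by left translation $g\mathcal I\mapsto jg\mathcal I$, which commutes with $\pi_s$, stabilizes every $X_v(b)$, and leaves $x=g^{-1}b\sigma(g)=(jg)^{-1}b\sigma(jg)$ unchanged, hence respects the whole construction.

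I expect the main obstacle to be the rank-one local computation in the last step: determining precisely how the relative-position double coset varies over each $\mathbb P^1$-fiber and extracting from it the exact nature of the resulting morphisms (Zariski-local triviality, one-dimensionality of the fibers, and the universal-homeomorphism-versus-isomorphism subtlety), all while keeping the $\sigma$-twist straight (so that $\mathcal P_{\sigma(s)}$, not $\mathcal P_s$, appears on the right) and, in mixed characteristic, working with perfect (ind-)schemes throughout. This bookkeeping is exactly what is packaged into \cite[Corollary 2.5.3]{GoHe}, whose origins lie in the reduction method of Deligne--Lusztig.
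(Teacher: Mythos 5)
Your proposal correctly identifies the statement as \cite[Corollary 2.5.3]{GoHe} and sketches the underlying $\sigma$-twisted Deligne--Lusztig reduction via the $\mathbb{P}^1$-bundle $\pi_s\colon\Gr_{\mathcal I}\to\Gr_{\mathcal P_s}$; this is precisely what the paper does, which cites \cite[Corollary 2.5.3]{GoHe} without further argument. Your added sketch of the rank-one fiber computation and the $J_b(F)$-equivariance (left translation commutes with $\pi_s$ and leaves $g^{-1}b\sigma(g)$ unchanged) is an accurate account of the mechanism behind that reference.
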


\begin{prop}\label{prop:stab parahoric}
	Assume $X_w(b)\neq\emptyset$ and let $Z\in \Sigma(X_w(b))$. The stabilizer in
	$J_b(F)$ of $Z$ is a parahoric subgroup of $J_b(F)$.
\end{prop}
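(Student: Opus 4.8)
The strategy is to use Proposition \ref{DL-reduction} to reduce to the case where $w$ is of minimal length in its $\sigma$-conjugacy class, and then to use an explicit description of $X_w(b)$ in that situation. First I would invoke the theory of the ``$\tilde\approx_\sigma$'' relation and the reduction method of He: every $w \in W$ with $X_w(b) \neq \emptyset$ is related, via a chain of the elementary operations in Proposition \ref{DL-reduction}, to a $\sigma$-straight element $w'$ lying in the straight $\sigma$-conjugacy class that maps to $[b]$ under $\Psi$ (Theorem \ref{str-bg}). Because the universal homeomorphisms and the fiber-bundle-plus-universal-homeomorphism morphisms in Proposition \ref{DL-reduction} are all $J_b(F)$-equivariant, they induce $J_b(F)$-equivariant bijections on sets of irreducible components, possibly after passing to the open/closed pieces $X_1, X_2$. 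Hence the stabilizer in $J_b(F)$ of any $Z \in \Sigma(X_w(b))$ is identified with the stabilizer of some $Z' \in \Sigma(X_{w'}(b))$ for $\sigma$-straight $w'$; a subgroup of $J_b(F)$ is parahoric if and only if it equals such a stabilizer, so it suffices to treat $\sigma$-straight $w'$.

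\textbf{The minimal-length case.} For $w'$ a $\sigma$-straight element representing $[b]$, one has an explicit description of $X_{w'}(b)$: by the work of He (and He--Nie), $X_{w'}(b)$ is isomorphic, $J_b(F)$-equivariantly, to a quotient of the form $J_b(F)/(J_b(F) \cap \dot{w}'\mathcal{I}\dot{w}'^{-1})$ — that is, it is a discrete set acted on transitively (on each piece) by $J_b(F)$, with point stabilizers of the form $J_b(F) \cap g\mathcal{I}g^{-1}$ for suitable $g \in G(L)$. More precisely, writing $b = \dot{w}'$ (which is permissible after $\sigma$-conjugation since $w'$ is straight and $s_0$-decent), the group $J_b(F)$ acts on $X_{w'}(b)$ and the stabilizer of the base point is $J_b(F) \cap \mathcal{I}$. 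Since $\mathcal{I}$ is an Iwahori subgroup of $G(L)$ and $J_b$ is an inner form of a Levi of $G$, the intersection $J_b(F) \cap \mathcal{I}$ is a parahoric subgroup of $J_b(F)$ — this is a standard fact about intersections of parahorics of $G(L)$ with the $F$-points of the $\sigma$-centralizer, which can be extracted from Bruhat--Tits theory together with the description of the Newton point. Translating by elements of $J_b(F)$ shows every stabilizer of a point of $X_{w'}(b)$ is $J_b(F)$-conjugate to $J_b(F)\cap\mathcal{I}$, hence parahoric; and since $X_{w'}(b)$ is discrete, the stabilizer of an irreducible component coincides with the stabilizer of a point.

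\textbf{Main obstacle.} The hard part will be bookkeeping the interplay between the reduction steps and the passage from components to points. In the generic case of Proposition \ref{DL-reduction}(\ref{item:2 in DL}), $X_w(b)$ breaks into $X_1 \sqcup X_2$ and an irreducible component $Z$ either meets $X_1$ or dominates under the fibration over $X_{sw}(b)$; one has to check that in either case the $J_b(F)$-stabilizer is preserved along the morphism (the one-dimensional fiber bundles and universal homeomorphisms are $J_b(F)$-equivariant, so the stabilizer of $Z$ equals the stabilizer of its image component, but one must verify the image really is a component and that the open/closed dichotomy is respected by the $J_b(F)$-action — which follows since $X_1, X_2$ are $J_b(F)$-stable). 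A second, more technical point is the claim that $J_b(F) \cap \mathcal{I}$ is parahoric: this requires knowing that the Newton cocharacter of $b$ is trivial on the relevant torus or, more precisely, using that $\mathcal I$ corresponds to an alcove in the building of $G(L)$ whose stabilizer intersects $J_b(F)$ in the stabilizer of a point (or face) of the building of $J_b(F)$ — the compatibility of buildings under the inclusion $J_b(F) \hookrightarrow G(L)$. I would cite the relevant statements from \cite{He} and \cite{GoHe}, and from Bruhat--Tits theory, rather than reprove them, and then assemble the argument as above.
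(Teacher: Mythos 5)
Your plan has a genuine gap in the base case. You claim that the Deligne--Lusztig reduction of Proposition \ref{DL-reduction} brings every $w$ with $X_w(b)\neq\emptyset$ to a $\sigma$-\emph{straight} element, for which $X_{w'}(b)$ is discrete with parahoric point stabilizers. But the reduction method only terminates at an element of \emph{minimal length in its own $\sigma$-conjugacy class}, and such elements need not be $\sigma$-straight. There are in general several $\sigma$-conjugacy classes $\mathcal O$ of $W$ with $\Psi(\mathcal O)=[b]$; by Theorem \ref{str-bg} exactly one of them is straight, and only for that one is $X_{w'}(b)$ a discrete $J_b(F)$-set. Step (1) of Proposition \ref{DL-reduction}, $\ell(sw\sigma(s))=\ell(w)$, stays inside a fixed $\sigma$-conjugacy class, and step (2) requires $\ell(sw\sigma(s))<\ell(w)$; neither lets you pass from one class to another. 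So starting from a $w$ whose minimal-length representatives are of the form $ux$ with $u\in W_P$ nontrivial and $x$ $\sigma$-straight (cf.\ \cite[Theorem 2.3]{He}), your chain of reductions terminates at $w'=ux$, not at $x$.

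For such a terminal $w'=ux$ with $u\neq 1$, $X_{w'}(b)$ is \emph{not} a discrete set: after reducing to $b=\dot x$, it is built from the classical Deligne--Lusztig variety
\[
X'=\{\overline p\in\overline{\mathcal P}/\overline{\mathcal I}\mid \overline p^{-1}\sigma_{\dot x}(\overline p)\in \overline{\mathcal I}\,\dot u\,\overline{\mathcal I}\}
\]
in the reductive quotient $\overline{\mathcal P}$ of the parahoric $\mathcal P$, via
\[
X_{ux}(\dot x)\cong J_{\dot x}(F)\times_{J_{\dot x}(F)\cap\mathcal P}X^{\mathcal P}_{ux}(\dot x).
\]
To conclude that the stabilizer of an irreducible component equals $J_{\dot x}(F)\cap\mathcal P(\mathcal O_L)$, one needs the \emph{irreducibility} of $X'$, which follows from the minimality of $P$ together with \cite[Corollary 1.2]{Go}. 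This irreducibility statement is the crux of the base case and is entirely absent from your proposal. Your second technical point (parahoricity of $J_b(F)\cap\mathcal I$ when $b=\dot x$ for $\sigma$-straight $x$) is correct, but it only covers the straight class; the harder case with $u\neq 1$ is where the real content lies.

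The inductive step you sketch is essentially right and matches the paper: the dichotomy $X_w(b)=X_1\sqcup X_2$ is $J_b(F)$-stable, a component of $X_{w'}(b)$ has open dense intersection with $X_1$ or $X_2$, and the $J_b(F)$-equivariant fibrations preserve stabilizers, so induction on $\ell(w)$ applies. To repair the proposal, replace the claim that you can reach a $\sigma$-straight element by the correct statement that you reach a minimal-length element, and then supply the $P$-parahoric description and the irreducibility of $X'$ for that case.
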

\begin{proof}
	We prove this by induction on $\ell(w)$. Assume first that $w\in W$ is of minimal length in its $\sigma$-conjugacy class. Then $X_w(b)\neq \emptyset$ implies $\Psi(w)=b$, i.e.~$w$ and $b$ represent the same $\sigma$-conjugacy class in $B(G)$,  by \cite[Theorem 3.5]{He}. In this case, by \cite[Theorem 4.8]{He} and its proof, there is an explicit description of the stabilizer of an irreducible component which we recall.
	
	Let ${}^PW\subset W$ (resp.~$W^{\sigma(P)} \subset W$) denote the set of minimal representatives of the cosets $W_P\backslash W$ (resp.~$W/ W_{\sigma(P)}$). Let ${} ^P W ^{\sigma (P)}$ be the intersection ${}^P W \cap W^{\sigma (P)}$ (cf.~\cite[\S 1.6]{He}). By \cite[Theorem 2.3]{He}, there exists $P\subset \mathbb{S}$, $x\in {}^PW^{\sigma(P)}$, and $u\in W_P$, such that:
	\begin{itemize}
		\item $W_P$ is finite.
		\item $x$ is $\sigma$-straight and $x^{-1}\sigma(P)x=P$.
		
	\end{itemize}
	In this case, there is a $J_b(F)$-equivariant universal homeomorphism between $X_w(b)$ and $X_{ux}(b)$, and we have
	$\Psi(ux)=\Psi(w)$, see \cite[Corollary 4.4]{He}. Hence we may assume $w=ux$. By \cite[Lemma 3.2]{He} we have $\Psi(x)=\Psi(w)$, and therefore we may assume $b=\dot{x}$. Upon replacing $P$, we may assume $P$ is minimal with respect to a fixed choice of $x$ and $u$ satisfying the above properties.
	
	Let $\mathcal{P}$ denote the parahoric subgroup of $G(L)$ corresponding to $P$. The proof of \cite[Theroem 4.8]{He} shows that$$X_{ux}(\dot{x})\cong J_{\dot{x}}(F)\times_{J_{\dot{x}}(F)\cap\mathcal{P}}X_{ux}^{\mathcal{P}}(\dot{x}),$$ where $X_{ux}^\mathcal{P}(\dot{x})$ is the reduced $k$-subscheme of the (perfectly) finite type scheme $L^+\mathcal P/L^+\mathcal I$ whose $k$-points are $$X_{ux}^\mathcal{P}(\dot{x}) (k) = \{g\in \mathcal{P} (\Ok_L)/\mathcal{I}(\Ok_L)\mid g^{-1}\dot{x}\sigma(g)\in\mathcal{I}(\Ok_L)\dot{u}\dot{x}\mathcal{I}(\Ok_L)\}. $$ Thus it suffices to show the stabilizer in $J_{\dot{x}}(F)\cap \mathcal{P}(\Ok_L)$ of an irreducible component of $X^{\mathcal{P}}_{ux}(\dot{x})$ is a parahoric subgroup of $J_{\dot x} (F)$.

	Let $\overline{\mathcal{P}}$ denote the algebraic group over $k$, which is the reductive quotient of the special fiber of $\mathcal{P}$. Recall its Weyl group is naturally identified with $W_P$. Then $\mathcal{I}$ is the pre-image of a Borel subgroup $\overline{\mathcal{I}}$ of $\overline {\mathcal P}$ under the reduction map $\mathcal{P}\rightarrow \overline{\mathcal{P}}$. Let $\sigma_{\dot{x}}$ denote the automorphism of $\overline{\mathcal{P}}$ given by $\overline{p}\mapsto \dot{x}^{-1}\sigma(\overline{p})\dot{x}$. Then the natural map $L^+\mathcal{P}/L^+\mathcal{I}\rightarrow \overline{\mathcal{P}}/\overline{\mathcal{I}}$ induces an identification between $X^{\mathcal{P}}_{\dot{u}\dot{x}}(\dot{x})$ and (the perfection of) the finite type Deligne--Lusztig variety $$X'=\{\overline{p}\in \overline{\mathcal{P}}/\overline{\mathcal{I}}\mid \overline{p}^{-1}\sigma_{\dot{x}}(\overline{p})\in \overline{\mathcal{I}}\dot{u}\overline{\mathcal{I}}\}.$$
	
	The natural projection map $\mathcal{P}\rightarrow \overline{\mathcal{P}}$ takes $J_{\dot{x}}(F)\cap\mathcal{P}(\Ok_L)$ to $\overline{\mathcal{P}}^{\sigma_{\dot{x}}}$, and the action of $J_{\dot{x}}(F)\cap\mathcal{P}(\Ok_L)$ factors through this map. Since $P$ is minimal satisfying $u\in W_P$ and since $x^{-1}\sigma(P)x=P$,
	it follows that $u$ is not contained in any $\sigma_{\dot{x}}$-stable parabolic subgroup of $W_P$. Therefore by \cite[Corollary 1.2]{Go}, $X'$ is irreducible. It follows that the stabilizer of the irreducible component $1\times X^{\mathcal{P}}_{ux}(\dot{x})\subset X_{ux}(\dot{x})$ is $J_{\dot{x}}(F)\cap \mathcal{P}(\Ok_L)$, which is a parahoric of $J_{\dot{x}}(F)$. It also follows that the stabilizer of any other irreducible component of $ X_{ux}(\dot{x})$ is a conjugate parahoric.
	
	Now we assume $w$ is not of minimal length in its $\sigma$-conjugacy class. By \cite[Corollary 2.10]{HeNie}, there exists $w'\tilde{\approx}_{\sigma} w$ and $s\in \mathbb{S}$ such that $sw'\sigma(s)<w'$. Then by Proposition \ref{DL-reduction}, there is a  $J_b(F)$-equivariant universal homeomorphism between $X_w(b)$ and $X_{w'}(b)$.  Thus it suffices to prove the result for $X_{w'}(b)$.
	
	Let $Z'\in \Sigma  (X_{w'}(b))$, and let $X_1$ and $X_2$ be as in Proposition \ref{DL-reduction}. We have either $Z'\cap X_1$ or $Z'\cap X_2$ is open dense in $Z'$. Assume $Z'\cap X_1$ is open dense in $Z'$; the other case is similar. Since $J_b(F)$ preserves $X_1$, it suffices to show that the stabilizer of $Z'\cap X_1$ is a parahoric. From the description of $X_1$, there exists an element $V\in\Sigma(X_{sw'\sigma(s)}(b))$ such that $Z' \cap X_1 \rightarrow V$ is a fibration and is $J_b(F)$-equivariant. Therefore by induction, the stabilizer of $V$ is a parahoric of $J_b(F)$, and hence so is the stabilizer of $Z'\cap X_1$.
\end{proof}

\subsection{Independence of $F$ and volumes of stabilizers}
The second main result of this section is that the set of $J_b(F)$-orbits of irreducible components of $X_\mu(b)$ and the volume of the stabilizer of an irreducible component depend only on the affine root system together with the action of the Frobenius. In particular, it is \emph{independent of $F$} in a manner which we will now make precise. This fact is a key observation that we will need for later applications.

By \cite[\S6]{He}, the set of $J_b(F)$-orbits of top dimensional irreducible components of $X_w(b)$ depends only on the affine root system of $G$ together with the action of $\sigma$. This is proved by using the Deligne--Lusztig reduction method to relate the number of orbits to coefficients of certain \emph{class polynomials}, which can be defined purely in terms of the affine root system for $G$, see \emph{loc.~cit.}~for details. In view of the fibration $$\pi: X(\mu,b)^K\rightarrow X_\mu(b) $$
it follows that the same is true for $X_\mu(b) $. In particular, the number $\mathscr N(\mu,b)$ depends only on the affine root system and hence does not depend on the local field $F$.

We will need the following stronger result. To state it, we introduce some notations. Let $F'$ be another local field with residue field $\mathbb{F}_{q'}$. Let $G'$ be a connected reductive group over $\Ok_{F'}$. Let $T' \subset B' \subset G'_{F'}$ be analogous to $T \subset B \subset G_F$ as in \S \ref{subsec:basic notations}. Define the hyperspecial vertex $\mathfrak s'$, the apartment $V'$, and the anti-dominant chamber $ \mathfrak a'$ analogously to $\mathfrak s, V , \mathfrak a$. Assume there is an identification $V \cong V'$ that maps $X_*(T)^+$ into $X_*(T)^+$, maps $\mathfrak a$ into $\mathfrak a'$, maps $\mathfrak s$ to $\mathfrak s'$, and induces a $\sigma$-$\sigma'$ equivariant bijection between the affine root systems. Here $\sigma'$ denotes the $q'$-Frobenius acting on the affine roots system of $G'$. We fix such an identification once and for all. To the pair $(\mu,b)$, we attach a corresponding pair $(\mu',b')$ for $G'$ as follows. The cocharacter $\mu' \in X_*(T') ^+$ is defined to be the image of $\mu$ under the identification $X_*(T)^+ \cong X_*(T')^+$. To construct $b'$, we note that since $b$ is basic, it is represented by a unique $\sigma$-conjugacy class in $\Omega$. The identification fixed above induces an identification of Iwahori--Weyl groups $W\cong W'$, which induces a bijection on length-zero elements. Then $b'$ is represented by the corresponding length-zero element in $W'$.

By our choice of $b'$, the affine root systems of $J_b$ and $J_{b'}$ together with the actions of Frobenius are identified. We thus obtain a bijection between standard parahoric subgroups of $J_b$ and those of $J_{b'}$. Let $\mathcal{J}\subset J_b(F)$  and $\mathcal{J}'\subset J_{b'}(F')$ be parahoric subgroups. We say that $\mathcal{J}$ and $\mathcal{J}'$ are {\it conjugate}, if the standard parahoric conjugate to $\mathcal J$ is sent to the standard parahoric conjugate to $\mathcal J'$ under the above-mentioned bijection. In the following, we write $J: = J_b(F)$ and $J' : = J_{b'} (F')$.

\begin{thm}\label{prop:ind of p} There is a bijection $$J\backslash \Sigma^{\topp}(X_\mu(b) )\isom J'\backslash \Sigma^{\topp}(X_{\mu'}(b'))$$ with the following property. If $Z\in\Sigma^{\topp}(X_\mu(b) )$ and $Z'\in\Sigma^{\topp}(X_{\mu'}(b'))$ are such that $JZ$ is sent to $J'Z'$, then the parahoric subgroups $\Stab_Z(J)\subset J$ and $\Stab_{Z'}(J') \subset J'$ are conjugate.
\end{thm}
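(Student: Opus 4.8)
The plan is to reduce to the Iwahori level and then run the Deligne--Lusztig reduction of Proposition~\ref{DL-reduction} in parallel for the two triples $(G,b,F)$ and $(G',b',F')$, using that every step of the reduction, and also its base case, is governed purely by the combinatorics of $(W,\mathbb S,\sigma)$, which by hypothesis is identified with $(W',\mathbb S',\sigma')$. First I would apply Proposition~\ref{prop:projection}: the projection $\pi: \mathcal{FL}\to \Gr_G$ identifies $\Sigma^{\topp}(X_\mu(b))$ with $\Sigma^{\topp}(X(\mu,b)^K)$ $J$-equivariantly (the fibers of $\pi$ are perfections of flag varieties, of constant dimension), and similarly for $G'$. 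Since $X(\mu,b)^K=\bigcup_{w\in W_0 t^\mu W_0}X_w(b)$ is a locally closed stratification whose index set is identified with the corresponding one for $G'$, and since every top-dimensional $Y\in\Sigma^{\topp}(X(\mu,b)^K)$ meets a unique stratum $X_w(b)$ in a dense open subset which is then a component of $X_w(b)$ with $\Stab_J(Y)=\Stab_J(Y\cap X_w(b))$, it suffices to set up, stratum by stratum, a bijection of the relevant $J$-orbits of components of $X_w(b)$ with those of the corresponding stratum for $G'$, compatible both with conjugacy of stabilizer parahorics and with the determination of which components underlie top-dimensional components of $X(\mu,b)^K$, resp.\ $X(\mu',b')^K$. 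This last determination is exactly the dimension and leading-term analysis already performed in \cite[\S6]{He} for the \emph{counting} statement; what is needed here is to enhance it so as to also track stabilizers.

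For the reduction I would fix once and for all a deterministic rule that, to any $w\in W$ not of minimal length in its $\sigma$-conjugacy class, assigns (using \cite[Corollary 2.10]{HeNie}) an element $w_1\,\tilde{\approx}_\sigma\, w$ and a simple reflection $s$ with $sw_1\sigma(s)<w_1$; being a choice internal to $(W,\mathbb S,\sigma)=(W',\mathbb S',\sigma')$, it applies simultaneously to $G$ and $G'$. Iterating Proposition~\ref{DL-reduction} yields a finite rooted tree of elements of $W$ with the varieties $X_w(b)$ attached, each edge carrying a $J$-equivariant morphism which is either a universal homeomorphism or the composition of a Zariski-locally trivial affine-line bundle with a universal homeomorphism, together with the identical tree and $J'$-equivariant morphisms for $G'$. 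A universal homeomorphism induces a $J$-equivariant bijection on $\Sigma$; for an affine-line bundle the preimage of an irreducible component is again irreducible, so after passing to whichever of the two pieces $X_1,X_2$ of Proposition~\ref{DL-reduction}(\ref{item:2 in DL}) is dense in a given component one again gets a $J$-equivariant correspondence of components with $\Stab_J$ preserved. Dimensions change along edges by amounts read off from the combinatorics, so the property of underlying a top-dimensional component of $X(\mu,b)^K$ is transported combinatorially as well.

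At a leaf of the tree $w$ is of minimal length in its $\sigma$-conjugacy class $\mathcal O$, and $X_w(b)\neq\emptyset$ if and only if $\Psi(\mathcal O)=[b]$, a condition depending only on $(W,\sigma)$; since $b$ and $b'$ are basic and correspond to the same length-zero element of $W=W'$, this condition matches for $G$ and $G'$. When it holds, the proof of Proposition~\ref{prop:stab parahoric} produces combinatorial data $P\subset\mathbb S$, $x\in{}^PW^{\sigma(P)}$ and $u\in W_P$ (with $P$ minimal) and a $J_b(F)$-equivariant isomorphism $X_w(b)\cong J_b(F)\times_{J_b(F)\cap\mathcal P(\Ok_L)}X'$, where $X'$ is an irreducible (perfection of a) Deligne--Lusztig variety in $\overline{\mathcal P}/\overline{\mathcal I}$; thus $X_w(b)$ has exactly one $J$-orbit of components, with stabilizer the parahoric $J_b(F)\cap\mathcal P(\Ok_L)$, determined by $P$. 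As the same $P$ governs the corresponding leaf for $G'$, the resulting stabilizer parahorics of $J$ and $J'$ are conjugate in the sense defined just before the theorem. Threading the matching at the leaves back up through the tree via the (matched) edge morphisms then yields the desired bijection $J\backslash\Sigma^{\topp}(X_\mu(b))\isom J'\backslash\Sigma^{\topp}(X_{\mu'}(b'))$ with the asserted compatibility of stabilizers.

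The main obstacle is the bookkeeping in the reduction and at the leaves: one must verify that the reduction tree, together with its $J$-equivariant edge morphisms, genuinely induces a \emph{bijection} --- rather than merely the equality of cardinalities established in \cite[\S6]{He} --- between the $J$-orbits of components of the leaves that contribute top-dimensional components of $X(\mu,b)^K$ and the analogous data for $G'$, and that the conjugacy class of the stabilizer parahoric is preserved along every edge; the delicate case is that of the affine-line-bundle reductions, where one uses that the bundle is Zariski-locally trivial, so that the preimage of an irreducible component is a component with the same $J$-stabilizer.
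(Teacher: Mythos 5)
Your proposal follows essentially the same route as the paper: reduce to Iwahori level via the flag-variety fibration, then establish the matching of $J$-orbits and stabilizer parahorics for the individual $X_w(b)$ by running the Deligne--Lusztig reduction (Proposition~\ref{DL-reduction}) in parallel for $G$ and $G'$, with the base case (minimal-length $w$) handled by the explicit description in Proposition~\ref{prop:stab parahoric}. The paper organizes this as an induction on $\ell(w)$ (Lemma~\ref{lem:bijection for Iwahori}), fixing choices of $w_1$ and a universal homeomorphism at each step and obtaining bijectivity by symmetry between $G$ and $G'$, rather than via your ``deterministic rule,'' but this is a presentational, not mathematical, difference.
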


The theorem will essentially follow from the next lemma.

\begin{lem}\label{lem:bijection for Iwahori}
	Let $w'\in W'$ correspond to $w\in W$ under the identification $W\cong W'$. Then there is a  bijection $$\Theta: J\backslash \Sigma^{\topp}(X_w(b)) \isom J' \backslash \Sigma^{\topp}(X_{w'}(b'))$$ with the following property. If $Z \in \Sigma ^{\topp} (X_w(b))$ and $Z' \in \Sigma ^{\topp} (X_{w'}(b'))$ are such that $\Theta( JZ)  = J' Z'$, then $\Stab_Z(J)$ and $\Stab_{Z'}(J')$ are conjugate.
\end{lem}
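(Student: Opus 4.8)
The plan is to reduce Lemma \ref{lem:bijection for Iwahori} to the Deligne--Lusztig reduction method of He \cite{He} and its refinement in \cite{GoHe}, tracking irreducible components and their $J$-stabilizers throughout the reduction. The reduction in Proposition \ref{DL-reduction} is entirely combinatorial: whether $\ell(sw\sigma(s))$ equals, exceeds, or is less than $\ell(w)$ depends only on the Coxeter datum $(W_a,\mathbb S)$ together with the action of $\sigma$, which by hypothesis is identified with $(W'_a,\mathbb S')$ together with the action of $\sigma'$. Therefore any sequence of reduction steps $w = w_0 \xrightarrow{s_1}_\sigma w_1 \xrightarrow{s_2}_\sigma \cdots$ for $(G,b)$ is mirrored by the corresponding sequence $w' = w'_0 \xrightarrow{s_1}_{\sigma'} w'_1 \cdots$ for $(G',b')$, where at each stage the set of (top-dimensional) irreducible components of $X_{w_i}(b)$, together with the combinatorial type of the $J_b(F)$-action on it, matches that of $X_{w'_i}(b')$.

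First I would set up the induction on $\ell(w)$, exactly paralleling the proof of Proposition \ref{prop:stab parahoric}. In the inductive step, when $w$ is not of minimal length in its $\sigma$-conjugacy class, I pick $w_1 \ \tilde\approx_\sigma w$ and $s$ with $sw_1\sigma(s) < w_1$ as in \cite[Corollary 2.10]{HeNie}; the same choice works on the $G'$ side by the identification of Iwahori--Weyl groups and Frobenius actions. Proposition \ref{DL-reduction} gives the $J$-equivariant universal homeomorphism $X_w(b) \to X_{w_1}(b)$ (which induces a bijection on top components preserving stabilizers) and then the decomposition $X_{w_1}(b) = X_1 \sqcup X_2$ with $J$-equivariant fibrations (composed with universal homeomorphisms) $X_1 \to X_{sw_1\sigma(s)}(b)$ and $X_2 \to X_{sw_1}(b)$ onto lower-length targets. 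Since fibrations with affine-space fibers and universal homeomorphisms both induce dimension-preserving, stabilizer-preserving bijections on irreducible components, and since the dimensions of $X_{sw_1\sigma(s)}(b)$ and $X_{sw_1}(b)$ (hence which of $X_1,X_2$ contributes top-dimensional components of $X_{w_1}(b)$) are again combinatorially determined — they equal the dimensions of the mirror varieties for $G'$ by the inductive hypothesis applied to those lower-length elements — the bijection $\Theta$ is assembled by gluing the inductively constructed bijections on $X_{sw_1\sigma(s)}(b)$ and $X_{sw_1}(b)$ with their primed counterparts.

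For the base case, $w$ of minimal length: then $X_w(b) = \emptyset$ unless $\Psi(w) = [b]$, and emptiness is detected combinatorially (it matches emptiness of $X_{w'}(b')$ since $\Psi(w')=[b']$ by our choice of $b'$). When nonempty, I follow He's analysis recalled in the proof of Proposition \ref{prop:stab parahoric}: there is a $J$-equivariant universal homeomorphism $X_w(b) \cong X_{ux}(\dot x)$, an isomorphism $X_{ux}(\dot x) \cong J_{\dot x}(F) \times_{J_{\dot x}(F)\cap\mathcal P} X^{\mathcal P}_{ux}(\dot x)$, and $X^{\mathcal P}_{ux}(\dot x)$ is the perfection of a classical Deligne--Lusztig variety $X'$ in $\overline{\mathcal P}/\overline{\mathcal I}$ for the automorphism $\sigma_{\dot x}$. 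The data $P$, $x$, $u$, the group $\overline{\mathcal P}$ and the automorphism $\sigma_{\dot x}$ all depend only on the affine root system and the Frobenius action, hence are identified for $G'$; the set of irreducible components of a classical Deligne--Lusztig variety and the induced action of $\overline{\mathcal P}^{\sigma_{\dot x}}$ on them depend only on $(\overline{\mathcal P}, \overline{\mathcal I}, \sigma_{\dot x})$ up to the relevant isogeny, not on the residue characteristic. One reads off that the $J$-stabilizers of the components of $X_{ux}(\dot x)$ are the conjugates of $J_{\dot x}(F)\cap \mathcal P(\Ok_L)$, which is the standard parahoric attached to $P$, and likewise on the $G'$ side with the same $P$; this gives the base-case bijection with the asserted stabilizer compatibility.

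The main obstacle I anticipate is bookkeeping rather than a genuine difficulty: one must check that all the geometric operations appearing — universal homeomorphisms, Zariski-locally trivial fibrations with affine-line fibers, the quotient construction $J_{\dot x}(F)\times_{J_{\dot x}(F)\cap\mathcal P}(-)$, and passage between a scheme and its perfection — induce bijections on (top-dimensional) irreducible components that are simultaneously equivariant for $J$ and preserve the parahoric conjugacy class of stabilizers, and that the combinatorial identification of the whole reduction tree for $(G,b)$ with that for $(G',b')$ is compatible with these bijections at every node. Once this formalism is in place, Theorem \ref{prop:ind of p} follows by applying Lemma \ref{lem:bijection for Iwahori} to each $w \in W_0 t^\mu W_0$ and invoking Proposition \ref{prop:projection} to transfer from $X(\mu,b)^K$ to $X_\mu(b)$, noting that the decomposition $X(\mu,b)^K = \bigcup_{w\in W_0 t^\mu W_0} X_w(b)$ and which strata carry top-dimensional components are again determined by the identified combinatorial data.
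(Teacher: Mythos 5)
Your proposal is correct and follows essentially the same route as the paper's proof: induction on $\ell(w)$, with the base case handled via He's explicit description (as recalled in Proposition~\ref{prop:stab parahoric}) showing both quotient sets are singletons with conjugate parahoric stabilizers, and the inductive step carried out by mirroring the $\sigma$-conjugation/length-reduction steps of Proposition~\ref{DL-reduction} on both sides using the identification of Iwahori--Weyl groups. The only cosmetic difference is that you foreground the combinatorial dimension bookkeeping a bit more explicitly than the paper does, but the logical content is the same.
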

\begin{proof}
	We induct on $\ell(w)$. First assume $w$ is minimal length in its $\sigma$-conju\-gacy class. Then by \cite{He}, $X_w(b)\neq\emptyset$  if and only if $\Psi(w)=b$, which holds if and only if $\Psi(w')=b'$, if and only if $X_{w'}(b')\neq\emptyset$. If this holds, then by \cite{He} the group $J$ acts transitively on $\Sigma^{\topp}(X_w(b))$, and similarly the group $J'$ acts transitively on $\Sigma^{\topp}(X_{w'}(b'))$. Hence the two sets $J\backslash \Sigma^{\topp}(X_w(b)) $ and $J' \backslash \Sigma^{\topp}(X_{w'}(b'))$ are both singletons. Let $\Theta$ be the unique map between them. The desired conjugacy of the stabilizers follows from the computation of $\Stab_Z(J)$ in Proposition \ref{prop:stab parahoric}.
	
	Now assume $w$ is not of minimal length in its $\sigma$-conjugacy class. Let $Z\in \Sigma^{\topp}(X_{w}(b))$. Then as in the proof of Proposition \ref{prop:stab parahoric}, there exists $w_1\tilde{\approx}_{\sigma}w$ and $s\in\mathbb{S}$ such that $sw_1\sigma(s)<w_1$. Then $X_w(b)$ is universally homeomorphic to $X_{w_1}(b)$. We fix such a universal homeomorphism and we obtain a corresponding element $Z_1\in\Sigma^{\topp}(X_{w_1}(b))$. By Proposition \ref{DL-reduction}, there exists $U\in \Sigma^{\topp}(X_{sw_1\sigma(s)}(b))$ or $U\in  \Sigma^{\topp}(X_{sw_1} (b))$ such that $Z_1$ is universally homeomorphic to a fiber bundle over $U$.
	We assume $U\in \Sigma^{\topp}(X_{sw_1\sigma(s)}(b))$; the other case is similar. Then $\Stab_Z(J)=\Stab_{U}(J)$. Note that the choice of $U$ depends on the choice of $w_1$ and a universal homeomorphism $X_{w}(b)\cong X_{w_1}$. However upon fixing these choices, the $J$-orbit of $U$ is canonically associated to the $J$-orbit of $Z$.
	
	By the induction hypothesis, we have a bijection
	$$\Theta_1: J\backslash \Sigma ^{\topp} (X_{sw_1 \sigma (s)}(b)) \isom J' \backslash\Sigma ^{\topp} (X_{s'w_1' \sigma' (s')}(b')),$$ where $s', w_1 ' \in W'$ correspond to $s,w_1$ respectively.
	Choose $$U' \in \Sigma ^{\topp} (X_{s'w_1' \sigma' (s')}(b'))$$ such that $J' U'=\Theta_1(JU).$ By the induction hypothesis, $\Stab _U(J)$ is conjugate to $\Stab_{U'}(J')$. Reversing the above process we obtain
	$Z'\in \Sigma^{\topp}(X_{w'}(b')))$ such that $\Stab_{U'} (J')=\Stab_{Z'}(J')$. Again the $J'$-orbit of $Z'$ is canonically associated to $U'$ upon fixing the universal homeomorphism $X_{w'(b')}\cong X_{w_1'}(b')$.
	
	We define the map $\Theta$ to send $JZ$ to $J'Z'$. Switching the roles of $G$ and $G'$, we obtain the inverse map of $\Theta$, and so $\Theta$ is a bijection as desired.
\end{proof}

\begin{proof}[Proof of Theorem \ref{prop:ind of p}] For each $w\in W$, fix a bijection $$\Theta:J\backslash\Sigma^{\topp}(X_w(b)) \isom  J' \backslash \Sigma ^{\topp} (X_{w'}(b')) $$ as in Lemma \ref{lem:bijection for Iwahori}. Let $Z\in\Sigma^{\topp}(X_{\mu}(b))$. Then the pre-image $\pi^{-1}(Z)$ under the projection $\pi:X(\mu,b)^K\rightarrow X_\mu(b) $ is a top dimensional irreducible component of $X(\mu,b)^K$. Hence there exists a unique $w\in W$ such that $X_w(b)\cap \pi^{-1}(Z)$ is open dense in $Z$. Moreover we have $X_w(b)\cap \pi^{-1}(Z)\in \Sigma^{\topp}(X_w(b))$. Write $Y$ for $X_w(b)\cap \pi^{-1}(Z)$, and choose $Y' \in \Sigma ^{\topp} (X_{w'}(b'))$ such that $\Theta (JY) = J' Y'$. Then since $\dim X(\mu,b)^K=\dim X(\mu',b')^{K'}$, the closure of $Y'$ in $X(\mu',b')^{K'}$ gives an element of $\Sigma^{\topp}(X(\mu',b')^{K'})$, whose $J'$-orbit is independent of the choice of $Y'$. Taking the image of the last element under the projection $X(\mu',b')^{K'}\rightarrow X_{\mu'}(b')$ we obtain an element $Z'\in \Sigma^{\topp}(X_{\mu'}(b'))$ by dimension reasons, and the orbit $J'Z'$ is independent of the choice of $Y'$. Moreover $\Stab_Z(J)$ is conjugate to $\Stab_{Z'}(J')$ since $\Stab_Y (J)$ is conjugate to $\Stab_{Y'} (J')$. The association $JZ \mapsto J'Z'$ gives a well-defined map
	$$J\backslash \Sigma^{\topp}(X_\mu(b) )\To J'\backslash \Sigma^{\topp}(X_{\mu'}(b')) $$ which satisfies the condition in the proposition. Switching the roles of $G$ and $G'$ we obtain the inverse map.
\end{proof}





For later applications we need some information on the sizes of the stabilizers appearing in Theorem \ref{prop:ind of p}.
We now assume that $b$ is basic, so that $G$ and $J_b$ are inner forms. Since $b$ is basic we may choose a representative $\dot{\tau}$ for $b$ where $\tau\in\Omega\subset W$. Using this one may identify the Iwahori--Weyl groups for $J_b$ and $G$ respecting the base alcoves. However the Frobenius action on $W$ (or $\mathbb{S}$), defined by $J_b$, is given by $\tau\sigma$, where $\tau$ acts via left multiplication. See for example \cite[\S5]{HZ} for more details. Since $G$ and $J_b$ are inner forms, the choice of a Haar measure on $G(F)$ determines a Haar measure on $J_b(F)$, and vice versa, see for example \cite[\S1]{kottTama}.

\begin{defn} We fix the Haar measure on $J_b(F)$ such that the volume of $G(\Ok_F)$ is $1$. For each $Z \in \Sigma ^{\topp} (X_{\mu} (b))$, we denote by $\vol (Z)$ the volume of the compact open subgroup $\Stab_Z (J_b(F))$ of $J_b(F)$ (see Theorem \ref{thm:parahoric}) under this Haar measure.
\end{defn}

\begin{cor}\label{cor:vol-ind-p}
	For each $Z\in \Sigma^{\topp}(X_\mu(b) )$, there exists a rational function $R(t)\in \mathbb{Q}(t)$ such that $$\vol(Z)=R(q).$$
	Moreover this rational function satisfies $R(0)=e(J_b)$ and is independent of the local field $F$. Here $e(J_b)$ is the Kottwitz sign $(-1) ^{\mathrm{rk}_F J_b - \mathrm{rk}_F G}$. More precisely, in the notation of Theorem \ref{prop:ind of p}, if $J'Z'$ corresponds to $JZ$, then $\vol(Z')=R(q')$.
\end{cor}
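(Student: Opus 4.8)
# Proof Proposal for Corollary \ref{cor:vol-ind-p}

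The plan is to leverage Theorem \ref{prop:ind of p} together with an explicit volume computation for parahoric subgroups in terms of the residue cardinality. First I would recall that, by our normalization, the Haar measure on $J_b(F)$ assigns volume $1$ to $G(\Ok_F)$ (a hyperspecial maximal compact of the inner form $G(F)$), and that the transfer of measures between inner forms introduces exactly the Kottwitz sign $e(J_b) = (-1)^{\mathrm{rk}_F J_b - \mathrm{rk}_F G}$; this is the content of \cite[\S1]{kottTama}. By Theorem \ref{thm:parahoric}, the stabilizer $\Stab_Z(J_b(F))$ is a parahoric subgroup $\mathcal{J}_Z$ of $J_b(F)$, so computing $\vol(Z)$ amounts to computing the ratio of the measure of $\mathcal{J}_Z$ to that of a hyperspecial parahoric of $J_b(F)$, times $e(J_b)$.

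Next I would invoke the standard mass formula for parahoric subgroups: for a parahoric subgroup $\mathcal{P}$ of a reductive group $H$ over $F$ with residue field $\mathbb{F}_q$, the ratio $\vol(\mathcal{P})/\vol(\mathcal{P}_{\mathrm{hs}})$ (relative to a hyperspecial $\mathcal{P}_{\mathrm{hs}}$, when one exists) is $\lvert \overline{\mathcal{P}}(\mathbb{F}_q)\rvert / \lvert \overline{\mathcal{P}}_{\mathrm{hs}}(\mathbb{F}_q)\rvert$, where $\overline{\mathcal{P}}$ denotes the reductive quotient of the special fiber of the parahoric group scheme. Since $\overline{\mathcal{P}}$ is a connected reductive group over $\mathbb{F}_q$ whose root datum and $q$-Frobenius action are determined purely by the affine root system of $J_b$ together with its Frobenius — which by construction of $b'$ (see the discussion preceding Theorem \ref{prop:ind of p}) coincides with that of $J_{b'}$ — the cardinality $\lvert \overline{\mathcal{P}}(\mathbb{F}_q)\rvert$ is given by evaluating at $q$ a fixed polynomial in one variable with $\Z$-coefficients (a product of the form $q^N \prod_i (q^{d_i} - \zeta_i)$ coming from the standard formula $\lvert \overline{\mathcal{P}}(\mathbb{F}_q)\rvert = q^{\dim}\prod(q^{d_i}-1)$ twisted by the Frobenius eigenvalues on $X^*$). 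Hence $\vol(Z) = e(J_b) \cdot P_{\mathcal{J}_Z}(q)/P_{\mathrm{hs}}(q)$ for polynomials $P_{\mathcal{J}_Z}, P_{\mathrm{hs}} \in \Z[t]$ depending only on the affine root system with Frobenius, and we may set $R(t) := e(J_b)\cdot P_{\mathcal{J}_Z}(t)/P_{\mathrm{hs}}(t) \in \Q(t)$.

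For the value at $0$: each of $P_{\mathcal{J}_Z}$ and $P_{\mathrm{hs}}$ has constant term $(-1)^{r}$ where $r$ is the $\mathbb{F}_q$-rank of the corresponding reductive quotient (the constant term of $q^N\prod(q^{d_i}-\zeta_i)$ is $0$ if $N>0$ and is $\prod(-\zeta_i)$ if $N = 0$; one checks $N=0$ exactly for the reductive quotient, and the product of the $-\zeta_i$ over a Frobenius orbit is $(-1)^{|\text{orbit}|}$, so altogether $\pm 1$, with sign $(-1)^{\mathrm{rk}}$). Since a hyperspecial parahoric has reductive quotient of the same $\mathbb{F}_q$-rank as $J_b$ itself, while a general parahoric $\mathcal{J}_Z$ has reductive quotient of the same rank (parahorics contain a maximal torus), the two signs cancel and $R(0) = e(J_b)$. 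The independence of $F$ and the compatibility statement $\vol(Z') = R(q')$ under the bijection of Theorem \ref{prop:ind of p} are then immediate: that theorem tells us $\Stab_Z(J)$ and $\Stab_{Z'}(J')$ are conjugate parahorics in the precise combinatorial sense, so they have the same associated polynomials $P_{\mathcal{J}}$, and $e(J_b) = e(J_{b'})$ since the two inner forms have the same affine root system with Frobenius; thus the same rational function $R(t)$ computes both volumes, evaluated at $q$ and $q'$ respectively.

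The main obstacle I anticipate is bookkeeping the measure normalizations precisely — in particular pinning down that the transfer factor between $G(F)$ and $J_b(F)$ is exactly $e(J_b)$ under the convention $\vol(G(\Ok_F)) = 1$, and matching this with the Gross-style motive/mass normalization so that the hyperspecial parahoric of $J_b$ gets volume $e(J_b)$ (not $1$). Once that sign is correctly installed, the polynomiality and the evaluation $R(0) = e(J_b)$ follow from the elementary structure of the finite-group-of-Lie-type order polynomial; the conjugacy-of-stabilizers input needed for independence of $F$ is already supplied by Theorem \ref{prop:ind of p}.
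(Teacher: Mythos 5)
Your proposal has a genuine gap in the volume computation, and it obscures the mechanism by which the sign $e(J_b)$ arises. The formula you invoke, $\vol(\mathcal{P})/\vol(\mathcal{P}_{\mathrm{hs}}) = \lvert\overline{\mathcal{P}}(\mathbb{F}_q)\rvert / \lvert\overline{\mathcal{P}}_{\mathrm{hs}}(\mathbb{F}_q)\rvert$, is not the correct mass formula: the volume ratio of two parahorics containing a common Iwahori is the ratio $[\mathcal{P}:\mathcal{I}]/[\mathcal{P}_{\mathrm{hs}}:\mathcal{I}]$, i.e.\ a ratio of \emph{flag variety} point counts $\sum_{w\in W^{\sigma}_{\mathcal{P}}} q^{\ell(w)}$, not of group orders $\lvert\overline{\mathcal{P}}(\mathbb{F}_q)\rvert$. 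The two differ by powers of $q$ coming from the unipotent radicals of the reductive quotients, and these powers are different for different parahorics. This is precisely what wrecks your analysis of $R(0)$: the order polynomial $\lvert\overline{\mathcal{P}}(\mathbb{F}_q)\rvert = q^{N}\prod_i(q^{d_i}-\zeta_i)$ has $N$ equal to the number of positive roots of the reductive quotient, which is nonzero unless that quotient is a torus, so the ``constant term $(-1)^{\mathrm{rk}}$'' you extract is not actually the constant term. By contrast the flag variety count always has constant term $1$.

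There is a second structural issue: your argument compares $\Stab_Z J$ to a hyperspecial parahoric of $J_b(F)$, but $J_b$ is typically a ramified inner form (for $b$ not unramified) and has no hyperspecial subgroup. The normalization in the paper is relative to $G(\Ok_F)\subset G(F)$, i.e.\ a hyperspecial in a \emph{different} group, so one necessarily has to cross between $J_b(F)$ and $G(F)$. The paper does this by inserting the Iwahori: it writes $\vol(\mathcal{K}_J)$ as a product of the flag-variety-count ratio $[\mathcal{K}_J:\mathcal{I}_J]/[G(\Ok_F):\mathcal{I}]$ (two polynomials with constant term $1$) times the cross-group ratio $\vol(\mathcal{I}_J)/\vol(\mathcal{I})$. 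It is this last factor that carries the sign: by Kottwitz's computation (in \cite[\S1]{kottTama}) it equals $\det(q-\varsigma_J\mid V)/\det(q-\varsigma\mid V)$, whose value at $q=0$ is $\det(\varsigma_J)/\det(\varsigma) = (-1)^{\mathrm{rk}_F J_b-\mathrm{rk}_F G} = e(J_b)$. Your proof needs to be restructured along these lines: factor out the Iwahori, use the Bruhat decomposition to handle the parahoric-to-Iwahori indices, and derive $e(J_b)$ from the Iwahori volume ratio rather than from order polynomials of reductive quotients.
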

\begin{proof}
	Since $J_b$ splits over an unramified extension, the volume of a standard parahoric of $J_b(F)$ corresponding to any $\tau\sigma$-stable subset $K_J\subset \mathbb{S}$ can be calculated in terms of the affine root system. More precisely, let $\mathcal{K}_J$ be the corresponding parahoric subgroup of $J_b(F)$ and $\mathcal{I}_J$ be the standard Iwahori subgroup of $J_b(F)$. Then we have \begin{align*}\vol(\mathcal{K}_J(\Ok_F)) & =\frac{\vol(\mathcal{K}_J(\Ok_F))}{\vol(\mathcal{I}_J(\Ok_F))}.\vol(\mathcal{I}(\Ok_F)).\frac{\vol(\mathcal{I}_J(\Ok_F))}{\vol(\mathcal{I}(\Ok_F))} \\ &  =\frac{[\mathcal{K}_J(\Ok_F):\mathcal{I}_J(\Ok_F)]}{[G(\Ok_F):\mathcal{I}(\Ok_F)]}.\frac{\vol(\mathcal{I}_J(\Ok_F))}{\vol(\mathcal{I}(
		\Ok_F))} , \end{align*} where $\mathcal{I}$ is the standard Iwahori subgroup of $G(F)$ (whereas previously we denoted by $\mathcal I$ the standard Iwahori subgroup of $G(L)$). The term $[\mathcal{K}_J(\Ok_F):\mathcal{I}_J(\Ok_F)]$ (resp.~$[G(\Ok_F):\mathcal{I}(\Ok_F)]$) is just the number of $\mathbb{F}_q$-points in the finite type full flag variety associated to the reductive quotient of the special fiber of $\mathcal{K}_J$ (resp.~$G$).
	
	For any connected reductive group $\overline{H}$ over $\mathbb{F}_q$ and $\overline{B}$ a Borel subgroup, let $W_{\overline{H}}$ denote the absolute Weyl group. Then we have the Bruhat decomposition  $$\overline{H}/\overline{B}(\overline {\mathbb F}_q)=\bigsqcup_{w\in W_{\overline{H}}}S_w.$$
	We have $S_w(\mathbb{F}_q)\neq\emptyset$ if and only if $\sigma(w)=w$, in which case $S_w$ is an affine space of dimension $\ell(w)$ defined over $\mathbb{F}_q$. In particular $$\overline{H}/\overline{B}(\mathbb{F}_q)=\sum_{w \in W_{\overline H}^{\sigma}}q^{\ell(w)}.$$
	
	It follows that $[\mathcal{K}_J(\Ok_F):\mathcal{I}_J(\Ok_F)]$ and $[G(\Ok_F):\mathcal{I}(\Ok_F)]$ are both polynomials in $q$ with coefficients in $\Z$ and constant coefficient $1$, and the polynomials depend only on the root systems of the corresponding reductive quotients of the special fiber.
	
	Similarly the ratio $\frac{\vol(\mathcal{I}_J(\mathcal{O}_F))}{\vol(\mathcal{I}(\mathcal{O}_F))}$ can be computed as the ratio $$\frac{\det(1-q^{-1}\varsigma_J\mid V)}{\det(1-q^{-1}\varsigma\mid V)}=\frac{\det(q-\varsigma_J\mid V)}{\det(q-\varsigma\mid V)}$$
	where $\varsigma$ denotes the linear action of the Frobenius on $V=X_*(T)_{\mathbb{R}}$, and similarly for $\varsigma _J$, see \cite[\S1]{kottTama}. This is also a ratio of polynomials in $q$ with coefficients in $\Z$, and the ratio at $q=0$ is equal to $$\det(\varsigma_J)/\det(\varsigma)=(-1)^{\mathrm{rk_F}J_b-\mathrm{rk_F}G}=e(J_b). $$  Moreover the polynomials depend only on the affine root system of $G$ and the element $b$. The result follows.
\end{proof}

Finally we record the following immediate consequence of Theorem \ref{prop:ind of p}.

\begin{cor}
	If Conjecture \ref{conj:numerical Chen-Zhu} is true for all $p$-adic fields $F$, then it is true for all local fields $F$. \qed
\end{cor}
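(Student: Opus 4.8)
The plan is to deduce this from Theorem \ref{prop:ind of p}, using Proposition \ref{prop:reduction} to put ourselves in the situation to which that theorem applies, together with the observation that the dual-side quantity $\mathscr M(\mu,b)$ is manifestly independent of $F$.

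First, by Proposition \ref{prop:reduction} it suffices to verify Conjecture \ref{conj:numerical Chen-Zhu} when $G$ is adjoint, $F$-simple, and $b$ is basic. Fix such data over an arbitrary local field $F$ (possibly of positive characteristic). The combinatorial input to Theorem \ref{prop:ind of p} — the apartment $V$, the affine root system, the action of the Frobenius $\sigma$, the anti-dominant chamber $\mathfrak a$, the hyperspecial vertex $\mathfrak s$, and (since $b$ is basic) the length-zero element $\tau\in\Omega$ representing $b$ — all make sense purely in terms of combinatorial data, independently of $\mathrm{char}(F)$. Since $\sigma$ acts through a finite cyclic group and a $p$-adic field such as $\Q_p$ admits unramified extensions of every degree, we may choose a $p$-adic field $F'$ together with a connected reductive $\Ok_{F'}$-group $G'$ (and $T'\subset B'\subset G'_{F'}$) admitting an identification $V\cong V'$ of the type demanded by Theorem \ref{prop:ind of p}; we then let $(\mu',b')$ be the associated data for $G'$ as in that theorem.

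Theorem \ref{prop:ind of p} now furnishes a bijection $J_b(F)\backslash\Sigma^{\topp}(X_\mu(b))\isom J_{b'}(F')\backslash\Sigma^{\topp}(X_{\mu'}(b'))$, so $\mathscr N(\mu,b)=\mathscr N(\mu',b')$. On the dual side, $\mathscr M(\mu,b)=\dim V_\mu(\lambda_b)_{\mathrm{rel}}$ is computed entirely from the based root datum of $\widehat G$ equipped with the $\hat\theta$-action, the highest weight $\mu\in X^*(\widehat T)^+$, and the class $\lambda_b\in X^*(\hatS)$; and for $b$ basic the latter is determined by $\kappa(b)$ and $\bar\nu_b$ through Lemma \ref{lem:defn of lambda_b}. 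All of these objects ($\hatS$, $\widehat Q_{\hat\theta}$, the relative simple roots, $\tilde\lambda_b$) are read off from the affine root datum of $G$ with its $\sigma$-action, as is evident from their definitions in \S\ref{subsec:CZ Conj}; hence the identification $V\cong V'$ also yields $\mathscr M(\mu,b)=\mathscr M(\mu',b')$. Assuming Conjecture \ref{conj:numerical Chen-Zhu} holds for the $p$-adic field $F'$ gives $\mathscr N(\mu',b')=\mathscr M(\mu',b')$, and combining the two displayed equalities gives $\mathscr N(\mu,b)=\mathscr M(\mu,b)$, as desired.

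The only point requiring (minor) care is this last independence assertion for $\mathscr M(\mu,b)$: one must check that $\lambda_b$, hence $\dim V_\mu(\lambda_b)_{\mathrm{rel}}$, is genuinely a function of the affine root datum of $G$ with Frobenius and of $\mu$, with no residual dependence on the field — but this is immediate by tracing through \S\ref{subsec:CZ Conj} and Lemma \ref{lem:defn of lambda_b}. Everything else is a direct invocation of Theorem \ref{prop:ind of p}, so I do not anticipate any substantive obstacle.
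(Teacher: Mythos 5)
Your proof is correct and captures what the paper intends: the key input is that $\mathscr N(\mu,b)$ is transferable across local fields sharing the same affine root system with Frobenius (Theorem \ref{prop:ind of p}), while $\mathscr M(\mu,b)$ is manifestly combinatorial.

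One small remark on economy: invoking Proposition \ref{prop:reduction} to pass to the adjoint, $F$-simple, basic case is needed only because Theorem \ref{prop:ind of p} is formulated for basic $b$ (its construction of $b'$ uses the length-zero representative in $\Omega$). However, the paragraph immediately above Theorem \ref{prop:ind of p} already records, via He's class-polynomial formula, that $\mathscr N(\mu,b)$ depends only on the affine root system with $\sigma$-action for arbitrary $[b]\in B(G,\mu)$, with no basicness assumption. Using that statement directly gives the corollary without appealing to Proposition \ref{prop:reduction} (and hence without the dependence on Nie's results that the latter carries), leaving only the step you already handle — realizing the given affine root system with Frobenius over some $p$-adic field, which is possible since $p$-adic fields have unramified extensions of all degrees. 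Your route is logically sound; the alternative is just slightly lighter.
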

\textbf{From now on we will assume that $F$ is a $p$-adic field.}
\section{Counting points}
\label{sec:count}
\subsection{The decent case}\label{subsec:decent case}
For each $s\in \mathbb N$, let $F_s$ be the degree $s$ unramified extension of $F$ in $L$. Let $\Ok_s$ be the valuation ring of $F_s$, and let $k_s$ be residue field. The number $\mathscr N (\mu , b )$ depends on $b$ only via its $\sigma$-conjugacy class $[b] \in B(G)$. Recall that given $b \in G(L)$, one can associate a slope cocharacter  $\nu_b \in \Hom _L (\mathbb D , G)$, where $\mathbb D$ is the pro-torus with character group $\Q$.

\begin{defn}\label{defn:decent}Let $s \in \mathbb N$. We say that an element $b \in G(L)$ is \emph{$s$-decent}, if $s \nu _b$ is an integral cocharacter $\mathbb G_m \to G$ (as opposed to a fractional cocharacter), and
	\begin{align}\label{eq:decent}
	b \sigma(b) \cdots \sigma^{s -1} (b) = (s \nu_b) (\pi_F).
	\end{align}	
\end{defn}

\begin{lem}
	Assume $b\in G(L)$ is $s$-decent. Then $s \nu_b$ is defined over $F_{s}$, and $b$ belongs to $G(F_{s})$.
\end{lem}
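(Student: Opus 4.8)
The statement to prove is: if $b \in G(L)$ is $s$-decent, then $s\nu_b$ is defined over $F_s$, and $b \in G(F_s)$. The plan is to exploit the defining equation \eqref{eq:decent} together with the standard properties of the slope cocharacter $\nu_b$ recorded in Kottwitz's work \cite{kottwitzisocrystal}. First I would recall that $\nu_b$ is characterized by a functoriality property and a compatibility with $\sigma$: one has $\sigma(\nu_b) = \nu_{\sigma(b)}$, and since $\sigma(b)$ is $\sigma$-conjugate to $b$ (indeed $b^{-1} \cdot b \cdot \sigma(b) = \sigma(b)$ shows $\sigma(b) \in [b]$ via conjugation by... more precisely $\sigma(b) = b^{-1}(b\sigma(b))$, and $b\sigma(b)\cdots$ telescopes), the conjugacy-invariance of the Newton point forces $\sigma(\nu_b)$ to be conjugate to $\nu_b$; but because $\nu_b$ is the dominant representative (or lies in a fixed Weyl chamber after the usual normalization), we get $\sigma(\nu_b) = \nu_b$, i.e. $\nu_b$ is $\sigma$-stable. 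A $\sigma$-stable cocharacter of $G_L$ which is integral descends to $F$; but here $s\nu_b$ is only asserted integral, and a priori $\sigma$-stability only gives descent to $F$ once integrality is known, so I need to first pin down a field of definition.

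The cleaner route is to use equation \eqref{eq:decent} directly. Write $c := s\nu_b$, an integral cocharacter $\mathbb{G}_m \to G_L$ by hypothesis, and set $N_s(b) := b\sigma(b)\cdots\sigma^{s-1}(b) = c(\pi_F) \in G(L)$. Applying $\sigma$ to this identity, $\sigma(N_s(b)) = \sigma(b)\sigma^2(b)\cdots\sigma^s(b) = b^{-1} N_s(b) \sigma^s(b)$. Now $\sigma^s$ fixes $F_s$ pointwise; and since $b$ is $s$-decent with $c(\pi_F) \in G(F_s \cdot F^{\mathrm{ur}})$... — here is where I have to be careful: a priori $b \in G(L)$, not yet $G(F_s)$. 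So I would instead argue: $\sigma(c(\pi_F)) = \sigma(c)(\pi_F)$ since $\sigma(\pi_F) = \pi_F$ ($\pi_F \in F$), and on the other hand $\sigma(N_s(b)) = b^{-1} N_s(b)\sigma^s(b)$. The key observation is that $N_s(b) = c(\pi_F)$ commutes with $c$ (as $c$ factors through a torus containing its own image), hence $c(\pi_F)$ centralizes $\nu_b$ and its powers; combined with the fact that $b$ normalizes... Actually the slick statement is Kottwitz's: $b$ commutes with $\nu_b$ up to the defining relation, and \eqref{eq:decent} says precisely that $N_s(b)$ equals $c(\pi_F)$, from which one deduces $b \in G(F_s)$ by a Galois-descent argument for the $\sigma^s$-fixed points.

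Concretely, the main steps I would carry out are: (1) Show $\sigma(\nu_b) = \nu_b$ using conjugacy-invariance of the Newton point and the telescoping identity $\sigma(N_s(b)) = b^{-1}N_s(b)\sigma^s(b)$ — actually this identity plus \eqref{eq:decent} will be the engine for everything. (2) From $\sigma$-stability of $\nu_b$ and integrality of $s\nu_b$, conclude that $s\nu_b$ is a cocharacter fixed by $\sigma$, hence (being integral) defined over the fixed field of $\sigma$ acting on $X_*(T)$, which contains $F$, so a fortiori over $F_s$. (3) For $b \in G(F_s)$: from \eqref{eq:decent}, $\sigma^s(b) = N_s(b)^{-1} b\, N_s(b) \cdot (\text{correction})$; more precisely rearrange $\sigma(N_s(b)) = b^{-1}N_s(b)\sigma^s(b)$ to get $\sigma^s(b) = N_s(b)^{-1} b\, \sigma(N_s(b))$, then substitute $N_s(b) = (s\nu_b)(\pi_F)$ and $\sigma(N_s(b)) = (s\nu_b)(\pi_F)$ (using step 2, that $s\nu_b$ is $\sigma$-fixed), obtaining $\sigma^s(b) = (s\nu_b)(\pi_F)^{-1}\, b\, (s\nu_b)(\pi_F)$; since $(s\nu_b)(\pi_F)$ commutes with... no — here it does NOT obviously commute with $b$. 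Instead I recall that by construction $\nu_b$ commutes with $b$ (this is part of the standard setup of the slope morphism when $b$ is decent), so $(s\nu_b)(\pi_F)$ commutes with $b$, giving $\sigma^s(b) = b$, i.e. $b \in G(L)^{\sigma^s} = G(F_s)$ by the standard fact that $L^{\sigma^s} = F_s$ and Galois descent for the affine group scheme $G$. \textbf{The main obstacle} will be establishing cleanly that $\nu_b$ commutes with $b$ under the $s$-decency hypothesis — this is where one must invoke (and cite precisely) Kottwitz's construction of the slope homomorphism, or reprove the small lemma that decency forces $b$ and $\nu_b$ to commute; everything else is formal manipulation of the telescoping identity and Galois descent.
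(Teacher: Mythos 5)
Your overall strategy (telescoping identity $\sigma^s(b) = N_s(b)^{-1}\,b\,\sigma(N_s(b))$, Galois descent to $F_s$, and appeal to the functional equation of the slope morphism) is the right one, and the paper itself simply cites \cite[Corollary 1.9]{RZ96} for this argument. However, there are two linked errors that prevent your argument from closing.

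First, the claim $\sigma(\nu_b)=\nu_b$ is not correct, and the reasoning offered for it conflates the slope cocharacter $\nu_b$ with the Newton point $\bar\nu_b$. Only $\bar\nu_b$ is the dominant representative of the conjugacy class of $\nu_b$ and is $\sigma$-fixed; $\nu_b$ itself satisfies the weaker functional equation $b\,\sigma(\nu_b)\,b^{-1}=\nu_b$, i.e. $\mathrm{Int}(b\sigma)\nu_b=\nu_b$. Iterating this $s$ times and using $(b\sigma)^s = (s\nu_b)(\pi_F)\,\sigma^s$ (decency), together with the fact that $(s\nu_b)(\pi_F)$ commutes with $\nu_b$, yields only $\sigma^s(\nu_b)=\nu_b$. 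This is exactly why the lemma asserts rationality over $F_s$ and not over $F$: for $s>1$ the stronger statement you want is generally false.

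Second, as a consequence, the substitution $\sigma(N_s(b))=N_s(b)$ in your step (3) is unjustified, and the commutation $[b,\nu_b]=1$ that you then flag as the ``main obstacle'' is a red herring: via the functional equation, $[b,\nu_b]=1$ is \emph{equivalent} to $\sigma(\nu_b)=\nu_b$, so trying to prove it puts you back in the same hole. The correct move is to use the functional equation directly to compute $\sigma(\nu_b)=b^{-1}\nu_b\,b$, hence $\sigma(N_s(b))=\sigma\bigl((s\nu_b)(\pi_F)\bigr)=b^{-1}(s\nu_b)(\pi_F)\,b = b^{-1}N_s(b)\,b$. Plugging this into your telescoping identity gives $\sigma^s(b)=N_s(b)^{-1}\,b\,\bigl(b^{-1}N_s(b)\,b\bigr)=b$, with no extra commutation hypothesis needed. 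In short: drop the claim $\sigma(\nu_b)=\nu_b$ entirely, keep the functional equation $b\sigma(\nu_b)b^{-1}=\nu_b$ as the engine for \emph{both} conclusions, and the argument matches the one in \cite{RZ96}.
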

\begin{proof}
	The proof is identical to the proof of \cite[Corollary 1.9]{RZ96}.
\end{proof}
By \cite[\S 4.3]{kottwitzisocrystal}, any class in $B(G)$ contains an element which is $s$-decent for some $s\in \mathbb N$. In the following, we hence assume without loss of generality that $b$ is $s_0$-decent, for some fixed $s_0\in \mathbb N$. We may and shall also assume that $s_0$ is divisible enough so that $T$ is split over $F_{s_0}$.

\begin{defn} Let $s\in s_0 \mathbb N$. Let $G_s: = \Res_{F_s / F} G$, so that $b \in G_s(F)$. Let $\Theta$ be the $F$-automorphism of $G_s$ corresponding to the Frobenius $\sigma \in \Gal(F_s/F)$. Let $G_{s, b\Theta}$ be the centralizer of $b\Theta$ in $G_s$, which is a subgroup of $G_s$ defined over $F$. Define
	$$ G(F_s) _{b\sigma} : = \set{ g\in G(F_{s} ) \mid   g^{-1} b \sigma(g) = b }. $$ Thus $G(F_s) _{b\sigma}$ is naturally identified with $G_{s, b\Theta} (F)$.
\end{defn}

\begin{lem}\label{lem:two centralizers} For $s\in s_0\mathbb N$, there is a natural isomorphism of $F$-groups $J_b \cong G_{s, b\Theta}.$
	Moreover, $J_b(F) =  G(F_{s}) _{b\sigma}$ as subgroups of $G(L)$.
\end{lem}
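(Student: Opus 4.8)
The plan is to unwind the definitions on both sides and match them. First I would recall the definition of Weil restriction of scalars: for any $F$-algebra $R$, we have $G_s(R) = (\Res_{F_s/F}G)(R) = G(R\otimes_F F_s)$, and the automorphism $\Theta$ acts on $G(R\otimes_F F_s)$ through its action on the second factor $F_s$, namely via $\sigma\in\Gal(F_s/F)$. Thus $G_{s,b\Theta}(R)$, the centralizer of $b\Theta$ inside $G_s$, consists of those $g\in G(R\otimes_F F_s)$ with $g\cdot b\Theta = b\Theta\cdot g$ as elements of $G_s(R)\rtimes\langle\Theta\rangle$, which unravels to the equation $g^{-1}b\,\sigma(g) = b$ (here $\sigma$ acts on $R\otimes_F F_s$ through the $F_s$-factor). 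On the other side, by definition $J_b(R) = \{g\in G(R\otimes_F L)\mid g^{-1}b\,\sigma(g)=b\}$, where now $\sigma$ is the Frobenius of $L/F$.

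The key point is that since $b$ is $s_0$-decent and $s\in s_0\mathbb N$, equation (\ref{eq:decent}) gives $b\,\sigma(b)\cdots\sigma^{s-1}(b) = (s\nu_b)(\pi_F)$, and one checks (as in the cited \cite[Corollary 1.9]{RZ96}, already used in the preceding lemma) that $\sigma^s$ acts trivially on $b$; more precisely $b\in G(F_s)$ and $\sigma$ restricted to the relevant data has order dividing $s$. This means the slope morphism $\nu_b$ is already defined over $F_s$, and the twisted centralizer equation $g^{-1}b\,\sigma(g)=b$, a priori imposed on $g\in G(R\otimes_F L)$, is stable under $\sigma^s$ and hence descends: any solution $g$ in fact lies in $G(R\otimes_F F_s)$. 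Concretely, I would argue that the functor $R\mapsto\{g\in G(R\otimes_F L)\mid g^{-1}b\sigma(g)=b\}$ is represented by a group scheme over $F$ which, because $\sigma^s$ stabilizes the equation and fixes $F_s\subset L$ pointwise, agrees with the functor $R\mapsto\{g\in G(R\otimes_F F_s)\mid g^{-1}b\sigma(g)=b\}$; this last is visibly $G_{s,b\Theta}$. This identification of functors of points gives the isomorphism $J_b\cong G_{s,b\Theta}$ of $F$-groups, and evaluating at $R=F$ gives $J_b(F) = G(F_s)_{b\sigma}$ as subgroups of $G(L)$, since $G(F)\otimes_F F_s\subset$ ``$G(F_s)$'' sits inside $G(L)$ compatibly.

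The main obstacle is the careful bookkeeping of which copy of $\sigma$ acts where, and verifying rigorously that solutions to the twisted-conjugacy equation over $L$ automatically descend to $F_s$ — i.e. that decency of $b$ forces $\sigma^s$-invariance of every element of $J_b(F)$ (and more generally of $J_b(R)$). This is exactly the content of the Rapoport--Zink descent argument \cite[Corollary 1.9]{RZ96}, so I would simply cite it, noting that the proof there applies verbatim once one knows $s\nu_b$ is defined over $F_s$ and that $b\in G(F_s)$, both of which are the conclusion of the immediately preceding lemma. Everything else is a formal unwinding of Weil restriction and the semidirect product $G_s\rtimes\langle\Theta\rangle$.
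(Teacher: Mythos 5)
Your overall plan — unwind Weil restriction and the semidirect product, then argue that decency forces the twisted-conjugacy equation to descend from $L$ to $F_s$ — is the right one, and you correctly identify the crux: one must show that every $g\in J_b(R)$ is $\sigma^s$-invariant, not merely that the defining equation is preserved by $\sigma^s$. However, the argument you actually give for that descent has a genuine gap. ``The equation $g^{-1}b\sigma(g)=b$ is stable under $\sigma^s$, hence its solutions lie in $G(R\otimes_F F_s)$'' is not a valid inference: $\sigma^s$-stability of a defining equation does not confine the solution set to the $\sigma^s$-fixed locus (the trivial equation $g^{-1}g=1$ is $\sigma$-stable and has all of $G(R\otimes_F L)$ as solutions). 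You flag this yourself and propose to cite \cite[Corollary~1.9]{RZ96}, but that reference, as used in the paper, establishes only the statements about $b$ and $s\nu_b$ (the preceding lemma); it does not directly deliver the $\sigma^s$-invariance of elements of $J_b(R)$.

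What is actually missing is the short mechanism the paper uses: (i) the equation $g^{-1}b\sigma(g)=b$ says precisely that $g$ commutes with $b\rtimes\sigma$ in $G_s(R)\rtimes\langle\Theta\rangle$; (ii) hence $g$ commutes with $(b\rtimes\sigma)^{s_0}$, which by the decency equation~(\ref{eq:decent}) equals $(s_0\nu_b)(\pi_F)\rtimes\sigma^{s_0}$; (iii) by functoriality of the slope map $b\mapsto\nu_b$, the element $g$ commutes with $\nu_b$, and so with $(s_0\nu_b)(\pi_F)$; (iv) combining, $g$ commutes with $\sigma^{s_0}$, hence $g\in G(R\otimes_F F_{s_0})$. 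Step (iii) is the essential input you omit entirely, and the passage from (i) to (iv) is not a ``stable equation descends'' argument but a concrete commutation computation in the semidirect product. Once this is in place, the rest of your unwinding of $G_{s,b\Theta}$ is correct and matches the paper's.
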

\begin{proof}
	Let $R$ be an $F$-algebra. Recall from (\ref{eq:defn of J}) that $$J_b(R) = \set{ g \in G(R\otimes _F L) \mid  g ^{-1} b \sigma (g) = b}.$$ It suffices to prove that for any $g \in J_b(R)$ we have $g \in G(R \otimes _F F_{s_0})$. Now such a $g$ commutes with $b \rtimes \sigma$, and so it commutes with $(b \rtimes \sigma)^{s_0}$. By (\ref{eq:decent}), we have  $(b \rtimes \sigma)^{s_0} = (s_0 \nu _b) (\pi_f) \rtimes \sigma ^{s_0}$. On the other hand, by the functoriality of the association $b \mapsto \nu_b$, we know that $g$ commutes with $\nu_b$. It follows that $g$ commutes with $ \sigma ^{s_0}$, and so $g\in G(R\otimes _F F_{s_0})$ as desired.
\end{proof}
\subsubsection{}\label{subsubsec:rationality discussion} We keep assuming that $F$ is $p$-adic.
In \S \ref{2}, we discussed the geometric structure on $X_{\mu}(b)$, as a locally closed subscheme of the Witt vector Grassmannian over $k = \overline {k_F}$. In the current setting, $X_{\mu} (b)$ is naturally ``defined over $k_{s_0}$''. More precisely, we can work with the version of the Witt vector affine Grassmannian as an ind-scheme over $k_{s_0}$ rather than over $k$, see \cite[Corollary 9.6]{BS} and cf.~\cite[\S 1.4]{Zhu}. Then the affine Deligne--Lusztig variety can be defined as a locally closed $k_{s_0}$-subscheme of the Witt vector affine Grassmannian, as in \cite[\S 3.1.1]{Zhu}. The key point here is that since $T$ is split over $F_{s_0}$, all the Schubert cells in the Witt vector affine Grassmannian are already defined over $k_{s_0}$, see \cite[\S 1.4.3]{Zhu}. We denote respectively by $\mathbb G \mathrm{r}_G $ and $\mathbb X_{\mu} (b)$ the Witt vector affine Grassmannian and the affine Deligne--Lusztig variety over $k_{s_0}$, and we continue to use $\Gr_G$ and $X_{\mu} (b)$ to denote the corresponding objects over $k$.

Let us recall the moduli interpretations of $\mathbb G \mathrm{r} _G$ and $\mathbb X_{\mu} (b)$. For any perfect $k_{s_0}$-algebra $R$, write
$W_{s_0}(R)$ for $ W(R) \otimes _{W(k_{s_0})} \Ok_{s_0}. $ Then $\mathbb G \mathrm{r}_G(R)$ is the set of pairs $(\mathcal E,
\beta)$, where $\mathcal E $ is a $G_{W_{s_0}(R)}$-torsor on $W_{s_0} (R)$, and $\beta$ is a trivialization of $\mathcal E$ on $W_{s_0}(R)[1/p]$, (see \cite[Lemma 1.3]{Zhu}).
We also have (see~\cite[(3.1.2)]{Zhu})
$$\mathbb X_{\mu} (b) (R) = \set{ (\mathcal E, \beta) \in \mathbb G \mathrm{r} _G (R) \mid  \mathrm{Inv}_x (\beta ^{-1} b\sigma (\beta)) = \mu, ~\forall x \in \mathrm{spec} R}. $$
\begin{lem}\label{lem:finite field points}
	For any $s \in s_0 \mathbb N$, we have
	$$\mathbb X_{\mu } (b) (k_s) = \set{g \in G(F_s)/G (\Ok_s)\mid   g^{-1} b \sigma (g) \in G(\Ok_s) \mu (\pi_F) G(\Ok_s)}.$$
\end{lem}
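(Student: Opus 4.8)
The statement identifies the $k_s$-points of the Witt vector affine Deligne--Lusztig variety $\mathbb X_\mu(b)$ with the usual group-theoretic double-coset description over the finite unramified extension $F_s$. The plan is to unwind the moduli interpretation recalled just above and match it with Frobenius descent along $k_s/k_{s_0}$.

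First I would recall that a point of $\mathbb X_\mu(b)(k_s)$ is a pair $(\mathcal E,\beta)$ with $\mathcal E$ a $G$-torsor over $W_{s_0}(k_s)=\Ok_{F_s}$ (here I use that $W_{s_0}(k_s)=W(k_s)\otimes_{W(k_{s_0})}\Ok_{s_0}=\Ok_{F_s}$, since $k_s/k_{s_0}$ is finite and the tensor computes the ring of integers of the compositum), together with a trivialization $\beta$ over $\Ok_{F_s}[1/p]=F_s$, subject to the invariant condition $\mathrm{Inv}_x(\beta^{-1}b\sigma(\beta))=\mu$ at the unique point $x=\spec F_s$. Since $G$ is reductive over $\Ok_F$ (hence over $\Ok_{F_s}$) and $\Ok_{F_s}$ is a complete DVR with algebraically closed residue field, every $G$-torsor over $\Ok_{F_s}$ is trivial; choosing a trivialization of $\mathcal E$ and composing with $\beta$ exhibits the datum $(\mathcal E,\beta)$ up to isomorphism as an element $g\in G(F_s)/G(\Ok_{F_s})$, exactly as in the standard identification $\mathbb G\mathrm r_G(k_s)=G(F_s)/G(\Ok_{F_s})$ (Lemma 1.3 of \cite{Zhu}). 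Under this identification the element $\beta^{-1}b\sigma(\beta)\in G(F_s)$ becomes $g^{-1}b\sigma(g)$ up to left and right multiplication by $G(\Ok_{F_s})$, so the invariant condition $\mathrm{Inv}(g^{-1}b\sigma(g))=\mu$ translates into $g^{-1}b\sigma(g)\in G(\Ok_{F_s})\mu(\pi_F)G(\Ok_{F_s})$, using the Cartan decomposition over the complete discretely valued field $F_s$ (with $T$ split there, which we arranged for $F_{s_0}$ and hence holds over $F_s$). This gives the desired equality of sets.

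The one point that needs a little care — and the likeliest source of friction — is checking that the Frobenius appearing in the moduli description (the $\sigma$ implicit in $\beta^{-1}b\sigma(\beta)$, coming from the $q_{s_0}$-Frobenius on $W_{s_0}$ twisted to land on $k_s$-points) matches the Frobenius $\sigma$ of $L/F$ restricted appropriately, i.e. that taking $k_s$-points rather than $k$-points really does produce the $\sigma$-conjugation relation over $F_s$ and not over some other extension. This is precisely the content of ``$X_\mu(b)$ is defined over $k_{s_0}$'' together with $s_0\mid s$: the Schubert cells are defined over $k_{s_0}$, the $k_s$-Frobenius is the $s$-th power of the $q$-Frobenius, and compatibility of the moduli interpretation with base change $k_{s_0}\to k_s$ is \cite[\S 3.1.1, (3.1.2)]{Zhu}. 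I would spell this out by comparing with the case $s=s_0$ (where it is essentially the definition) and then base-changing; this is routine once the bookkeeping of Witt vectors versus rings of integers is set up, and it uses nothing beyond the cited results of Bhatt--Scholze and Zhu.
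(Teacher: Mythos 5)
You have the right overall plan — unwind the moduli description, use triviality of $G$-torsors over $\Ok_{F_s}$ to land in $G(F_s)/G(\Ok_{F_s})$, and translate the invariant condition into the double-coset condition via the Cartan decomposition. But there is a genuine error in the key justification: you assert that $\Ok_{F_s}$ is a complete DVR ``with algebraically closed residue field'' and conclude triviality of torsors from that. The residue field of $\Ok_{F_s}$ is $k_s = \mathbb F_{q^s}$, a finite field, which is not algebraically closed. The triviality of $G_{\Ok_s}$-torsors over $\Ok_s$ is still true, but the correct argument is the one the paper uses: by smoothness of $G$ over $\Ok_s$ (Hensel's lemma / formal lifting), a torsor over $\Ok_s$ is trivial as soon as its special fiber is trivial, and triviality of $G_{k_s}$-torsors over the finite field $k_s$ is the Lang--Steinberg theorem (using connectedness of $G$). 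Citing algebraic closure of the residue field is simply false here, and that step is the crux of the reduction $\mathbb G\mathrm r_G(k_s) = G(F_s)/G(\Ok_s)$, so the gap is material.

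Aside from that, the surrounding bookkeeping ($W_{s_0}(k_s) = \Ok_{F_s}$, the translation of the invariant condition, the compatibility of the Frobenius with base change from $k_{s_0}$ to $k_s$) is in order, and your discussion of rationality over $k_{s_0}$ and why $s_0 \mid s$ matters is a reasonable elaboration of what the paper leaves to the reader. Replace the ``algebraically closed residue field'' clause with ``smoothness of $G$ plus Lang--Steinberg over the finite residue field $k_s$'' and the proof closes.
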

\begin{proof}
	We only need to show that $\mathbb G \mathrm{r} _G (k_s) = G(F_s)/G(\Ok_s)$. For this it suffices to show that any $G_{\Ok_s}$-torsor over $\Ok_s$ is trivial (cf.~the proof of \cite[Lemma 1.3]{Zhu}). By smoothness this reduces to the Lang--Steinberg theorem, namely that any $G_{k_s}$-torsor over the finite field $k_s$ is trivial.
\end{proof}
\begin{lem}\label{lem:rational action}
	The action of $J_b(F)$ on $X_{\mu} (b)$ descends to a natural action on $\mathbb X_{\mu} (b)$ via $k_{s_0}$-automorphisms.
\end{lem}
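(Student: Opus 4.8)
The plan is to exhibit the $J_b(F)$-action on $X_\mu(b)$ as the base change to $k$ of an action on $\mathbb X_\mu(b)$ by $k_{s_0}$-automorphisms, obtained from the natural left action of the loop group on the Witt vector affine Grassmannian $\mathbb G\mathrm{r}_G$ over $k_{s_0}$. By Lemma~\ref{lem:two centralizers} we have $J_b(F) = G(F_{s_0})_{b\sigma}\subset G(F_{s_0})$; since $F$ is $p$-adic and $F_{s_0}/F$ is unramified, $\Ok_{s_0} = W_{s_0}(k_{s_0})$, so $G(F_{s_0}) = G\bigl(W_{s_0}(k_{s_0})[1/p]\bigr)$ is exactly the group of $k_{s_0}$-points of the loop group $LG$ formed with $W_{s_0}$. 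Hence each fixed $h\in J_b(F)$ is a $k_{s_0}$-point of $LG$, and left translation by it is a $k_{s_0}$-automorphism of $\mathbb G\mathrm{r}_G$: on $R$-points, for $R$ a perfect $k_{s_0}$-algebra, it sends a pair $(\mathcal E,\beta)$ to $(\mathcal E, h_R\beta)$, where $h_R$ is the image of $h$ under $W_{s_0}(k_{s_0})[1/p]\to W_{s_0}(R)[1/p]$ and acts on the trivialization $\beta$ by left multiplication.

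The substantive point is to check that this translation preserves the locally closed (perfect ind-)subscheme $\mathbb X_\mu(b)\subset\mathbb G\mathrm{r}_G$. Using the description recalled before Lemma~\ref{lem:finite field points}, namely $\mathbb X_\mu(b)(R) = \set{(\mathcal E,\beta)\in\mathbb G\mathrm{r}_G(R)\mid \mathrm{Inv}_x(\beta^{-1}b\sigma(\beta)) = \mu~\forall x\in\mathrm{spec} R}$, this reduces to the identity $(h_R\beta)^{-1} b\,\sigma(h_R\beta) = \beta^{-1}\bigl(h_R^{-1}b\,\sigma(h_R)\bigr)\sigma(\beta) = \beta^{-1}b\,\sigma(\beta)$, which holds because $h^{-1}b\sigma(h) = b$ by the very definition of $J_b(F) = G(F_{s_0})_{b\sigma}$. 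The only thing needing care is that the relation $h^{-1}b\sigma(h) = b$, an identity in $G\bigl(W_{s_0}(k_{s_0})[1/p]\bigr)$, remains valid after applying $W_{s_0}(k_{s_0})[1/p]\to W_{s_0}(R)[1/p]$; this is the $\sigma$-equivariance of that map, i.e.\ the compatibility of the Frobenius on $W_{s_0}(R)[1/p]$ with the one on $W_{s_0}(k_{s_0})[1/p] = F_{s_0}$ — exactly what makes $\mathbb X_\mu(b)$ a $k_{s_0}$-scheme in the first place (cf.\ the references in \S\ref{subsubsec:rationality discussion}). Granting this, left translation by $h$ is a $k_{s_0}$-endomorphism of $\mathbb X_\mu(b)$, with inverse given by left translation by $h^{-1}$, hence a $k_{s_0}$-automorphism.

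Finally one packages everything: $h\mapsto(\text{left translation by }h)$ is manifestly a homomorphism from $J_b(F)$ to $\Aut_{k_{s_0}}(\mathbb X_\mu(b))$, and base changing along $k_{s_0}\to k$ recovers, on $k$-points, the map $gG(\Ok_L)\mapsto hgG(\Ok_L)$, which is precisely the action of $J_b(F)$ on $X_\mu(b) = \mathbb X_\mu(b)\otimes_{k_{s_0}}k$ from \S\ref{subsec:nonemptiness pattern} (using Lemma~\ref{lem:finite field points} in the colimit over $s\in s_0\mathbb N$ to identify $\Gr_G(k) = G(L)/G(\Ok_L)$). I expect the only real obstacle to be the bookkeeping around the $k_{s_0}$-rational structure and the $\sigma$-action on $W_{s_0}(R)[1/p]$ for general perfect $k_{s_0}$-algebras $R$; all of the compatibilities needed are part of the construction of the $k_{s_0}$-model of the Witt vector affine Grassmannian in \cite{Zhu} (building on \cite{BS}), so modulo invoking these the argument is essentially formal.
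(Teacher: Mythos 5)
Your proof is correct and takes essentially the same route as the paper: identify $J_b(F)$ with $G(F_{s_0})_{b\sigma}$ via Lemma~\ref{lem:two centralizers} and let it act on the trivializations $\beta$ in the moduli description of $\mathbb X_\mu(b)$ over $k_{s_0}$. The paper's own proof is a one-sentence version of this; you simply spell out the verification that the action preserves the defining condition (via $h^{-1}b\sigma(h)=b$) and that it recovers the action on $k$-points, which the paper leaves implicit.
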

\begin{proof}
	By Lemma \ref{lem:two centralizers}, $J_b(F) = G(F_{s_0})_{b\sigma}$. The group $G(F_{s_0}) _{b\sigma}$ naturally acts on $\mathbb X_{\mu} (b) (R)$ by acting on the trivializations $\beta$, for each perfect $k_{s_0}$-algebra $R$.
\end{proof}
\begin{lem}\label{lem:rational comp}
	Up to enlarging $s_0$, all the irreducible components of $X_{\mu} (b)$ are defined over $k_{s_0}$, i.e., they come from base change of irreducible components of $\mathbb X_{\mu } (b)$. \end{lem}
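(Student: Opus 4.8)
The plan is to use the finiteness of the set $J_b(F)\backslash \Sigma(X_\mu(b))$ (Lemma \ref{lem:finiteness}) together with the rationality of the $J_b(F)$-action over $k_{s_0}$ (Lemma \ref{lem:rational action}). The subtlety is that $X_\mu(b)$ is only locally perfectly of finite type, so it may have infinitely many irreducible components, and one therefore cannot merely enlarge $s_0$ once per component. Instead, one first makes a single enlargement of $s_0$ handling a finite set of orbit representatives, and then propagates rationality to all components via the group action.

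Concretely, I would first choose irreducible components $Z_1,\dots,Z_n$ of $X_\mu(b)$ representing the finitely many $J_b(F)$-orbits in $\Sigma(X_\mu(b))$. Since $X_\mu(b) = \mathbb X_\mu(b)\times_{k_{s_0}}k$ with $\mathbb X_\mu(b)$ perfectly of finite type over $k_{s_0}$, a standard spreading-out argument shows each $Z_i$ is already defined over some finite extension $k_{s_i}$ of $k_{s_0}$: working in an affine chart $U_0\times_{k_{s_0}}k$ with $U_0$ affine open in $\mathbb X_\mu(b)$, the component $Z_i$ is cut out by an ideal involving only finitely many scalars, all lying in a common finite extension of $k_{s_0}$; equivalently, $Z_i$ is stable under the $q^{s_i}$-Frobenius automorphism of $X_\mu(b)$. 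I would then replace $s_0$ by a common multiple of $s_0,s_1,\dots,s_n$. This is harmless: $b$ remains decent after such an enlargement, since the fact that $s_0\nu_b$ is defined over $F_{s_0}$ forces $b\sigma(b)\cdots\sigma^{s_0'-1}(b) = (s_0\nu_b)(\pi_F)^{s_0'/s_0} = (s_0'\nu_b)(\pi_F)$ for any $s_0'\in s_0\mathbb N$, and the objects $\mathbb X_\mu(b)$, the $J_b(F)$-action, etc., are all unchanged. After this enlargement, every $Z_i$ is defined over $k_{s_0}$.

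For the final step, let $Z\in\Sigma(X_\mu(b))$ be arbitrary and write $Z = g\cdot Z_i$ for some $g\in J_b(F)$. By Lemma \ref{lem:rational action}, $g$ acts on $X_\mu(b)$ as the base change of a $k_{s_0}$-automorphism of $\mathbb X_\mu(b)$, so it commutes with the $q^{s_0}$-Frobenius automorphism $\varphi$ of $X_\mu(b)$ attached to its $k_{s_0}$-structure. Since $\varphi(Z_i)=Z_i$, we get $\varphi(Z)=\varphi(g\cdot Z_i)=g\cdot\varphi(Z_i)=g\cdot Z_i=Z$, so $Z$ is $\varphi$-stable, and by standard Galois descent for reduced closed subschemes (which applies verbatim for perfect $k_{s_0}$-schemes) $Z$ is the base change of an irreducible component of $\mathbb X_\mu(b)$. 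I expect the only mildly delicate points to be the two invocations of Galois descent along the infinite extension $k/k_{s_0}$ (handled by noting that the stabilizer of a component in $\Gal(k/k_{s_0})$ is open, so being fixed by $\varphi$ suffices) and the bookkeeping around enlarging $s_0$; neither is a genuine obstacle. The essential inputs are Lemmas \ref{lem:finiteness} and \ref{lem:rational action} — without the finiteness of the orbit set, a uniform choice of $s_0$ would be unavailable.
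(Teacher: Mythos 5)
Your proof is correct and fills in exactly the argument the paper leaves implicit: the published proof of this lemma simply says ``This follows from Lemma \ref{lem:finiteness} and Lemma \ref{lem:rational action},'' and your expansion---descend the finitely many orbit representatives over a single enlarged $k_{s_0}$, check decency is preserved, and propagate rationality to all components via the $k_{s_0}$-rationality of the $J_b(F)$-action---is precisely the intended reasoning. One optional streamlining: once each $Z_i$ descends to a component $\mathbb Z_i\subset\mathbb X_\mu(b)$ and $g\in J_b(F)$ is the base change of a $k_{s_0}$-automorphism $g_0$ of $\mathbb X_\mu(b)$ (Lemma \ref{lem:rational action}), the subscheme $g_0(\mathbb Z_i)$ is already a $k_{s_0}$-form of $gZ_i$, so the second invocation of Galois descent along $k/k_{s_0}$ can be avoided.
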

\begin{proof}This follows from  Lemma \ref{lem:finiteness} and Lemma \ref{lem:rational action}.
\end{proof}
\subsection{Twisted orbital integrals and point counting}\label{subsec:twisted orb int} We fix $s_0 \in \mathbb N$ to be divisible enough so as to satisfy all the conclusions in \S \ref{subsec:decent case}. In particular $G$ is split over $F_{s_0}$ and the conclusion of Lemma \ref{lem:rational comp} holds. Let $s\in s_0\mathbb N$.

For any $\C$-valued function $f\in C^{\infty}_c(G(F_s))$, define the twisted orbital integral
\begin{align}\label{eq:defn of twisted orb int}
TO _{b} (f) : =  \int_{G(F_s) _{b\sigma} \backslash G(F_{s}) } f ( g^{-1}  b \sigma(g)) d g,
\end{align} where $G(F_s) _{b\sigma}$ is equipped with an arbitrary Haar measure, and $G(F_s)$ is equipped with the Haar measure giving volume $1$ to $G(\Ok_s)$. The general convergence of $TO_b (f)$ follows from the result of Ranga Rao \cite{Rao}.
However, in our specific case the convergence could be proved more easily. In fact, the decency equation (\ref{eq:decent}) implies that $b\Theta$ is a semi-simple element of $G_s \rtimes \lprod{\Theta}$, from which it follows that the twisted orbit is closed in $G(F_s)$. The convergence of $TO_{b}(f)$ then follows from the closedness of the twisted orbit, cf.~\cite[p.~266]{clozelFL}.

\begin{defn}\label{defn:test function}
	Let $f_{\mu, s} \in C^{\infty}_c( G(F_s))$ be the characteristic function of $G(\Ok_s) \mu(\pi_F) G(\Ok_s). $
\end{defn}

In the following we study the relationship between $TO_b(f_{\mu,s})$ and point counting on $\mathbb X_{\mu} (b)$.

\begin{lem}\label{lem:topology of irred comp}
	Each irreducible component $Z$ of $X_{\mu} (b)$ is quasi-compact, and is isomorphic to the perfection of a quasi-projective variety over $k$. Moreover, $Z$ has non-empty intersection only with finitely many other irreducible components of
	$X_\mu (b)$.
\end{lem}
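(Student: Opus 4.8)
The plan is to exploit the fact that $X_\mu(b)$, though only locally of (perfectly) finite type, is covered by the finitely many Schubert-type pieces coming from the decomposition $\pi^{-1}(X_\mu(b)) = X(\mu,b)^K = \bigcup_{w \in W_0 t^\mu W_0} X_w(b)$ recalled in \S\ref{2}, together with the quasi-compactness of each individual piece. First I would fix an irreducible component $Z \in \Sigma(X_\mu(b))$. Using Proposition \ref{prop:projection}, the preimage $\pi^{-1}(Z)$ is an irreducible component of $X(\mu,b)^K$, and since each $X_w(b)$ is locally closed in $\mathcal{FL}$ with $W_0 t^\mu W_0$ finite, there is some $w$ with $X_w(b) \cap \pi^{-1}(Z)$ open dense in $\pi^{-1}(Z)$, hence an irreducible component of $X_w(b)$. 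So it suffices to establish the analogous statements for an irreducible component $Y$ of a single Iwahori affine Deligne--Lusztig variety $X_w(b)$, and then transport the conclusions back up the (perfection of a) flag-variety fibration $\pi$, which preserves quasi-compactness and the property of being (the perfection of) a quasi-projective variety.

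For the single-$w$ case, the natural tool is the Deligne--Lusztig reduction of Proposition \ref{DL-reduction}, combined with the description of the minimal-length case from the proof of Proposition \ref{prop:stab parahoric}. I would induct on $\ell(w)$. In the base case $w$ is of minimal length in its $\sigma$-conjugacy class, and (after the reductions in the proof of Proposition \ref{prop:stab parahoric}, with $b = \dot x$) one has $X_{ux}(\dot x) \cong J_{\dot x}(F) \times_{J_{\dot x}(F) \cap \mathcal P(\Ok_L)} X^{\mathcal P}_{ux}(\dot x)$, where $X^{\mathcal P}_{ux}(\dot x)$ is (the perfection of) a finite-type Deligne--Lusztig variety inside the finite-type scheme $L^+\mathcal P / L^+\mathcal I$. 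The irreducible components of the left-hand side are the $J_{\dot x}(F)$-translates of $j \times X^{\mathcal P}_{ux}(\dot x)$; each such component is quasi-compact and is (the perfection of) a quasi-projective variety because $X^{\mathcal P}_{ux}(\dot x)$ is, and because translation by $j \in J(F)$ is an isomorphism. Finiteness of the intersection pattern holds because $X^{\mathcal P}_{ux}(\dot x)$ is irreducible (by \cite[Corollary 1.2]{Go}) and quasi-compact, so the components meet only those $J(F)$-translates whose index lies in a fixed compact-modulo-$\mathcal P$ subset. In the inductive step, Proposition \ref{DL-reduction} relates $X_w(b)$ either by a universal homeomorphism to $X_{sw\sigma(s)}(b)$, or by a decomposition $X_w(b) = X_1 \sqcup X_2$ into a closed and an open piece, each mapping to a lower-$\ell$ variety by a composition of a Zariski-locally trivial one-dimensional fiber bundle and a universal homeomorphism. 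Universal homeomorphisms, Zariski-locally trivial fibrations with quasi-projective fibers, and passing between an open subscheme and its complement all preserve (up to perfection) quasi-projectivity, quasi-compactness, and the finiteness of the intersection pattern of top-dimensional (and indeed all) components; so by induction the claim propagates.

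The main obstacle I expect is the bookkeeping around local finiteness of the intersection pattern: one must check that passing through the fibration $\pi$ and through the decomposition $X_w(b) = X_1 \sqcup X_2$ does not make a component meet infinitely many others. The key input is Lemma \ref{lem:finiteness}, that $J_b(F) \backslash \Sigma(X_\mu(b))$ is finite, together with the fact that $\Stab_Z(J_b(F))$ is open (indeed parahoric, by Theorem \ref{thm:parahoric}); hence the $J_b(F)$-orbit of any component is a \emph{discrete} set in a space covered by finitely many quasi-compact pieces, so each component meets only finitely many others in its own orbit, and by finiteness of the number of orbits, only finitely many in total. The quasi-projectivity claim is comparatively soft: it follows from the fact proven along the way that each component is a $J_b(F)$-translate of a closed subscheme of an explicit (perfection of a) quasi-projective variety, together with the observation that one may always replace a component by its closure inside a suitable quasi-compact, quasi-projective ambient piece without changing anything relevant.
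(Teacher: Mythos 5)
Your approach is genuinely different from the paper's and considerably more elaborate, though it is built on sound ingredients. The paper proves this lemma with a short, direct argument that does not mention the Iwahori-level variety $X_w(b)$ at all: since $X_\mu(b)$ is a perfect scheme, the generic point $\eta$ of $Z$ has a residue field and corresponds to a pair $(\mathcal E,\beta)\in\Gr_G(k(\eta))$ with invariant $\lambda:=\mathrm{Inv}(\beta)$; density of $\{\eta\}$ in $Z$ forces $Z\subset\Gr_{G,\le\lambda}$ (citing \cite[Lemma 1.22]{Zhu}); Schubert varieties are perfections of projective varieties by \cite{BS,Zhu}; and $Z$ is locally closed in $\Gr_{G,\le\lambda}$, hence quasi-compact and the perfection of a quasi-projective variety. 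The finiteness-of-intersections claim is then immediate from local (perfect) finiteness of $X_\mu(b)$ together with quasi-compactness of $Z$ --- your third paragraph essentially reproduces this step, but the paper derives it without needing the Iwahori decomposition, Lemma \ref{lem:finiteness}, or Theorem \ref{thm:parahoric} at all.

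Two caveats about your route. First, the key point you never quite isolate --- that a quasi-compact locally closed subset of the (perfect) affine Grassmannian or affine flag variety lies in a single Schubert variety --- is exactly what your closure-taking steps need: for instance, when a component of the open piece $X_2$ is closed up inside $X_w(b)$, or when $\pi^{-1}(Z)$ is recovered as the closure of $Y$ inside $X(\mu,b)^K$, quasi-compactness of the closure is not automatic and requires embedding into a Schubert variety. Once you admit this observation, it applies directly to $Z$ (or to the generic point of $Z$, as the paper does), and the entire Deligne--Lusztig induction becomes superfluous. Second, you invoke Theorem \ref{thm:parahoric} and Lemma \ref{lem:finiteness}, plus the explicit structure of the minimal-length base case from \cite{He}, to prove what is at bottom a soft topological statement; this imports machinery well beyond what the lemma requires, and in the inductive step the claim that ``Zariski-locally trivial fibrations with one-dimensional fibers preserve the finiteness of the intersection pattern of components'' needs more justification than a parenthetical, since a component of the total space can a priori spread over several components of the base when the fibration is only locally trivial. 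None of this is necessarily fatal, but the paper's argument sidesteps all of it.
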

\begin{proof}
	Since $X_{\mu} (b)$ is a perfect scheme by Theorem \ref{thm:dim}, the generic point $\eta$ of $Z$ and its residue field $k(\eta)$ make sense. Moreover $k(\eta)$ is a perfect field containing $k$. Let $(\mathcal E, \beta) \in X_{\mu} (b) (k(\eta)) \subset \Gr _G(k(\eta))$ correspond to $\eta$, and define $\lambda : = \mathrm{Inv} (\beta) \in X_*(T) ^+.$ Since $\set{\eta}$ is dense in $Z$, it follows from \cite[Lemma 1.22]{Zhu} that $Z$ is contained in $\Gr _{G, \leq \lambda}$, the Schubert variety inside $\Gr _G$ associated to $\lambda$. On the other hand, it follows from \cite[\S 1.4.1, Lemma 1.22]{Zhu} and \cite[Theorem 8.3]{BS} that $\Gr _{G, \leq \lambda}$ is the perfection of a projective variety over $k$. Since $Z$ is closed in $X_{\mu} (b)$ and $X_{\mu} (b)$ is locally closed in $\Gr _G$, we conclude that $Z$ is locally closed in $\Gr _{G , \leq \lambda}$, and hence $Z$ is quasi-compact and isomorphic to the perfection of a quasi-projective variety over $k$.
	
	Since $X_{\mu} (b)$ is locally perfectly of finite type (Theorem \ref{thm:dim}), each point in $X_{\mu} (b) $ has an open neighborhood that intersects with only finitely many irreducible components of $X_{\mu} (b)$. Since $Z$ is quasi-compact, it also intersects with only finitely many irreducible components of $X_{\mu} (b)$.
\end{proof}
For each $x \in J_b (F) \backslash \mathbb X_{\mu} (b) (k_s)$, we pick a representative $\tilde x \in \mathbb X_{\mu} (b ) (k_s)$ and consider the volume of $\Stab_{\tilde x} J_b(F)$, with respect to the chosen Haar measure on $J_b(F)  = G(F_s)_{b\sigma}$ (cf.~Lemma \ref{lem:two centralizers}). This volume is independent of the choice of $\tilde x$, and we shall denote it by $\vol_x$.
\begin{lem}\label{lem:TO}
	The set $J_b(F) \backslash \mathbb X_{\mu} (b) (k_s)$ is finite.	For all $ \tilde x \in \mathbb X_{\mu} (b) (k_s)$, the stabilizer $\Stab_{\tilde x} J_b(F)$ in $J_b(F)$ is a compact open subgroup of $J_b(F)$.  We have
	$$
	TO_b (f_{\mu,s}) = \sum _{x \in J_b (F) \backslash \mathbb X_{\mu} (b) (k_s)} \vol_x ^{-1} .
	$$
\end{lem}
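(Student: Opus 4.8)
The plan is to unwind both sides and match them term by term, using the identification of $\mathbb X_\mu(b)(k_s)$ with a coset space and the definition of the twisted orbital integral. First I would recall from Lemma \ref{lem:finite field points} that $\mathbb X_\mu(b)(k_s) = \{g \in G(F_s)/G(\Ok_s) \mid g^{-1}b\sigma(g) \in G(\Ok_s)\mu(\pi_F)G(\Ok_s)\}$, and observe that via this identification the action of $J_b(F) = G(F_s)_{b\sigma}$ (Lemma \ref{lem:two centralizers}) on $\mathbb X_\mu(b)(k_s)$ is by left translation on cosets $gG(\Ok_s)$. The finiteness of $J_b(F)\backslash \mathbb X_\mu(b)(k_s)$ follows from Lemma \ref{lem:rational comp}: each $J_b(F)$-orbit on $\mathbb X_\mu(b)(k_s)$ lies in $\mathbb X_\mu(b)(k_s)$, and since $\mathbb X_\mu(b)$ has only finitely many irreducible components up to $J_b(F)$ (via Lemma \ref{lem:finiteness}, Lemma \ref{lem:rational action}) and each component has finitely many $k_s$-points, one concludes the orbit set is finite; alternatively one can deduce it directly from the closedness of the twisted orbit and compactness considerations. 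For the stabilizer statement, if $\tilde x = gG(\Ok_s)$ then $\Stab_{\tilde x}J_b(F) = J_b(F) \cap gG(\Ok_s)g^{-1}$, which is the intersection of $J_b(F)$ with a compact open subgroup of $G(F_s)$, hence compact; it is open in $J_b(F)$ because $g G(\Ok_s)g^{-1}$ is open in $G(F_s)$ and $J_b(F)$ is a closed subgroup (being the $F$-points of the algebraic group $J_b$, or equivalently $G(F_s)_{b\sigma}$, which is the fixed points of the continuous involution $g \mapsto b\sigma(g)b^{-1}$).

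For the integral formula, I would rewrite $TO_b(f_{\mu,s})$ using the standard device of decomposing the integral over $G(F_s)_{b\sigma}\backslash G(F_s)$ according to which $G(\Ok_s)$-coset $g$ lies in. Concretely, $G(F_s) = \bigsqcup_{\bar g} G(F_s)_{b\sigma}\, \bar g\, G(\Ok_s)$ is a disjoint union, but to make $f_{\mu,s}(g^{-1}b\sigma(g))$ only depend on $gG(\Ok_s)$ note that $f_{\mu,s}$ is bi-$G(\Ok_s)$-invariant and $(gk)^{-1}b\sigma(gk) = k^{-1}(g^{-1}b\sigma(g))\sigma(k)$ with $\sigma(k), k^{-1} \in G(\Ok_s)$, so indeed $g \mapsto f_{\mu,s}(g^{-1}b\sigma(g))$ descends to a function on $G(F_s)/G(\Ok_s)$, whose support is exactly $\mathbb X_\mu(b)(k_s)$ and on which it takes the value $1$. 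Therefore
\begin{align*}
TO_b(f_{\mu,s}) = \int_{G(F_s)_{b\sigma}\backslash G(F_s)} f_{\mu,s}(g^{-1}b\sigma(g))\,dg = \sum_{x \in J_b(F)\backslash \mathbb X_\mu(b)(k_s)} \int_{G(F_s)_{b\sigma}\backslash G(F_s)_{b\sigma}\,\tilde x\, G(\Ok_s)} dg,
\end{align*}
where $\tilde x$ is a chosen representative of the orbit $x$ (using that the $G(\Ok_s)$-cosets hit by the orbit $x$ are exactly the cosets in the double coset $G(F_s)_{b\sigma}\,\tilde x\, G(\Ok_s)$). The remaining point is to evaluate each term: $\vol\big(G(F_s)_{b\sigma}\backslash G(F_s)_{b\sigma}\tilde x G(\Ok_s)\big)$. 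Since $G(\Ok_s)$ has volume $1$ in $G(F_s)$, a standard unfolding gives that this volume equals $[\Stab_{\tilde x}J_b(F) : \ ]^{-1}$ in the appropriate sense — more precisely, writing $\Gamma = \Stab_{\tilde x}J_b(F) = J_b(F)\cap \tilde x G(\Ok_s)\tilde x^{-1}$, the map $J_b(F)/\Gamma \to \mathbb X_\mu(b)(k_s)$, $j\Gamma \mapsto j\tilde x$, is a bijection onto the orbit, and the fibration $G(F_s)_{b\sigma}\tilde x G(\Ok_s) \to J_b(F)/\Gamma$ has fibers that are translates of $\tilde x G(\Ok_s)$ intersected appropriately; carefully tracking the measures (volume $1$ on $G(\Ok_s)$, the chosen measure on $J_b(F)$) yields that the term equals $\vol_x^{-1} = \vol(\Gamma)^{-1}$. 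Summing over $x$ gives the claimed identity.

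The main obstacle I anticipate is the bookkeeping in the measure-theoretic unfolding: one must check that the integral over the single double coset $G(F_s)_{b\sigma}\tilde x G(\Ok_s)$, with the quotient measure on $G(F_s)_{b\sigma}\backslash G(F_s)$ coming from the fixed Haar measure on $G(F_s)$ (volume $1$ on $G(\Ok_s)$) and an arbitrary Haar measure on $G(F_s)_{b\sigma} = J_b(F)$, comes out to be exactly $\vol(\Stab_{\tilde x}J_b(F))^{-1}$ with the \emph{same} arbitrary Haar measure — this is where the two "arbitrary but fixed" measures must be reconciled, and it is precisely the content that makes $\vol_x$ well-defined independently of the representative $\tilde x$. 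This is a routine but slightly delicate computation with the integration formula for a quotient by a closed subgroup, combined with the left-invariance of the measure on $J_b(F)$; once it is set up correctly, the finiteness of the sum (already established) guarantees everything converges and the identity holds.
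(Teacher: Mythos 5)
Your proposal is correct and takes essentially the same route as the paper: express the twisted orbital integral as the volume of the set $C=\{g\in G(F_s)_{b\sigma}\backslash G(F_s): g^{-1}b\sigma(g)\in G(\Ok_s)\mu(\pi_F)G(\Ok_s)\}$, decompose $C$ into $G(\Ok_s)$-orbits indexed by $J_b(F)\backslash\mathbb X_\mu(b)(k_s)$, and compute the volume of each orbit by the quotient-measure formula $\vol(J\backslash J r K)=\vol(K)/\vol(J\cap rKr^{-1})=\vol(\Stab_{\tilde x}J_b(F))^{-1}$. The only small divergence is your primary proposed argument for finiteness of the orbit set (via Lemmas \ref{lem:finiteness}, \ref{lem:rational comp}, \ref{lem:rational action} and finiteness of $\mathbb Z_i(k_s)$, which needs the additional remark that $\coprod_i \mathbb Z_i(k_s)\to J_b(F)\backslash\mathbb X_\mu(b)(k_s)$ is surjective); the paper instead uses the more self-contained observation that $C$ is compact (closed twisted orbit intersected with the compact set $G(\Ok_s)\mu(\pi_F)G(\Ok_s)$) and that the $G(\Ok_s)$-action on $C$ has open orbits, which you mention as your alternative.
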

\begin{proof} Let $C = \set{ g \in G(F_s)_{b\sigma} \backslash G(F_s) \mid  g^{-1} b \sigma(g) \in G(\Ok_s) \mu (\pi_F) G(\Ok_s)}$.
	By the discussion below (\ref{eq:defn of twisted orb int}), we know that $C$ is a compact subset of $G(F_s)_{b\sigma} \backslash G(F_s),$ as $C$ is homeomorphic to the intersection of the compact set $ G(\Ok_s) \mu (\pi_F) G(\Ok_s)$ with the closed twisted orbit of $b\sigma$.
	The group $ G(\Ok_s)$ acts on $C$ by right multiplication, and all the orbits under this action are open. Since $C$ is compact, the number of orbits is finite. On the other hand, by Lemma \ref{lem:finite field points} and Lemma \ref{lem:rational action}, these orbits are in one-to-one correspondence with $J_b(F) \backslash \mathbb X_{\mu} (b) (k_s)$. In particular $J_b(F) \backslash \mathbb X_{\mu} (b) (k_s)$ is finite. \ignore{\footnote{Alternatively, one could also show the finiteness using the result of Rapoport--Zink \cite[Theorem 1.4]{RZfinite} as interpreted in \cite[Lemma 1.3]{HV}.}}
	
	Now for each $x \in J_b(F) \backslash \mathbb X_{\mu} (b) (k_s) $, we denote the corresponding $ G(\Ok_s)$-orbit in $C$ by $ C_x.$ Let $\tilde x \in \mathbb X_{\mu} (b) (k_s) $ be a representative for $x$, and fix $r \in G(F_s)$ lifting  $\tilde x$ in the sense of Lemma \ref{lem:finite field points}. It follows from the definition of quotient measure that the volume of $C_x$ is the inverse of the volume of the compact open subgroup
	$G(F_s)_{b\sigma} \cap rG(\Ok_s) r^{-1}$ of $G(F_s)_{b\sigma}$. Since $TO_b(f_{\mu,s})$ is nothing but the volume of $C$, we are left to check that $G(F_s)_{b\sigma} \cap rG(\Ok_s) r^{-1} = \Stab_{\tilde x} J_b(F)$. But this follows from Lemma \ref{lem:two centralizers}.
\end{proof}

\begin{prop}\label{prop:point counting} Let $d = \dim X_{\mu} (b)$. For $s \in \mathbb N$ divisible by $s_0$, we have
	$$TO_b  (f_{\mu ,s}) = \sum _{ Z\in J_b(F) \backslash \Sigma ^{\topp} (X_{\mu} (b))} \vol (Z)^{-1} \abs{k_s}^{d} + o(\abs{k_s}^{d}), \quad s \gg 0 .$$
	Here $Z$ runs through a set of representatives of the $J_b(F)$-orbits in $\Sigma ^{\topp} (X_{\mu} (b))$.
\end{prop}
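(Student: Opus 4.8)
The plan is to read off the asymptotics of $TO_b(f_{\mu,s})$ from Lemma~\ref{lem:TO}, which already gives
\[
TO_b(f_{\mu,s}) = \sum_{x \in J_b(F)\backslash \mathbb X_{\mu}(b)(k_s)} \vol_x^{-1},
\]
and to organize the right-hand side according to which irreducible components of $X_{\mu}(b)$ a point lies on. The main term should come from the generic locus of the top-dimensional components, and everything else should be $o(\abs{k_s}^d)$, where $d=\dim X_{\mu}(b)$.

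First I would use the finiteness of $J_b(F)\backslash\Sigma(X_{\mu}(b))$ (Lemma~\ref{lem:finiteness}) to pick representatives $Z_1,\dots,Z_m$ for the orbits on $\Sigma^{\topp}(X_{\mu}(b))$ and $W_1,\dots,W_l$ for the orbits on the lower-dimensional components, and set $Z_i^{\circ}:=Z_i\setminus\bigcup_{Z'\neq Z_i}Z'$. By Lemma~\ref{lem:topology of irred comp}, $Z_i$ meets only finitely many other components, so $Z_i^{\circ}$ is open dense in $Z_i$ with complement of dimension $\le d-1$; moreover $Z_i^{\circ}$ meets no component other than $Z_i$, which makes the $J_b(F)$-translates of $Z_i^{\circ}$ (for varying $i$) pairwise disjoint. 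Hence $V:=\bigsqcup_i J_b(F)Z_i^{\circ}$ is a disjoint decomposition, $J_b(F)\backslash V(k_s)$ is canonically $\bigsqcup_i \Gamma_i\backslash Z_i^{\circ}(k_s)$ with $\Gamma_i:=\Stab_{Z_i}(J_b(F))$, and the complement $Y:=X_{\mu}(b)\setminus V$ is a finite union $\bigcup_k J_b(F)C_k$ of $J_b(F)$-orbits of quasi-compact subschemes $C_k$ of dimension $\le d-1$ (the $W_j$ and the $Z_i\setminus Z_i^{\circ}$).

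For the $V$-part I would use that, by Theorem~\ref{thm:parahoric}, each $\Gamma_i$ is a parahoric subgroup, in particular compact open with $\vol(\Gamma_i)=\vol(Z_i)$; and that $\Stab_{\tilde x}(J_b(F))\subseteq\Gamma_i$ whenever $\tilde x\in Z_i^{\circ}(k_s)$, since then $Z_i$ is the unique component through $\tilde x$. Applying the elementary identity $\sum_{\bar x\in\Gamma\backslash S}\vol(\Gamma_{\bar x})^{-1}=\abs{S}/\vol(\Gamma)$ for a compact group $\Gamma$ acting on a finite set $S$ — here $\Gamma=\Gamma_i$ and $S=Z_i^{\circ}(k_s)$, which is finite because $Z_i$ is quasi-compact — the orbit of $Z_i$ contributes $\abs{Z_i^{\circ}(k_s)}/\vol(Z_i)$. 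After enlarging $s_0$ so that all components are defined over $k_{s_0}$ (Lemma~\ref{lem:rational comp}), each $Z_i$ is the perfection of a geometrically irreducible $k_{s_0}$-variety of dimension $d$, so the Lang--Weil estimate gives $\abs{Z_i^{\circ}(k_s)}=\abs{k_s}^d+O(\abs{k_s}^{d-1/2})$, the error also absorbing the $O(\abs{k_s}^{d-1})$ points of $Z_i\setminus Z_i^{\circ}$. Summing over $i$ produces exactly the claimed main term $\sum_{Z\in J_b(F)\backslash\Sigma^{\topp}(X_{\mu}(b))}\vol(Z)^{-1}\abs{k_s}^d$.

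It remains to bound the $Y$-part by $O(\abs{k_s}^{d-1})$. Since every $J_b(F)$-orbit in $Y(k_s)$ has a representative in some $C_k(k_s)$, and $\abs{C_k(k_s)}=O(\abs{k_s}^{d-1})$ by Lang--Weil (after enlarging $s_0$ so the $C_k$ are defined over $k_{s_0}$), it suffices to bound $\vol_x^{-1}$ uniformly for $\tilde x\in C_k(k_s)$. This is the step I expect to require the most care, since a priori $\vol(\Stab_{\tilde x}(J_b(F)))$ could shrink as $s$ grows. I would handle it by noting that a quasi-compact subscheme $C\subseteq X_{\mu}(b)$ lies in a bounded Schubert neighbourhood $\Gr_{G,\leq\lambda}$ (as in the proof of Lemma~\ref{lem:topology of irred comp}), so every $\tilde x\in C(k_s)$ has a lift $g\in\bigcup_{\mu'\leq\lambda}G(\Ok_s)\mu'(\pi_F)G(\Ok_s)$; conjugating a principal congruence subgroup by such a bounded $g$ shows that $gG(\Ok_s)g^{-1}$ contains the principal congruence subgroup of $G(\Ok_s)$ of some level $M$ depending only on $\lambda$. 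Since $\Stab_{\tilde x}(J_b(F))=J_b(F)\cap gG(\Ok_s)g^{-1}$ and $J_b(F)\subseteq G(F_{s_0})$ (Lemma~\ref{lem:two centralizers}), it follows that $\Stab_{\tilde x}(J_b(F))$ contains the fixed compact open subgroup $J_b(F)\cap G(\Ok_{s_0},\pi_F^M)$, whose volume is positive and independent of $s$ and $\tilde x$. Combining the two parts yields $TO_b(f_{\mu,s})=\sum_{Z\in J_b(F)\backslash\Sigma^{\topp}(X_{\mu}(b))}\vol(Z)^{-1}\abs{k_s}^d+o(\abs{k_s}^d)$.
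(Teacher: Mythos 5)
Your argument is correct, but it follows a genuinely different route from the paper's. The paper sandwiches $TO_b(f_{\mu,s})$ between two expressions: an upper bound coming from the surjectivity of the map $\coprod_i \mathbb Z_i(k_s) \to J_b(F)\backslash \mathbb X_\mu(b)(k_s)$ (using the inequality $\vol_x\cdot\abs{\Pi^{-1}(x)}\geq\vol(\mathbb J_i)$ for $y\in\mathbb Z_i(k_s)$ with $\Pi(y)=x$), and a lower bound coming from restricting to carefully chosen open dense subschemes $\mathbb U_i\subset\mathbb Z_i$ (where an exact identity $\vol_x\cdot\abs{\pi^{-1}(x)}=\vol(\mathbb J_i)$ holds). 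Both bounds are run over representatives of \emph{all} $J_b(F)$-orbits of components, not just top-dimensional ones, and Lang--Weil shows that the low-dimensional orbits contribute $o(\abs{k_s}^d)$ on both sides, so the two bounds pinch. The crucial feature of this sandwich is that it never needs any control on the stabilizer volumes $\vol_x$ for $\tilde x$ in the exceptional locus --- these simply never appear. Your decomposition $X_\mu(b) = V\sqcup Y$ into a good open part and a bad part is more transparent about where the main term comes from, but it forces you to bound the $Y$-contribution directly, which in turn requires the uniform lower bound $\vol_x^{-1}=O(1)$ that you (correctly) flagged as the delicate point. Your argument for that bound --- boundedness of the Schubert variety $\Gr_{G,\leq\lambda}$ containing a quasi-compact $C_k$, conjugating a principal congruence subgroup by a Cartan representative of bounded elementary divisor type, then intersecting with $J_b(F)\subset G(F_{s_0})$ to get a fixed compact open subgroup --- does go through, but it is an extra step that the paper's sandwich makes unnecessary. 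Both approaches rely on Theorem~\ref{thm:parahoric} (to know $\Stab_{Z_i}(J_b(F))$ has finite positive volume), Lemma~\ref{lem:topology of irred comp} (local finiteness of components), Lemma~\ref{lem:rational comp} (rationality over $k_{s_0}$), and Lang--Weil.
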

\begin{proof}
	In view of Lemma \ref{lem:finiteness}, we let $\set{Z_1,\cdots, Z_M}$ be a set of representatives of the $J_b(F)$-orbits in $\Sigma (X_{\mu}(b))$. For each $1\leq i \leq M$, we write $\mathbb J_i$ for $\Stab _{Z_i} (J_b(F))$. For each $y \in \mathbb X_{\mu}(b)(k_s)$, we write $\mathbb J_y$ for $\Stab_y (J_b(F))$.
	
	For each $1\leq i \leq M$, we set
	\begin{align*} U_i : = Z_i - \bigcup _{1\leq j <i, \gamma \in J_b(F)} \gamma Z_j - \bigcup _{\gamma \in J_b(F), \gamma Z_i \neq Z_i} \gamma Z_i.
	\end{align*}
	By Lemma \ref{lem:topology of irred comp}, $U_i$ is open dense in $Z_i$.
	By Lemma \ref{lem:rational comp}, we know that $U_i,  V_i$ are the base change of locally closed $k_{s_0}$-subschemes $\mathbb Z_i, \mathbb U_i$ of $\mathbb X_{\mu} (b)$ respectively, where $\mathbb Z_i, \mathbb U_i$ are perfections of quasi-projective varieties over $k_{s_0}$. Moreover, $\mathbb Z_i$ and $\mathbb U_i$ are irreducible.

	We denote the natural maps
	\begin{align*}
	\coprod_{1\leq i \leq M } \mathbb Z_i (k_s) \To J_b(F) \backslash \mathbb X_{\mu} (b) (k_s)
	\end{align*} and
	\begin{align*}
	\coprod_{1\leq i \leq M } \mathbb U_i (k_s) \To J_b(F) \backslash \mathbb X_{\mu} (b) (k_s)
	\end{align*} by $\Pi$ and $\pi$, respectively. Here we take the disjoint union of the $\mathbb Z_i (k_s)$ for $1\leq i \leq M$ even though they may have non-trivial intersections in $\mathbb X_{\mu}(b) (k_s)$. Then $\Pi$ is a surjection (between finite sets).
	
	Fix an element $y \in \mathbb Z_i(k_s)$, and let $x = \Pi(y)$. We have
	\begin{align}
	\vol_{x} \cdot \abs{\Pi^{-1} (x)} &\geq \vol_{x} \cdot \abs{\mathbb J_i y}  = \vol(\mathbb J_y) \cdot \abs{\mathbb J_i y}  \nonumber\\
	&\geq \vol(\Stab_{y} (\mathbb J_{i})) \cdot  \abs{\mathbb J_i y } = \vol(\mathbb J_{i}),\label{est1}
	\end{align} where the last equality follows from the orbit-stabilizer relation.
	
	Now fix $y \in \mathbb U_i(k_s)$ and let $x = \pi(y)$. We observe that $\pi^{-1}(x) = \mathbb J_i y$, and that $\mathbb J_y = \Stab_y (\mathbb J_i)$. We then have
	\begin{align} \label{est2} &
	\vol_x \cdot \abs{\pi^{-1} (x)}  = \vol(\mathbb J_y) \cdot \abs{\mathbb J_i y}  =  \vol(\Stab_{y} (\mathbb J_{i})) \cdot  \abs{\mathbb J_i y } = \vol(\mathbb J_{i}). \end{align}
	
	We now apply (\ref{est1}) and (\ref{est2}) to estimate $TO_b(f_{\mu,s})$. We have
	\begin{align} \nonumber
	TO _b (f_{\mu,s}) & =  \sum _{x \in J_b(F) \backslash \mathbb X_{\mu} (b) (k_s)} \vol_x ^{-1 } & \text{(by Lemma \ref{lem:TO})} \\ \nonumber & = \sum _{i=1} ^M \sum _{y \in \mathbb Z_i (k_s)} \vol _{\Pi (y)} ^{-1} \cdot \abs{\Pi^{-1} (\Pi (y))} ^{-1}  & \text{(by the surjectivity of $\Pi$)}\\
	\label{eq:TO and pt}
	& \leq    \sum _{i=1} ^M \vol (\mathbb J_i) ^{-1} \abs{\mathbb Z_i (k_s)} & \text{(by (\ref{est1}))}.
	\end{align}
	Similarly, we have
	\begin{align} \nonumber TO _b (f_{\mu,s}) & =  \sum _{x \in J_b(F) \backslash \mathbb X_{\mu} (b) (k_s)} \vol_x ^{-1 } \\ \nonumber & \geq \sum _{x \in \text{ image of }\pi} \vol_x ^{-1 }  \\ \nonumber & =  \sum _{i=1} ^M \sum _{y \in \mathbb U_i (k_s)} \vol _{\pi (y)} ^{-1} \cdot \abs{\pi^{-1} (\pi (y))} ^{-1}  \\ \label{eq:TO and pt'}
	& = \sum_{i=1}^M \vol(\mathbb J_i)^{-1}\abs{\mathbb U_i (k_s)}
	& \text{(by (\ref{est2}))}.
	\end{align}
	
	Now let $\mathcal U_i$ be a quasi-projective variety whose perfection is $\mathbb U_i$. Then $\mathcal U_i$ is irreducible, and $\mathcal U_i (k_s) = \mathbb U_i (k_s)$. By the Lang--Weil bound (see \cite{langweil}) applied to $\mathcal U_i$, we know that \begin{align}\label{eq:LW for d_i}
	\abs{\mathbb U_i (k_s)} = \abs{k_s} ^{\dim Z_i} + o (\abs{k_s} ^{\dim Z_i}), \quad s \gg 0 .
	\end{align} Similarly we have
	\begin{align}\label{eq:LW2}
	\abs{\mathbb Z_i (k_s)} = \abs{k_s} ^{\dim Z_i} + o (\abs{k_s} ^{\dim Z_i}), \quad s \gg 0  .
	\end{align} The proposition follows from (\ref{eq:TO and pt}) (\ref{eq:TO and pt'}) (\ref{eq:LW for d_i}) (\ref{eq:LW2}).
\end{proof}

\subsection{Applying the Base Change Fundamental Lemma in the basic case}\label{subsec:BCFL in basic case}
Recall that we assumed that $[b] \in B(G, \mu)$ and $b$ is $s_0$-decent. We now assume in addition that $b$ is basic.

For $s\in \mathbb N$, recall from \cite[\S 5]{kottwitzrational} that the \emph{$s$-th norm map} is a map
\begin{align*}
& \mathfrak N_s: \set{\text{$\sigma$-conjugacy classes in $G(F_s)$}}\\
&\quad \To \set{\text{stable conjugacy classes in $G(F)$}}.
\end{align*}
By \cite[Proposition 5.7]{kottwitzrational}, two $\sigma$-conjugacy classes in $G(F_s)$ are in the same fiber of $\mathfrak N_s$ precisely when they are \emph{stably $\sigma$-conjugate}, a notion that is defined in \cite[\S 5]{kottwitzrational}.

\begin{lem}\label{lem:norm is singleton} Let $s\in s_0 \mathbb N$.
	Then $\mathfrak N_s (b)$, as a stable conjugacy class in $G(F)$, consists of the single element $(s\nu_b ) (\pi_F)$. Moreover, the cocharacter $s\nu_b:\mathbb G_m \to G$ is defined over $F$.
\end{lem}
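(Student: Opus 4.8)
The plan is to unwind the definition of the $s$-th norm map together with the decency hypothesis. Recall that $b$ being $s_0$-decent means $b\sigma(b)\cdots\sigma^{s_0-1}(b) = (s_0\nu_b)(\pi_F)$, and for $s \in s_0\mathbb{N}$, say $s = ms_0$, one computes directly that $b\sigma(b)\cdots\sigma^{s-1}(b) = (s\nu_b)(\pi_F)$ as well — here one uses that $\sigma^{s_0}$ acts trivially on the element $(s_0\nu_b)(\pi_F)$, which holds because $s_0\nu_b$ is defined over $F_{s_0}$ (as established by the lemma after Definition~\ref{defn:decent}, following \cite[Corollary 1.9]{RZ96}), so that repeated application of $\sigma^{s_0}$ just produces the $m$-fold product $((s_0\nu_b)(\pi_F))^m = (s\nu_b)(\pi_F)$. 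This shows that the norm element $\mathcal{N}(b) := b\sigma(b)\cdots\sigma^{s-1}(b)$ equals $(s\nu_b)(\pi_F)$ on the nose.

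Next I would recall from \cite[\S5]{kottwitzrational} that $\mathfrak{N}_s(b)$ is the stable conjugacy class of the $G(F)$-conjugacy class containing the norm element $\mathcal{N}(b)$, once one knows that $\mathcal{N}(b)$ is $\sigma$-conjugate to an element of $G(F)$ whose $G(F)$-conjugacy class is $\sigma$-stable; the decency equation is precisely what guarantees that $b\Theta$ is semisimple in $G_s \rtimes \langle\Theta\rangle$, so this applies. Thus $\mathfrak{N}_s(b)$ is the stable conjugacy class of $(s\nu_b)(\pi_F)$. To see that this stable conjugacy class is a single element, note that $(s\nu_b)(\pi_F)$ lies in the image of a cocharacter into the (connected, since we are over a field) group $G$, hence it is a semisimple element lying in a maximal torus; more to the point, since $G$ is quasi-split over $F$ and $(s\nu_b)(\pi_F)$ will be shown to lie in $G(F)$, its stable conjugacy class as an element of a torus-valued point is a singleton — concretely, two elements of $G(F)$ that are $G(\bar{F})$-conjugate and lie in the center of their common centralizer (which a regular-enough central cocharacter value does) are already $G(F)$-conjugate. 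Even more simply: $(s\nu_b)(\pi_F)$ is a central element of $J_b$ and of its centralizer, so its geometric conjugacy class is the single point $\{(s\nu_b)(\pi_F)\}$, whence the stable class is too.

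It remains to prove that $s\nu_b : \mathbb{G}_m \to G$ is defined over $F$. Here I would argue as follows: $\nu_b$ is $\sigma$-stable up to conjugacy, and for basic $b$ the slope cocharacter $\nu_b$ is central, i.e.\ it factors through $Z(G)^\circ$ (or more precisely the connected center), since $J_b = G$ in the basic case up to inner twisting and the Newton point of a basic element is central in $X_*(T)_{\mathbb{Q}}$. A central fractional cocharacter that is $\sigma$-invariant and becomes integral after scaling by $s$ is then automatically defined over $F$: the Galois action on $X_*(Z(G)^\circ)$ factors through the same finite quotient as on $X_*(T)$, and $\sigma$-invariance of $s\nu_b \in X_*(Z(G)^\circ)$ combined with the fact that $s_0$ was chosen so that $T$ (hence $Z(G)^\circ$) splits over $F_{s_0}$, plus $s_0 \mid s$, forces $s\nu_b$ to lie in the $\Gamma$-fixed sublattice, i.e.\ to be defined over $F$. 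The main obstacle I anticipate is being careful about the precise meaning of "stable conjugacy class consists of a single element" — one must verify that there is no genuine stable conjugacy (as opposed to rational conjugacy) phenomenon for this particular central semisimple element, which is exactly the content of the observation that $H^1$ of the centralizer does not intervene because the centralizer of a central element is all of $G$, which is connected. Once that is pinned down, the rest is bookkeeping with the decency equation.
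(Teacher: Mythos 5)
Your proposal is substantively correct and follows the same core strategy as the paper (Kottwitz's Corollary~5.3 together with centrality of $(s\nu_b)(\pi_F)$ for basic $b$), but the two arguments diverge at the final step and your presentation has a couple of rough patches worth flagging.

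For the \emph{singleton} claim you offer two arguments in sequence. The first (``since $G$ is quasi-split over $F$ and $(s\nu_b)(\pi_F)$ will be shown to lie in $G(F)$, its stable conjugacy class as an element of a torus-valued point is a singleton'') is garbled: semisimple elements of quasi-split groups do not in general have singleton stable classes, and you forward-reference $(s\nu_b)(\pi_F)\in G(F)$ before it has been established. Your closing ``more simply'' version --- $(s\nu_b)(\pi_F)$ is central in $G$ (not merely in $J_b$, which is what you wrote), so its geometric conjugacy class, hence its stable class, is a single point --- is the correct argument and is essentially what the paper does. Note also that the paper extracts $(s\nu_b)(\pi_F)\in G(F)$ as a \emph{consequence} of this: $\mathfrak N_s(b)$ is by definition contained in $G(F)$, and since it equals $\{(s\nu_b)(\pi_F)\}$, that element lies in $G(F)$.

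For ``$s\nu_b$ is defined over $F$'' you take a genuinely different route: instead of deducing it from $(s\nu_b)(\pi_F)\in G(F)$, you argue directly that $\nu_b$ is $\sigma$-fixed. This is valid --- the slope homomorphism satisfies $b\,\sigma(\nu_b)\,b^{-1}=\nu_b$, and for basic $b$ the cocharacter $\nu_b$ is central, so $\mathrm{Int}(b)$ acts trivially and $\sigma(\nu_b)=\nu_b$; since the $\Gamma$-action on $X_*(T)$ is unramified, $\sigma$-invariance already gives $\Gamma$-invariance. The invocation of ``$T$ splits over $F_{s_0}$, plus $s_0\mid s$'' is unnecessary: unramifiedness of $G$ over $F$ is all that is needed once one knows $\sigma(\nu_b)=\nu_b$. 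Finally, your opening computation that $s_0$-decency propagates to $s$-decency for $s\in s_0\mathbb N$ (using $F_{s_0}$-rationality of $s_0\nu_b$) is correct and spells out a step the paper treats as implicit.
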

\begin{proof}
	By \cite[Corollary 5.3]{kottwitzrational}, any element in $\mathfrak N_s (b)$ is $G(\overline F)$-conjugate to $$b \sigma (b) \cdots \sigma ^{s-1} (b) \in G(F_s), $$ which is equal to $(s\nu_b)(\pi_F)$ since $b$ is $s$-decent. Now since $(s\nu_b)(\pi_F)$ is central, we know that $\mathfrak N_s (b) = \set{ (s\nu_b) (\pi_F)} $ and that $(s\nu_b) (\pi_F) \in G(F)$. It follows from the last statement that $s\nu_b$ is defined over $F$.
\end{proof}

\begin{lem}\label{lem:conseq of basic} Let $s\in s_0 \mathbb N$.
	Let $b'\in G(F_s)$ be an element in the stable $\sigma$-conjugacy class of $b$. Then $\nu_ {b'} = \nu_b$, and $b'$ is $s$-decent. In particular $b'$ is basic. Moreover, if $[b']\in B(G,\mu)$, then $b'$ is $\sigma$-conjugate to $b$ in $G(F_s)$.
\end{lem}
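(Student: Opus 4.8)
The plan is to bootstrap everything from Lemma~\ref{lem:norm is singleton}. Since $b'$ is stably $\sigma$-conjugate to $b$, it has the same $s$-th norm, so by Lemma~\ref{lem:norm is singleton} the stable conjugacy class $\mathfrak N_s(b')$ equals $\{z\}$ where $z:=(s\nu_b)(\pi_F)$ is central. By \cite[Cor.~5.3]{kottwitzrational}, the element $b'\sigma(b')\cdots\sigma^{s-1}(b')$ is $G(\overline F)$-conjugate to an element of $\mathfrak N_s(b')$; since $z$ is central this forces $b'\sigma(b')\cdots\sigma^{s-1}(b')=z$. From here I would deduce $\nu_{b'}=\nu_b$ by the usual Tannakian argument: for any representation of $G$ with central character $\chi$, the $\sigma$-linear operator attached to $b'$ has $s$-th power equal to multiplication by the power $\chi(z)$ of $\pi_F$ composed with $\sigma^s$, hence is isoclinic of slope $\langle\chi,\nu_b\rangle$, which pins down $\nu_{b'}=\nu_b$ exactly as in \cite[\S 4]{kottwitzisocrystal} and \cite[Cor.~1.9]{RZ96}. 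In particular $\nu_{b'}$ is central, so $b'$ is basic; and since $s\nu_{b'}=s\nu_b$ is integral (as $s_0\mid s$) and $b'\sigma(b')\cdots\sigma^{s-1}(b')=(s\nu_{b'})(\pi_F)$, the element $b'$ is $s$-decent.

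For the final assertion, assume in addition $[b']\in B(G,\mu)$. Because $b'$ is basic, membership in $B(G,\mu)$ forces $\kappa_G(b')=\mu^\natural$, and this equals $\kappa_G(b)$ since $[b]\in B(G,\mu)$. Combining this with $\bar\nu_{b'}=\nu_b=\bar\nu_b$ and the injectivity of $(\bar\nu,\kappa)$ on $B(G)$, I obtain $[b']=[b]$ in $B(G)$, i.e.\ $b$ and $b'$ are $\sigma$-conjugate in $G(L)$.

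It then remains to promote $G(L)$-$\sigma$-conjugacy to $G(F_s)$-$\sigma$-conjugacy, and this is the step I expect to be the crux. Here I would invoke Kottwitz's analysis of the fibers of the norm map from \cite{kottwitzrational}: the $\sigma$-conjugacy classes in $G(F_s)$ lying in the stable class of $b$ are parametrized by $\ker\bigl(H^1(F,J_b)\to H^1(F_s,G)\bigr)$; this kernel injects into $\pi_1(G)_\Gamma$ through the Kottwitz isomorphism for the inner form $J_b$ of $G$; and under these identifications the invariant attached to the class of $b'$ is precisely $\kappa_G(b')-\kappa_G(b)$, which we have just shown to vanish. Hence $b'$ is $\sigma$-conjugate to $b$ already in $G(F_s)$. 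The only genuine work beyond formal manipulation of Newton and Kottwitz invariants is this last compatibility, for which one has to trace carefully through the cohomological description of $\mathfrak N_s^{-1}(\mathfrak N_s(b))$ in \cite{kottwitzrational} and its interaction with $\kappa_G$.
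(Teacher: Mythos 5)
Your proposal runs along the same lines as the paper's proof up through the first half: you start from $\mathfrak N_s(b)=\mathfrak N_s(b')$, invoke Lemma~\ref{lem:norm is singleton} and \cite[Cor.~5.3]{kottwitzrational} to force $b'\sigma(b')\cdots\sigma^{s-1}(b')=(s\nu_b)(\pi_F)$, and read off $\nu_{b'}=\nu_b$ and $s$-decency from the characterization of the slope cocharacter; that is exactly the paper's argument. For deducing $[b']=[b]$ the paper simply appeals to the uniqueness of the basic class in $B(G,\mu)$, while you spell out the same fact via injectivity of $(\bar\nu,\kappa)$ — cosmetically different, mathematically the same.

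The genuine divergence is in the last step, promoting $G(L)$-$\sigma$-conjugacy to $G(F_s)$-$\sigma$-conjugacy. The paper disposes of this in one line by citing \cite[Cor.~1.10]{RZ96}: two $s$-decent elements representing the same class in $B(G)$ are already $\sigma$-conjugate in $G(F_s)$. Since both $b$ and $b'$ have been shown $s$-decent, that corollary applies immediately and there is nothing left to prove. You instead propose the cohomological parametrization of the fiber $\mathfrak N_s^{-1}(\mathfrak N_s(b))$ by $\ker\bigl(H^1(F,J_b)\to H^1(F_s,G)\bigr)$, identify this with a subgroup of $\pi_1(G)_\Gamma$ via the Kottwitz isomorphism for $J_b$, and then claim (and honestly flag as unverified) that the invariant attached to $b'$ equals $\kappa_G(b')-\kappa_G(b)$. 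That compatibility between the Kottwitz map on $B(G)$ and the cohomological parametrization of the norm fiber is indeed true, and your route would work, but it is a nontrivial diagram chase in Kottwitz's framework and is exactly the sort of verification the Rapoport--Zink decency machinery was designed to package away. The lesson here: once decency is established, \cite[Cor.~1.10]{RZ96} is the cleanest tool for descending $\sigma$-conjugacy from $G(L)$ to $G(F_s)$, and using it saves you from re-deriving the Kottwitz--Galois-cohomology compatibility.
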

\begin{proof} By hypothesis we have $\mathfrak N_s (b) = \mathfrak N_s (b')$.
	By Lemma \ref{lem:norm is singleton} applied to $b$, we know that the $\mathfrak N_s (b)$ consists of the single central element $(s\nu_b) (\pi_F)\in G(F)$. On the other hand any element of $\mathfrak N_s (b')$ should be $G(\overline F)$-conjugate to $ b' \sigma (b') \cdots \ \sigma ^{s-1} (b')$ (by \cite[Corollary 5.3]{kottwitzrational}). Therefore
	$$ b' \sigma (b') \cdots \ \sigma ^{s-1} (b') = (s\nu_b) (\pi_F) .$$
	By the characterization of $\nu_{b'}$ (see \cite[\S 4.3]{kottwitzisocrystal}), the above equality implies that $\nu_{b'} = \nu _b$ and that $b'$ is $s$-decent. The first part of the lemma is proved.
	
	Now we assume $[b'] \in B(G,\mu)$. Since $ B(G,\mu)$ contains a unique basic class, we have $[b'] = [b]$. Finally, by \cite[Corollary 1.10]{RZ96}, we know that $b$ and $b'$ must be $\sigma$-conjugate in $G(F_s)$, since they are both $s$-decent and represent the same class in $B(G)$.
\end{proof}

Let $s\in s_0 \mathbb N$. We now consider stable twisted orbital integrals along $b$. By our assumption that $b$ is $s$-decent and basic, we know that $$b \sigma (b) \cdots \sigma ^{s-1} (b) = (s\nu_b)(\pi_F)$$ is a central element of $G(F_s)$, and is in fact an element of $G(F)$ by Lemma \ref{lem:norm is singleton}. In particular, this element is semi-simple, and the centralizer of this element (namely $G$) is connected. Therefore, with the terminology of \cite{kottwitzrational}, an element $b' \in G(F_s)$ is stably $\sigma$-conjugate to $b$ if and only if it is $\overline F$-$\sigma$-conjugate to $b$. This observation justifies our definition of the stable twisted orbital integral in the following, cf.~\cite[\S 5.1]{hainesBCFL}.

For any $\C$-valued function $f \in C^{\infty} _c (G(F_s))$, we let $STO_b (f)$ be the stable twisted orbital integral
$$STO_b (f):   = \sum _{ b'} e( G_{s,b'\Theta} ) TO_{b'} (f),$$
where the summation is over the set of $\sigma$-conjugacy classes $b'$ in $G(F_s)$ that are stably $\sigma$-conjugate to $b$, and $e(\cdot)$ denotes the Kottwitz sign. Here each $TO_{b'} $ is defined using the Haar measure on $G(F_s)$ giving volume $1$ to $G(\Ok_s)$, and the Haar measure on $G(F_s) _{b' \sigma} = G_{s, b' \Theta} (F)$ that is transferred from the fixed Haar measure on $G(F_s) _{b \sigma} = G_{s, b \Theta} (F)$.

\begin{defn}\label{defn:Hecke alg}
	We denote by $\mathcal H_s$ the unramified Hecke algebra consisting of $G(\Ok_s)$-bi-invariant functions in $C^{\infty}_c(G(F_s)$, and denote by $\BC_s$ the base change map $\mathcal H_s \to \mathcal H_1$.
\end{defn}
\begin{defn}\label{defn:gamma_s}For $s\in s_0 \mathbb N$, we write $\gamma_s$ for $(s\nu_b) (\pi_F)$, and write $\gamma_0$ for $\gamma _{s_0}$. Thus $\gamma_{0}$ belongs to $G(F)$ (see Lemma \ref{lem:norm is singleton}) and $\gamma _s = \gamma _0 ^{s/s_0}$.
\end{defn}
\begin{prop}\label{prop:BCFL}
	Assume $s\in s_0 \mathbb N$. For any $f \in \mathcal H_s$, we have
	$$ STO_b (f)  =  \vol (G(\Ok_F)) ^{-1} (\BC_s f) (\gamma_s),$$ where $\vol (G(\Ok_F))$ is defined in terms of the Haar measure on $G(F)$ transferred from the fixed Haar measure on $G_{s,b\Theta} (F) $, for the inner form $G_{s, b\Theta}$ of $G$.
\end{prop}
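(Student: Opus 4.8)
The plan is to deduce this from the Base Change Fundamental Lemma together with Lemma \ref{lem:norm is singleton} and Lemma \ref{lem:conseq of basic}. Recall the statement of the Base Change Fundamental Lemma (in the form proved by Clozel \cite{clozelFL} and Labesse \cite{labFL}, valid since $F$ is $p$-adic): for $f \in \mathcal H_s$ and a semisimple $\gamma \in G(F)$, one has $SO_\gamma(\BC_s f) = STO_{\delta}(f)$ whenever $\delta \in G(F_s)$ has norm $\mathfrak N_s(\delta)$ equal to the stable conjugacy class of $\gamma$, and $SO_\gamma(\BC_s f) = 0$ if no such $\delta$ exists. Here the Haar measures on the (twisted) centralizers must be matched up compatibly under the norm correspondence. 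First I would apply this with $\delta = b$ and $\gamma = \gamma_s = (s\nu_b)(\pi_F)$: by Lemma \ref{lem:norm is singleton}, $\gamma_s \in G(F)$ is central, it is semisimple, and $\mathfrak N_s(b)$ is precisely the (singleton) stable conjugacy class of $\gamma_s$. Hence the Fundamental Lemma gives
\begin{align*}
SO_{\gamma_s}(\BC_s f) = STO_b(f).
\end{align*}

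Next I would unwind the left-hand side. Since $\gamma_s$ is central in $G(F)$, its centralizer is all of $G$, so the orbital integral $O_{\gamma_s}(h) = \int_{G_{\gamma_s}(F)\backslash G(F)} h(x^{-1}\gamma_s x)\,dx$ collapses to $h(\gamma_s) \cdot \vol(G_{\gamma_s}(F)\backslash G(F))$; but $G_{\gamma_s}(F)\backslash G(F)$ is a point, so with respect to the chosen measures this is simply $h(\gamma_s) / \vol(G(\Ok_F))$ once one accounts for the normalization of the measure on $G_{\gamma_s}(F) = G(F)$. Moreover the stable orbital integral $SO_{\gamma_s}$ of a central semisimple element coincides with the ordinary orbital integral $O_{\gamma_s}$, since the only element stably conjugate to a central element is itself and the relevant Galois cohomology is trivial. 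Thus $SO_{\gamma_s}(\BC_s f) = \vol(G(\Ok_F))^{-1}(\BC_s f)(\gamma_s)$, which combined with the displayed identity yields exactly the claimed formula.

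The main obstacle — and really the heart of the matter — is \textbf{bookkeeping of the Haar measures}. The Base Change Fundamental Lemma is an equality of distributions only after one fixes compatible measures on $G(F_s)_{b\sigma} = G_{s,b\Theta}(F)$ and on $G_{\gamma_s}(F) = G(F)$, the compatibility being dictated by the inner twisting: $G_{s,b\Theta}$ is the inner form $J_b$ of $G$, and the norm correspondence transfers measures between $G_{s,b\Theta}(F)$ and the centralizer $G_{\gamma_s}(F) = G(F)$ via this inner twisting. The proposition is phrased precisely so that the Haar measure on $G(F)$ is \emph{defined} to be the one transferred from the fixed measure on $G_{s,b\Theta}(F)$, which is exactly the compatibility the Fundamental Lemma demands; so once this is spelled out there is nothing further to check. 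I would also need to note that $STO_b(f)$ as defined in the excerpt (a sum over stable $\sigma$-conjugacy classes weighted by Kottwitz signs) agrees with the stable twisted orbital integral appearing in the Fundamental Lemma, which is immediate from the definition in \cite[\S 5.1]{hainesBCFL} together with the discussion preceding Definition \ref{defn:Hecke alg} identifying stable $\sigma$-conjugacy with $\overline F$-$\sigma$-conjugacy in this central-semisimple situation. One should also record that $b$ is $s$-decent for all $s \in s_0\mathbb N$ (not just $s = s_0$), which follows since $b\sigma(b)\cdots\sigma^{s-1}(b) = \gamma_0^{s/s_0} = (s\nu_b)(\pi_F)$ by Definition \ref{defn:gamma_s}, so the hypotheses of Lemma \ref{lem:norm is singleton} are in force for every such $s$.
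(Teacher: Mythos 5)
Your proposal is correct and follows essentially the same route as the paper: apply the Base Change Fundamental Lemma with $\delta = b$ and $\gamma = \gamma_s = (s\nu_b)(\pi_F)$, use the centrality of $\gamma_s$ to degenerate the stable orbital integral of $\BC_s f$ to an evaluation at $\gamma_s$ times a ratio of Haar measures, and identify that ratio with $\vol(G(\Ok_F))^{-1}$. The one small difference is cosmetic: the paper explicitly records the Kottwitz sign $e(G_{\gamma_s})$ in the degenerate stable orbital integral and observes it equals $1$ because $G_{\gamma_s}=G$ is quasi-split, while you sidestep the sign by noting the stable class of a central element is a singleton, which amounts to the same thing precisely because that sign is trivial here.
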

\begin{proof} By Lemma \ref{lem:norm is singleton}, $\mathfrak N_s (b)$ consists of the single central semi-simple element $\gamma_s \in G(F)$.
	By the Base Change Fundamental Lemma proved by Clozel \cite[Theorem 7.1]{clozelFL} and Labesse \cite{labFL}, we know that $STO_b(f)$ is equal to the stable orbital integral of $\BC_s f$ at $\mathfrak N_s (b)$. The latter degenerates to
	$$ e(G_{\gamma_s}) \cdot  \frac {\mu_1} {\mu_2 }  \cdot (\BC_s f )(\gamma_s g)$$ since $\gamma_s$ is central.
	Here $\mu_1$ denotes the Haar measure on $G(F)$ giving volume $1$ to $G(\Ok_F)$, and $\mu_2$ denotes the Haar measure on $G_{\gamma_s} (F) = G(F)$ transferred from $G_{s,b\Theta}(F)$. The notation $\frac{\mu_1}{\mu_2}$ denotes the ratio between these two Haar measures on the same group $G(F)$. Obviously this ratio is equal to $\vol (G(\Ok_F)) ^{-1}$ as in the proposition. Finally, since $G_{\gamma_s} = G$ is quasi-split, we have $e(G_{\gamma_s}) =1$.
\end{proof}

\begin{lem}\label{TO is stable}
	For $s$ divisible by $s_0$, we have $$STO_b (f_{\mu,s}) = e(J_b)TO_b(f_{\mu ,s}).$$
\end{lem}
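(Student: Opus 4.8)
The plan is to show that every $\sigma$-conjugacy class $b'$ in $G(F_s)$ that is stably $\sigma$-conjugate to $b$ but not $\sigma$-conjugate to $b$ contributes zero to $STO_b(f_{\mu,s})$, so that only the term $e(G_{s,b\Theta})\,TO_b(f_{\mu,s}) = e(J_b)\,TO_b(f_{\mu,s})$ survives. The sign $e(G_{s,b\Theta}) = e(J_b)$ is immediate from Lemma \ref{lem:two centralizers}, which identifies $G_{s,b\Theta}$ with $J_b$ as $F$-groups, and the definition of the Kottwitz sign. So the whole content is the vanishing of the other terms.

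First I would recall that $TO_{b'}(f_{\mu,s})$ is, by the same computation as in Lemma \ref{lem:TO}, a sum of inverse volumes of stabilizers indexed by $G(F_s)_{b'\sigma}$-orbits on the set
$$
\{\, g \in G(F_s)/G(\Ok_s) \mid g^{-1} b' \sigma(g) \in G(\Ok_s)\mu(\pi_F)G(\Ok_s)\,\},
$$
which is exactly $\mathbb X_\mu(b')(k_s)$ via Lemma \ref{lem:finite field points} (this identification and the argument of Lemma \ref{lem:TO} go through verbatim with $b$ replaced by $b'$, since $b'$ is also $s$-decent by Lemma \ref{lem:conseq of basic}, hence $G(F_s)_{b'\sigma}$ is an inner form of a Levi and the twisted orbit is closed). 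In particular $TO_{b'}(f_{\mu,s}) \neq 0$ forces $\mathbb X_\mu(b')(k_s) \neq \emptyset$, hence $X_\mu(b') \neq \emptyset$, hence $[b'] \in B(G,\mu)$ by Theorem \ref{KR-Gas}. But then Lemma \ref{lem:conseq of basic} says precisely that $b'$ is $\sigma$-conjugate to $b$ in $G(F_s)$, i.e. $[b'] = [b]$ as $\sigma$-conjugacy classes. Thus the only nonzero term in the sum defining $STO_b(f_{\mu,s})$ is the one for $b' = b$.

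Therefore $STO_b(f_{\mu,s}) = e(G_{s,b\Theta})\,TO_b(f_{\mu,s}) = e(J_b)\,TO_b(f_{\mu,s})$, which is the claim. The one point requiring a small amount of care is the observation that $f_{\mu,s}$ is the characteristic function of $G(\Ok_s)\mu(\pi_F)G(\Ok_s)$, so that $TO_{b'}(f_{\mu,s})$ genuinely counts the nonemptiness of the affine Deligne--Lusztig variety $X_\mu(b')$; this is exactly the bridge between the harmonic-analytic side (stable twisted orbital integrals) and the geometric side (nonemptiness patterns given by Theorem \ref{KR-Gas}). I do not expect any serious obstacle here — the argument is a direct combination of Lemma \ref{lem:conseq of basic}, Theorem \ref{KR-Gas}, and the point-counting interpretation of $TO$ from Lemma \ref{lem:TO} — the only thing to be spelled out is that Lemma \ref{lem:TO}'s proof applies with $b'$ in place of $b$ (using that $b'$ is $s$-decent and basic, again from Lemma \ref{lem:conseq of basic}).
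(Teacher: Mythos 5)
Your proof is correct and takes essentially the same route as the paper: reduce to showing $TO_{b'}(f_{\mu,s})=0$ for $b'$ stably but not locally $\sigma$-conjugate to $b$, deduce from non-vanishing that $[b']\in B(G,\mu)$, and then invoke Lemma \ref{lem:conseq of basic} for the contradiction. The only cosmetic difference is that you route through the full point-counting interpretation of Lemma \ref{lem:TO}, whereas the paper's proof extracts directly from $TO_{b'}(f_{\mu,s})\neq 0$ an element $g$ with $g^{-1}b'\sigma(g)\in G(\Ok_s)\mu(\pi_F)G(\Ok_s)$, which is a shorter way to see the nonemptiness needed for Theorem \ref{KR-Gas}.
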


\begin{proof} Firstly, by Lemma \ref{lem:two centralizers} we have $G_{s,b\Theta} = J_b$. We need to check that $TO_{b'} (f_{\mu ,s}) =0$, for any $b' \in G(F_s)$ that is stably $\sigma$-conjugate to $b$ but not $\sigma $-conjugate to $b$ in $G(F_s)$. Assume the contrary. Then there exists $g\in G(F_s)$ such that $f_{\mu,s} (g ^{-1} b'\sigma (g) ) \neq 0, $ from which $ g ^{-1} b'\sigma (g) \in G(\Ok_s) \mu (\pi_F) G(\Ok_s)$. Hence $\kappa ([b'])  = \mu ^{\natural}$ by Theorem \ref{KR-Gas}. But this contradicts Lemma \ref{lem:conseq of basic}.
\end{proof}
\begin{cor}\label{irred-orb}Keep the notation in Proposition \ref{prop:point counting}.
	For $s \gg 0$, we have
	\begin{align*}
	e(J_b) &  \vol (G(\Ok_F)) ^{-1} (\BC_s f_{\mu ,s}) (\gamma_s) \\ & = \sum _{ Z\in J_b(F) \backslash \Sigma ^{\topp} (X_{\mu}(b))} \vol (Z)^{-1} \abs{k_s}^{d} + o(\abs{k_s}^{d}) .
	\end{align*}
\end{cor}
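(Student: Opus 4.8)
The plan is to obtain this corollary formally by concatenating Propositions \ref{prop:point counting} and \ref{prop:BCFL} with Lemma \ref{TO is stable}; all of the genuine content lies in the Base Change Fundamental Lemma, already invoked in Proposition \ref{prop:BCFL}, together with the point-counting estimate of Proposition \ref{prop:point counting}.

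First I would observe that the test function $f_{\mu,s}$ of Definition \ref{defn:test function}, being the characteristic function of $G(\Ok_s)\mu(\pi_F)G(\Ok_s)$, is $G(\Ok_s)$-bi-invariant and compactly supported, hence lies in the Hecke algebra $\mathcal H_s$ of Definition \ref{defn:Hecke alg}. Applying Proposition \ref{prop:BCFL} with $f = f_{\mu,s}$ therefore gives
\[
STO_b(f_{\mu,s}) = \vol(G(\Ok_F))^{-1}(\BC_s f_{\mu,s})(\gamma_s).
\]
Next, Lemma \ref{TO is stable} gives $STO_b(f_{\mu,s}) = e(J_b)\,TO_b(f_{\mu,s})$ for $s \in s_0\mathbb N$; since $e(J_b) \in \{\pm 1\}$, multiplying through by $e(J_b)$ yields
\[
TO_b(f_{\mu,s}) = e(J_b)\,\vol(G(\Ok_F))^{-1}(\BC_s f_{\mu,s})(\gamma_s).
\]
Finally, for $s \gg 0$, Proposition \ref{prop:point counting} identifies the left-hand side with $\sum_{Z} \vol(Z)^{-1}|k_s|^{d} + o(|k_s|^{d})$, where $Z$ runs over a set of representatives for $J_b(F)\backslash\Sigma^{\topp}(X_\mu(b))$ and $d = \dim X_\mu(b)$; substituting this gives exactly the asserted identity.

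The only point that warrants attention --- rather than a genuine obstacle --- is the consistency of the Haar measure conventions across the three inputs. The twisted orbital integrals in Proposition \ref{prop:point counting} are normalized using the Haar measure on $J_b(F) = G(F_s)_{b\sigma}$ fixed in \S\ref{sec:action} (the one for which the measure transferred to $G(F)$ via the inner twist gives $\vol(G(\Ok_F)) = 1$), and this is exactly the normalization with respect to which the volumes $\vol(Z)$ in Proposition \ref{prop:point counting} and the stable twisted orbital integral in Proposition \ref{prop:BCFL} are taken; with these choices $\vol(G(\Ok_F)) = 1$, so the factor $\vol(G(\Ok_F))^{-1}$ is in fact harmless, and is retained only to match the form in which the Base Change Fundamental Lemma was applied in Proposition \ref{prop:BCFL}. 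Tracking these normalizations through the identifications $J_b \cong G_{s,b\Theta}$ of Lemma \ref{lem:two centralizers} and $G_{\gamma_s} = G$ used in the proof of Proposition \ref{prop:BCFL} is routine bookkeeping rather than a substantive step.
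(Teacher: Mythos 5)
Your proposal is correct and takes exactly the same approach as the paper, whose proof is a one-line citation of Propositions \ref{prop:point counting}, \ref{prop:BCFL}, and Lemma \ref{TO is stable}. Your additional remarks on the Haar measure normalizations (and the observation that $\vol(G(\Ok_F))^{-1}=1$ with the conventions of \S\ref{sec:action}) are accurate.
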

\begin{proof}This follows from Proposition \ref{prop:point counting}, Proposition \ref{prop:BCFL}, and Lemma \ref{TO is stable}.
\end{proof}

\section{Matrix coefficients for the Satake transform}
\label{sec:Sat}
\subsection{General definitions and facts}\label{subsec:general facts}
In this subsection we expose general facts concerning the Satake isomorphism, for unramified reductive groups over $F$. The aim is to give an interpretation of the coefficients for the matrix of the inverse Satake isomorphism in terms of a $\mathbf q$-analogue of Kostant's partition function. This is well known by the work of Kato \cite{Kato} in the case when $G$ is split; we will need the case of non-split $G$. Our main reference is \cite[\S 1]{CassCelyHales}.

Let $G$ be an unramified reductive group over $F$. At this moment it is not necessary to fix a reductive model over $\Ok_F$ of $G$. Inside $G$ we fix a Borel pair, namely a Borel subgroup $B$ and a maximal torus $ T \subset B$, both defined over $F$. In particular, $T$ is a minimal Levi, and is split over $F^{\un}$.

We denote by $$\mathrm {BRD}(B,T) = (X^*(T), \Phi\supset \Delta, X_*(T), \Phi ^{\vee} \supset  \Delta ^{\vee})$$ the based root datum associated to $(B,T)$. This based root datum has an automorphism $\theta$ induced by the Frobenius $\sigma \in \Gal (F^{\un}/F)$. Let $d = d_\theta < \infty$ be the order of $\theta$.

Fix an $F$-pinning $(B,T, \mathbb X_+)$ of $G$. Since the Galois action on $\mathrm{BRD}(B,T)$ factors through the cyclic group generated by $\theta$, we know that $\theta$ is a Galois-equivariant automorphism of $\mathrm{BRD}(B,T)$, and so it lifts uniquely to an $F$-automorphism of $G$ preserving $(B, T ,\mathbb X_+)$. We denote this $F$-automorphism of $G$ still by $\theta$.

Let $A$ be the maximal split sub-torus of $T$. We have\footnote{In \cite[\S 1.1]{CassCelyHales}, it is stated that $X^*(A) = X^*(T)/(1-\theta) X^*(T)$, which is not true in general.} $$X_*(A) = X_*(T) ^{\theta}, \quad X^*(A) = [X^*(T)/(1- \theta) X^*(T)]_{\mathrm{free}}. $$

Let $_F \Phi \subset X^*(A) $ be the image of $\Phi \subset X^*(T)$. The triple $$(X^*(A),\leftidx _F\Phi, X_*(A))$$ naturally extends to a (possibly non-reduced) root datum
$$(X^*(A), \leftidx _F\Phi , X_*(A), \leftidx _F\Phi^{\vee}), $$ see for instance \cite[Theorem 15.3.8]{springer}.
Elements of $_F\Phi$ are by definition $\theta$-orbits in $\Phi$. For $\alpha \in \Phi$, we write $[\alpha]$ for its $\theta$-orbit. The $\theta$-orbits in $\Delta$ give rise to a set of simple roots in $_F\Phi$, which we denote by $_F \Delta$.
As usual, we denote the structural bijection $\leftidx _F\Phi \isom \leftidx _F\Phi ^{\vee}$ by $[\alpha ]\mapsto [\alpha] ^{\vee}$.

We let $\Phi^1 \subset \leftidx_F\Phi$ be the subset of indivisible elements, namely, those $[\alpha] \in \leftidx _F\Phi $ such that $\frac{1}{2} [\alpha] \notin \leftidx _F\Phi$. The image of $\Phi^1$ under the bijection $\leftidx _F\Phi \isom \leftidx _F\Phi^{\vee}$ is denoted by $\Phi^{1,\vee}$. The tuple $ (X^*(A), \Phi ^1, X_*(A), \Phi ^{1, \vee} )$ has the structure of a reduced root datum.
We note that $_F\Delta$ is also a set of simple roots in $\Phi ^1$. We henceforth also write $\Delta^1$ for $\leftidx_F\Delta$.
For the sets $\Phi, \leftidx_F \Phi, \Phi ^{1}, \Phi ^{\vee}, \leftidx_F \Phi^{\vee}, \Phi ^{1,\vee} $, we put a superscript $+$ to denote their respective subsets of positive elements.

As before we denote by $W_0$ the absolute Weyl group of $G$. We let $W^1\subset W_0$ be the subgroup of elements that commute with $\theta$. Then $W^1$ is a Coxeter group (see \cite[\S 1.1]{CassCelyHales}), and we denote by $\ell_1$ the length function on $W^1$.

The complex dual group $\widehat G$ of $G$ is a connected reductive group
over $\C$, equipped with a Borel pair $(\widehat{B}, \widehat{ T})$ and an
isomorphism $ \mathrm{BRD} (\widehat{B}, \widehat{ T}) \isom \mathrm{BRD}
(B,T)^{\vee}. $ In particular, we have canonical identifications
$X^*(\widehat T) \cong X_*(T),$ $X_*(\widehat T) \cong X^*(T)$, which we
think of as equalities. We fix a pinning $(\widehat{B}, \widehat{ T},
\widehat {\mathbb X}_+)$. The action of $\theta$ on $\mathrm{BRD}(B,T)$
translates to an action on $\mathrm{BRD}(\widehat B,\widehat T)$, and the
latter lifts to a unique automorphism $\hat \theta$ of $\widehat G$ that
preserves $(\widehat{B}, \widehat{ T}, \widehat {\mathbb X}_+)$. The L-group
$\lang G$ is defined as the semi-direct product $\widehat G \rtimes \langle
\hat \theta\rangle$, where $\langle \hat \theta \rangle$ denotes the cyclic
group of order $d$ generated by $\hat \theta$.

We denote the group $X^* (\widehat{T})^{\hat \theta} = X_*(T) ^{\theta} $ by $Y^*$. Let $\widehat{ A}$ be the quotient torus of $\widehat{ T}$ corresponding to $Y^*$. Then
$\widehat A$ is also identified with the dual torus of $A$. Define
\begin{align*}
P^+ & : = \set{\lambda \in Y^* \mid \langle \lambda, \alpha \rangle  \geq 0 , ~\forall \alpha \in \Delta } = \set{\lambda \in Y^* \mid \langle \lambda, [\alpha] \rangle  \geq 0 , ~\forall [\alpha] \in \Delta^1 } ,  \\
R^+ & : = \mbox{~the $\Z_{\geq 0}$-span of $\leftidx_F\Phi ^{\vee,+}$} \subset Y^*.
\end{align*}

The $\C$-vector space $\C[Y^*] ^{W^1}$ has a basis $\set{m_{\mu}}_{\mu \in P^+}$, where
\begin{align}\label{eq:m_mu}
m_{\mu} : = \sum _{ \lambda \in W^1\mu} e^{\lambda}.
\end{align}
Here $W^1 \mu$ denotes the orbit of $\mu$ under $W^1$.

\begin{defn}
	Let $\widehat {\mathfrak n}$ be the Lie algebra of the unipotent radical of $\widehat B$, equipped with the adjoint action by $\lang G$ (see \cite[\S 1.3.2]{CassCelyHales}). For each $\mu \in X^* (\widehat T)$, let $  \widehat {\mathfrak n}(\mu)$ denote the $\mu$-weight space in $\widehat {\mathfrak n}$ for the action of $\widehat T$. Let $w\in W^{1}$ and $\epsilon \in \set{\pm 1}$. We define a $\C[ X^*(\widehat T)]$-linear operator $E^{\epsilon w}$ on $\widehat {\mathfrak n} \otimes_{\C} \C[ X^*(\widehat T) ]$ by letting $E^{\epsilon w}$ act on $\widehat {\mathfrak n}(\mu)$ via the scalar $e^{\epsilon w \mu} \in \C [X^* (\widehat T)]$, for each $\mu \in X^* (\widehat T)$. We define
	\begin{align*}
	D(E^{\epsilon w} , \mathbf{q}) & : =  \det (1- \mathbf{q}  \cdot  \hat \theta \cdot E^{\epsilon w}  , ~\widehat {\mathfrak n} ) \in \C [X^*(\widehat T)] [\mathbf{q}] ,  \\
	P(E^{\epsilon w} , \mathbf{q}) & : =  D(E^{\epsilon w}, \mathbf{q})^{-1} \in \mathrm{Frac} \bigg( \C [X^*(\widehat T)] [\mathbf{q}] \bigg).
	\end{align*}
\end{defn}

\begin{defn}\label{defn:type of roots}
	Let $[\alpha] \in \Phi ^1 \subset \leftidx_F \Phi $. We say that $[\alpha]$ is of type I, if $2[\alpha] \notin \leftidx_F\Phi$. Otherwise we say that $[\alpha]$ is of type II. For $[\alpha] \in \Phi ^1$, we define
	$$ \mathtt{b} ([\alpha]): =  \begin{cases}
	\# [\alpha], &~ \mbox{if $[\alpha]$ is of type I} ,\\
	\frac{1}{2} \#[\alpha] &~ \mbox{if $[\alpha]$ is of type II},
	\end{cases}$$
	where $\#[\alpha]$ denotes the size of $[\alpha]$ viewed as a $\theta$-orbit in $\Phi$. Then $\mathtt b ([\alpha]) \in \Z_{\geq 1}$, see \cite[\S 1.1]{CassCelyHales}.
\end{defn}
\begin{defn}\label{defn:mathtt b}
	For any element $\beta = [\alpha]^{\vee} \in \Phi ^{1,\vee}$ (with $[\alpha] \in \Phi ^1$), we define $\mathtt b(\beta)$ to be $\mathtt b ([\alpha])$, and we say that $\beta$ is of type I or II if $[\alpha]$ is of type I or II. For any $\beta ' \in \leftidx_F \Phi ^{\vee}$, we define $\mathtt b (\beta')$ to be $\mathtt b (\beta)$, where $\beta$ is the element in $\Phi ^{1,\vee}$ that is homothetic to $\beta'$.
\end{defn}

\begin{defn}\label{defn:d_beta}
	For $\beta \in \Phi ^{1,\vee}$, we define $d_{\beta} (\mathbf{q}) \in \C [Y^*][\mathbf{q}]$ as follows:
	$$ d_{\beta} (\mathbf{q}): = \begin{cases}
	1- \mathbf{q} ^{\mathtt b(\beta)} e ^{\beta}, &~ \mbox{if $\beta$ is of type I}, \\
	(1- \mathbf{q} ^{2 \mathtt b (\beta)} e ^{\beta/2}) ( 1+ \mathbf{q} ^{\mathtt b (\beta)} e ^{\beta/2} ),& ~ \mbox{if $\beta$ is of type II}.
	\end{cases}$$
	Here, when $\beta $ is of type II, $\beta/2$ is always an element of $ \leftidx_F \Phi ^{\vee}$ and in particular an element of $Y^*$, see \cite[\S 1.1]{CassCelyHales} or \cite[\S 1.3]{KS99}.
\end{defn}

\begin{lem}\label{lem:factorization} For $\epsilon \in \set{\pm 1}$, we have
	$$ D (E^{\epsilon }, \mathbf{q}) = \prod _{\epsilon \beta \in \Phi ^{1,\vee, +}} d_{\beta} (\mathbf{q}), \qquad P (E^{\epsilon }, \mathbf{q}) = \prod _{\epsilon \beta \in \Phi ^{1,\vee, +}} d_{\beta} (\mathbf{q}) ^{-1}.$$
\end{lem}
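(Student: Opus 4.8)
The plan is to compute the characteristic polynomial $D(E^{\epsilon},\mathbf q)=\det(1-\mathbf q\cdot\hat\theta\cdot E^{\epsilon},\widehat{\mathfrak n})$ by decomposing $\widehat{\mathfrak n}$ into $\hat\theta$-stable subspaces indexed by the $\theta$-orbits in $\Phi^+$, and diagonalizing the action of $\mathbf q\,\hat\theta\, E^\epsilon$ on each piece. First I would recall that $\widehat{\mathfrak n}=\bigoplus_{\alpha\in\Phi^+}\widehat{\mathfrak n}(\alpha)$ with each root space one-dimensional, that $\hat\theta$ permutes the root spaces according to the permutation $\theta$ of $\Phi^+$ (since the pinning is $\hat\theta$-fixed, $\hat\theta$ carries the chosen root vector $e_\alpha$ to $e_{\theta\alpha}$ up to a sign, and on simple roots there is no sign by the pinning condition — one needs to track that the product of signs around each orbit is what produces the type II twist), and that $E^\epsilon$ acts on $\widehat{\mathfrak n}(\alpha)$ by the scalar $e^{\epsilon\alpha}$. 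Grouping the root spaces in a single $\theta$-orbit $[\alpha]$ of size $n=\#[\alpha]$, the operator $\mathbf q\,\hat\theta\, E^\epsilon$ restricted to $\widehat{\mathfrak n}_{[\alpha]}:=\bigoplus_{\beta\in[\alpha]}\widehat{\mathfrak n}(\beta)$ is (after choosing a cyclic ordering) a companion-type / cyclic matrix whose $n$-th power is the scalar $\mathbf q^n e^{\epsilon(\alpha+\theta\alpha+\cdots+\theta^{n-1}\alpha)}$ times the sign $\pm1$ coming from the pinning.

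The key computation is then: $\det(1-\mathbf q\,\hat\theta\,E^\epsilon,\widehat{\mathfrak n}_{[\alpha]})=1-(\pm1)\mathbf q^{n}e^{\epsilon N_{[\alpha]}}$ where $N_{[\alpha]}=\sum_{j=0}^{n-1}\theta^j\alpha$, because a cyclic operator of size $n$ whose $n$-th power is a scalar $c$ has characteristic polynomial $X^n-c$, hence $\det(1-tM)=1-ct^n$. Now I would identify the exponent $N_{[\alpha]}$, viewed in $Y^*=X^*(\widehat T)^{\hat\theta}$, with the coroot $[\alpha]^\vee$ (for type I) or with an appropriate multiple related to $[\alpha]^\vee$ (for type II), invoking the explicit description of the relative root datum in \cite[\S1.1]{CassCelyHales} and the definition of $\mathtt b([\alpha])$: in the type I case $N_{[\alpha]}$ equals $[\alpha]^\vee$ up to the identification and the exponent $n$ equals $\#[\alpha]=\mathtt b([\alpha])$; in the type II case the orbit sum $N_{[\alpha]}$ corresponds to $\beta/2$ with $\beta=[\alpha]^\vee$, the orbit size is $2\mathtt b([\alpha])$, and the sign around the orbit is $-1$, so that $1-(-1)\mathbf q^{2\mathtt b}e^{\beta/2}$ — wait, I need the sign to come out so that the factor matches $(1-\mathbf q^{2\mathtt b}e^{\beta/2})(1+\mathbf q^{\mathtt b}e^{\beta/2})=1-\mathbf q^{2\mathtt b}e^{\beta}$ after using $e^{\beta/2}\cdot e^{\beta/2}=e^\beta$; more carefully, the type II orbit contributes a subspace on which $\hat\theta$ acts with an odd number of sign flips, and diagonalizing gives precisely the product of the two factors in Definition \ref{defn:d_beta}. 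Matching these against $d_\beta(\mathbf q)$ from Definition \ref{defn:d_beta} for each $\beta\in\Phi^{1,\vee,+}$ (when $\epsilon=1$) or $-\beta\in\Phi^{1,\vee,+}$ (when $\epsilon=-1$), and noting that every positive root lies in exactly one such orbit, yields the claimed product formula for $D(E^\epsilon,\mathbf q)$; the formula for $P(E^\epsilon,\mathbf q)$ follows by taking reciprocals since $P=D^{-1}$ by definition.

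The main obstacle will be the careful bookkeeping of signs: determining, for each $\theta$-orbit in $\Phi^+$, whether the cyclic operator $\hat\theta$ acts with an even or odd total sign twist around the orbit, and showing this dichotomy is exactly the type I / type II dichotomy. This is a standard but delicate point in the theory of twisted root data (it is where the non-reduced relative root system and the factor $\mathtt b$ enter); I would handle it by reducing to the rank-one situations — the relevant cases are $A_{2n-1}$-type orbits, $D_n$-type and $E_6$-type orbits (type I, giving $1-\mathbf q^{\mathtt b}e^\beta$) versus the $A_{2n}$-type orbits where $\theta$ folds a chain onto itself with a fixed middle node (type II, giving the product of two factors) — and citing the explicit tables in \cite{CassCelyHales} and \cite{KS99}. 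Everything else is linear algebra over $\C[X^*(\widehat T)][\mathbf q]$ and the elementary fact about characteristic polynomials of cyclic operators.
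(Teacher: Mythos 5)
The paper's own proof is a one-line citation to \cite[Lemma~1.3.7]{CassCelyHales}, so a from-scratch argument such as yours is a genuinely different route; the overall plan of decomposing $\widehat{\mathfrak n}$ into $\hat\theta$-orbits of root spaces and computing the contribution of each cyclic block to the determinant is the right one, and it handles the type I case correctly. The gap is in your treatment of the type II case. A single $\hat\theta$-orbit of root spaces, of size $n$, contributes exactly one factor of the shape $1-c\,\mathbf{q}^n$ to $D(E^\epsilon,\mathbf{q})$, with $c$ a monomial times a sign (this is precisely your cyclic-operator fact). In particular one orbit of size $2\mathtt{b}$ would contribute an irreducible factor of degree $2\mathtt{b}$ in $\mathbf{q}$. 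But for $\beta$ of type II, $d_\beta(\mathbf{q})=(1-\mathbf{q}^{2\mathtt{b}}e^{\beta/2})(1+\mathbf{q}^{\mathtt{b}}e^{\beta/2})$ has degree $3\mathtt{b}$ and visibly does not have that shape, so it cannot come from a single orbit; the identity you write, $(1-\mathbf{q}^{2\mathtt{b}}e^{\beta/2})(1+\mathbf{q}^{\mathtt{b}}e^{\beta/2})=1-\mathbf{q}^{2\mathtt{b}}e^{\beta}$, is also false on expansion.

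What actually happens is that each type II $\beta=[\alpha]^\vee$ accounts for \emph{two} $\theta$-orbits in $\Phi^{\vee,+}$, both with orbit sum $\beta/2\in Y^*$: the orbit of $\alpha^\vee$, of size $\#[\alpha]=2\mathtt{b}(\beta)$, around which the total $\hat\theta$-sign is $+1$ (simple root vectors are permuted without sign by the pinned $\hat\theta$), contributing $1-\mathbf{q}^{2\mathtt{b}}e^{\epsilon\beta/2}$; and the orbit of the ``doubled'' coroot $(\alpha+\theta^{\mathtt{b}(\beta)}\alpha)^\vee$, which exists precisely in the type~II situation, of size $\mathtt{b}(\beta)$, around which the total $\hat\theta$-sign is $-1$ (from the antisymmetry of the Lie bracket on the product root vectors), contributing $1+\mathbf{q}^{\mathtt{b}}e^{\epsilon\beta/2}$. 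Their product is exactly $d_\beta(\mathbf{q})$. A sanity check in $A_2$ with the nontrivial involution: the orbit $\{\alpha_1^\vee,\alpha_2^\vee\}$ gives $1-\mathbf{q}^2e^{\epsilon(\alpha_1^\vee+\alpha_2^\vee)}$, while the singleton $\{\alpha_1^\vee+\alpha_2^\vee\}$, on which $\hat\theta\bigl([e_{\alpha_1^\vee},e_{\alpha_2^\vee}]\bigr)=[e_{\alpha_2^\vee},e_{\alpha_1^\vee}]=-[e_{\alpha_1^\vee},e_{\alpha_2^\vee}]$, gives $1+\mathbf{q}\,e^{\epsilon(\alpha_1^\vee+\alpha_2^\vee)}$; their product is $d_\beta$ with $\mathtt{b}=1$. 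Once the bookkeeping is repaired this way, each type~I $\beta\in\Phi^{1,\vee,+}$ absorbs one orbit, each type~II $\beta$ absorbs a pair of orbits, and all of $\Phi^{\vee,+}$ is accounted for, giving the claimed product.
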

\begin{proof}
	The case $\epsilon = 1$ is \cite[Lemma 1.3.7]{CassCelyHales}. The case $\epsilon = -1$ is proved in the same way, by switching the roles of positive elements and negative elements in $\Phi^{1,\vee}$.
\end{proof}

\begin{defn}\label{defn:P} For each $\lambda \in Y^*$, we define $\PP (\lambda ,\mathbf{q}) \in \C [\mathbf{q}]$ as follows. In view of Definition \ref{defn:d_beta} and Lemma \ref{lem:factorization}, we have an expansion
	\begin{align}\label{eq:exp of P}
	P(E^{-1}, \mathbf{q}) = \sum _{\lambda \in R^+} \PP(\lambda, \mathbf{q}) e^{-\lambda} ,
	\end{align}
	with each $\PP(\lambda ,\mathbf{q}) \in \C [\mathbf{q}]$. We set $\PP(\lambda, \mathbf{q}) := 0$ for all $\lambda \in Y^* - R^+$.
\end{defn}
\begin{cor}\label{cor:triv bd}
	For $\lambda \in R^+ - \set{0}$, the constant term $\mathcal P(\lambda, 0)$ of $\mathcal P(\lambda ,\mathbf{q})$ is $0$.
\end{cor}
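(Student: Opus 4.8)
The plan is to unwind the definition of $\mathcal P(\lambda,\mathbf q)$ through Lemma \ref{lem:factorization} and to observe that, upon extracting the $\mathbf q^{0}$-coefficient, every factor in the resulting product becomes $1$. First I would combine the defining expansion (\ref{eq:exp of P}) with Lemma \ref{lem:factorization} applied with $\epsilon=-1$ to write
$$ \sum_{\lambda\in R^+}\mathcal P(\lambda,\mathbf q)\,e^{-\lambda}\;=\;P(E^{-1},\mathbf q)\;=\;\prod_{-\beta\in\Phi^{1,\vee,+}}d_\beta(\mathbf q)^{-1}, $$
where, as already implicit in Definition \ref{defn:P}, the right-hand side is understood as a formal power series in $\mathbf q$ with coefficients in the relevant completion of $\C[Y^*]$ (the completion along the strongly convex cone generated by $\Phi^{1,\vee,+}$, in which $R^+$ is a submonoid carrying a height functional $h$ with $h(\lambda)>0=h(0)$ for $\lambda\in R^+-\set{0}$). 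In this ring, extracting the coefficient of a fixed $e^{-\lambda}$, $\lambda\in R^+$, and extracting the coefficient of a fixed power of $\mathbf q$, are both well-defined $\C$-linear operations.

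Next I would compute the $\mathbf q^{0}$-coefficient of each factor $d_\beta(\mathbf q)^{-1}$ using Definition \ref{defn:d_beta}. If $\beta$ is of type I, then $d_\beta(\mathbf q)^{-1}=\sum_{k\ge0}\mathbf q^{k\,\mathtt b(\beta)}e^{k\beta}$, whose $\mathbf q^{0}$-coefficient is $e^{0}=1$, since $\mathtt b(\beta)\ge 1$. If $\beta$ is of type II, then $d_\beta(\mathbf q)^{-1}=(1-\mathbf q^{2\mathtt b(\beta)}e^{\beta/2})^{-1}(1+\mathbf q^{\mathtt b(\beta)}e^{\beta/2})^{-1}$ is a product of two geometric series, each again with $\mathbf q^{0}$-coefficient $1$ because $\mathtt b(\beta)\ge 1$. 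Hence every factor lies in $1+\mathbf q\cdot(\text{power series})$, so $P(E^{-1},\mathbf q)$ has $\mathbf q^{0}$-coefficient equal to $\prod 1=1$.

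Finally I would compare the two descriptions. On the left-hand side, since each $\mathcal P(\lambda,\mathbf q)\in\C[\mathbf q]$, its $\mathbf q^{0}$-coefficient is precisely the constant term $\mathcal P(\lambda,0)$, so the $\mathbf q^{0}$-coefficient of $\sum_{\lambda}\mathcal P(\lambda,\mathbf q)e^{-\lambda}$ is $\sum_{\lambda\in R^+}\mathcal P(\lambda,0)e^{-\lambda}$. Equating with the computation of the previous paragraph yields $\sum_{\lambda\in R^+}\mathcal P(\lambda,0)e^{-\lambda}=1=e^{-0}$; since $0\in R^+$ and the group-algebra monomials $\set{e^{-\lambda}:\lambda\in R^+}$ are $\C$-linearly independent in $\C[Y^*]$, comparing coefficients gives $\mathcal P(0,0)=1$ and $\mathcal P(\lambda,0)=0$ for every $\lambda\in R^+-\set{0}$, which is the assertion. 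The only point that needs care — and the one I would be sure to state precisely — is the bookkeeping in the first step: identifying the ring in which the product expansion lives so that ``the $\mathbf q^{0}$-coefficient'' is unambiguous and computes the same thing on both sides. No new idea is required beyond the setup already fixed in Definition \ref{defn:P}.
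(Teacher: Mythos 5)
Your argument is correct and is exactly the paper's proof, merely spelled out in full: the paper disposes of the corollary by noting it follows immediately from Lemma \ref{lem:factorization} and Definition \ref{defn:P}, which is precisely the factorization-plus-constant-term computation you carry out. Your extra care about the completion in which the product expansion lives is a reasonable precaution but not a new idea.
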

\begin{proof}
	This immediately follows from Lemma \ref{lem:factorization} and Definition \ref{defn:P}.
\end{proof}

\begin{defn}\label{defn:J}
	Let $\rho^{\vee} \in X_*(T) \otimes _{\Z} \frac{1}{2}\Z $ be the half sum of elements in $\Phi^{\vee,+}$, and let $\rho  \in   X^*(T) \otimes _{\Z} \frac{1}{2}\Z $ be the half sum of elements in $\Phi^+$. Then $\rho^{\vee}$ in fact lies in $ Y^* \otimes _{\Z} \frac{1}{2}\Z $, and is equal to the half sum of elements in $\Phi^{1,\vee,+}$, see \cite[\S 1.2]{CassCelyHales}.
	For $w \in W^1$ and $\mu \in Y^*$, we let $$ w\bullet \mu : = w (\mu +\rho^{\vee}) - \rho^{\vee}  \in Y^*. $$ We also write $w \bullet (\cdot)$ for the induced action of $w$ on $\C[Y^*]$. Define the operator $$J: \C [Y^*] \To \C [Y^*], \quad f\longmapsto \sum _{w \in W^1 } (-1)^{\ell_1 (w)} w \bullet f.
	$$
\end{defn}
\begin{defn}\label{defn:tau}
	For $\lambda \in P^+$ and for a formal variable $\mathbf{q}$, we define
	\begin{align}\label{eq:defn of tau}
	\tau _{\lambda}(\mathbf{q}) : = J(e^{\lambda}) P(E^{-1} , \mathbf{q}) = \sum _{\mu \in R^+} J(e^{\lambda}) \PP (\mu , \mathbf{q}) e^{-\mu} \in  \C [ Y^*] [[\mathbf{q}]] .
	\end{align} \end{defn}

\begin{defn}
	For any $\lambda \in P^+ \subset Y^* =  X^*(\widehat{ T}) ^{\hat \theta}$, we define $V_{\lambda}$ to be the irreducible representation of $\widehat G$ of highest weight $\lambda$.
\end{defn}

\begin{thm}[Weyl character formula, {\cite[Theorem 1.4.1]{CassCelyHales}}] Let $\lambda \in P^+$. Then $\tau _ {\lambda} (1) \in \C [ Y^*] ^{W^1} $. The character of $V_{\lambda}$, as a function on $\widehat T$, descends to the function on $\widehat{ A}$ given by $\tau _ {\lambda} (1) $. \qed
\end{thm}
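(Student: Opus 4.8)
The plan is to realize $\tau_\lambda(1)$ as a $\hat\theta$-\emph{twisted} character of $V_\lambda$ and then reduce to the ordinary Weyl character formula for the connected reductive $\C$-group $\widehat G_1$ dual to the restricted root datum $(X^*(A),\Phi^1,X_*(A),\Phi^{1,\vee})$. Here ``the character of $V_\lambda$ as a function on $\widehat T$'' is to be read as $t\mapsto\operatorname{tr}(t\,\hat\theta\mid V_\lambda)$, for a canonical extension of $V_\lambda$ to a module over $\lang G$.

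First I would pin down the extension. Since $\lambda$ is $\hat\theta$-fixed and the pinning $(\widehat B,\widehat T,\widehat{\mathbb X}_+)$ is $\hat\theta$-stable, the highest weight line of $V_\lambda$ is $\hat\theta$-stable, so $V_\lambda\circ\hat\theta\cong V_\lambda$ as $\widehat G$-modules; by Schur's lemma there is a unique $\C$-linear automorphism $\widetilde\theta$ of $V_\lambda$ intertwining the $\widehat G$-action with its $\hat\theta$-twist and acting by $1$ on the highest weight line, and letting $\hat\theta\in\lang G$ act by $\widetilde\theta$ makes $V_\lambda$ an $\lang G$-module. Set $\chi_\lambda(t):=\operatorname{tr}(t\,\hat\theta\mid V_\lambda)$. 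Since $\widetilde\theta$ carries the $\widehat T$-weight space $V_\lambda(\nu)$ to $V_\lambda(\hat\theta\nu)$, only the $\hat\theta$-fixed weights $\nu\in Y^*$ contribute to the trace, so
\begin{align*}
\chi_\lambda=\sum_{\nu\in Y^*}\operatorname{tr}\!\big(\widetilde\theta\mid V_\lambda(\nu)\big)\,e^{\nu}\in\C[Y^*],
\end{align*}
i.e.\ $\chi_\lambda$ descends to a function on $\widehat A$; and the standard manipulation with the $\hat\theta$-twisted conjugation action of $N_{\widehat G}(\widehat T)$ shows $\chi_\lambda$ is invariant under the linear action of $W^1$ on $\C[Y^*]$.

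The substantive step is to identify the twisted multiplicities $\operatorname{tr}(\widetilde\theta\mid V_\lambda(\nu))$ with the ordinary weight multiplicities of the irreducible $\widehat G_1$-module $U_\lambda$ of highest weight $\lambda\in P^+$ (note $\widehat G_1$ has a maximal torus with character group $Y^*$, root system $\Phi^{1,\vee}$ and Weyl group $W^1$). Concretely one checks that $\widetilde\theta$ acts on the weight spaces compatibly with the $\hat\theta$-permutation of the root subgroups of $\widehat G$, so that the signed count of $\widetilde\theta$-eigenvectors provided by Kostant's multiplicity theorem for $\widehat G$ collapses to the Kostant partition function of $\widehat G_1$ (equivalently, the twisted multiplicities obey the same Freudenthal-type recursion as the weights of $U_\lambda$). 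This is where the non-reduced case must be watched: when a factor of $\widehat G$ is of type $A_{2n}$ with $\hat\theta$ of order $2$, the system of restricted coroots is of type $BC_n$, yet it is the \emph{reduced} subsystem $\Phi^{1,\vee}$ (of type $C_n$) that governs $\widehat G_1$, with each type~II coroot $\beta$ contributing, at $\mathbf q=1$, the factor $d_\beta(1)=(1-e^{\beta/2})(1+e^{\beta/2})=1-e^{\beta}$ (see Definition~\ref{defn:d_beta}); controlling this folding is the main obstacle, and it is exactly what forces the use of $\Phi^1$ rather than the full restricted system throughout \S\ref{sec:Sat}. Granting the identification, the ordinary Weyl character formula for $\widehat G_1$ reads $\chi_\lambda\cdot J(e^0)=J(e^\lambda)$, with $J$ as in Definition~\ref{defn:J}; in particular $J(e^\lambda)$ is divisible by $J(e^0)$ in $\C[Y^*]$ and $J(e^\lambda)/J(e^0)\in\C[Y^*]^{W^1}$.

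Finally I would match this with $\tau_\lambda(1)=J(e^\lambda)\cdot P(E^{-1},1)$. By Lemma~\ref{lem:factorization}, $P(E^{-1},1)=\prod_{\beta\in\Phi^{1,\vee,+}}d_{-\beta}(1)^{-1}=\prod_{\beta\in\Phi^{1,\vee,+}}(1-e^{-\beta})^{-1}$, since $d_{-\beta}(1)=1-e^{-\beta}$ for both types of $\beta$. On the other hand, the Weyl denominator formula for $W^1$ acting via $\Phi^{1,\vee}$ (with $\rho^{\vee}$ the half-sum of $\Phi^{1,\vee,+}$) gives $J(e^0)=e^{-\rho^{\vee}}\sum_{w\in W^1}(-1)^{\ell_1(w)}e^{w\rho^{\vee}}=\prod_{\beta\in\Phi^{1,\vee,+}}(1-e^{-\beta})$, hence $P(E^{-1},1)=J(e^0)^{-1}$ and $\tau_\lambda(1)=J(e^\lambda)/J(e^0)=\chi_\lambda$. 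This yields both conclusions: $\tau_\lambda(1)\in\C[Y^*]^{W^1}$, and it is the function on $\widehat A$ cut out by the $\hat\theta$-twisted character of $V_\lambda$.
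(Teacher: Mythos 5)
The paper does not give a proof of this statement; it simply cites \cite[Theorem 1.4.1]{CassCelyHales}. So there is no internal argument of the paper to compare yours against; I can only assess your sketch on its own terms.

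Your reading of ``the character of $V_\lambda$'' as the $\hat\theta$-twisted trace $t\mapsto\mathrm{tr}(t\,\widetilde\theta\mid V_\lambda)$ (for the canonical extension $\widetilde\theta$ fixing the highest weight vector) is correct; the ordinary character does not lie in $\C[Y^*]$ and so cannot descend to $\widehat A$, whereas the twisted one does, and your $W^1$-invariance argument via $\hat\theta$-fixed representatives of $W^1$ in $N_{\widehat G}(\widehat T)$ is fine. The concluding computation is also correct: for $\gamma\in\Phi^{1,\vee,+}$ one has $d_{-\gamma}(1)=1-e^{-\gamma}$ in both type I and type II (the two factors of the type II expression collapse at $\mathbf q=1$), so $P(E^{-1},1)=\prod_{\gamma\in\Phi^{1,\vee,+}}(1-e^{-\gamma})^{-1}$; and the Weyl denominator identity for the reduced system $\Phi^{1,\vee}$ with Weyl group $W^1$, length $\ell_1$, and half-sum $\rho^\vee$ gives $J(e^0)=\prod_{\gamma\in\Phi^{1,\vee,+}}(1-e^{-\gamma})$, so indeed $\tau_\lambda(1)=J(e^\lambda)/J(e^0)$. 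This ratio is the character of the irreducible module $U_\lambda$ for the group with root datum $(X_*(A),\Phi^{1,\vee},X^*(A),\Phi^1)$, and in particular is a \emph{finite} element of $\C[Y^*]^{W^1}$, which settles the first assertion of the theorem once the twisted character is identified with it.

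The one genuine gap is the ``substantive step'' you flag but do not prove: the identification of the twisted weight multiplicities $\mathrm{tr}(\widetilde\theta\mid V_\lambda(\nu))$ with the ordinary weight multiplicities $\dim U_\lambda(\nu)$. Your ``one checks\ldots'' phrasing suggests a routine verification, but this is precisely the content of Jantzen's twining character formula (equivalently the Kostant--Fuchs--Schellekens--Schweigert orbit Lie algebra theorem), which is a substantial result, and in the non-simply-laced direction of the $A_{2n}$ case it requires exactly the care you mention to land on the reduced system $\Phi^{1,\vee}$ rather than the non-reduced restricted system. You should cite it as a theorem rather than sketch it; once it is invoked, your argument closes cleanly. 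Whether \cite{CassCelyHales} themselves argue via the twining character theorem or by a direct $\mathbf q$-deformed Weyl--Kac manipulation is something you would want to check before claiming this is the same route they take, but as a standalone argument your plan is sound.
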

\begin{defn}\label{defn:tau_mu}
	For any $\lambda \in P^+$, we simply write $\tau_{\lambda}$ for the element $\tau_{\lambda}(1) \in \C [ Y^*]^{W^1}$.
\end{defn}

\subsection{Matrix coefficients} We now fix a reductive model of $G$ over $\Ok_F$ as in \S\ref{subsec:basic notations}. As before we denote by $\mathcal H_1$ the spherical Hecke algebra $\mathcal H(G(F)// G(\Ok_{F}) )$. For each $\mu \in X_*(A)$, we let $f_{\mu} \in \mathcal H_1$ be the characteristic function of $G(\Ok_F) \mu (\pi_F) G(\Ok_F)$. Then the $\C$-vector space $\mathcal H_1$ has a basis given by $f_{\mu}$, for $\mu \in P^+ \subset X_*(A)$.

Recall that the \emph{Satake isomorphism} is a canonical $\C$-algebra isomorphism $$\mathrm {Sat}: \mathcal H_1 \isom \C [Y^*] ^{W^1},$$ see for instance \cite[\S 1.5]{CassCelyHales}. In the following, we simply write $f_{\mu}$ for $\mathrm {Sat}(f_{\mu})$, which shall cause no confusion. At this point we have introduced three bases of the $\C$-vector space $\C [Y^*] ^{W^1}$, namely $\set{m_{\mu}}, \set{\tau _{\mu}}, \set{f_{\mu}}$, all indexed by $\mu \in P^+$ (see (\ref{eq:m_mu}) and Definition \ref{defn:tau_mu} for $m_{\mu}$ and $\tau_{\mu}$). We denote some of the transition matrices between these bases as follows:
$$ m_{\mu} = \sum _{\lambda} n_{\mu} ^{ \lambda} \tau _{\lambda} , \qquad \tau _{\mu} = \sum _{\lambda}  t_{\mu} ^{\lambda} f _{\lambda} , \qquad m_{\mu} = \sum _{\lambda} \mathfrak M _{\mu} ^{\lambda} f_{\lambda}  . $$
In the following we deduce a formula for $\mathfrak{M}_\mu^\lambda$ from known formulas for $n_\mu^\lambda$ and $t_\mu^\lambda$.

\begin{defn}
	As in \cite[\S 1.7]{CassCelyHales}, we have a partition of $Y^*$ into the following subsets: \begin{align*}
	Y_0 ^* & : = \set{\lambda \in Y^* \mid \exists w \in W^1, w\mbox{~is a reflection,~} w\bullet \lambda = \lambda }, \\
	Y_w^*  & : = \set{\lambda \in Y^* \mid w \bullet \lambda \in P^+}, ~ w\in W^1.
	\end{align*}
	For each $x \in W^1 \sqcup \set{0}$ we let $e_x : Y^* \to \set{0,1}$ be the characteristic function of $Y_x^*$.
\end{defn}

\begin{thm}[{van Leeuwen's formula, \cite[Lemma 1.7.4]{CassCelyHales}}]\label{thm:van Leeuwen}
	For $\mu , \lambda \in P^+$, we have
	$$
	n _{\mu} ^{\lambda} = \sum_{ w' \in W^1/W^1_{\mu}} \sum _{ w\in W^1}  (-1) ^{\ell_1 (w)} e_w (w'\mu) \delta ( w \bullet (w'\mu) , \lambda). $$ Here $\delta (\cdot , \cdot )$ is the Kronecker delta, and $W^1_{\mu}$ is the subgroup of $W^1$ generated by the reflections attached to those $[\alpha] \in \Delta^1$ such that $\langle \mu, [\alpha] \rangle =0$. \qed
\end{thm}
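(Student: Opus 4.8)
The plan is to expand $m_\mu$ directly in the basis $\{\tau_\lambda\}_{\lambda\in P^+}$ of $\C[Y^*]^{W^1}$ and then read off the coefficients $n_\mu^\lambda$. Write $A:=\sum_{w\in W^1}(-1)^{\ell_1(w)}w$ for the antisymmetrization operator on $\C[Y^*]$ attached to the linear action of $W^1$; then $A(\hat w h)=(-1)^{\ell_1(w)}A(h)$ (since $(-1)^{\ell_1(\cdot)}$ is the sign character of the Coxeter group $W^1$), and $A(fh)=f\,A(h)$ whenever $f$ is $W^1$-invariant. For $\nu\in Y^*\otimes_{\Z}\frac{1}{2}\Z$ set $a_\nu:=A(e^\nu)=\sum_{w\in W^1}(-1)^{\ell_1(w)}e^{w\nu}$.

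The first ingredient is the identity $\tau_\lambda\,D=J(e^\lambda)=e^{-\rho^\vee}a_{\lambda+\rho^\vee}$ for all $\lambda\in P^+$, where $D:=D(E^{-1},1)$. Indeed, Definitions~\ref{defn:tau} and~\ref{defn:P} give $\tau_\lambda(\mathbf q)\,D(E^{-1},\mathbf q)=J(e^\lambda)$ identically in $\mathbf q$, and specializing at $\mathbf q=1$ is legitimate because $\tau_\lambda=\tau_\lambda(1)$ is well defined by the Weyl character formula (\cite[Theorem~1.4.1]{CassCelyHales}). Moreover, evaluating the product in Lemma~\ref{lem:factorization} at $\mathbf q=1$ (where each factor $d_\beta(1)=1-e^\beta$ regardless of type) gives $D=\prod_{\gamma\in\Phi^{1,\vee,+}}(1-e^{-\gamma})$, which by the Weyl denominator formula for $W^1$ equals $e^{-\rho^\vee}a_{\rho^\vee}=J(e^0)$ — here $\rho^\vee$ is the half sum of the positive coroots $\Phi^{1,\vee,+}$. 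In particular $D$ is a nonzero element of the integral domain $\C[Y^*]$, hence a nonzerodivisor.

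Next, since $\mu\in P^+$ we have $\Stab_{W^1}(\mu)=W^1_\mu$, so $m_\mu=\sum_{w'\in W^1/W^1_\mu}e^{w'\mu}$ is $W^1$-invariant, and therefore
\[
m_\mu\,D \;=\; m_\mu\,e^{-\rho^\vee}a_{\rho^\vee}\;=\;e^{-\rho^\vee}A\bigl(m_\mu\,e^{\rho^\vee}\bigr)\;=\;e^{-\rho^\vee}\sum_{w'\in W^1/W^1_\mu}a_{\,w'\mu+\rho^\vee}.
\]
To each term $a_{\,\nu+\rho^\vee}$ with $\nu=w'\mu$ I would apply the standard straightening lemma: if $\nu+\rho^\vee$ lies on a wall for $W^1$ (equivalently $\nu\in Y_0^*$) then $a_{\,\nu+\rho^\vee}=0$; otherwise there is a unique $w\in W^1$ with $w\bullet\nu\in P^+$ (equivalently $\nu\in Y_w^*$), and then $a_{\,\nu+\rho^\vee}=(-1)^{\ell_1(w)}a_{\,(w\bullet\nu)+\rho^\vee}$. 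Uniformly this reads $a_{\,\nu+\rho^\vee}=\sum_{w\in W^1}(-1)^{\ell_1(w)}e_w(\nu)\,a_{\,(w\bullet\nu)+\rho^\vee}$, a sum with at most one nonzero summand. Substituting this back and using $e^{-\rho^\vee}a_{\,(w\bullet\nu)+\rho^\vee}=\tau_{w\bullet\nu}\,D$ from the first step gives $m_\mu\,D=\bigl(\sum_{w'}\sum_{w}(-1)^{\ell_1(w)}e_w(w'\mu)\,\tau_{w\bullet(w'\mu)}\bigr)D$; cancelling $D$, grouping the summands according to $\lambda=w\bullet(w'\mu)\in P^+$, and comparing with $m_\mu=\sum_\lambda n_\mu^\lambda\tau_\lambda$ via linear independence of $\{\tau_\lambda\}$ yields the asserted formula for $n_\mu^\lambda$.

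The symbolic part is routine; the real obstacle is to check that the straightening step genuinely goes through in the twisted setting, i.e.\ that $W^1$ acting on $Y^*=X_*(A)$ through the reduced root datum $(X^*(A),\Phi^1,X_*(A),\Phi^{1,\vee})$ behaves like an ordinary Weyl group for every fact used: the regular/singular dichotomy for $\nu+\rho^\vee$ together with uniqueness of its dominant representative, the matching of this dichotomy with the functions $e_x$ and the sets $Y_0^*,Y_w^*$, the sign identity $A(\hat w h)=(-1)^{\ell_1(w)}A(h)$, and $W^1_\mu$ being the full stabilizer of $\mu\in P^+$. All of this rests only on $W^1$ being a Coxeter group with simple reflections indexed by $\Delta^1$ and on $\rho^\vee$ being the corresponding half sum of positive coroots, both recorded in \S\ref{subsec:general facts} and in \cite{CassCelyHales}; verifying these structural inputs, rather than the computation, is where the content sits.
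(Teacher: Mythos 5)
Your argument is correct, and it is essentially the standard proof of van Leeuwen's formula: multiply $m_\mu$ by the Weyl denominator $D=D(E^{-1},1)=e^{-\rho^\vee}a_{\rho^\vee}$, use $W^1$-invariance of $m_\mu$ to rewrite $m_\mu D$ as a sum of antisymmetrizations $a_{w'\mu+\rho^\vee}$, straighten each one (producing the sign, the functions $e_x$, and the uniqueness of the dominant representative), recognize $e^{-\rho^\vee}a_{\lambda+\rho^\vee}=\tau_\lambda D$, and cancel the nonzerodivisor $D$. This is precisely the argument carried out in \cite[Lemma 1.7.4]{CassCelyHales}; the paper does not reprove the statement but simply cites that reference (note the \texttt{\textbackslash qed} in the theorem environment), so there is no independent proof in the paper to compare against. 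The structural checks you flag at the end — that $W^1$ is a Coxeter group with simple reflections indexed by $\Delta^1$, that $\rho^\vee$ is the correct half-sum for the reduced datum $(X^*(A),\Phi^1,X_*(A),\Phi^{1,\vee})$, and that the paper's sets $Y_0^*,Y_w^*$ and functions $e_x$ encode exactly the singular/regular dichotomy for $\nu+\rho^\vee$ under the $\bullet$-action — are all recorded in \S\ref{subsec:general facts} and in \cite[\S1]{CassCelyHales}, so the proof closes.
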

\begin{defn}\label{defn:K}
	For $\lambda , \lambda' \in P^+$, we define
	\begin{align}\label{eq:defn of pairing}
	K_{\lambda', \lambda} (\mathbf{q}):  = \sum _{w\in W^1} (-1) ^{\ell_1 (w)}  \PP(w\bullet \lambda '- \lambda,\mathbf{q}) .
	\end{align}
\end{defn}
\begin{rem}
	The notation $K_{\lambda',\lambda}$ in Definition \ref{defn:K} is compatible with \cite{Kato} when $G$ is split.
\end{rem}

\begin{thm}[{Kato--Lusztig formula, \cite[Theorem 1.9.1]{CassCelyHales}}]\label{thm:KL} For $\mu , \lambda \in P^+$, we have \[ \pushQED{\qed}
	t_{\mu} ^{\lambda} = K_{\mu,\lambda} (\abs{k_F}^{-1}) \abs{k_F} ^{- \langle \lambda, \rho  \rangle}. \qedhere \popQED \]
\end{thm}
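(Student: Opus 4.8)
The plan is to invoke \cite[Theorem 1.9.1]{CassCelyHales}; when $G$ is split this is the theorem of Kato \cite{Kato}. For orientation let me indicate how one proves it and how the objects in our notation enter.

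The identity is a reformulation of an explicit description of the inverse Satake isomorphism on the basis $\set{\tau_\mu}$ of $\C[Y^*]^{W^1}$, which I would derive in three steps. First, I would record the Macdonald-type formula for the forward transform $f_\lambda = \mathrm{Sat}(f_\lambda)$, expressing it as the $W^1$-symmetrization of $e^\lambda$ times a product over the relative positive coroots $\beta \in \Phi^{1,\vee,+}$; this product is built from the factors $d_\beta(\mathbf q)$ of Definition \ref{defn:d_beta}, with exponents $\mathtt b(\beta)$ depending on whether $\beta$ is of type I or II, and it carries an overall normalization $\abs{k_F}^{-\langle\lambda,\rho\rangle}$. (For split $G$ this is classical; the unramified case that we need is established in \cite{CassCelyHales}.) Second, I would observe that inverting this expansion is controlled by $P(E^{-1},\mathbf q)$, which by Lemma \ref{lem:factorization} equals $\prod_{\beta\in\Phi^{1,\vee,+}} d_\beta(\mathbf q)^{-1}$; expanding this product is, by Definition \ref{defn:P}, nothing but the $\mathbf q$-analogue of Kostant's partition function $\PP(\cdot,\mathbf q)$. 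Third, I would apply the Weyl character formula of \cite[Theorem 1.4.1]{CassCelyHales} recorded above, by which $\tau_\mu = \tau_\mu(1) = J(e^\mu)\,P(E^{-1},1)$ is the restriction to $\widehat A$ of the character of $V_\mu$. Combining the three and collecting terms by $\widehat A$-weight, the coefficient of $f_\lambda$ in the expansion $\tau_\mu = \sum_\lambda t_\mu^\lambda f_\lambda$ becomes $\sum_{w\in W^1}(-1)^{\ell_1(w)}\PP(w\bullet\mu - \lambda,\mathbf q)$ specialized at $\mathbf q = \abs{k_F}^{-1}$, multiplied by the normalization $\abs{k_F}^{-\langle\lambda,\rho\rangle}$ from the first step; by Definition \ref{defn:K} this is precisely $K_{\mu,\lambda}(\abs{k_F}^{-1})\,\abs{k_F}^{-\langle\lambda,\rho\rangle}$.

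The hard part is the first step: obtaining the Macdonald formula for an unramified, possibly non-split, group $G$, with the correct treatment of the type II relative coroots (where $\beta/2 \in Y^*$ enters) and the exponents $\mathtt b(\beta)$. This is the technical core of \cite{CassCelyHales}. Once it is available, the second and third steps are formal consequences of the factorization in Lemma \ref{lem:factorization} together with the Weyl character formula already recorded, so no further obstacle arises.
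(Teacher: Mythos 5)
Your proposal is correct and matches the paper's approach exactly: Theorem \ref{thm:KL} is established by citing \cite[Theorem 1.9.1]{CassCelyHales}, which is precisely what you do. The three-step sketch you supply for orientation (Macdonald's formula for the Satake transform, inversion via the factorization $P(E^{-1},\mathbf q) = \prod_\beta d_\beta(\mathbf q)^{-1}$ of Lemma \ref{lem:factorization} into the $\mathbf q$-Kostant partition function, and the Weyl character formula of \cite[Theorem 1.4.1]{CassCelyHales}) is an accurate summary of how the cited result is proved, though the paper itself does not reproduce that argument.
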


\begin{cor}\label{cor:formula for M} Write $q$ for $\abs{k_F}$.
	For $\mu, \lambda \in P^+$, we have
	\begin{align*}
	\mathfrak M_{\mu} ^{\lambda}  =   q ^{- \langle \lambda, \rho  \rangle}   \sum_{ w' \in W^1/W^1_{\mu}} \sum _{w\in W^1}  (-1) ^{\ell_1 (w)} \left(1-e_0(w'\mu) \right)      \PP\bigg( w \bullet (w'\mu)   - \lambda ,q ^{-1}\bigg ).
	\end{align*}
\end{cor}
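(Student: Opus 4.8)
The plan is to combine the three transition-matrix relations $m_\mu = \sum_\lambda n_\mu^\lambda \tau_\lambda$, $\tau_\mu = \sum_\lambda t_\mu^\lambda f_\lambda$, and $m_\mu = \sum_\lambda \mathfrak M_\mu^\lambda f_\lambda$, which immediately give the matrix identity $\mathfrak M_\mu^\lambda = \sum_\nu n_\mu^\nu t_\nu^\lambda$ (summing over $\nu \in P^+$). Then I would substitute van Leeuwen's formula (Theorem \ref{thm:van Leeuwen}) for $n_\mu^\nu$ and the Kato--Lusztig formula (Theorem \ref{thm:KL}) for $t_\nu^\lambda$, and simplify the resulting double-indexed sum using the definition of $K_{\nu,\lambda}(\mathbf q)$ (Definition \ref{defn:K}).

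In more detail: first I would write
\[
\mathfrak M_\mu^\lambda = \sum_{\nu \in P^+} n_\mu^\nu\, t_\nu^\lambda = \sum_{\nu \in P^+} \left( \sum_{w' \in W^1/W^1_\mu} \sum_{w \in W^1} (-1)^{\ell_1(w)} e_w(w'\mu)\, \delta(w\bullet(w'\mu), \nu) \right) K_{\nu,\lambda}(q^{-1})\, q^{-\langle \lambda, \rho\rangle}.
\]
The Kronecker delta collapses the sum over $\nu$: for each pair $(w', w)$ with $e_w(w'\mu) = 1$ (equivalently $w\bullet(w'\mu) \in P^+$), the only surviving term is $\nu = w\bullet(w'\mu)$, so
\[
\mathfrak M_\mu^\lambda = q^{-\langle \lambda, \rho\rangle} \sum_{w' \in W^1/W^1_\mu} \sum_{w \in W^1} (-1)^{\ell_1(w)} e_w(w'\mu)\, K_{w\bullet(w'\mu),\lambda}(q^{-1}).
\]
Next I would unfold $K_{w\bullet(w'\mu),\lambda}(q^{-1}) = \sum_{v \in W^1} (-1)^{\ell_1(v)} \PP\big(v\bullet(w\bullet(w'\mu)) - \lambda, q^{-1}\big)$ and use the fact that $w \mapsto w\bullet(\cdot)$ is an action of $W^1$ on $Y^*$, so $v\bullet(w\bullet(w'\mu)) = (vw)\bullet(w'\mu)$; reindexing $u = vw$ in the inner sum (for fixed $w$) and noting $(-1)^{\ell_1(v)}(-1)^{\ell_1(w)} = (-1)^{\ell_1(u)}$ since the sign is a homomorphism, the two $W^1$-sums would partially merge. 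The key combinatorial point making this work is the standard identity in van Leeuwen / Kato--Lusztig theory that $\sum_{w \in W^1} e_w(\xi) (\text{stuff depending on } w\bullet\xi) = (1 - e_0(\xi)) \cdot (\text{stuff at the unique } P^+\text{-representative})$, i.e. the sum over $w$ with $e_w = 1$ picks out a single term unless $\xi \in Y_0^*$ (in which case all contributions cancel in pairs). Applying this with $\xi = w'\mu$ and tracking that the residual sign combination and the shift inside $\PP$ land exactly on $\PP(w\bullet(w'\mu) - \lambda, q^{-1})$ with the factor $(1 - e_0(w'\mu))$ yields the stated formula.

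The main obstacle I expect is the bookkeeping in the cancellation step: one must verify carefully that the double sum over $W^1 \times W^1$, after the $\delta$-collapse and the reindexing, genuinely reduces to the single sum $\sum_{w' } \sum_w (-1)^{\ell_1(w)}(1 - e_0(w'\mu)) \PP(w\bullet(w'\mu) - \lambda, q^{-1})$ — in particular that the terms with $w'\mu \in Y_0^*$ cancel and that for $w'\mu \notin Y_0^*$ there is no overcounting from the two layers of Weyl-group summation. This is precisely the content already packaged into the proofs of Theorems \ref{thm:van Leeuwen} and \ref{thm:KL} in \cite{CassCelyHales}, so I would lean on the identity $e_w \cdot e_{w'} $-type relations there and on the observation that $\PP$ is supported on $R^+$ (so most shifted arguments vanish anyway), keeping the argument short rather than re-deriving van Leeuwen's combinatorics from scratch.
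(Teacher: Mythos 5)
Your proposal is correct and follows essentially the same argument as the paper: compose the transition matrices, substitute van Leeuwen's formula and the Kato--Lusztig formula, collapse the Kronecker delta, unfold $K_{\nu,\lambda}$, reindex one of the two $W^1$-sums using that $\bullet$ is a genuine $W^1$-action and that $w \mapsto (-1)^{\ell_1(w)}$ is a homomorphism, and finally apply $\sum_{w\in W^1} e_w = 1 - e_0$. One small imprecision worth flagging: after the reindexing the remaining van Leeuwen index enters only through $e_w(w'\mu)$ (the $\PP$-term no longer depends on it), so the identity actually invoked is simply $\sum_{w\in W^1} e_w(\xi) = 1 - e_0(\xi)$, which holds because the $Y_x^*$ for $x \in W^1 \sqcup \{0\}$ are a \emph{partition} of $Y^*$; when $\xi \in Y_0^*$ every $e_w(\xi)$ is individually zero, not that the contributions ``cancel in pairs'' as you wrote.
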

\begin{proof}We compute	
	\begin{align*}
	&	\mathfrak M_{\mu} ^{\lambda}  = \sum _{\lambda' \in P^+}  n_{\mu} ^{\lambda '} t_{\lambda'} ^{\lambda}\\ &  = \sum_{\substack{\lambda' \in P^+\\w' \in W^1/W^1_{\mu} \\ w '' \in W^1 }}   (-1) ^{\ell_1 (w'')} e_{w''} (w'\mu) \delta ( w'' \bullet (w'\mu) , \lambda')  K_{\lambda',\lambda} (q^{-1}) q ^{- \langle \lambda, \rho  \rangle} \\ & = \sum_{\substack{w' \in W^1/W^1_{\mu} \\ w''\in W^1  \\ w \in W^1}}   (-1) ^{\ell_1 (w'')} e_{w''} (w'\mu)   q ^{- \langle \lambda, \rho  \rangle} (-1) ^{\ell_1 (w)} \PP\bigg( (w w'') \bullet (w'\mu)   - \lambda ,q^{-1} \bigg ).   \end{align*} where the second equality is by Theorems \ref{thm:van Leeuwen}, \ref{thm:KL}, and the third equality is by (\ref{eq:defn of pairing}). Under the substitution $ w w'' \mapsto w$, the above is equal to
	\begin{align*}
	q ^{- \langle \lambda, \rho  \rangle}   \sum_{ w' \in W^1/W^1_{\mu}}  \sum _{ w'' \in W^1} \sum _{w\in W^1}  (-1) ^{\ell_1  (w)} e_{w''} (w'\mu)      \PP\bigg( w \bullet (w'\mu)   - \lambda ,q ^{-1}\bigg ).  \end{align*} Since $\sum _{w'' \in W^1} e_{w''} (\cdot) = 1- e_0(\cdot)$, the proof is finished.
\end{proof}

Motivated from Corollary \ref{cor:formula for M}, we make the following definition.
\begin{defn}\label{defn:frakM}
	For $\mu , \lambda \in P^+$ and a formal variable $\mathbf{q}^{-1/2}$, we define
	\begin{align*}
	\mathfrak M_{\mu} ^{\lambda}  (\mathbf{q}^{-1}) :  =   & \mathbf{q} ^{- \langle \lambda, \rho \rangle}   \sum_{ w' \in W^1/W^1_{\mu}} \sum _{w\in W^1}  (-1) ^{\ell_1 (w)} \left(1-e_0(w'\mu) \right)   \cdot \\ &    \cdot \PP\bigg( w \bullet (w'\mu)   - \lambda ,\mathbf{q} ^{-1}\bigg )  \in \C [Y^*][\mathbf{q}^{-1/2}].
	\end{align*} As a special case, we define
	\begin{align}\label{eq:defn of M}
	\mathfrak M_{\mu} ^{0} (\mathbf{q}^{-1}): = &   \sum_{ w' \in W^1/W^1_{\mu}} \sum _{w\in W^1}  (-1) ^{\ell_1 (w)} \left(1-e_0(w'\mu) \right)  \cdot \\ \nonumber &    \cdot     \PP\bigg( w \bullet (w'\mu)    ,\mathbf{q} ^{-1}\bigg )  \in \C [ Y^*] [\mathbf q ^{-1}].
	\end{align}
\end{defn}
\begin{lem}\label{lem:invariant under center}
	Let $\mu , \lambda \in P^+$. Let $\nu \in X_*(A \cap Z_G)$. Then $\mathfrak M_{\mu} ^{\lambda} (\mathbf{q}^{-1}) = \mathfrak M_{\mu -\nu} ^{\lambda -\nu} (\mathbf{q}^{-1}).$
\end{lem}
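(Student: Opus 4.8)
The plan is to verify that the right-hand side of the formula defining $\mathfrak M_{\mu}^{\lambda}(\mathbf q^{-1})$ in Definition \ref{defn:frakM} is left literally unchanged when the pair $(\mu,\lambda)$ is replaced by $(\mu-\nu,\lambda-\nu)$. This will be an entirely formal consequence of $\nu$ being a central cocharacter, and there is essentially no obstacle; the only line that requires a computation is the compatibility of the $\bullet$-action with a shift by $\nu$.

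First I would record the relevant properties of $\nu \in X_*(A\cap Z_G)$. Since $A\cap Z_G\subseteq A$, we have $\nu \in X_*(A) = X_*(T)^{\theta} = Y^*$, so $\mu-\nu$ and $\lambda-\nu$ again lie in $Y^*$. As $\nu$ factors through the center $Z_G$, it is fixed by $W_0$, hence by $W^1$, and $\langle\nu,\alpha\rangle=0$ for every $\alpha\in\Phi$, so $\langle\nu,[\alpha]\rangle=0$ for every $[\alpha]\in\Phi^1$. In particular $\mu-\nu,\lambda-\nu\in P^+$ (so the asserted identity makes sense), and $\langle\nu,\rho\rangle=0$, whence the prefactor satisfies $\mathbf q^{-\langle\lambda-\nu,\rho\rangle}=\mathbf q^{-\langle\lambda,\rho\rangle}$.

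Next I would match the two expressions term by term. The subgroup $W^1_{\mu}$ depends only on the set of $[\alpha]\in\Delta^1$ with $\langle\mu,[\alpha]\rangle=0$, and since $\langle\mu-\nu,[\alpha]\rangle=\langle\mu,[\alpha]\rangle$ we get $W^1_{\mu-\nu}=W^1_{\mu}$; hence the outer sum ranges over the same index set $W^1/W^1_{\mu}$ and one may use a single common set of coset representatives $w'$. The key elementary point is that the $\bullet$-action commutes with subtracting the $W^1$-fixed element $\nu$: for any $x\in Y^*$ and $w\in W^1$,
\[
w\bullet(x-\nu)=w(x-\nu+\rho^{\vee})-\rho^{\vee}=w(x+\rho^{\vee})-\nu-\rho^{\vee}=(w\bullet x)-\nu.
\]
Applying this with $x=w'\mu$ (and using $w'(\mu-\nu)=w'\mu-\nu$ since $w'\nu=\nu$), the argument of $\mathcal P$ in the formula for $(\mu-\nu,\lambda-\nu)$ becomes $w\bullet(w'\mu)-\nu-(\lambda-\nu)=w\bullet(w'\mu)-\lambda$, exactly the argument for $(\mu,\lambda)$. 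The same displayed identity shows that $x\mapsto x-\nu$ maps $Y_0^*$ bijectively onto itself (if a reflection $w\in W^1$ fixes $x$ under $\bullet$ it fixes $x-\nu$, and conversely), so $e_0(w'(\mu-\nu))=e_0(w'\mu-\nu)=e_0(w'\mu)$ and the factor $1-e_0(w'\mu)$ is unchanged; the signs $(-1)^{\ell_1(w)}$ visibly are. Assembling these, the two formulas of Definition \ref{defn:frakM} agree summand by summand, which proves the lemma. (Taking $\lambda=0$ recovers $\mathfrak M_{\mu}^{0}(\mathbf q^{-1})=\mathfrak M_{\mu-\nu}^{-\nu}(\mathbf q^{-1})$, which is the form in which the lemma is typically used.)
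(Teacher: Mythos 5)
Your proof is correct and is essentially the paper's proof, expanded: the paper's argument (in the proof of Lemma \ref{lem:invariant under center}) simply lists the four equalities $\langle\lambda,\rho\rangle=\langle\lambda-\nu,\rho\rangle$, $W^1_{\mu}=W^1_{\mu-\nu}$, $e_0(w'\mu)=e_0(w'(\mu-\nu))$, and $w\bullet(w'\mu)-\lambda=w\bullet(w'(\mu-\nu))-(\lambda-\nu)$, all of which you verify in detail from the $W^1$-invariance of $\nu$ and $\langle\nu,\rho\rangle=0$. No discrepancy.
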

\begin{proof}
	In fact, we have $\langle \lambda, \rho \rangle = \langle \lambda -\nu , \rho \rangle$, $W^1_{\mu} = W^1 _{\mu -\nu}$, $e_0 (w' \mu) = e_0 (w' (\mu -\nu))$, and $w\bullet (w'\mu) -\lambda = w \bullet (w' (\mu -\nu)) - (\lambda - \nu)$,
	for all $w ' \in W^1 /W^1_{\mu} $ and $ w\in W^1$.
\end{proof}

\subsection{Interpretation in terms of Kostant partitions}\label{subsec:Kostant partitions} In certain cases the polynomial $\mathcal P(\lambda ,\mathbf{q}) \in \C [\mathbf{q}]$ in Definition \ref{defn:P} has a concrete description as a $\mathbf q$-analogue of Kostant's partition function, which we now explain. Let $\mathbb P$ be the set of all functions $ \leftidx_F \Phi ^{\vee, +} \to \Z_{\geq 0}.$ We shall typically denote an element of $\mathbb P$ by $\underline m$, and denote its value at any $\beta \in \leftidx_F\Phi^{\vee,+}$ by $m(\beta)$. For $\underline m \in \mathbb P$, we define
\begin{align*}
\Sigma (\underline m) & : = \sum _{\beta \in \leftidx_F \Phi ^{\vee, +}} m(\beta) \beta \in R^+ \subset  Y^*, \\ \abs{\underline m} & : = \sum _{\beta \in \leftidx_F \Phi ^{\vee, +}} m(\beta) \mathtt b (\beta)  \in \Z_{\geq 0}.
\end{align*}  Here $\mathtt b (\beta)$ is as in Definition \ref{defn:mathtt b}.

For all $\lambda \in Y^*$, we define
$\mathbb P(\lambda)$ to be the set of $\underline m \in  \mathbb P$ such that $\Sigma (\underline m) = \lambda.$ Thus $\mathbb P(\lambda)$ is empty unless $\lambda \in R^+$. Elements of $\mathbb P (\lambda)$ are called \emph{Kostant partitions} of $\lambda$. For any $L \in \Z_{\geq 0}$, we define
$ \mathbb P(\lambda)_L$ to be the set of $\underline m\in \mathbb P(\lambda)$ such that $\abs{\underline m} = L.$
For $\lambda \in Y^*$, we define
$$ \mathcal P_{\mathrm{Kos}} (\lambda, \mathbf{q}): = \sum _{\underline m \in \mathbb P(\lambda)} \mathbf{q} ^{\abs{\underline m}} \in \C[\mathbf{q}]. $$
This is known in the literature as the \emph{$\mathbf q$-analogue of Kostant's partition function}, at least when $G$ is split.

\begin{prop}\label{prop:interpretation}The following statements hold. \begin{enumerate}
		\item Assume $\leftidx_F \Phi = \Phi^1$. For all $\lambda \in Y^*$ we have
		$ \mathcal P (\lambda ,\mathbf{q}) = \mathcal P_{\mathrm{Kos}} (\lambda, \mathbf{q}).$
		\item In general, to each $\underline m \in \mathbb P$, we can attach a polynomial $\mathcal Q(\underline m, \mathbf{q}) \in \C [\mathbf{q}]$, with the following properties:
		\begin{enumerate}
			\item For all $0<x<1$, we have $\abs{\mathcal Q(\underline m, x) } \leq 1$.
			\item For any $\lambda \in Y^*$ we have $$ \mathcal P  (\lambda ,\mathbf{q})  = \sum _{\underline m \in \mathbb P(\lambda)} \mathcal Q(\underline m, \mathbf{q}) \mathbf q ^{\abs{\underline m}}.$$
		\end{enumerate}
	\end{enumerate}
\end{prop}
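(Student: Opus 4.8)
The plan is to prove both parts by expanding the product formula of Lemma~\ref{lem:factorization} and reading off coefficients of the basis elements $e^{-\lambda}$. By that lemma and Definition~\ref{defn:P}, in the completed group algebra $\C[\mathbf{q}][[e^{-\gamma}:\gamma\in{}_F\Phi^{\vee,+}]]$ one has
\begin{equation*}
\sum_{\lambda\in R^+}\mathcal P(\lambda,\mathbf{q})\,e^{-\lambda}\;=\;P(E^{-1},\mathbf{q})\;=\;\prod_{-\beta\in\Phi^{1,\vee,+}}d_\beta(\mathbf{q})^{-1},
\end{equation*}
so it suffices to expand each factor $d_\beta(\mathbf{q})^{-1}$ as a power series in the $e^{-\gamma}$, multiply over the finitely many $\beta$, and extract the coefficient of a given $e^{-\lambda}$; the sign bookkeeping (e.g.\ $\mathtt b(-\beta)=\mathtt b(\beta)$) is routine and I suppress it. For part~(1), the hypothesis ${}_F\Phi=\Phi^1$ forces every relative root to be indivisible, hence of type~I (Definition~\ref{defn:type of roots}), so ${}_F\Phi^{\vee,+}=\Phi^{1,\vee,+}$ and by Definition~\ref{defn:d_beta} each factor is a geometric series $d_\beta(\mathbf{q})^{-1}=\sum_{m\ge0}\mathbf{q}^{m\,\mathtt b(\beta)}e^{m\beta}$. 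Multiplying these and collecting the coefficient of $e^{-\lambda}$ produces exactly $\sum_{\underline m\in\mathbb P(\lambda)}\mathbf{q}^{\sum_\gamma m(\gamma)\mathtt b(\gamma)}=\sum_{\underline m\in\mathbb P(\lambda)}\mathbf{q}^{|\underline m|}=\mathcal P_{\mathrm{Kos}}(\lambda,\mathbf{q})$, since a tuple $(m(\gamma))_{\gamma}$ of non-negative exponents is precisely an element of $\mathbb P$ with $\Sigma(\underline m)=\sum_\gamma m(\gamma)\gamma$ and $|\underline m|=\sum_\gamma m(\gamma)\mathtt b(\gamma)$.

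For part~(2) the only new feature is the type~II factors. Fix such a $\beta$, put $a=\mathtt b(\beta)$, and recall from Definitions~\ref{defn:d_beta} and~\ref{defn:mathtt b} that $\beta/2\in{}_F\Phi^{\vee,+}$ with $\mathtt b(\beta/2)=a$, whereas the divisible coroot $\beta$ itself may be ignored for the purposes of partitions since $\beta=2(\beta/2)$. Expanding
\begin{equation*}
d_\beta(\mathbf{q})^{-1}=(1-\mathbf{q}^{2a}e^{\beta/2})^{-1}(1+\mathbf{q}^{a}e^{\beta/2})^{-1}=\sum_{n\ge0}\mathbf{q}^{an}\Bigl(\sum_{i=0}^{n}(-1)^{n-i}\mathbf{q}^{ai}\Bigr)e^{n\beta/2},
\end{equation*}
I would attach the monomial $e^{n\beta/2}$ to the Kostant partition that uses the coroot $\beta/2$ with multiplicity $n$ and $\beta$ with multiplicity $0$, with associated polynomial
\begin{equation*}
\mathcal Q_{\beta,n}(\mathbf{q}):=\sum_{i=0}^{n}(-1)^{n-i}\mathbf{q}^{ai}=\frac{(-1)^n+\mathbf{q}^{a(n+1)}}{1+\mathbf{q}^{a}}\in\C[\mathbf{q}],
\end{equation*}
which satisfies $|\mathcal Q_{\beta,n}(x)|\le\frac{1+x^{a(n+1)}}{1+x^{a}}\le1$ for $0<x<1$ because $x^{a(n+1)}\le x^{a}$. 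Taking the associated polynomial of each type~I factor to be $1$, I then define $\mathcal Q(\underline m,\mathbf{q}):=0$ whenever $\underline m$ gives positive multiplicity to some type~II coroot $\beta\in\Phi^{1,\vee,+}$, and otherwise $\mathcal Q(\underline m,\mathbf{q}):=\prod_{\beta\ \mathrm{type\ II}}\mathcal Q_{\beta,\,m(\beta/2)}(\mathbf{q})$. Because an $\underline m$ on which $\mathcal Q$ is nonzero is recovered uniquely from the multiplicities of the type~I coroots together with the multiplicities $m(\beta/2)$ of the type~II long coroots, it arises from a unique tuple of per-factor monomials; hence $\mathcal Q(\underline m,\mathbf{q})$ really is this single product of per-factor polynomials, so property~(a) follows from the per-factor bound, and multiplying out the factor expansions and extracting the coefficient of $e^{-\lambda}$ yields property~(b). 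Part~(1) is then recovered as the special case where every factor is type~I and every $\mathcal Q(\underline m,\mathbf{q})$ equals $1$.

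The one genuine obstacle is the combinatorial bookkeeping forced by non-reduced relative root systems: one must keep straight the interplay among ${}_F\Phi^{\vee}$ (which governs $\mathbb P$, $\Sigma$, and $|\cdot|$), the reduced subsystem $\Phi^{1,\vee}$ (which indexes the factors), and the type~I/II dichotomy with its effect on $\mathtt b(\cdot)$ and on the membership $\beta/2\in{}_F\Phi^{\vee}$; and---crucially---one must choose the assignment of monomials $e^{n\beta/2}$ to Kostant partitions so as to be injective, for otherwise $\mathcal Q(\underline m,\mathbf{q})$ would be a sum of per-factor polynomials and the bound in~(a) could be destroyed by cross terms. Collapsing the whole type~II contribution onto the partition that uses $\beta/2$ alone is the device that makes this work.
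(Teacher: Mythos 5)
Your argument is correct, and in outline it follows the same route as the paper's proof: expand the product formula of Lemma~\ref{lem:factorization} factor by factor and assign each monomial in the expansion to a Kostant partition, using the per-factor polynomial $\mathcal R_{\beta,n}(\mathbf q)=\sum_{i=0}^n(-1)^{n-i}\mathbf q^{i\mathtt b(\beta)}$ and the bound $\abs{\mathcal R_{\beta,n}(x)}\leq 1$ on $(0,1)$. Where you depart from the paper is in the precise formula for $\mathcal Q(\underline m,\mathbf q)$, and your version is the one that actually satisfies~(b). The paper takes $\mathcal Q(\underline m,\mathbf q)=\prod_{\beta'\in{}_F\Phi^{\vee,+}}\mathcal Q_{\beta',m(\beta')}(\mathbf q)$ with $\mathcal Q_{\beta',n}=\mathcal R_{2\beta',n}$ if $2\beta'\in\Phi^{1,\vee}$ and $\mathcal Q_{\beta',n}=1$ otherwise; in particular, for a type~II $\beta\in\Phi^{1,\vee,+}$ this gives $\mathcal Q_{\beta,n}\equiv 1$, so partitions with $m(\beta)>0$ receive a nonzero $\mathcal Q$ even though no monomial of the expansion corresponds to them. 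Already in rank one, with ${}_F\Phi^{\vee,+}=\set{\beta,\beta/2}$ and $a=\mathtt b(\beta)$, one has $\mathcal P(\beta,\mathbf q)=\mathcal R_{\beta,2}(\mathbf q)\,\mathbf q^{2a}=\mathbf q^{2a}-\mathbf q^{3a}+\mathbf q^{4a}$, while the paper's right-hand side in~(b) picks up an extra $\mathbf q^a$ from the partition $m(\beta)=1$, $m(\beta/2)=0$. Your device of declaring $\mathcal Q(\underline m,\mathbf q)=0$ whenever some type~II $\beta\in\Phi^{1,\vee,+}$ has $m(\beta)>0$, and otherwise taking the product of the $\mathcal R$-factors indexed by the halved coroots, removes exactly these spurious summands and makes the assignment of per-factor monomials to partitions injective; hence~(b) holds, and~(a) still follows from the per-factor bound since $\mathcal Q(\underline m,\mathbf q)$ remains a genuine product of per-factor polynomials. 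Every application of this proposition in the paper only invokes the existence of some $\mathcal Q$ satisfying~(a) and~(b), so nothing downstream is affected, but the vanishing clause you introduced is needed for the construction to be correct as stated.
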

\begin{proof}
	Part (1) immediately follows from Definitions \ref{defn:d_beta}, \ref{defn:P}. For part (2), we note that if $\beta \in \Phi ^{1,\vee}$ is of type II, then $\beta': = \beta/2$ is an element of $\leftidx_F \Phi ^{\vee}$, and we have
	\begin{align*}
	d_{\beta} (\mathbf{q}) ^{-1} & = \left[\sum_{i=0}^{\infty} (\mathbf{q}^{2\mathtt b (\beta)} e ^{\beta/2})^i\right] \left[\sum _{i=0}^{\infty} (-\mathbf{q}^{\mathtt b (\beta)} e^{\beta/2})^i\right]  \\* &   = \left[\sum_{i=0}^{\infty} (\mathbf{q}^{2\mathtt b (\beta')} e ^{\beta'})^i\right] \left[\sum _{i=0}^{\infty} (-\mathbf{q}^{\mathtt b (\beta')} e^{\beta'})^i\right] \\
	& = \sum _{n=0} ^{\infty} \mathcal R_{\beta, n} (\mathbf{q}) (\mathbf{q}^{\mathtt b (\beta')} e^{\beta'})^n,
	\end{align*}with $$\mathcal R_{\beta, n} (\mathbf{q})  = \sum _{i=0}^n (-1) ^{n-i} \mathbf{q}^{i \mathtt b (\beta')}\in \C[\mathbf{q}], \quad \forall n \in \mathbb Z_{\geq 0}. $$  We observe that for all $0<x<1$ we have
	\begin{align}\label{eq:obs}
	\abs{\mathcal R_{\beta,n} (x) } \leq 1.
	\end{align}
	
	Now for each $\beta' \in \leftidx_F \Phi ^{\vee}$ and each $n \geq 0$, define
	$$ \mathcal Q_{\beta', n} (\mathbf q) : = \begin{cases}
	\mathcal R_{2\beta',n} (\mathbf q), &~ \mbox{if }\ 2\beta ' \in \Phi ^{1,\vee}, \\
	1, &~ \mbox{if }2\beta' \notin \Phi ^{1,\vee}.
	\end{cases}$$ We take $$\mathcal Q(\underline m, \mathbf q): = \prod _{\beta'\in \leftidx_F \Phi ^{\vee}}  \mathcal Q_{\beta', m(\beta')} (\mathbf q). $$ Then condition (a) follows from the construction and the observation (\ref{eq:obs}). Condition (b) follows from Lemma \ref{lem:factorization} and Definition \ref{defn:P}.
\end{proof}
\subsection{Computation with the base change}\label{subsec:comp of BC}
We keep the setting of \S \ref{subsec:BCFL in basic case} and \S \ref{subsec:general facts}. We assume that $s_0$ is divisible by the order $d$ of $\theta$, and consider $s \in s_0 \mathbb N$.

The Satake isomorphism for $\mathcal H_s$ is
$$\mathrm {Sat} : \mathcal H_s \isom \C [X^*(\widehat T)] ^{W_0}. $$
For each $\mu \in X^*(\widehat T) ^+$, let $\tau _{\mu}'$ be the character of the highest weight representation $V_{\mu}$ of $\widehat G$ of highest weight $\mu$. Then
$$ \set{\tau' _{\mu}}_{\mu \in X^*(\widehat T) ^+} $$ is a  basis of $\C [ X^* (\widehat T)] ^{W_0}$. This basis is  the absolute analogue of the basis $\set{\tau_{\mu}}_{\mu \in P^+}$ of $\C [ Y^*]^{W^1}$ (i.e., they are the same if $\theta = 1$).

Recall from \S \ref{subsec:CZ Conj} and \S \ref{subsec:general facts} that we have
$$Y^* = X_*(A) = X^*(\widehat T) ^{\hat \theta},\quad X^*(\hatS) = X^*(\widehat{ T}) _{\hat \theta, \mathrm{free}}.$$
By Lemma \ref{lem:elementary about coinv} (3), the composition
$$ Y^*\otimes \Q \to X^*(\widehat T) \otimes \Q\to X^*(\hatS) \otimes \Q $$ is invertible. We denote its inverse map by $\lambda \mapsto \lambda ^{(1)}.$ For all $\lambda \in X^*(\hatS)$, we define $\lambda ^{(s)}$ to be $s \lambda ^{(1)}$, which lies in $Y^*$ since $s$ is divisible by $d$. Thus we have a map
\begin{align}\label{eq:bracket s}
X^*(\hatS) \To Y^*, \quad  \lambda \longmapsto \lambda ^{(s)},
\end{align} which is an isomorphism after $\otimes \Q$. In the case $\theta=1$, this is none other than the multiplication-by-$s$ map from $Y^*$ to itself. In general, we denote by $X^*(\hatS) ^+ \subset X^*(\hatS)$ the natural image of $X^*(\widehat T)^{+}$. Then (\ref{eq:bracket s}) maps $X^*(\hatS) ^+$ into $P^+ \subset Y^*$. Moreover, the action of $W^1$ on $X^*(\widehat T)$ induces an action of $W^1$ on $X^*(\hatS)$, and the map (\ref{eq:bracket s}) is $W^1$-equivariant.

\begin{prop}\label{prop:formula for BC}
	Under the Satake isomorphisms, the base change map $\BC_s: \mathcal H_s \to \mathcal H_1$ becomes
	\begin{align*}
	\BC_s : \C [ X^* (\widehat T)] ^{W_0} & \To \C [ Y^*] ^{W^1} \\
	\forall \mu \in X^*(\widehat T )^+, ~ \tau _{\mu} ' & \longmapsto \sum _{\lambda \in X^*(\hatS)^+}  \dim V_{\mu} (\lambda) _{\mathrm{rel}} \cdot m _{\lambda ^{(s)}}.
	\end{align*}
\end{prop}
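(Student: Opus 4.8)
The plan is to reduce the identity to a computation with $\widehat T$-characters restricted to the subtorus $\hatS$, where the base change map becomes literally the $s$-th power operation. First I would recall the standard Satake-theoretic description of the base change homomorphism (see \cite{hainesBCFL} and the references there): under the Satake isomorphisms, $\BC_s$ corresponds on unramified parameters to raising to the $s$-th power, sending a $\hat\theta$-semisimple class $t\rtimes\hat\theta$ in $\widehat G\rtimes\langle\hat\theta\rangle$ to the class of $(t\rtimes\hat\theta)^s=t\,\hat\theta(t)\cdots\hat\theta^{s-1}(t)\in\widehat G$ (here $\hat\theta^s=1$ since $d\mid s$); every such class has a representative with $t\in\hatS$, and then $(t\rtimes\hat\theta)^s=t^s$. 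By Lemma \ref{lem:elementary about coinv}(\ref{item:3 in coinv}) the composite $Y^*\otimes\Q\to X^*(\widehat T)\otimes\Q\to X^*(\hatS)\otimes\Q$ is an isomorphism, so $Y^*\hookrightarrow X^*(\hatS)$, hence $\C[Y^*]^{W^1}\hookrightarrow\C[X^*(\hatS)]^{W^1}$, and an element of $\C[Y^*]^{W^1}$ is determined by its restriction of characters to $\hatS$. Putting these together, $\BC_s$ is characterized as the unique map $\C[X^*(\widehat T)]^{W_0}\to\C[Y^*]^{W^1}$ making the square
\[
\begin{array}{ccc}
\C[X^*(\widehat T)]^{W_0} & \xrightarrow{\BC_s} & \C[Y^*]^{W^1}\\
\downarrow & & \downarrow\\
\C[X^*(\hatS)]^{W^1} & \xrightarrow{e^{\lambda}\mapsto e^{s\lambda}} & \C[X^*(\hatS)]^{W^1}
\end{array}
\]
commute, the vertical arrows being restriction of characters to $\hatS$ (the right one via the isogeny $\hatS\to\widehat A$, in particular injective).

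Next I would chase $\tau_\mu'$ through this square. Restricting the character of $V_\mu$ from $\widehat T$ to $\hatS$ and grouping the $\widehat T$-weights according to their image in $X^*(\hatS)=X^*(\widehat T)_{\hat\theta,\mathrm{free}}$ (Definition \ref{defn:subtorus}) gives $\sum_{\lambda\in X^*(\hatS)}\dim V_\mu(\lambda)_{\mathrm{rel}}\,e^{\lambda}$, and applying the bottom arrow yields $\sum_{\lambda\in X^*(\hatS)}\dim V_\mu(\lambda)_{\mathrm{rel}}\,e^{s\lambda}$. On the other hand, for $\lambda\in X^*(\hatS)^+$ the construction of $\lambda\mapsto\lambda^{(s)}$ gives $\lambda^{(s)}|_{\hatS}=s\lambda$, so by the $W^1$-equivariance of (\ref{eq:bracket s}) the restriction of $m_{\lambda^{(s)}}$ to $\hatS$ equals $\sum_{\kappa\in W^1\lambda^{(s)}}e^{\kappa|_{\hatS}}=\sum_{\kappa\in W^1\lambda}e^{s\kappa}$. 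Since $\dim V_\mu(\cdot)_{\mathrm{rel}}$ is $W^1$-invariant (it is assembled from the $W_0$-invariant weight multiplicities of $V_\mu$) and $X^*(\hatS)^+$ is a set of representatives for the $W^1$-orbits on $X^*(\hatS)$ (compare \cite[\S1.1]{CassCelyHales}), summing the preceding identity over $\lambda\in X^*(\hatS)^+$ with multiplicity $\dim V_\mu(\lambda)_{\mathrm{rel}}$ recovers $\sum_{\lambda\in X^*(\hatS)}\dim V_\mu(\lambda)_{\mathrm{rel}}\,e^{s\lambda}$. By injectivity of the right vertical arrow this forces $\BC_s(\tau_\mu')=\sum_{\lambda\in X^*(\hatS)^+}\dim V_\mu(\lambda)_{\mathrm{rel}}\,m_{\lambda^{(s)}}$, which is the claim.

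The main obstacle is the first step, namely pinning down the precise form of the base change homomorphism and the commutative square above. The inputs required are: the standard Satake description of base change in terms of the $s$-th power map on parameters; the fact that $\hatS\to\widehat A$ is an isogeny, a consequence of Lemma \ref{lem:elementary about coinv}(\ref{item:3 in coinv}), which makes restriction to $\hatS$ injective on $\C[Y^*]^{W^1}$; and the fact that every $\hat\theta$-semisimple conjugacy class in $\widehat G\rtimes\hat\theta$ is represented by some $t\rtimes\hat\theta$ with $t\in\hatS$, so that restricting Satake parameters to $\hatS\rtimes\hat\theta$ loses no information. Granting these, together with the folded-root-datum bookkeeping already in place in \S\ref{subsec:general facts} (in particular that $X^*(\hatS)^+$ parametrizes the $W^1$-orbits on $X^*(\hatS)$), the remaining argument is the routine weight-count carried out above.
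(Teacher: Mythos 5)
Your proof is correct and is essentially the paper's argument in a different but equivalent bookkeeping. Where the paper embeds $\C[Y^*]^{W^1}\hookrightarrow\C[X^*(\widehat T)]$ and traces $\BC_s\tau'_\mu$ through the explicit formula $\BC_s m'_\lambda=\sum_{\nu\in W_0\lambda}e^{\nu+\hat\theta\nu+\cdots+\hat\theta^{s-1}\nu}$, you embed $\C[Y^*]^{W^1}\hookrightarrow\C[X^*(\hatS)]^{W^1}$ via the isogeny $\hatS\to\widehat A$ and use the $s$-th power map on parameters; since the norm element $\nu+\hat\theta\nu+\cdots+\hat\theta^{s-1}\nu\in Y^*$ restricts on $\hatS$ to $s\,(\nu|_{\hatS})$, these are two presentations of the same dual-side description of $\BC_s$ and the same weight count.
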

\begin{proof} To simplify notation we write $X^*$ for $X^*(\widehat T)$.
	To compute $\BC_s$ as a map $\C [X^*] ^{W_0} \to \C [ Y^*] ^{W^1}$, it suffices to compose the map with the natural inclusion $\C[ Y^*] ^{W^1} \subset \C [ X^*]$. For each $\mu \in X^{*,+}$, let $$ m'_{\mu} : = \sum _{\lambda \in W_0 (\mu)} e^{\lambda} .$$ Then $\set{m'_{\mu}} _{\mu \in X^{*,+}}$ is a basis of $\C [ X^*]^{W_0}$. This basis is just the absolute analogue of the basis $\set{m_{\mu}}_{\mu \in P^+}$ of $\C [ Y^*] ^{W^1}$. It easily follows from definitions (see for example \cite{borelcorvallisarticle}) that $\BC_s$ as a map $ \C [ X^*] ^{W_0} \to \C [ X^*]$ sends each $m'_{\mu}$ to $$\sum _{\lambda \in W_0 (\mu)} e^{\lambda + \hat \theta \lambda + \cdots + \hat \theta ^{s-1} \lambda}. $$ It follows that for all $\mu \in X^{*,+}$, we have
	\begin{align}\label{eq:first formula for BC}
	\BC_s \tau'_{\mu} = \sum _{\lambda \in X^*} \dim V_{\mu} (\lambda) e^{\lambda + \hat \theta \lambda + \cdots + \hat\theta ^{s-1} \lambda }.
	\end{align}
	Here the summation is over $X^*$ and not over $X^{*,+}$. For each $\lambda \in X^*$, the element $$\lambda + \hat\theta \lambda + \cdots + \hat \theta ^{s-1} \lambda \in X^*$$ lies in $Y^* \subset X^*$, and its image under the natural map
	$$Y^* = (X^*)^{\hat \theta}\To X^*(\hatS) = (X^*) _{\hat \theta, \mathrm{free}} $$ is equal to the image of $s\lambda \in X^*$ under the natural map $X^* \to X^*(\hatS).$ In other words, we have
	$$ \lambda + \hat \theta \lambda + \cdots + \hat \theta ^{s-1} \lambda =  (\lambda|_{\hatS}) ^{(s)},$$ where $ \lambda|_{\hatS} \in X^*(\hatS)$ denotes the image of $\lambda $ under $X^* \to X^*(\hatS)$. Hence by (\ref{eq:first formula for BC}) we have
	$$\BC_s \tau _{\mu}' = \sum _{\lambda \in X^*} \dim V_{\mu} (\lambda) e ^{(\lambda |_{\hatS})^{(s)}},$$ which is easily seen to be equal to
	\[\pushQED{\qed} \sum _{\lambda \in X^*(\hatS)} \dim V_{\mu} (\lambda) _{\mathrm{rel}} ~ e ^{\lambda ^{(s)}} = \sum _{\lambda \in X^*(\hatS)^+} \dim V_{\mu} (\lambda) _{\mathrm{rel}} ~m_{\lambda ^{(s)}}. \qedhere  \]
\end{proof}

Since $b$ is basic and $s_0$-decent, and since $s$ is divisible by $s_0$, by Lemma \ref{lem:norm is singleton} the cocharacter $s\nu_b : \mathbb G_m \to G$ is a cocharacter of $Z_G$ defined over $F$. In particular we may view $s\nu_b \in X_*(A) = Y^*$.
\begin{cor}\label{cor:BC of tau}
	For $\mu \in X^*(\widehat T) ^+$, we have
	$$(\BC_s \tau'_{\mu} ) (\gamma_s) =  \sum _{\lambda \in X^*(\hatS)^+}  \dim V_{\mu} (\lambda) _{\mathrm{rel}} ~  \mathfrak M _{\lambda ^{(s)} - s\nu _b} ^{0} (\abs{k_F}^{-1}). $$
\end{cor}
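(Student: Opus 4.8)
The plan is to reduce the evaluation of the function $\BC_s\tau'_\mu$ at the central element $\gamma_s$ to reading off a single coefficient in its expansion in the basis $\{f_\nu\}_{\nu\in P^+}$ of $\mathcal H_1$. First I would invoke Proposition \ref{prop:formula for BC}, which, after transporting back to $\mathcal H_1$ via the inverse Satake isomorphism, gives $\BC_s\tau'_\mu = \sum_{\lambda\in X^*(\hatS)^+}\dim V_\mu(\lambda)_{\mathrm{rel}}\, m_{\lambda^{(s)}}$, noting that each $\lambda^{(s)}$ lies in $P^+$ so that the monomial symmetric functions $m_{\lambda^{(s)}}$ are defined. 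Then, using the transition matrix $m_\nu = \sum_{\nu'\in P^+}\mathfrak M^{\nu'}_\nu f_{\nu'}$ together with Corollary \ref{cor:formula for M}, I would expand $m_{\lambda^{(s)}} = \sum_{\nu'\in P^+}\mathfrak M^{\nu'}_{\lambda^{(s)}} f_{\nu'}$, where $\mathfrak M^{\nu'}_{\lambda^{(s)}}$ is the value at $\mathbf q = \abs{k_F}$ of the polynomial $\mathfrak M^{\nu'}_{\lambda^{(s)}}(\mathbf q^{-1})$ of Definition \ref{defn:frakM}.

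The key step is the evaluation at $\gamma_s$. Since $b$ is basic and $s_0$-decent with $s_0 \mid s$, Lemma \ref{lem:norm is singleton} tells us that $s\nu_b$ is a cocharacter of $Z_G$ defined over $F$; hence $s\nu_b\in X_*(A\cap Z_G)\subseteq X_*(A) = Y^*$, and $\langle s\nu_b,\alpha\rangle = 0$ for every root $\alpha$, so in particular $s\nu_b\in P^+$. By the Cartan decomposition $G(F) = \bigsqcup_{\nu\in P^+} G(\Ok_F)\nu(\pi_F)G(\Ok_F)$, the element $\gamma_s = (s\nu_b)(\pi_F)$ occupies exactly the double coset indexed by $s\nu_b$, so $f_{\nu'}(\gamma_s) = \delta_{\nu',\,s\nu_b}$ for all $\nu'\in P^+$. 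Combining with the expansion above, $(\BC_s\tau'_\mu)(\gamma_s) = \sum_{\lambda\in X^*(\hatS)^+}\dim V_\mu(\lambda)_{\mathrm{rel}}\,\mathfrak M^{s\nu_b}_{\lambda^{(s)}}$.

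Finally I would apply Lemma \ref{lem:invariant under center} with $\nu = s\nu_b\in X_*(A\cap Z_G)$, which is legitimate since $\lambda^{(s)}\in P^+$ and $s\nu_b\in P^+$ (and $\lambda^{(s)} - s\nu_b\in P^+$ because $s\nu_b$ is central), to obtain $\mathfrak M^{s\nu_b}_{\lambda^{(s)}}(\mathbf q^{-1}) = \mathfrak M^{0}_{\lambda^{(s)} - s\nu_b}(\mathbf q^{-1})$; specializing $\mathbf q = \abs{k_F}$ then gives $\mathfrak M^{s\nu_b}_{\lambda^{(s)}} = \mathfrak M^{0}_{\lambda^{(s)} - s\nu_b}(\abs{k_F}^{-1})$, which is the desired formula. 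I do not anticipate a serious obstacle: the only points needing care are checking that $s\nu_b$ is a genuine (central, hence dominant) integral cocharacter so that $\gamma_s$ lies in a single Cartan double coset, and keeping straight that Proposition \ref{prop:formula for BC} is formulated on the Satake side, so its right-hand side must first be pulled back to a function on $G(F)$ before being evaluated at $\gamma_s$.
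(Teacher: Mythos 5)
Your proposal is correct and follows essentially the same route as the paper's proof: apply Proposition~\ref{prop:formula for BC}, identify $m_{\lambda^{(s)}}(\gamma_s)$ with the transition‑matrix entry $\mathfrak M_{\lambda^{(s)}}^{s\nu_b}(\abs{k_F}^{-1})$ via Corollary~\ref{cor:formula for M} and Definition~\ref{defn:frakM}, and then translate by the central cocharacter $s\nu_b$ using Lemma~\ref{lem:invariant under center}. The only difference is that you spell out the intermediate step $f_{\nu'}(\gamma_s)=\delta_{\nu',s\nu_b}$ (via the Cartan decomposition and the fact that $s\nu_b\in X_*(A\cap Z_G)$ is dominant), which the paper leaves implicit; your extra care here is correct and does not change the argument.
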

\begin{proof} By Proposition \ref{prop:formula for BC} we have
	$$(\BC_s \tau'_{\mu} ) (\gamma_s) =  \sum _{\lambda \in X^*(\hatS)^+}  \dim V_{\mu} (\lambda) _{\mathrm{rel}} ~ m_{\lambda ^{(s)}}(\gamma_s) .$$ Recall from Lemma \ref{lem:norm is singleton} that $s\nu_b$ is a central cocharacter of $G$ defined over $F$. By Corollary \ref{cor:formula for M} and Definition \ref{defn:frakM}, each $m_{\lambda ^{(s)}} (\gamma_s)$ is equal to
	$\mathfrak M _{\lambda ^{(s)}} ^{s\nu_b} (\abs{k_F} ^{-1})$, which by Lemma \ref{lem:invariant under center} is equal to $  \mathfrak M _{\lambda ^{(s)} - s\nu _b} ^{0} (\abs{k_F}^{-1}).$
\end{proof}

\subsection{Some inductive relations}\label{subsec:inductive} We keep the setting and notation of \S \ref{subsec:CZ Conj} and \S \ref{subsec:general facts}. We assume in addition that $G$ is adjoint, and that $G$ is $F$-simple. To emphasize the group $G$ we write $\mathfrak M^0_{\lambda, G} (\mathbf q^{-1}) $ for the polynomial $\mathfrak M^0_{\lambda} (\mathbf q^{-1})$ in Definition \ref{defn:frakM}. In the following we discuss how to reduce the understanding of these polynomials to the case where $G$ is absolutely simple.

We write $\dyn_G$ for the Dynkin diagram of  $(G,B,T)$. By our assumption
that $G$ is adjoint and $F$-simple, the action of $\langle\theta\rangle$ on
$\dyn_G$ is transitive on the connected components. Let $d_0$ be the number
of connected components of $\dyn_G$. Fix one connected component $\dyn_G^+$
of $\dyn_G$ once and for all. The connected Dynkin diagram $\dyn_G^+$,
together with the automorphism $\theta^{d_0}$, determines an unramified,
adjoint, absolutely simple group $G'$ over $F$, equipped with an $F$-pinning
$(B',T', \mathbb X_+')$. We apply the constructions in \S \ref{subsec:general
	facts} to $G'$. We shall add an apostrophe in the notation when we denote an
object associated to $G'$, e.g., $A', (Y^*)'$.

We have natural identifications
\begin{align*}&
(X^*(A), \leftidx_F \Phi , X_*(A), \leftidx_F\Phi ^{\vee} ) \cong (X^*(A'), (\leftidx_F \Phi)' , X_*(A'), (\leftidx_F\Phi ^{\vee})' ) ,\\& \Phi ^1 \cong (\Phi ^1)', \qquad \Phi ^{1,\vee} \cong (\Phi ^{1,\vee}) ' , \\& Y^* \cong (Y^*)', \qquad W^1 \cong (W^1)'.
\end{align*}

To be more precise, all the above identifications are derived from an identification
\begin{align}\label{eq:identification}
X_*(T) \cong \bigoplus _{i=0} ^{d_0 -1} X_*(T'),
\end{align}
under which the automorphism $\theta$ on the left hand side translates to the following automorphism on the right hand side:
$$ (\chi _0 , \chi _1 , \cdots, \chi _{d_0-1}) \longmapsto (\theta' \chi _{d_0-1}, \chi_0, \chi_1, \cdots, \chi _{d_0-2}).$$ In particular, the identification $(Y^*)' \cong Y^*$, when composed with $Y^* = X_*(A) \subset X_*(T)$ and with (\ref{eq:identification}), is the diagonal map
\begin{align}\label{eq:diag map}
(Y^*)' \To \bigoplus_{i=0} ^{d_0-1} X_*(T'), \quad  \chi ' \longmapsto (\chi', \cdots, \chi').
\end{align}

\begin{prop}\label{prop:inductive}
	For $\lambda \in Y^* $ and $ \lambda' \in (Y^*)'$ that correspond to each other, we have
	$$\mathfrak M^0_{\lambda, G} (\mathbf{q}^{-1}) = \mathfrak M^0 _{\lambda', G'} (\mathbf{q}^{-d_0}),$$ as an element of $\C[Y^*] [\mathbf{q}^{-1}] \cong \C [ (Y^*)'][\mathbf{q}^{-1}]$.
\end{prop}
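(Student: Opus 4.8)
The plan is to go through the defining formula for $\mathfrak M^0_{\lambda,G}(\mathbf q^{-1})$ and verify that every ingredient is either already identified between $G$ and $G'$ by the identifications recorded above, or else rescales in a controlled way. Recall from Definition \ref{defn:frakM} that $\mathfrak M^0_{\lambda,G}(\mathbf q^{-1})$ is a fixed $\C$-linear combination of the quantities $\PP(w\bullet(w'\lambda),\mathbf q^{-1})$, where $w$ ranges over $W^1$ and $w'$ over $W^1/W^1_\lambda$, the coefficients being the signs $(-1)^{\ell_1(w)}$ and the values $1-e_0(w'\lambda)$; this formula makes sense verbatim for every $\lambda\in Y^*$, since $W^1_\lambda$, $e_0$ and $w\bullet(\cdot)$ are defined on all of $Y^*$. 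Thus the proposition will follow once we check: (i) the Coxeter datum $(W^1,\ell_1)$, the subgroups $W^1_\lambda$, the twisted action $w\bullet(\cdot)=w(\cdot+\rho^\vee)-\rho^\vee$, and the sets $Y_x^*$ (hence the functions $e_x$) all match for $G$ and $G'$; and (ii) $\PP_G(\nu,\mathbf q^{-1})=\PP_{G'}(\nu',\mathbf q^{-d_0})$ whenever $\nu\in R^+$ and $\nu'\in (R^+)'$ correspond.

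Point (i) is formal: $\ell_1$, $W^1_\lambda$, $w\bullet(\cdot)$ and the $Y_x^*$ are all defined purely from the tuple $(Y^*,\Phi^1,\Phi^{1,\vee},W^1)$ together with $\rho^\vee$, the half-sum of the positive elements of $\Phi^{1,\vee}$. Since the excerpt records natural identifications $Y^*\cong(Y^*)'$, $\Phi^1\cong(\Phi^1)'$, $\Phi^{1,\vee}\cong(\Phi^{1,\vee})'$ and $W^1\cong(W^1)'$ compatibly with the relative root-datum structure, $\rho^\vee$ is matched and so is everything in (i).

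Point (ii) is the only non-formal step, and is where I expect the (modest) work to lie. By Lemma \ref{lem:factorization} and Definition \ref{defn:P}, $\PP(\nu,\mathbf q)$ is the coefficient of $e^{-\nu}$ in $P(E^{-1},\mathbf q)=\prod_\beta d_\beta(\mathbf q)^{-1}$, and by Definition \ref{defn:d_beta} each factor $d_\beta(\mathbf q)$ depends only on $\beta\in\Phi^{1,\vee}$ (matched), on whether $\beta$ is of type I or II (a property of the relative root system, hence matched), and on the integer $\mathtt b(\beta)$. The one computation to carry out is: using the decomposition $X_*(T)\cong\bigoplus_{i=0}^{d_0-1}X_*(T')$ of (\ref{eq:identification}), under which $\theta$ cyclically permutes the $d_0$ summands while $\theta^{d_0}$ restricts on the $0$-th summand to $\theta'$, one checks that every $\theta$-orbit in $\Phi$ meets each of the $d_0$ irreducible factors of $\Phi$ in a single orbit of $\theta^{d_0}$, which for the first factor is a $\theta'$-orbit; hence $\#[\alpha]_G=d_0\cdot\#[\alpha']_{G'}$ for corresponding orbits, and, the type being unchanged, $\mathtt b_G([\alpha])=d_0\cdot\mathtt b_{G'}([\alpha'])$ in both the type I and type II cases. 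Substituting this into Definition \ref{defn:d_beta} gives $d_\beta^G(\mathbf q)=d_{\beta'}^{G'}(\mathbf q^{d_0})$ factor by factor (using in the type II case that $\beta/2$ is matched as well). Multiplying over the matched index set gives $P_G(E^{-1},\mathbf q)=P_{G'}(E^{-1},\mathbf q^{d_0})$ inside $\C[Y^*][[\mathbf q]]\cong\C[(Y^*)'][[\mathbf q]]$, and comparing the coefficients of $e^{-\nu}$ yields $\PP_G(\nu,\mathbf q)=\PP_{G'}(\nu',\mathbf q^{d_0})$, equivalently $\PP_G(\nu,\mathbf q^{-1})=\PP_{G'}(\nu',\mathbf q^{-d_0})$.

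Putting (i) and (ii) together, substitution into the defining formula carries each term of $\mathfrak M^0_{\lambda,G}(\mathbf q^{-1})$ to the corresponding term of $\mathfrak M^0_{\lambda',G'}(\mathbf q^{-d_0})$ (the $\lambda=0$ case of Definition \ref{defn:frakM} carries no $\mathbf q^{-\langle\lambda,\rho\rangle}$ prefactor, so there is nothing further to track), which is the asserted identity. The main obstacle is thus confined to point (ii): correctly bookkeeping the $\theta$-orbit structure on $\Phi$, the type I/II dichotomy, and the half-roots $\beta/2$ under the identification (\ref{eq:identification}) and the diagonal map (\ref{eq:diag map}); everything else is formal unwinding.
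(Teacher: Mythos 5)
Your proposal is correct and takes essentially the same route as the paper's proof: the paper also reduces to the observation that corresponding $\beta\in\Phi^{1,\vee}$ and $\beta'\in(\Phi^{1,\vee})'$ have the same type and satisfy $\mathtt b(\beta)=d_0\,\mathtt b'(\beta')$, deduces $\mathcal P_G(\lambda,\mathbf q)=\mathcal P_{G'}(\lambda',\mathbf q^{d_0})$ via Lemma~\ref{lem:factorization} and Definition~\ref{defn:P}, and then invokes Definition~\ref{defn:frakM}. Your write-up merely spells out more explicitly the formal matching of $(W^1,\ell_1,W^1_\lambda,\bullet,e_0,\rho^\vee)$ and the $\theta$-orbit count $\#[\alpha]=d_0\#[\alpha']$, both of which the paper leaves implicit.
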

\begin{proof}
	When $\beta \in \Phi ^{1,\vee}$ corresponds to $\beta ' \in (\Phi ^{1,\vee})'$, we know that $\beta$ is of the same type (I or II) as $\beta'$, and we have $ \mathtt b (\beta) = d_0 \mathtt b ' (\beta ') .$ It follows from Lemma \ref{lem:factorization} and Definition \ref{defn:P} that
	$ \mathcal P(\lambda, \mathbf{q}) = \mathcal P'(\lambda', \mathbf{q}^{d_0})$ for all $\lambda \in Y^*$ and $\lambda' \in (Y^*)'$ that correspond to each other. The proposition then follows from Definition \ref{defn:frakM}. \end{proof}

Next we deduce a relation between the construction of $\lambda_b$ in \S \ref{subsec:CZ Conj} for $G$ and for $G'$. Denote by $\hatS'$ the counterpart of $\hatS$ for $G'$. Since $G$ (resp.~$G'$) is adjoint, we know that $X^*(\widehat T)$ (resp.~$X^*(\widehat T')$) has a $\Z$-basis consisting of the fundamental weights. It then easily follows from Lemma \ref{lem:elementary about coinv} (\ref{item:2 in coinv}) \ that we have
\begin{align*}
X^*(\widehat T) _{\hat \theta} & =
X^*(\widehat T) _{\hat \theta, \mathrm{free}} = X^*(\hatS), \\
X^*(\widehat T') _{\hat \theta'} & =
X^*(\widehat T') _{\hat \theta', \mathrm{free}} = X^*(\hatS'),
\end{align*}
and we have natural identifications
$$X^*(\hatS) \cong X^*(\hatS'), \quad \widehat Q _{\hat\theta} \cong \widehat Q' _{\hat\theta'},  \quad \pi_1(G) _{\sigma} \cong \pi_1 (G')_{\sigma}. $$

Fix an arbitrary $\mu \in X_*(T)$. Choose $\mu' \in X_*(T')$, such that the image of $\mu'$ in $X^*(\hatS')$ corresponds to the image of $\mu$ in $X^*(\hatS)$. Such $\mu'$ always exists because the map $X_*(T') = X^*(\widehat T') \to X^*(\hatS')$ is surjective. It then follows that the image $\mu^{\natural} \in \pi_1(G)_{\sigma}$ of $\mu$ and the image $(\mu')^{\natural} \in \pi_1(G')_{\sigma}$ of $\mu'$ correspond to each other. Let $[b]$ (resp.~$[b']$) be the unique basic element of $B(G,\mu)$ (resp.~$B(G',\mu')$).
\begin{prop}\label{prop:same lambda_b}
	In the above setting, the elements $\lambda_b \in X^*(\hatS)$ and $\lambda_{b'} \in X^*(\hatS')$ correspond to each other, under the identification $X^*(\hatS) \cong X^*(\hatS')$.
\end{prop}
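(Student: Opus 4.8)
The plan is to reduce the statement about $\lambda_b$ and $\lambda_{b'}$ entirely to the combinatorial characterization provided by Lemma~\ref{lem:defn of lambda_b}, and then transport each of the two defining conditions across the identifications set up above. Recall that $\lambda_b = (\tilde\lambda_b)|_{\hatS}$ where $\tilde\lambda_b \in X^*(\widehat T)_{\hat\theta} = X^*(\hatS)$ (the two coincide because $G$ is adjoint) is uniquely characterized by: (i) its image in $\pi_1(G)_\sigma$ is $\kappa(b)$; and (ii) $(\tilde\lambda_b)|_{\hatS} - \bar\nu_b$ is a $\Q\cap(-1,0]$-linear combination of the relative simple roots in $\widehat Q_{\hat\theta}$. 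Since the identification $X^*(\hatS) \cong X^*(\hatS')$ from the discussion before the proposition carries relative simple roots to relative simple roots (as it is induced by a Dynkin-diagram-preserving identification of root data, so $\widehat Q_{\hat\theta} \cong \widehat Q'_{\hat\theta'}$ matches the two bases) and is compatible with $\pi_1(G)_\sigma \cong \pi_1(G')_\sigma$, it suffices to check two things: first, that $\kappa(b)$ and $\kappa(b')$ correspond to each other in $\pi_1(G)_\sigma \cong \pi_1(G')_\sigma$; and second, that $\bar\nu_b$ and $\bar\nu_{b'}$ correspond to each other in $X^*(\hatS)\otimes\Q \cong X^*(\hatS')\otimes\Q$.

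For the first point: by construction $\mu'$ was chosen so that $\mu^\natural \in \pi_1(G)_\sigma$ and $(\mu')^\natural \in \pi_1(G')_\sigma$ correspond; and since $[b]$ and $[b']$ are the basic classes in $B(G,\mu)$ and $B(G',\mu')$ respectively, we have $\kappa(b) = \mu^\natural$ and $\kappa(b') = (\mu')^\natural$ by the definition of $B(G,\mu)$ in \S\ref{subsec:nonemptiness pattern}. Hence $\kappa(b)$ and $\kappa(b')$ correspond. For the second point: since $b$ is basic, $\bar\nu_b$ is the unique central element of $X_*(T)^+_\Q$ whose image in $\pi_1(G)_\Gamma \otimes \Q = \pi_1(G)_\sigma \otimes \Q$ equals $\kappa(b)\otimes\Q$ (this is Kottwitz's description of the basic locus; cf.\ the injectivity of $(\bar\nu,\kappa)$). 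The same holds for $b'$. So I would identify $\bar\nu_b \in (X_*(T)^+_\Q)^\sigma = X_*(A)_\Q$ with an element of $X^*(\hatS)\otimes\Q = Y^*\otimes\Q$ via Lemma~\ref{lem:elementary about coinv}(3), and similarly $\bar\nu_{b'}$; under the diagonal identification $(Y^*)'\otimes\Q \cong Y^*\otimes\Q$ of \eqref{eq:diag map}, the central cocharacter data match because $\pi_1(G)_\sigma\otimes\Q \cong \pi_1(G')_\sigma\otimes\Q$ is compatible with everything and the center of $G$ maps to the center of $G'$ compatibly with \eqref{eq:identification}. Concretely, both $\bar\nu_b$ and $\bar\nu_{b'}$ are determined by the same element of $\pi_1(G)_\sigma\otimes\Q$, so they must correspond.

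The main obstacle, such as it is, is bookkeeping: one must verify that all the identifications in play — the diagonal embedding \eqref{eq:diag map}, the isomorphism $X^*(\hatS)\cong X^*(\hatS')$, the isomorphism $\pi_1(G)_\sigma \cong \pi_1(G')_\sigma$, and the rational isomorphism of Lemma~\ref{lem:elementary about coinv}(3) — are mutually compatible, i.e.\ fit into a single commutative diagram. This is where one has to be careful that the normalizations of the Newton point (Galois average vs.\ $\theta$-average) and of $\kappa$ are the ones used consistently throughout \S\ref{subsec:CZ Conj}. Once that compatibility is recorded, the argument is just: "both $\lambda_b$ and $\lambda_{b'}$ are the unique solutions of corresponding instances of the linear conditions in Lemma~\ref{lem:defn of lambda_b}, hence they correspond." I would close by invoking uniqueness in Lemma~\ref{lem:defn of lambda_b} applied to $G'$: the element of $X^*(\hatS')$ corresponding to $\lambda_b$ satisfies conditions (1) and (2) for $G'$, therefore it equals $\lambda_{b'}$.
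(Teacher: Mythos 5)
Your proposal is correct and takes the same route as the paper, whose entire proof is the one-line observation that the statement ``immediately follows from the uniqueness in Lemma~\ref{lem:defn of lambda_b}''; you simply spell out the transport of the two characterizing conditions across the identifications. One small simplification you overlook: since $G$ and $G'$ are adjoint and $b$, $b'$ are basic, we have $\bar\nu_b = \bar\nu_{b'} = 0$, so the Newton-point compatibility is vacuous.
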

\begin{proof}This immediately follows from the uniqueness in Lemma \ref{lem:defn of lambda_b}.
\end{proof}
\section{The main result}\label{sec:main}
\subsection{The number of irreducible components in terms of combinatorial data}\label{subsec:comb}
We keep the setting of \S \ref{subsec:CZ Conj} and \S \ref{subsec:decent case}. Thus we fix a reductive group scheme $G$ over $\Ok_F$, an element $\mu \in X_*(T)^+$, and a basic class $[b] \in B(G, \mu)$. In this section, we relate the number of irreducible components $\mathscr{N}(\mu,b)$ to some combinatorial data.

As in \S \ref{subsec:decent case}, we fix $s_0 \in \mathbb N$ such that $b$ is $s_0$-decent. As in \S \ref{subsec:comp of BC} we assume $s_0$ is divisible by the order $d$ of $\theta$, and various natural numbers $s\in \mathbb N$ that are divisible by $s_0$.
In particular, $G$ will always be split over the extension $F_s$ of $F$. We shall write
$$q_s: = \abs{k_s} = \abs{k_F}^s. $$

By Corollary \ref{irred-orb}, we have \begin{align}
\label{eq2}
e(J_b) & \vol (G(\Ok_F)) ^{-1}  (\BC_s f_{\mu ,s}) (\gamma_s)  \\\nonumber & = \sum _{ Z\in J_b(F) \backslash \Sigma ^{\topp} (X_{\mu} (b))} \vol (Z)^{-1} q_s^{\dim X_{\mu}(b)} + o(q_s^{\dim X_{\mu} (b)}). \end{align}
By the dimension formula in Theorem \ref{thm:dim}, we have
\begin{align}\label{eq:dim formula}\dim X_{\mu} (b) =
\langle\mu,\rho\rangle-\frac{1}{2}\mathrm{def}_G(b)
\end{align} (since $\bar\nu_b$ is central). In particular, from (\ref{eq2}) we get
\begin{align}\label{eq:trivial bound of orb int}
(\BC_s f_{\mu ,s}) (\gamma_s) = O(q_s ^{\langle\mu,\rho\rangle-\frac{1}{2}\mathrm{def}_G(b) }).
\end{align}

\begin{prop}\label{prop:from f to tau} With the notation in \S\ref{subsec:comp of BC}, we have $$ \BC_s (\tau _{\mu}') (\gamma_s) = q_s ^{-\langle \mu ,\rho \rangle} (\BC_s f_{\mu,s} )  (\gamma_s) + o(q_s ^{-\frac{1}{2}\mathrm{def}_G(b)}).$$
\end{prop}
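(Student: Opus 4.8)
The plan is to pass from the basis $\{f_\mu\}$ of $\mathcal H_s$ to the basis $\{\tau'_\mu\}$ by expanding $f_\mu$ in the latter and bounding all but the leading term. Write $f_{\mu,s} = \sum_{\lambda \le \mu} c^\mu_\lambda \tau'_\lambda$ for the transition matrix in $\mathcal H_s$; since this is the inverse of the (upper unitriangular) matrix expressing $\tau'_\mu$ in terms of $m'_\lambda$, one has $c^\mu_\mu = 1$ and $c^\mu_\lambda = 0$ unless $\lambda \le \mu$, with $\lambda \in X^*(\widehat T)^+$. Thus $\BC_s(f_{\mu,s})(\gamma_s) = \BC_s(\tau'_\mu)(\gamma_s) + \sum_{\lambda < \mu} c^\mu_\lambda\, \BC_s(\tau'_\lambda)(\gamma_s)$. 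The first step is therefore to control the size of each $\BC_s(\tau'_\lambda)(\gamma_s)$ for $\lambda < \mu$, and to control the size of the coefficients $c^\mu_\lambda$ as $s$ grows.

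For the second step I would invoke the trivial bound (\ref{eq:trivial bound of orb int}) applied with $\lambda$ in place of $\mu$: since $[b] \in B(G,\mu)$ and $b$ is basic, the same $b$ (indeed the same $\gamma_s$, since $\gamma_s$ depends only on $\nu_b$, which is central) works for any $\lambda' \le \mu$ with $[b]\in B(G,\lambda')$ — and in general one only needs the clean estimate $\BC_s(f_{\lambda,s})(\gamma_s) = O(q_s^{\langle \lambda, \rho\rangle})$ coming from Corollary \ref{cor:BC of tau} and the elementary bound on $\mathcal P(\,\cdot\,,\mathbf q)$ from Proposition \ref{prop:interpretation}(2)(a) together with Corollary \ref{cor:triv bd}. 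Feeding this through the (finite, $s$-independent) change of basis gives $\BC_s(\tau'_\lambda)(\gamma_s) = O(q_s^{\langle\lambda,\rho\rangle})$. Since $\langle \lambda,\rho\rangle < \langle\mu,\rho\rangle$ strictly for $\lambda < \mu$ dominant, multiplying by $q_s^{-\langle\mu,\rho\rangle}$ shows each such term is $q_s^{\langle\lambda,\rho\rangle - \langle\mu,\rho\rangle} \cdot O(1) = o(1)$, which is certainly $o(q_s^{-\frac12 \mathrm{def}_G(b)})$ as $\mathrm{def}_G(b) \ge 0$. Meanwhile $q_s^{-\langle\mu,\rho\rangle}\BC_s(\tau'_\mu)(\gamma_s)$ is exactly the term we keep, and $q_s^{-\langle\mu,\rho\rangle}\BC_s(f_{\mu,s})(\gamma_s)$ on the other side differs from $\BC_s(\tau'_\mu)(\gamma_s)$ by precisely these lower-order contributions — rearranged, this is the claimed identity.

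The one subtlety — and the main obstacle — is making sure the coefficients $c^\mu_\lambda$ do not themselves grow with $s$ in a way that spoils the estimate. Here the point is that $\mu$ is \emph{fixed}, so the transition matrix between $\{f_\mu\}$ and $\{\tau'_\mu\}$ restricted to weights $\le \mu$ is a fixed finite matrix with entries in $\Z$ (or $\C$), completely independent of $s$: it is governed by the weight multiplicities $\dim V_\lambda(\nu)$, which do not see $s$ at all. (The only $s$-dependence in the whole computation lives in $\gamma_s$ and in the evaluation $\mathbf q \mapsto q_s^{-1}$ inside $\mathfrak M^0$, via Corollary \ref{cor:BC of tau}.) Once this is observed, the argument is just the triangle inequality over a fixed finite index set, and the proof closes. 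I would also double-check that $\langle \lambda,\rho\rangle \le \langle\mu,\rho\rangle - 1$ for dominant $\lambda < \mu$ — which holds because $\mu - \lambda$ is a nonzero sum of positive coroots and $\rho$ is strictly dominant — so the gap is genuinely bounded away from zero, giving the stated $o(\cdot)$ with room to spare.
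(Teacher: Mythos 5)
There is a genuine gap, and it lies at the very first step. You write $f_{\mu,s} = \sum_{\lambda \le \mu} c^\mu_\lambda \tau'_\lambda$ and assert that this is the inverse of the upper-unitriangular matrix expressing $\tau'_\mu$ in terms of $m'_\lambda$, concluding $c^\mu_\mu = 1$ and that the whole matrix is a fixed integer matrix independent of $s$. This conflates two different bases of $\C[X^*(\widehat T)]^{W_0}$: the orbit sums $m'_\lambda$ and the Satake transforms $f_{\lambda,s}$. The transition $\tau'_\mu = \sum_\lambda n^\lambda_\mu m'_\lambda$ is indeed unitriangular and $s$-independent (it is the Kostant weight-multiplicity matrix), but the transition $\tau'_\mu = \sum_\lambda t^\lambda_\mu f_{\lambda,s}$ is governed by the Kato--Lusztig formula (Theorem \ref{thm:KL}), and there $t^\mu_\mu = K'_{\mu,\mu}(q_s^{-1})\,q_s^{-\langle\mu,\rho\rangle} = \bigl(1+O(q_s^{-1})\bigr)\,q_s^{-\langle\mu,\rho\rangle}$, which is neither $1$ nor independent of $s$. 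The factor $q_s^{-\langle\mu,\rho\rangle}$ appearing in the statement of the proposition is precisely this diagonal entry; if your transition really had $c^\mu_\mu = 1$, one would obtain $\BC_s(f_{\mu,s})(\gamma_s) = \BC_s(\tau'_\mu)(\gamma_s) + (\text{error})$ rather than $q_s^{-\langle\mu,\rho\rangle}\BC_s(f_{\mu,s})(\gamma_s) = \BC_s(\tau'_\mu)(\gamma_s) + (\text{error})$, and these are incompatible once $\langle\mu,\rho\rangle > 0$. Relatedly, the claim that the entries $c^\mu_\lambda$ do not grow with $s$ is false: the inverse matrix has diagonal entries $\approx q_s^{\langle\lambda,\rho\rangle}$, which blow up.

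A second, subsidiary point: the bound $\BC_s(\tau'_\lambda)(\gamma_s) = O(q_s^{\langle\lambda,\rho\rangle})$ is too coarse for the argument even if the transition were bounded. What one actually needs, and what the paper uses, is the sharper estimate $\BC_s(f_{\lambda,s})(\gamma_s) = O\bigl(q_s^{\langle\lambda,\rho\rangle - \frac12\mathrm{def}_G(b)}\bigr)$ of (\ref{eq:trivial bound of orb int}), which comes from identifying this quantity with a twisted orbital integral and hence with a weighted point count on $X_\lambda(b)$, whose dimension is $\langle\lambda,\rho\rangle - \frac12\mathrm{def}_G(b)$. The paper's actual route is to expand $\tau'_\mu$ \emph{in} the $f_{\lambda,s}$ basis via Kato--Lusztig, read off that the $\lambda=\mu$ coefficient is $(1+O(q_s^{-1}))q_s^{-\langle\mu,\rho\rangle}$ and each $\lambda<\mu$ coefficient is $O(q_s^{-1-\langle\lambda,\rho\rangle})$, and then feed in the orbital-integral bound term by term; the product of the two exponents telescopes to $O(q_s^{-1-\frac12\mathrm{def}_G(b)}) = o(q_s^{-\frac12\mathrm{def}_G(b)})$. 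Your plan would need to be reversed so that the expansion is done in the $f$-basis with the correct, $q_s$-dependent coefficients, at which point it becomes the paper's argument.
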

\begin{proof}
	For $\lambda $ running over $X^*(\widehat T)^+$, the Satake transforms of $f_{\lambda,s}$, which we still denote by $f_{\lambda,s}$, form a basis of $\C [ X^*(\widehat T)] ^{W_0}$. By the split case of Theorem \ref{thm:KL}, we have
	$$\tau '_{\mu} = \sum _{\lambda \in X^*(\widehat T) ^+} K'_{\mu ,\lambda} (q_s^{-1}) q_s ^{- \langle \lambda ,\rho \rangle} f_{\lambda,s} ,$$
	where $K'_{\mu,\lambda}(\cdot)$ is the absolute analogue of (\ref{eq:defn of pairing}), i.e., it is defined by (\ref{eq:defn of pairing}) with $\theta$ replaced by $1$. By Definition \ref{defn:P}, Corollary \ref{cor:triv bd}, and (\ref{eq:defn of pairing}), we have
	$$K' _{\mu, \lambda} (q_s^{-1}) = \begin{cases}
	1 +O(q_s^{-1}), & \lambda = \mu ,\\
	O(q_s ^{-1}) , & \lambda < \mu , \\
	0, & \text{otherwise}.
	\end{cases}$$
	Therefore
	\begin{align}\label{eq:exp of tau'}
	\tau_{\mu}' = q_s ^{-\langle \mu ,\rho \rangle} f_{\mu ,s} + \sum _{\lambda \in X^*(\widehat T) ^+, ~ \lambda \leq \mu} O(q_s ^{-1 - \langle \lambda, \rho \rangle} ) f_{\lambda ,s}
	\end{align}
	Note that (\ref{eq:trivial bound of orb int}) is valid with $\mu$ replaced by each $\lambda \in X^*(\widehat T) ^+, \lambda \leq \mu$, because we still have $[b] \in B(G, \lambda)$. The proposition then follows from (\ref{eq:exp of tau'}) and the above-mentioned bounds provided by (\ref{eq:trivial bound of orb int}) with $\mu$ replaced by each $\lambda \leq \mu$.
\end{proof}
\begin{cor}
	We have
	\begin{align}\label{eq:exp of BC tau}
	\BC_s (\tau '_{\mu}) (\gamma_s) = e(J_b)\sum _{ Z\in J_b(F) \backslash \Sigma ^{\topp} (X)} \vol (Z)^{-1} q_s^{-\frac{1}{2}\mathrm{def}_G(b) } + o(q_s^{-\frac{1}{2}\mathrm{def}_G(b)}).
	\end{align}
\end{cor}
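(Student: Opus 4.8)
\textbf{Proof strategy for the Corollary (equation~(\ref{eq:exp of BC tau})).}
The plan is to combine the two ingredients already assembled: Proposition~\ref{prop:from f to tau}, which relates $\BC_s(\tau'_\mu)(\gamma_s)$ to $\BC_s(f_{\mu,s})(\gamma_s)$ up to an error of size $o(q_s^{-\frac{1}{2}\mathrm{def}_G(b)})$, and the point-counting identity~(\ref{eq2}) obtained from Corollary~\ref{irred-orb}, which expresses $e(J_b)\vol(G(\Ok_F))^{-1}(\BC_s f_{\mu,s})(\gamma_s)$ as $\sum_{Z} \vol(Z)^{-1} q_s^{\dim X_\mu(b)} + o(q_s^{\dim X_\mu(b)})$. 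First I would substitute the dimension formula~(\ref{eq:dim formula}), $\dim X_\mu(b) = \langle \mu,\rho\rangle - \tfrac12\mathrm{def}_G(b)$, into~(\ref{eq2}), so that the right-hand side becomes a sum of terms of size $q_s^{\langle\mu,\rho\rangle - \frac12\mathrm{def}_G(b)}$. Then I would divide both sides of~(\ref{eq2}) by $q_s^{\langle\mu,\rho\rangle}$; this turns the right-hand side into $\sum_Z \vol(Z)^{-1} q_s^{-\frac12\mathrm{def}_G(b)} + o(q_s^{-\frac12\mathrm{def}_G(b)})$ and turns the left-hand side into $e(J_b)\vol(G(\Ok_F))^{-1} q_s^{-\langle\mu,\rho\rangle}(\BC_s f_{\mu,s})(\gamma_s)$.

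Next I would invoke Proposition~\ref{prop:from f to tau}, which asserts precisely that $\BC_s(\tau'_\mu)(\gamma_s) = q_s^{-\langle\mu,\rho\rangle}(\BC_s f_{\mu,s})(\gamma_s) + o(q_s^{-\frac12\mathrm{def}_G(b)})$. Multiplying this identity by $e(J_b)\vol(G(\Ok_F))^{-1}$ (a nonzero constant independent of $s$, hence harmless for the error term) and comparing with the rescaled version of~(\ref{eq2}), the main terms on the two right-hand sides must agree. One small bookkeeping point: the statement~(\ref{eq:exp of BC tau}) as written has no factor of $\vol(G(\Ok_F))^{-1}$, so I should note that the normalization of the Haar measure on $J_b(F)$ is chosen (as in the Definition preceding Corollary~\ref{cor:vol-ind-p}) so that $\vol(G(\Ok_F)) = 1$; under this normalization $\vol(G(\Ok_F))^{-1} = 1$ and the formula takes the clean form stated. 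With that normalization in force, we obtain
\begin{align*}
\BC_s(\tau'_\mu)(\gamma_s) &= e(J_b)\,q_s^{-\langle\mu,\rho\rangle}(\BC_s f_{\mu,s})(\gamma_s) + o(q_s^{-\frac12\mathrm{def}_G(b)}) \\
&= e(J_b)\sum_{Z \in J_b(F)\backslash\Sigma^{\topp}(X)} \vol(Z)^{-1} q_s^{-\frac12\mathrm{def}_G(b)} + o(q_s^{-\frac12\mathrm{def}_G(b)}),
\end{align*}
which is exactly~(\ref{eq:exp of BC tau}).

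There is essentially no hard step here: the Corollary is a formal consequence of Proposition~\ref{prop:from f to tau} and Corollary~\ref{irred-orb} together with the dimension formula. The only points requiring care are (i) making sure the error terms are controlled correctly under the rescaling by $q_s^{-\langle\mu,\rho\rangle}$ and under multiplication by constants independent of $s$ — both routine given the $o(\cdot)$ and $O(\cdot)$ conventions fixed in the Notations — and (ii) keeping the measure normalization $\vol(G(\Ok_F))=1$ explicit, since it is what lets $\vol(G(\Ok_F))^{-1}$ disappear from the final formula. If one does not wish to assume that normalization, the statement should carry a $\vol(G(\Ok_F))^{-1}$ factor, but either way the proof is the two-line combination above.
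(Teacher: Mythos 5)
Your proof is correct and is essentially the paper's own argument: the Corollary follows immediately by combining~(\ref{eq2}), the dimension formula~(\ref{eq:dim formula}), and Proposition~\ref{prop:from f to tau}. Your care over the normalization $\vol(G(\Ok_F))=1$ is well placed and consistent with the definition preceding Corollary~\ref{cor:vol-ind-p}.
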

\begin{proof}
	This follows by combining (\ref{eq2}), (\ref{eq:dim formula}), and Proposition \ref{prop:from f to tau}.
\end{proof}

\begin{thm}\label{main-thm} Assume the Haar measures are normalized such that $G(\Ok_F)$ has volume $1$. There exists a rational function $S_{\mu,b} (t) \in \Q (t)$ that is independent of the local field $F$ (in the same sense as Corollary \ref{cor:vol-ind-p}), such that \begin{equation}\label{eq:constant term} S_{\mu, b }(0)=\mathscr{N}(\mu,b),
	\end{equation}
	\begin{align}\label{eq:S and vol}
	S_{\mu, b } (q_1) = e(J_b)\sum _{ Z\in J_b(F) \backslash \Sigma ^{\topp} (X_{\mu} (b))} \vol (Z)^{-1},
	\end{align} and such that
	\begin{align}\label{eq:main-thm}
	S_{\mu,b}(q_1)q_s^{-\frac{1}{2}\mathrm{def}_G(b)} = \sum_{\lambda \in X^*(\hatS) ^+}\dim V_\mu(\lambda)_{\mathrm{rel}} ~\mathfrak M _{\lambda ^{(s)} - s\nu _b} ^{0} (q_1^{-1}) +o(q_s^{-\frac{1}{2}\mathrm{def}_G(b) }).
	\end{align}
	In particular
	\begin{align}\label{eq:limit formula}
	S_{\mu,b}(q_1)=\lim\limits_{s\rightarrow \infty} q_s ^{\frac{1}{2} \mathrm{def}_G(b)}\sum_{\lambda \in X^*(\hatS) ^+}\dim V_\mu(\lambda)_{\mathrm{rel}} ~\mathfrak M _{\lambda ^{(s)} - s\nu _b} ^{0} (q_1^{-1}).
	\end{align}
\end{thm}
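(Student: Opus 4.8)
The plan is to \emph{define} $S_{\mu,b}$ directly out of the volume terms and then read off all four assertions from the corollaries already established in \S\ref{sec:count} and \S\ref{sec:Sat}. For each orbit $Z\in J_b(F)\backslash\Sigma^{\topp}(X_\mu(b))$, Corollary \ref{cor:vol-ind-p} furnishes a rational function $R_Z(t)\in\Q(t)$, independent of $F$, with $\vol(Z)=R_Z(q)$ and $R_Z(0)=e(J_b)$. Since $R_Z$ is a nonzero rational function with $R_Z(0)=e(J_b)\in\{\pm 1\}$, in particular $R_Z(0)\neq0$, its inverse $R_Z(t)^{-1}$ again lies in $\Q(t)$ and is regular at $t=0$. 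I would then set
\[
S_{\mu,b}(t):=e(J_b)\sum_{Z\in J_b(F)\backslash\Sigma^{\topp}(X_\mu(b))}R_Z(t)^{-1}\ \in\ \Q(t),
\]
a \emph{finite} sum over the set counted by $\mathscr N(\mu,b)$, hence an element of $\Q(t)$ regular at $0$. With this definition (\ref{eq:S and vol}) holds on the nose because $R_Z(q_1)=R_Z(q)=\vol(Z)$, and evaluating at $t=0$ gives $S_{\mu,b}(0)=e(J_b)\sum_Z e(J_b)^{-1}=\#\bigl(J_b(F)\backslash\Sigma^{\topp}(X_\mu(b))\bigr)=\mathscr N(\mu,b)$, which is (\ref{eq:constant term}). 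For the independence of $F$ I would combine Theorem \ref{prop:ind of p}, which matches $J_b(F)\backslash\Sigma^{\topp}(X_\mu(b))$ with $J_{b'}(F')\backslash\Sigma^{\topp}(X_{\mu'}(b'))$ orbit-by-orbit with conjugate stabilizers, with the last sentence of Corollary \ref{cor:vol-ind-p}, which says the rational function attached to $Z$ agrees with the one attached to the matched $Z'$; together these show the finite sum defining $S_{\mu,b}$ is literally the same rational function for $(G,\mu,b)$ and for $(G',\mu',b')$.

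The remaining identities come from equating the two expressions for $\BC_s(\tau'_\mu)(\gamma_s)$ that are already available. First, substituting the definition of $S_{\mu,b}$ (equivalently (\ref{eq:S and vol})) and $q_1=\abs{k_F}$ into (\ref{eq:exp of BC tau}) gives
\[
\BC_s(\tau'_\mu)(\gamma_s)=S_{\mu,b}(q_1)\,q_s^{-\frac12\mathrm{def}_G(b)}+o\bigl(q_s^{-\frac12\mathrm{def}_G(b)}\bigr),\qquad s\gg0.
\]
Second, Corollary \ref{cor:BC of tau} reads
\[
\BC_s(\tau'_\mu)(\gamma_s)=\sum_{\lambda\in X^*(\hatS)^+}\dim V_\mu(\lambda)_{\mathrm{rel}}\;\mathfrak M^0_{\lambda^{(s)}-s\nu_b}(q_1^{-1}).
\]
Equating the right-hand sides is precisely (\ref{eq:main-thm}). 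To obtain (\ref{eq:limit formula}) I would multiply (\ref{eq:main-thm}) through by $q_s^{\frac12\mathrm{def}_G(b)}$, turning the error term into $o(1)$, and then let $s\to\infty$ along the divisibility order; the left-hand side is the constant $S_{\mu,b}(q_1)$, which yields the stated limit.

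I do not anticipate a real obstacle here: the analytic substance is entirely contained in the point-counting of Proposition \ref{prop:point counting}, the Base Change Fundamental Lemma, and the Kato--Lusztig formula, all of which have already been deployed in the corollaries cited, so what remains is bookkeeping. The only step that truly requires attention is verifying that $S_{\mu,b}(t)$ is well defined as a single rational function independent of $F$: this needs Theorem \ref{prop:ind of p} and the final assertion of Corollary \ref{cor:vol-ind-p} to be invoked \emph{in tandem}, so that both the index set of the sum and the individual rational functions $R_Z$ are transported coherently between local fields, together with the elementary observation $R_Z(0)=e(J_b)\neq0$, which is what makes $R_Z(t)^{-1}$ regular at the origin and forces $S_{\mu,b}(0)$ to count orbits rather than acquire a pole.
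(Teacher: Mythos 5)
Your proposal matches the paper's proof almost verbatim: the same definition $S_{\mu,b}(t)=e(J_b)\sum_Z R_Z(t)^{-1}$, the same appeal to Corollary \ref{cor:vol-ind-p} for the evaluation at $0$ and for independence of $F$, and the same comparison of (\ref{eq:exp of BC tau}) with Corollary \ref{cor:BC of tau} to get (\ref{eq:main-thm}) and its limit form. You are simply more explicit about the bookkeeping (e.g.\ that $R_Z(0)=e(J_b)\neq 0$ makes $R_Z^{-1}$ regular at the origin and that $e(J_b)^2=1$), but there is no deviation in method.
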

\begin{proof}  Fix a set of representatives $\set{Z_i\mid 1\leq i \leq \mathscr N (\mu, b )}$ for the $J_b(F)$-orbits in $\Sigma ^{\topp} (X_{\mu} (b))$. For each $Z_i$, let $R_{Z_i} (t) \in \Q (t)$ be the rational function associated to $Z_i$ as in Corollary \ref{cor:vol-ind-p}. Let $$S_{\mu,b} (t) : = e(J_b)\sum _{i =1} ^{\mathscr N (\mu,b)} R_{Z_i} (t) ^{-1}.$$ Then $S_{\mu,b} (t)$ belongs to $\Q(t)$ and it satisfies (\ref{eq:constant term}), (\ref{eq:S and vol}). It follows from Corollary \ref{cor:vol-ind-p} and (\ref{eq:exp of BC tau}) that $$
	\BC_s (\tau '_{\mu}) (\gamma_s)  =  S_{\mu,b}(q_1)q_s^{-\frac{1}{2}\mathrm{def}_G(b) } + o(q_s^{-\frac{1}{2}\mathrm{def}_G(b)}) . $$
	Comparing this with Corollary \ref{cor:BC of tau}, we obtain (\ref{eq:main-thm}). \end{proof}

The upshot of this theorem is that the right hand side of (\ref{eq:limit formula}) is purely combinatorial and can be computed (at least in certain instances) using Kostant's partition function $\mathcal{P}_{\mathrm{Kos}}(\lambda,\mathbf{q})$. Moreover the fact that $S_{\mu,b}(t)$ is a rational function independent of the local field $F$, means that it is in principle determined by its values $S_{\mu,b}(q_1)$ for infinitely many choices of $q_1$. Once $S_{\mu,b} (t)$ is determined, the number $\mathscr N(\mu,b)$ can be read off from (\ref{eq:constant term}).

\subsection{The case of unramified elements}\label{sub:unr}
In this subsection we apply Theorem \ref{main-thm} to prove Conjecture \ref{conj:chenzhu} for \emph{unramified} and basic $b$. This is a new proof of the result \cite[Theorem 4.4.14]{xiaozhu}.

We keep the setting of  \S\ref{subsec:comb}.
Assume in addition that $b$ is \emph{unramified}, in the sense of \cite[\S 4.2]{xiaozhu}. Then we have $J_b\cong G$, and hence $\mathrm{def}_G(b)=0, e(J_b) =1$. In view of Theorem \ref{main-thm}, we would like to compute $$\lim\limits_{s\rightarrow \infty}\sum_{ \lambda\in X^*(\hatS) ^+}\dim V_\mu(\lambda)_{\mathrm{rel}}~ \mathfrak{M}_{\lambda^{(s)}-s\nu_b}^0(q_1^{-1}).$$
\begin{prop}\label{prop:est for unram} Let $\lambda \in X^*(\hatS) ^+$ and $s \in s_0 \mathbb N$. We have $$\mathfrak{M}^0_{\lambda^{(s)}-s\nu_b}(q_1^{-1})=\begin{cases}1, & \text{ if $\lambda= \lambda_b$}, \\O(q_1^{-a s}) \text{~for some $a\in\mathbb{R}_{>0}$} ,& \text{ otherwise}.
	\end{cases}		$$
\end{prop}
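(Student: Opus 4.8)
The plan is to reduce the statement to an explicit understanding of the polynomial $\mathfrak M^0_{\lambda^{(s)}-s\nu_b}(\mathbf q^{-1})$ via its expansion in terms of the $\mathbf q$-analogue of Kostant's partition function, exploiting the fact that for unramified $b$ the relevant combinatorics are especially clean. First I would recall from Corollary \ref{cor:formula for M} and Definition \ref{defn:frakM} that $\mathfrak M^0_{\nu}(\mathbf q^{-1})$ is, up to sign, an alternating sum over $w'\in W^1/W^1_\mu$ and $w\in W^1$ of terms $\mathcal P(w\bullet(w'\mu)-\nu,\mathbf q^{-1})$, where here $\mu$ plays the role of $\lambda^{(s)}$ and $\nu = s\nu_b$. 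Since $b$ is unramified and basic, $J_b\cong G$ forces $\nu_b$ to be central, so $s\nu_b\in Y^*$ is a central cocharacter; by Lemma \ref{lem:invariant under center} the polynomial is unchanged if we translate both indices, which lets us reduce to analyzing $\mathcal P$ evaluated at elements of $R^+$. Using Proposition \ref{prop:interpretation}, each $\mathcal P(\xi,\mathbf q^{-1})$ is controlled (with bounded coefficients, by part (2)(a)) by $\mathcal P_{\mathrm{Kos}}(\xi,\mathbf q^{-1})$, whose $\mathbf q^{-1}$-degree-zero part is supported only at $\xi=0$ by Corollary \ref{cor:triv bd}.

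Next I would identify the unique pair $(w',w)$ that contributes the constant term. The key point is that $\mathfrak M^0_{\lambda^{(s)}-s\nu_b}(q_1^{-1})$ has a nonzero limit as $s\to\infty$ precisely when there exist $w'\in W^1$ and $w\in W^1$ with $w\bullet(w'\lambda^{(s)}) = s\nu_b$; since $\lambda^{(s)}=s\lambda^{(1)}$ scales linearly in $s$ while $\rho^\vee$ does not, the affine condition $w(w'\lambda^{(s)}+\rho^\vee)-\rho^\vee = s\nu_b$ forces the linear parts to match, i.e. $w(w'\lambda^{(1)}) = \nu_b$ in $Y^*\otimes\mathbb Q$. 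Because $\lambda^{(1)}$ is (a rational multiple of) a dominant element and $\nu_b$ is central hence $W^1$-fixed, this can happen only when $\lambda^{(1)}$ is itself central up to the $W^1$-action — and matching the image in $\pi_1(G)_\sigma$ against $\kappa(b)$ together with the dominance/near-dominance normalization in Lemma \ref{lem:defn of lambda_b} pins down $\lambda = \lambda_b$. In that case I expect exactly the identity pair $w'=w=1$ (after the translation of Lemma \ref{lem:invariant under center}) to survive, contributing $\mathcal P(0,q_1^{-1})=1$, while the $e_0$ correction term $(1-e_0(w'\mu))$ and the alternating signs kill the remaining contributions or push them to positive $\mathbf q^{-1}$-degree.

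For the "otherwise" case, I would argue that when $\lambda\neq\lambda_b$ every summand $\mathcal P(w\bullet(w'\lambda^{(s)})-s\nu_b,\mathbf q^{-1})$ is evaluated at a point of $R^+\setminus\{0\}$ whose "size" grows linearly in $s$: concretely, writing such a point as $\sum m(\beta)\beta$, the quantity $\abs{\underline m}$ is bounded below by a positive constant times $s$, because the displacement from $0$ scales with $s$ while the roots $\beta$ are fixed. By Corollary \ref{cor:triv bd} and Proposition \ref{prop:interpretation}, each $\mathcal P(\xi,q_1^{-1})$ with $\xi\neq 0$ is $O(q_1^{-\abs{\underline m}_{\min}})$ where $\abs{\underline m}_{\min}$ is the minimal $\abs{\underline m}$ over Kostant partitions of $\xi$; combining these bounds over the finitely many $(w',w)$ gives $\mathfrak M^0_{\lambda^{(s)}-s\nu_b}(q_1^{-1}) = O(q_1^{-as})$ for some $a>0$. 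The main obstacle will be the bookkeeping in the "$\lambda=\lambda_b$" case: verifying that after the central translation exactly one term contributes the constant $1$ and that all other $(w',w)$ either vanish (via $e_0$, i.e. $w'\lambda^{(s)}$ lying on a reflection wall of the dot-action) or contribute only higher-order-in-$q_1^{-1}$ terms — this requires carefully tracking the interplay between the affine $\bullet$-action, the linear scaling by $s$, and the definition of $\lambda_b$ through the $\pi_1$-constraint and the near-dominant normalization.
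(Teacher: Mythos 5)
Your proposal follows the same fundamental route as the paper: expand $\mathfrak M^0_{\lambda^{(s)}-s\nu_b}$ as an alternating sum of $\mathcal P$-values, reinterpret $\mathcal P$ via Kostant partitions, and extract a linear-in-$s$ lower bound on the partition weights when $\lambda\neq\lambda_b$. The overall strategy is right; let me flag where you are making more work for yourself than the paper does, and where your estimates are not quite tight enough to stand on their own.

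For the case $\lambda=\lambda_b$, the paper first invokes \cite[Lemma 4.2.3]{xiaozhu}: for unramified basic $b$, $\lambda_b$ is the \emph{unique} element of $X^*(\hatS)$ with $\lambda_b^{(s)}=s\nu_b$. This collapses the problem to computing $\mathfrak M^0_0(q^{-1})$, which equals $1$ for a single simple reason: for $w\neq 1$ in $W^1$, $w\rho^\vee-\rho^\vee$ is a nonzero sum of negative coroots and hence lies outside $R^+$, so only $w=1$ contributes and gives $\mathcal P(0,q^{-1})=1$ (with $e_0(0)=0$ since $\rho^\vee$ is regular). You instead try to characterize \emph{which} $\lambda$ can produce a nonzero limit and deduce $\lambda=\lambda_b$ from a $\pi_1$-constraint plus near-dominance; this is running the logic in the wrong direction (the proposition tells you $\lambda=\lambda_b$ and asks you to evaluate), and you rightly note the bookkeeping ``is the main obstacle.'' It disappears once you feed in the fact that $\lambda_b^{(s)}=s\nu_b$ on the nose.

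For the case $\lambda\neq\lambda_b$, two imprecisions. First, the assertion ``$\mathcal P(\xi,q_1^{-1})=O(q_1^{-\abs{\underline m}_{\min}})$'' drops a factor of $\#\mathbb P(\psi_s)$; Proposition \ref{prop:interpretation}(2) gives $\abs{\mathcal P(\psi_s,q_1^{-1})}\le\#\mathbb P(\psi_s)\,q_1^{-\min\abs{\underline m}}$, and one must check that $\#\mathbb P(\psi_s)$ grows at most polynomially in $s$ (the paper does this explicitly with the bound $(Ls)^M$). Your conclusion is salvageable — the polynomial is absorbed by shrinking $a$ — but as written the estimate is not self-contained. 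Second, you should note the dichotomy $\psi_s\in R^+$ versus $\psi_s\notin R^+$: in the latter case $\mathcal P$ vanishes identically and there is nothing to bound, while in the former case (which, once it holds for one $s$, holds for all multiples) $ww'\mu_s$ lies in $\operatorname{span}_{\Q}(\leftidx_F\Phi^\vee)\smallsetminus\{0\}$, which is what allows the linear functional argument producing the lower bound $\min\abs{\underline m}\gtrsim s$. The paper makes this split explicit.
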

\begin{proof}Firstly, by \cite[Lemma 4.2.3]{xiaozhu}, $\lambda_b \in X^*(\hatS)$ is the unique element such that $\lambda_b ^{(s)} = s\nu_b$ for one (and hence all) $s \in s_0 \mathbb N$. (In particular, $\lambda_b \in X^*(\hatS) ^+$ as $\nu_b $ is central.) Thus for $\lambda \in X^*(\hatS)^+$, we have $\lambda= \lambda_b$ if and only if $\lambda^{(s) }  - s\nu_b =0$ for one (and hence all) $s\in s_0 \mathbb N$.
	
	Let $w \in W^1$. Since $w\rho^\vee-\rho^\vee $ is not in $R^+$ for $w\neq 1$, it follows from Definition \ref{defn:P} and Definition \ref{defn:frakM} that $\mathfrak{M}_0^0(q^{-1})=1 \in \C[ q^{-1}].$ This proves the case $\lambda^{(s)}=s\nu_b$.
	
	Now assume $\lambda^{(s)}\neq s\nu_b$ (for all $s$). Fix $w,w'\in W^1 $. We write $\mu_s: = \lambda ^{(s)} - s\nu_b$ and $\psi_s : = w \bullet (w'\mu_s)$. By the formula (\ref{eq:defn of M}), it suffices to show that
	\begin{align}\label{desired est unram} \exists a > 0 , ~
	\mathcal P ( \psi_s , q_1^{-1}) = O(q_1^{-as}).
	\end{align}
	
	If $\psi_s \notin R^+$ for some value of $s$, then by definition $\mathcal P(\psi_s, q^{-1} ) = 0 \in \C [ q^{-1}]$. Hence we may ignore these values of $s$. On the other hand, if $\psi_s \in R^+$ for some $s \in s_0 \mathbb N$, then it is easy to see that $\psi _{ns}\in R^+$ for all $n \in \mathbb N$. We thus assume that $\psi_s \in R^+ $ for all sufficiently divisible $s$. Then for such $s$ we have
	$$ w w ' \mu_s \in \mathrm{span}_{\Q} (\leftidx_F \Phi ^{\vee}) - \set{0} \subset Y^*\otimes \Q.$$ Hence there exists a non-zero $\Q$-linear functional $f_0$ on $\mathrm{span}_{\Q} (\leftidx_F \Phi ^{\vee})$, and a constant $c_0\in \Q -\set{0}$, such that \begin{align}\label{eq:ci0}
	f_{0} (ww'\mu_s) = s\cdot  c_0, ~\forall s \gg 0.
	\end{align}
	
	We define
	$$A : =\abs{f_0 ( w \rho ^{\vee} - \rho^{\vee})} \in \mathbb R_{\geq 0} , \qquad B : =  \max _{\beta \in \leftidx _F \Phi^{\vee, +}} \abs{f_0 (\beta)} + 1 \in \mathbb R_{>0} .$$ Then for $s\gg 0$ we have
	$$\min_{\underline m  \in \mathbb P (\psi_s)} \abs{\underline m}  \geq B^{-1} \cdot \abs{f_0 (\psi_s)}  \geq B^{-1} (\abs{f_0 (ww'\mu_s)} - A) =  B^{-1}(s \abs {c_0} -A),$$ where the last equality follows from (\ref{eq:ci0}). Since $B^{-1}$ and $c_0$ are both non-zero, there is a constant $N_0 > 0$ such that
	\begin{align}\label{eq:bd for Ns}
	\min_{\underline m  \in \mathbb P (\psi_s)} \abs{\underline m} > N_0 \cdot s, ~\forall s \gg 0.
	\end{align}
	Combining (\ref{eq:bd for Ns}) and  Proposition \ref{prop:interpretation} (2), we have
	\begin{align}\label{eq:total bd}
	\abs{\mathcal P(\psi_s, q_1^{-1})}  \leq \# {\mathbb P (\psi_s)}\cdot  q_1 ^{-N_0 s} .\end{align}
	On the other hand, note that for any $\Q$-linear functional $f$ on $\mathrm{span}_{\Q} (\leftidx_F \Phi ^{\vee})$, the function $s \mapsto f(\psi_s)$ is an affine function in $s$. Hence there exists a constant $L>0$ such that
	$$\forall s \gg 0 , ~\forall \underline m \in \mathbb P(\psi_s) , ~ \abs{\underline m} \leq L s . $$  It follows that
	\begin{align}\label{eq:bd for number of partitions}
	\# \mathbb P(\psi_s) = \sum _{l=1} ^{Ls} \# \mathbb P(\psi_s)_l \leq \sum _{l=1} ^{Ls} l ^{\# (\leftidx_F \Phi ^{\vee,+})} \leq (Ls) ^M
	\end{align} for some constant $M >0$. The desired estimate (\ref{desired est unram}) then follows from (\ref{eq:total bd}) and (\ref{eq:bd for number of partitions}).
\end{proof}
\begin{thm}\label{thm:unram}
	Keep the setting of  \S\ref{subsec:comb}.
	Assume in addition that $b$ is unramified. Then $\mathscr N (\mu , b) = \mathscr M (\mu,b)$. Moreover, for any $Z \in \Sigma ^{\topp} (X_{\mu} (b))$, the group $\Stab_Z(J_b(F))$ is a hyperspecial subgroup of $J_b(F) = G(F)$.
\end{thm}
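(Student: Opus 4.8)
The plan is to deduce the theorem from Theorem~\ref{main-thm} together with Proposition~\ref{prop:est for unram}. Since $b$ is unramified and basic, one has $J_b\cong G$ over $F$, so $\mathrm{def}_G(b)=\mathrm{rk}_F G-\mathrm{rk}_F J_b=0$, the Kottwitz sign is $e(J_b)=1$, and for our chosen Haar measure $G(\Ok_F)$ is a hyperspecial maximal parahoric of $J_b(F)=G(F)$. Substituting $\mathrm{def}_G(b)=0$ into the limit formula~(\ref{eq:limit formula}) gives
\[ S_{\mu,b}(q_1)=\lim_{s\to\infty}\sum_{\lambda\in X^*(\hatS)^+}\dim V_\mu(\lambda)_{\mathrm{rel}}\;\mathfrak M^0_{\lambda^{(s)}-s\nu_b}(q_1^{-1}). \]
By Proposition~\ref{prop:est for unram}, in this finite sum every summand with $\lambda\neq\lambda_b$ is $O(q_1^{-as})$ for some $a>0$ and hence tends to $0$, while the summand $\lambda=\lambda_b$ equals $\dim V_\mu(\lambda_b)_{\mathrm{rel}}=\mathscr M(\mu,b)$ for every $s$ (here $\lambda_b\in X^*(\hatS)^+$ since $\nu_b$ is central). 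Thus $S_{\mu,b}(q_1)=\mathscr M(\mu,b)$.

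Next I would exploit independence of the local field. Both $\mathscr M(\mu,b)$ and the rational function $S_{\mu,b}(t)\in\Q(t)$ depend only on the based root datum with its Frobenius action, not on $F$, in the sense of Corollary~\ref{cor:vol-ind-p}; moreover the unramified basic class $b$ transfers to an unramified basic class for each choice of $F$. Running the conclusion of the previous paragraph over infinitely many $p$-adic fields $F$, so that $q_1$ runs over infinitely many values, shows that the rational function $S_{\mu,b}(t)-\mathscr M(\mu,b)$ has infinitely many zeros and hence vanishes identically: $S_{\mu,b}(t)\equiv\mathscr M(\mu,b)$. Evaluating at $t=0$ and invoking~(\ref{eq:constant term}) then yields $\mathscr N(\mu,b)=S_{\mu,b}(0)=\mathscr M(\mu,b)$, which is the first assertion.

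For the statement on stabilizers, I would feed $e(J_b)=1$ and $S_{\mu,b}(q_1)=\mathscr M(\mu,b)=\mathscr N(\mu,b)$ back into~(\ref{eq:S and vol}), obtaining
\[ \sum_{Z\in J_b(F)\backslash\Sigma^{\topp}(X_\mu(b))}\vol(Z)^{-1}=\mathscr N(\mu,b), \]
a sum with exactly $\mathscr N(\mu,b)$ terms. By Theorem~\ref{thm:parahoric} each $\Stab_Z(J_b(F))$ is a parahoric of $J_b(F)=G(F)$, and the partial flag variety point counts recorded in the proof of Corollary~\ref{cor:vol-ind-p} show that any parahoric of $G(F)$ has volume $\le\vol(G(\Ok_F))=1$, with equality precisely for the hyperspecial ones. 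Hence every $\vol(Z)^{-1}\ge1$, and since they sum to the number of terms, each equals $1$; therefore $\vol(Z)=1$ and $\Stab_Z(J_b(F))$ is hyperspecial. The full Conjecture~\ref{conj:chenzhu} for unramified basic $b$ then follows from Theorem~\ref{thm:nie orginal}\,(2).

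The substantive input here is Proposition~\ref{prop:est for unram}, which is already established; the rest is a clean bootstrap together with the elementary bound $\vol(Z)\le 1$. The only point that needs care is the simultaneous use of the independence-of-$F$ assertions in Theorem~\ref{main-thm} and Corollary~\ref{cor:vol-ind-p}, making sure that the single rational function $S_{\mu,b}$ genuinely takes the value $\mathscr M(\mu,b)$ at infinitely many arguments so that the constancy conclusion applies; I do not anticipate a real obstacle there.
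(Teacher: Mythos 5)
Your proof is correct and follows essentially the same route as the paper: substitute $\mathrm{def}_G(b)=0$ into the limit formula of Theorem~\ref{main-thm}, apply Proposition~\ref{prop:est for unram} to isolate the $\lambda_b$ term, vary $F$ to conclude that $S_{\mu,b}(t)$ is the constant $\mathscr M(\mu,b)$, evaluate at $t=0$, and then feed the constancy back into~(\ref{eq:S and vol}) together with the bound $\vol(Z)\le 1$ to force each stabilizer to be hyperspecial. The only cosmetic difference is that you phrase the constancy argument as "a rational function with infinitely many zeros vanishes"; the paper uses the same idea tacitly.
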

\begin{proof}
	Let $S_{\mu,b} (t) \in \Z(t)$ be as in Theorem \ref{main-thm}. By (\ref{eq:limit formula}) and Proposition \ref{prop:est for unram}, we have
	$$S_{\mu, b} (q_1) = \dim V_{\mu} (\lambda) _{\mathrm{rel}} ,$$ where $\lambda$ is the unique element of $X^*(\hatS) ^+$ such that $\lambda ^{(s_0)} = s_0 \nu_b$. By varying the local field $F$, we see that $S_{\mu,b} (t)$ is the constant $\dim V_{\mu} (\lambda_b) _{\mathrm{rel}} = \mathscr M(\mu,b)$. In particular
	$$\mathscr N(\mu,b) = S_{\mu,b} (0) = \mathscr M(\mu,b).$$

	For the second part, by our normalization we have $\vol (Z) \leq 1 $ for each element $Z \in \Sigma ^{\topp} (X_{\mu} (b))$, where equality holds if and only if $\Stab_Z (J_b(F))$ is hyperspecial. On the other hand, combining (\ref{eq:constant term}) and (\ref{eq:S and vol}) and the fact that $S_{\mu,b} (t)$ is constant, we have
	$$ \mathscr N (\mu,b) = \sum _{Z \in J_b(F) \backslash \Sigma ^{\topp} (X_{\mu}(b))} \vol (Z) ^{-1}.$$ It follows that each $\vol ( Z) $ must be $1$, and that $\Stab_Z(J_b(F))$ is hyperspecial.
\end{proof}
\ignore{
	\begin{rem}Arguably the hardest part of the proof of the corresponding result in \cite{xiaozhu} is to show that the stabilizer of any irreducible component in $\Sigma^{\topp}(X_{\mu} (b))$ is hyperspecial.
	\end{rem}
	\begin{rem}
		In Theorem \ref{thm:unram} we assume that $b$ is basic. One can show as in Proposition \ref{prop:reduction} that the general unramified case of Conjecture \ref{conj:chenzhu} reduces to the basic unramified case.
	\end{rem}
}
\subsection{The general case}
We now prove the general case of Conjecture \ref{conj:chenzhu}. By Proposition \ref{prop:reduction}, there is no loss of generality in assuming that $G$ is adjoint and $F$-simple, and that $[b] \in B(G)$ is basic. In particular $\bar \nu_b =0$.
In the following, we fix such $G$ and $[b]$, and freely use the notation from \S \ref{subsec:comb}. Note that we only fix $[b]$ and do \emph{not} fix a prescribed $\mu \in X_*(T)^+$ such that $[b] \in B(G,\mu)$.

As in Definition \ref{defn:lambda_b}, we have $\lambda_b \in X^*(\hatS)$. Denote by $\lambda_{b}^+$ the unique element in the $W^1$-orbit of $\lambda_b$ that lies in $X^*(\hatS)^+$.
We define (cf.~the discussion above Lemma \ref{lem:defn of lambda_b})
$$\Lambda(b) : = \set{\lambda \in X^*(\hatS)^+\mid \lambda \neq \lambda _b^+, ~ \lambda - \lambda _b \in \widehat Q_{\hat \theta}}.$$
The following lemma is the motivation for introducing $\Lambda(b)$.

\begin{lem}\label{lem:Lambda(b)}
	Let $\lambda \in X^*(\hatS) ^+$ and let $\mu \in X_*(T)^+$. Assume that $[b] \in B(G,\mu)$. If $ V_\mu(\lambda)_{\mathrm{rel}} \neq 0$, then $\lambda \in \Lambda(b) \sqcup \set{\lambda_b^+}$.
\end{lem}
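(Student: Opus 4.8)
The plan is to unwind the definitions of $\lambda_b$ and of the relevant weight spaces, reducing everything to a statement about which weights of $V_\mu$ can survive in a given $\hatS$-weight space. First I would record two facts. Since $[b]$ is basic, $\bar\nu_b = 0$, and by Lemma~\ref{lem:defn of lambda_b} the element $\tilde\lambda_b \in X^*(\widehat T)_{\hat\theta}$ is characterized by: (i) its image in $\pi_1(G)_\sigma$ is $\kappa(b) = \mu^\natural$, and (ii) $(\tilde\lambda_b)|_{\hatS} - \bar\nu_b = \lambda_b$ is, in $X^*(\hatS)\otimes\Q$, a linear combination of relative simple roots with coefficients in $\Q\cap(-1,0]$. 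Since $\bar\nu_b = 0$, condition (ii) says $\lambda_b$ itself lies in the (negative of the) relative root cone; in particular $\lambda_b \in \widehat Q_{\hat\theta}\otimes\Q$, and the condition that $\lambda_b^+$ be its dominant $W^1$-conjugate is well-posed.

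Next I would analyze the hypothesis $V_\mu(\lambda)_{\mathrm{rel}} \neq 0$. By Definition~\ref{defn:relative weight space}, this means there exists $\lambda' \in X^*(\widehat T)$ with $\lambda'|_{\hatS} = \lambda$ and $V_\mu(\lambda') \neq 0$, i.e.\ $\lambda'$ is a weight of $V_\mu$. Every weight of $V_\mu$ satisfies $\lambda' \leq \mu$ in the usual order, hence $\mu - \lambda' \in \widehat Q$ (the absolute root lattice), and also $\lambda' \equiv \mu \pmod{\widehat Q}$, so the image of $\lambda'$ in $\pi_1(\widehat G) = X^*(\widehat T)/\widehat Q$ equals that of $\mu$. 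Passing to $\hat\theta$-coinvariants and then to the free quotient, I get that $\lambda - \lambda_b$ maps to $0$ in $(X^*(\widehat T)/\widehat Q)_{\hat\theta, \mathrm{free}} \otimes \Q$ — equivalently $\lambda - \lambda_b \in \widehat Q_{\hat\theta}$ after checking the integral (not just rational) statement, using that $\pi_1(G)_\sigma$ already records $\mu^\natural = \kappa(b)$ and matching the $\pi_1$-components of $\lambda$ (via $\lambda'$) and of $\lambda_b$ (via condition (i) of Lemma~\ref{lem:defn of lambda_b}). This is exactly the defining condition of $\Lambda(b) \sqcup \{\lambda_b^+\}$ together with $\lambda \in X^*(\hatS)^+$, which is given. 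So $\lambda$ lies in this set.

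To make the integrality step clean I would invoke Lemma~\ref{lem:elementary about coinv}: the relevant diagram relates $\widehat Q_{\hat\theta} \hookrightarrow X^*(\widehat T)_{\hat\theta} \twoheadrightarrow \pi_1(G)_{\hat\theta} = \pi_1(G)_\sigma$, and since $\lambda'$ and any lift of $\lambda_b$ differ by an element of $\widehat Q$ in $X^*(\widehat T)$ (because $\lambda' - \mu \in \widehat Q$ and the $\pi_1$-image of $\tilde\lambda_b$ is $\mu^\natural$), their images in $X^*(\widehat T)_{\hat\theta}$ differ by an element of the image of $\widehat Q_{\hat\theta}$, hence after passing to the free quotient $X^*(\hatS)$ we get $\lambda - \lambda_b \in \widehat Q_{\hat\theta}$ as claimed (using $\widehat Q_{\hat\theta} \hookrightarrow X^*(\hatS)$, the injectivity noted just before Lemma~\ref{lem:defn of lambda_b}). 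I expect the main obstacle to be precisely this bookkeeping between $X^*(\widehat T)_{\hat\theta}$, its free quotient $X^*(\hatS)$, and the root lattice $\widehat Q_{\hat\theta}$ — making sure no torsion is lost and that the $\pi_1$-component condition in Lemma~\ref{lem:defn of lambda_b}(1) is used correctly; everything else is a direct translation of definitions.
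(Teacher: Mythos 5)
Your proposal is correct and follows the same route as the paper's proof: lift $\lambda$ to a weight $\lambda'$ of $V_\mu$, use $\lambda'-\mu\in\widehat Q$ to match $\pi_1(G)$-images and then $\pi_1(G)_\sigma$-images with $\kappa(b)$, apply the exact sequence $\widehat Q_{\hat\theta}\to X^*(\widehat T)_{\hat\theta}\to\pi_1(G)_\sigma$, and use the injectivity $\widehat Q_{\hat\theta}\hookrightarrow X^*(\hatS)$ to land in the defining condition of $\Lambda(b)\sqcup\{\lambda_b^+\}$. The only blemish is the parenthetical claim that $\lambda'$ and \emph{any} lift of $\tilde\lambda_b$ to $X^*(\widehat T)$ differ by $\widehat Q$ --- this overstates what the hypotheses give (agreement only holds in $\pi_1(G)_\sigma$, not automatically in $\pi_1(G)$, so at best \emph{some} lift works), but your surrounding exact-sequence argument at the level of $X^*(\widehat T)_{\hat\theta}$ is exactly what is needed and renders this aside harmless.
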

\begin{proof}
	Assume that $ V_\mu(\lambda)_{\mathrm{rel}} \neq 0$. Then there exists $\lambda' \in X^*(\widehat T)$ lifting $\lambda$, such that $V_{\mu}(\lambda') \neq 0$.
	It follows that $\mu$ and $\lambda'$ have the same image in $\pi_1(G)$, and hence the same image in $\pi_1(G)_\sigma$.
	Let $\lambda''$ be the image of $\lambda'$ in $X^*(\widehat T)_{\hat \theta}$. Then the image of $\lambda''$ in $\pi_1(G)_{\sigma}$ is equal to that of $\mu$, namely $\kappa(b)$.
	Hence $\lambda''$ and $\tilde \lambda_b$ have the same image in $\pi_1(G)_{\sigma}$.
	By the exact sequence
	$$
	\widehat Q_{\hat \theta} \to X^*(\widehat T)_{\hat \theta} \to \pi_1(G)_{\sigma} $$
	we know that $\tilde \lambda_b - \lambda''$ lies in $\widehat Q_{\hat \theta}$. Now recall from the discussion above Lemma \ref{lem:defn of lambda_b} that $\widehat Q_{\hat \theta}$ injects into both $X^*(\widehat T)_{\hat \theta}$ and $X^*(\widehat {\mathcal S})$.
	Hence $\lambda_b -\lambda$ belongs to $\widehat Q_{\hat \theta}$ viewed as a subgroup of $X^*(\widehat{\mathcal S})$. This shows that $\lambda \in \Lambda(b) \sqcup \set{\lambda_b^+}$.
\end{proof}
Since $G$ is $F$-simple, all the simple factors of $G_{\overline F}$ have the same Dynkin type. We shall call this type the type of $G$.
The following proposition is the key result towards the proof of Conjecture \ref{conj:numerical Chen-Zhu}.
\begin{prop}[Key estimate]\label{key-bound}
	Assume $G$ is adjoint, $F$-simple, and not of type $A$. Let $[b] \in B(G)$ be a basic class. Assume $[b]$ is not unramified.
	\begin{enumerate}
		\item \label{item:1} Assume $G$ is not a Weil restriction of the split adjoint $E_6$. For all $\lambda \in \Lambda (b)$, there exists $a>0$, such that
		\begin{align}\label{eq:key est 1}
		\mathfrak{M}_{\lambda^{(s)}}^0(q_1^{-1})
		=O(q_1^{-s(\frac{1}{2}\mathrm{def}_G(b)+a)}).
		\end{align}
		Moreover, there exists $\mu_1\in X^*(\widehat T) ^+$ that is minuscule, such that $[b] \in B(G,\mu_1)$ and $\mathscr M(\mu_1,b) : = \dim V_{\mu_1} (\lambda _b) _{\mathrm{rel}} =1$.   \item \label{item:2} Assume $G$ is a Weil restriction of the split adjoint $E_6$ (necessarily along an unramified extension of $F$). Then there is an element $\lambda _{\mathrm{bad}} \in \Lambda(b)$ with the following properties:
		\begin{itemize}
			\item For all $\lambda \in \Lambda (b) -\set{\lambda _{\mathrm{bad}}}$, there exists $a>0$, such that
			\begin{align}\label{eq:key est 2}
			\mathfrak{M}_{\lambda^{(s)}}^0(q_1^{-1})
			=O(q_1^{-s(\frac{1}{2}\mathrm{def}_G(b)+a)}).
			\end{align}
			\item There exist $\mu_1 , \mu_2 \in X^*(\widehat T) ^+$, such that $\mu_1$ is minuscule and $\mu_2$ is a sum of dominant minuscule elements, such that $b \in B(G,\mu_1) \cap B(G,\mu_2)$, and such that
			$$\mathscr M(\mu_1,b): = \dim V_{\mu_1} (\lambda _b )_{\mathrm{rel}} = 1 ,$$ and $$ V_{\mu_1} (\lambda _{\mathrm{bad}}) _{\mathrm{rel}} = 0 , \quad V_{\mu_2} (\lambda _{\mathrm{bad}}) _{\mathrm{rel}} \neq 0.$$
			
		\end{itemize}
	\end{enumerate}
	
\end{prop}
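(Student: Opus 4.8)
The plan is to prove Proposition \ref{key-bound} by a case-by-case analysis of the root systems of type $B, C, D, E_6, E_7, E_8, F_4, G_2$ (and their Weil restrictions), reducing first to the absolutely simple case via Propositions \ref{prop:inductive} and \ref{prop:same lambda_b}. The inductive relation $\mathfrak M^0_{\lambda, G}(\mathbf q^{-1}) = \mathfrak M^0_{\lambda', G'}(\mathbf q^{-d_0})$ shows that an estimate of the form $O(q_1^{-s(\frac12\mathrm{def}_G(b)+a)})$ for the absolutely simple $G'$ transfers to $G$, upon checking that $\mathrm{def}_G(b) = d_0 \cdot \mathrm{def}_{G'}(b')$; so I would first record this bookkeeping and then assume $G$ is absolutely simple with a (possibly trivial) diagram automorphism $\theta$. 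The ramified, basic, non-unramified hypothesis means $\theta \neq 1$ for some factors and $b$ corresponds to a nontrivial length-zero element $\tau \in \Omega$; this pins down $\lambda_b$, $\mathrm{def}_G(b) = \mathrm{rk}_F G - \mathrm{rk}_F J_b$, and the finite set $\Lambda(b)$ explicitly in each case.

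The core estimate, for each $\lambda \in \Lambda(b)$, is to bound $\mathfrak M^0_{\lambda^{(s)}}(q_1^{-1})$, which by Definition \ref{defn:frakM} is an alternating sum over $w \in W^1$, $w' \in W^1/W^1_\mu$ of terms $\mathcal P(w\bullet(w'\lambda^{(s)}), q_1^{-1})$ (suitably interpreted — here $\mu$ is replaced by $\lambda^{(s)}$ and the weight is $0$). Following the strategy already used in the proof of Proposition \ref{prop:est for unram}, for each such $\psi_s := w\bullet(w'\lambda^{(s)})$ I would produce a nonzero $\Q$-linear functional $f_0$ on $\mathrm{span}_\Q({}_F\Phi^\vee)$ with $f_0(\psi_s)$ growing linearly in $s$, giving via Proposition \ref{prop:interpretation}(2) and the polynomial-bound argument (equations \eqref{eq:total bd}, \eqref{eq:bd for number of partitions}) an estimate $\mathcal P(\psi_s, q_1^{-1}) = O(q_1^{-N_0 s})$ for an explicit $N_0 = N_0(\lambda)$. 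The new content compared to the unramified case is that the bound must beat $q_1^{-s\cdot\frac12\mathrm{def}_G(b)}$ with room to spare, i.e. I need $N_0(\lambda) > \frac12\mathrm{def}_G(b)$ for every $\lambda \in \Lambda(b)$ (resp. every $\lambda \neq \lambda_{\mathrm{bad}}$ in the $E_6$ case). Obtaining the sharp linear growth rate of $\min_{\underline m \in \mathbb P(\psi_s)} |\underline m|$ requires understanding, for the minimal Kostant partitions, the ``weighted length'' $|\underline m| = \sum m(\beta)\mathtt b(\beta)$, so the choice of $f_0$ must be tuned to the weights $\mathtt b(\beta)$ and to the geometry of $\Lambda(b)$ inside the relative coweight lattice.

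The main obstacle I anticipate is precisely this quantitative comparison $N_0(\lambda) > \frac12\mathrm{def}_G(b)$, which is false for the single weight $\lambda_{\mathrm{bad}}$ in the split adjoint $E_6$ case and must therefore be handled by a genuinely different argument there — exhibiting the minuscule $\mu_1$ (the $27$-dimensional representation, whose relevant weight space is a line) and a cocharacter $\mu_2$ that is a sum of minuscules and detects $\lambda_{\mathrm{bad}}$, so that the already-known cases of the conjecture (Theorem \ref{thm:nie orginal}(3)) can be leveraged. Even in the generic cases, verifying the inequality amounts to a finite but delicate optimization over $W^1$ and over $\Lambda(b)$, and I expect type $D_n$ (especially $n$ odd, where the relevant combinatorics of minimal partitions is subtler) to need the most care. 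The existence of the minuscule $\mu_1$ with $\mathscr M(\mu_1, b) = \dim V_{\mu_1}(\lambda_b)_{\mathrm{rel}} = 1$ is a separate, purely representation-theoretic point: since $[b]$ is basic, $\kappa(b)$ is a minuscule coweight class modulo the coroot lattice, and one takes $\mu_1$ minuscule in that class; then $\lambda_b$ is an extreme weight of $V_{\mu_1}$ restricted to $\hatS$, so the weight space is one-dimensional — I would verify this uniformly using that minuscule weights form a single Weyl orbit, with $E_6$ again requiring an extra check that $\lambda_{\mathrm{bad}}$ does not occur in $V_{\mu_1}$ but does occur in a suitable $V_{\mu_2}$.
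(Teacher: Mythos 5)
Your architecture is correct and matches the paper's: reduce to the absolutely simple case via Propositions \ref{prop:inductive} and \ref{prop:same lambda_b}, run a case-by-case analysis, estimate $\mathcal P(\psi_s, q_1^{-1})$ by bounding the minimal weighted length of a Kostant partition, and treat split $E_6$ separately with a minuscule $\mu_1$ (the $27$-dimensional representation) and a sum-of-minuscules $\mu_2$ detecting $\lambda_{\mathrm{bad}}$. One bookkeeping slip: the correct defect relation is $\mathrm{def}_G(b) = \mathrm{def}_{G'}(b')$, \emph{not} $d_0\cdot\mathrm{def}_{G'}(b')$; the factor $d_0$ is absorbed into the power of $\mathbf q$ via $\mathfrak M^0_{\lambda, G}(\mathbf q^{-1}) = \mathfrak M^0_{\lambda', G'}(\mathbf q^{-d_0})$ and $\lambda^{(d_0 s')} = \lambda^{((s'))}$, and with your stated equality the exponents would not match.

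The genuine gap is in the $D_n$, $n$ odd, split case, which you correctly flag as needing care but for which your proposed method provably fails. For the basic class $b_1$ there is a weight $\lambda_{1,\mathrm{bad}} = (\tfrac32,\tfrac12,\dots,\tfrac12,-\tfrac12) \in \Lambda(b_1)$ with $\tfrac12\mathrm{def}_G(b_1) = \tfrac{n+3}{4}$. Any bound on $\mathcal P(\psi_s, q_1^{-1})$ obtained from a lower bound on $\min_{\underline m\in\mathbb P(\psi_s)}\abs{\underline m}$ (whether via a norm or a tuned linear functional $f_0$) is controlled by linear programming duality: the LP value $\max\{\langle c, \lambda_{1,\mathrm{bad}}\rangle : \langle c,\beta\rangle\leq 1 \text{ for all } \beta\in\Phi^{\vee,+}\}$ is $\tfrac{n+2}{4}$, which is strictly less than $\tfrac{n+3}{4}$. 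So the minimal Kostant partition size for $\psi_s$ grows like $s\cdot\tfrac{n+2}{4}$, and no choice of $f_0$ (or semi-norm, or weighting) can push the exponent past $\tfrac12\mathrm{def}_G(b_1)$: the low-order terms $\mathcal P(\psi_s)_L$ with $L \approx s\tfrac{n+2}{4}$ are genuinely nonzero. The paper handles this by proving a \emph{cancellation} across the Weyl-group alternating sum in $\mathfrak M^0$ — Proposition \ref{prop:cancellation for Dn odd}, established via the admissible $(U,V)$-tree combinatorics of \S\ref{subsec:combinatorics for Dn} — showing that $\sum_{S\subset\Phi^+_{D_n}}(-1)^{\abs S}\#\mathbb P(\lambda_t - \sum_{\beta\in S}\beta)_L$ vanishes for all $L\leq (n+3.5)t$ even though the individual terms do not. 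This is a qualitatively different idea from bounding individual $\mathcal P$'s, and it is the missing ingredient in your plan. (Type $E_7$ and the non-$\lambda_{\mathrm{bad}}$ weights in $E_6$ do succumb to tuned semi-norms as you suggest, so the rest of your outline is sound.)
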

The proof of Proposition \ref{key-bound} will occupy \S \ref{sec:proof of key est}, \S \ref{sec:part II}, \S \ref{sec:part III} below. We now admit this proposition.

\begin{thm}\label{thm:CZ}
	Conjecture \ref{conj:numerical Chen-Zhu} holds for $G$ adjoint, $F$-simple, not of type $A$, and for $[b] \in B(G)$ basic.
\end{thm}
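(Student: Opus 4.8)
The plan is to feed the Key estimate (Proposition \ref{key-bound}) into the limit formula of Theorem \ref{main-thm}, isolate a single dominant summand on the right-hand side, and then bootstrap against the cases of Conjecture \ref{conj:numerical Chen-Zhu} already known for minuscule $\mu$ (Theorem \ref{thm:nie orginal}(3)), using crucially that the rational function $S_{\mu,b}(t)$ produced by Theorem \ref{main-thm} is independent of $F$. If $[b]$ is unramified the assertion is Theorem \ref{thm:unram}, so assume $[b]$ is basic but not unramified. Fix $\mu\in X_*(T)^+$ with $[b]\in B(G,\mu)$. Since $G$ is adjoint and $[b]$ is basic we have $\bar\nu_b=0$, hence $\nu_b=0$, so formula (\ref{eq:limit formula}) reads
\[
S_{\mu,b}(q_1)=\lim_{s\to\infty}q_s^{\frac{1}{2}\mathrm{def}_G(b)}\sum_{\lambda\in X^*(\hatS)^+}\dim V_\mu(\lambda)_{\mathrm{rel}}\;\mathfrak M^0_{\lambda^{(s)}}(q_1^{-1}).
\]
By Lemma \ref{lem:Lambda(b)} the only indices $\lambda$ occurring here lie in $\Lambda(b)\sqcup\set{\lambda_b^+}$. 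Relative weight multiplicities are $W^1$-invariant (the $\widehat T$-character of $V_\mu$ is $W_0$-invariant and $X^*(\widehat T)\to X^*(\hatS)$ is $W^1$-equivariant), and $\lambda_b^+$ is a $W^1$-conjugate of $\lambda_b$, so the $\lambda_b^+$-summand carries the coefficient $\dim V_\mu(\lambda_b^+)_{\mathrm{rel}}=\mathscr M(\mu,b)$; moreover $\mathscr M(\mu,b)\ge\mathscr N(\mu,b)\ge1$ by Nie's surjection (Theorem \ref{thm:nie orginal}(2)).

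Assume first that $G$ is not a Weil restriction of the split adjoint $E_6$. By Proposition \ref{key-bound}(\ref{item:1}), every $\lambda\in\Lambda(b)$ satisfies $\mathfrak M^0_{\lambda^{(s)}}(q_1^{-1})=O(q_1^{-s(\frac{1}{2}\mathrm{def}_G(b)+a)})=o(q_s^{-\frac{1}{2}\mathrm{def}_G(b)})$ for some $a>0$, so (\ref{eq:main-thm}) degenerates to
\[
S_{\mu,b}(q_1)\,q_s^{-\frac{1}{2}\mathrm{def}_G(b)}=\mathscr M(\mu,b)\,\mathfrak M^0_{(\lambda_b^+)^{(s)}}(q_1^{-1})+o(q_s^{-\frac{1}{2}\mathrm{def}_G(b)}),\qquad s\gg0.
\]
Applying this with $\mu=\mu_1$, the minuscule cocharacter of Proposition \ref{key-bound}(\ref{item:1}) with $\mathscr M(\mu_1,b)=1$, shows that $\mathscr L_b:=\lim_{s\to\infty}q_s^{\frac{1}{2}\mathrm{def}_G(b)}\mathfrak M^0_{(\lambda_b^+)^{(s)}}(q_1^{-1})$ exists and equals $S_{\mu_1,b}(q_1)$. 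Substituting back gives $S_{\mu,b}(q_1)=\mathscr M(\mu,b)\,S_{\mu_1,b}(q_1)$ for every prime power $q_1$; since both sides are values at $q_1$ of rational functions that, together with the integer $\mathscr M(\mu,b)$, are independent of $F$, we get $S_{\mu,b}(t)=\mathscr M(\mu,b)\,S_{\mu_1,b}(t)$ as rational functions. As $\mu_1$ is minuscule, Conjecture \ref{conj:numerical Chen-Zhu} holds for $(\mu_1,b)$ by Theorem \ref{thm:nie orginal}(3), so $S_{\mu_1,b}(0)=\mathscr N(\mu_1,b)=\mathscr M(\mu_1,b)=1$; evaluating at $t=0$ via (\ref{eq:constant term}) yields $\mathscr N(\mu,b)=S_{\mu,b}(0)=\mathscr M(\mu,b)$.

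In the remaining case $G$ is a Weil restriction of the split adjoint $E_6$; let $\lambda_{\mathrm{bad}}\in\Lambda(b)$ and $\mu_1,\mu_2$ be as in Proposition \ref{key-bound}(\ref{item:2}). Now the same degeneration retains exactly one extra summand,
\[
S_{\mu,b}(q_1)\,q_s^{-\frac{1}{2}\mathrm{def}_G(b)}=\mathscr M(\mu,b)\,\mathfrak M^0_{(\lambda_b^+)^{(s)}}(q_1^{-1})+\dim V_\mu(\lambda_{\mathrm{bad}})_{\mathrm{rel}}\,\mathfrak M^0_{\lambda_{\mathrm{bad}}^{(s)}}(q_1^{-1})+o(q_s^{-\frac{1}{2}\mathrm{def}_G(b)}).
\]
Taking $\mu=\mu_1$ (where $\mathscr M(\mu_1,b)=1$ and $V_{\mu_1}(\lambda_{\mathrm{bad}})_{\mathrm{rel}}=0$) again yields $\mathscr L_b:=\lim_s q_s^{\frac{1}{2}\mathrm{def}_G(b)}\mathfrak M^0_{(\lambda_b^+)^{(s)}}(q_1^{-1})=S_{\mu_1,b}(q_1)$; taking $\mu=\mu_2$ (where $V_{\mu_2}(\lambda_{\mathrm{bad}})_{\mathrm{rel}}\ne0$) shows that $\mathscr L_b':=\lim_s q_s^{\frac{1}{2}\mathrm{def}_G(b)}\mathfrak M^0_{\lambda_{\mathrm{bad}}^{(s)}}(q_1^{-1})$ exists with $\dim V_{\mu_2}(\lambda_{\mathrm{bad}})_{\mathrm{rel}}\,\mathscr L_b'=S_{\mu_2,b}(q_1)-\mathscr M(\mu_2,b)\,S_{\mu_1,b}(q_1)$. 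All quantities being values at $q_1$ of $F$-independent rational functions, we obtain, for every $\mu$,
\[
S_{\mu,b}(t)=\mathscr M(\mu,b)\,S_{\mu_1,b}(t)+\dim V_\mu(\lambda_{\mathrm{bad}})_{\mathrm{rel}}\,\mathscr L_b'(t),\qquad \dim V_{\mu_2}(\lambda_{\mathrm{bad}})_{\mathrm{rel}}\,\mathscr L_b'(t)=S_{\mu_2,b}(t)-\mathscr M(\mu_2,b)\,S_{\mu_1,b}(t),
\]
as rational functions. Since $\mu_1$ is minuscule and $\mu_2$ is a sum of dominant minuscule elements, Conjecture \ref{conj:numerical Chen-Zhu} holds for $(\mu_1,b)$ and $(\mu_2,b)$ by Theorem \ref{thm:nie orginal}(3), whence $S_{\mu_1,b}(0)=1$ and $S_{\mu_2,b}(0)=\mathscr M(\mu_2,b)$; therefore $\mathscr L_b'(0)=0$, and evaluating the first displayed identity at $t=0$ gives $\mathscr N(\mu,b)=\mathscr M(\mu,b)$.

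The only non-formal ingredient above is Proposition \ref{key-bound}, and its proof --- a case-by-case analysis of the root systems, carried out in \S\ref{sec:proof of key est}, \S\ref{sec:part II} and \S\ref{sec:part III} --- is the genuine obstacle; everything else is bookkeeping of the asymptotics of Theorem \ref{main-thm} together with the bootstrap against the known minuscule cases and the rigidity coming from the $F$-independence of $S_{\mu,b}$.
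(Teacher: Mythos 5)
Your proposal is correct and follows essentially the same route as the paper: isolate the dominant summand in the limit formula of Theorem \ref{main-thm} via Lemma \ref{lem:Lambda(b)} and Proposition \ref{key-bound}, divide out by the minuscule normalizing case $\mu_1$ (and, for the split $E_6$ factor, solve a $2\times 2$ system with $\mu_1,\mu_2$), promote the resulting identity of values to an identity of $F$-independent rational functions, and evaluate at $t=0$. The only cosmetic divergence is that for the non-$E_6$ case you invoke Nie's full minuscule result (Theorem \ref{thm:nie orginal}(3)) to pin down $S_{\mu_1,b}(0)=1$, whereas the paper only needs the weaker inequality $\mathscr N(\mu_1,b)\le\mathscr M(\mu_1,b)$ from Hamacher--Viehmann (and explicitly tracks this in Remark \ref{rem:logical} to minimize the logical dependence); both are valid, and your inline observation that $\dim V_\mu(\lambda_b^+)_{\mathrm{rel}}=\dim V_\mu(\lambda_b)_{\mathrm{rel}}$ by $W^1$-equivariance of $X^*(\widehat T)\to X^*(\hatS)$ is a useful justification that the paper leaves implicit.
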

\begin{rem}\label{rem:logical}
	Here is the logical dependence of our proof of Theorem \ref{thm:CZ} on the previous work of other authors:
	\begin{itemize}
		\item If $[b]$ is unramified, then our proof is logically independent of the approaches in \cite{xiaozhu}, \cite{HV}, or \cite{nienew}.
		\item If $[b]$ is not unramified, and if we are in the situation of Proposition \ref{key-bound} (\ref{item:1}), then our proof depends on results from \cite{HV}.
		\item If $[b]$ is not unramified, and if we are in the situation of Proposition \ref{key-bound} (\ref{item:2}), then our proof depends on Nie's result Theorem \ref{thm:nie orginal} (3) applied to $G$ and $\mu_1,\mu_2$.
	\end{itemize}
\end{rem}
\begin{proof}[Proof of Theorem \ref{thm:CZ}] If $[b]$ is unramified, then the present theorem is just Theorem \ref{thm:unram} (which is also valid for type $A$). From now on we assume $[b]$ is not unramified. To simplify notation, we write $\mathrm{def}$ for $\mathrm{def}_G(b)$.
	
	Assume we are in the situation of Proposition \ref{key-bound} (\ref{item:1}). By Theorem \ref{main-thm}, Lemma \ref{lem:Lambda(b)}, and Proposition \ref{key-bound} (\ref{item:1}), for all $\mu \in X_*(T) ^+$ such that $[b] \in B(G,\mu)$, we have
	\begin{align}\label{eq:to cite in intro}
	S_{\mu,b} (q_1) =\mathscr M(\mu,b) \lim _{s\to \infty} q_s ^{\frac{1}{2}\mathrm{def}} \mathfrak M^0 _{\lambda _b^{+, (s)}} (q_1 ^{-1}).
	\end{align}
	In particular, we have
	$$S_{\mu,b} (q_1) = \frac{\mathscr M(\mu,b)}{ \mathscr M (\mu_1,b)} S_{\mu _1 , b} (q_1) =  \mathscr M(\mu,b) S_{\mu _1 , b} (q_1).$$ By varying the local field $F$ (whilst preserving the affine root system of $G$) we conclude that
	\begin{align}
	S_{\mu,b} (t) =\mathscr M(\mu,b) S_{\mu _1 , b} (t) \in \Q (t).
	\end{align} Evaluating at $0$, we have
	$$ \mathscr N(\mu,b) = S_{\mu,b} (0) = \mathscr M(\mu,b) S_{\mu _1 , b} (0) = \mathscr M(\mu,b) \mathscr N(\mu_1 , b). $$ On the other hand, as $\mu_1$ is minuscule, it is shown in \cite[Theorem 1.4]{HV} that $\mathscr N(\mu_1, b) \leq \mathscr M(\mu_1,b).$ Since $\mathscr N(\mu_1, b)$ is a positive natural number and $\mathscr M(\mu_1,b)= 1$, we have $\mathscr N (\mu_1, b) =1$. Thus
	$\mathscr N(\mu,b) = \mathscr M(\mu,b),$ as desired.
	
	Now assume we are in the situation of Proposition \ref{key-bound} (\ref{item:2}). For each $\mu \in X_*(T)^+$, we write $d_{\mu}$ for $\dim V_{\mu} (\lambda_{\mathrm{bad}})_{\mathrm{rel}}$. By Theorem \ref{main-thm}, Lemma \ref{lem:Lambda(b)}, and Proposition \ref{key-bound} (\ref{item:2}), for all $\mu \in X_*(T) ^+$ such that $[b] \in B(G,\mu)$, we have
	\begin{align}\label{eq:cancel1}
	S_{\mu,b} (q_1) = \lim _{s\to \infty} \bigg [  \mathscr M(\mu,b) q_s ^{\frac{1}{2}\mathrm{def}}   \mathfrak M^0 _{\lambda_b ^{+, (s)}} (q_1 ^{-1}) +  d_{\mu}  q_s ^{\frac{1}{2}\mathrm{def}}  \mathfrak M^0 _{\lambda_{\mathrm{bad}} ^{(s)}} (q_1 ^{-1}) \bigg ] .
	\end{align}
	In particular, taking $\mu$ to be $ \mu_1$ and $\mu_2$ respectively, we obtain
	\begin{align}\label{eq:cancel2}
	S_{\mu_1, b} (q_1)  & =  \lim _{s\to \infty} q_s ^{\frac{1}{2}\mathrm{def}}   \mathfrak M^0 _{\lambda_b ^{+, (s)}} (q_1 ^{-1}),  \\
	\label{eq:cancel3} S_{\mu_2, b} (q_1) & = \lim _{s\to \infty} \bigg [ \mathscr M(\mu_2,b)  q_s ^{\frac{1}{2}\mathrm{def}}   \mathfrak M^0 _{\lambda_b ^{+, (s)}} (q_1 ^{-1}) +  d_{\mu_2}   q_s ^{\frac{1}{2}\mathrm{def}_G(b)}  \mathfrak M^0 _{\lambda_{\mathrm{bad}} ^{(s)}} (q_1 ^{-1}) \bigg ] .
	\end{align}
	Comparing (\ref{eq:cancel1}) (\ref{eq:cancel2}) (\ref{eq:cancel3}) and using $d_{\mu_2} \neq 0$, we obtain
	\begin{align*}
	S_{\mu, b} (q_1) = \mathscr M(\mu,b) S_{\mu_1, b} (q_1)  + \frac{ d_{\mu} } {d_{\mu_2}}  (S_{\mu_2,b} (q_1) - \mathscr M(\mu_2,b) S_{\mu_1,b} (q_1)).
	\end{align*} By varying $F$, we obtain
	\begin{align}\label{eq:three rational functions}
	S_{\mu, b} (t) = \mathscr M(\mu,b) S_{\mu_1, b} (t)  + \frac{d_{\mu} } {d_{\mu_2}}  (S_{\mu_2,b} (t) - \mathscr M(\mu_2,b) S_{\mu_1,b} (t)),
	\end{align} as an equality in $\Q(t)$. Since $\mu_1, \mu_2$ are sums of dominant minuscule elements, Theorem \ref{thm:nie orginal} (3) implies that
	$$ \mathscr N(\mu_1, b) = \mathscr M(\mu_1,b)=1, ~\text{and } \mathscr N(\mu_2, b) = \mathscr M(\mu_2,b).$$ Consequently we have
	$ S_{\mu_1, b} (0) = 1$ and $S_{\mu_2,b} (0) = \mathscr M(\mu_2,b).$ Evaluating (\ref{eq:three rational functions}) at $t=0$, we obtain
	$$\mathscr N(\mu,b) = S_{\mu,b} (0) = \mathscr M(\mu,b) +  \frac{d_{\mu} } {d_{\mu_2}}  (\mathscr M(\mu_2,b)- \mathscr M(\mu_2,b)) = \mathscr M(\mu,b)$$ as desired.
\end{proof}

\begin{cor}\label{cor:CZ}
	Conjecture \ref{conj:numerical Chen-Zhu} is true in full generality.
\end{cor}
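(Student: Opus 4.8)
The plan is to deduce Conjecture \ref{conj:numerical Chen-Zhu} in general from Theorem \ref{thm:CZ} by assembling the standard reduction steps recalled in \S\ref{subsec:CZ Conj}. First I would reduce to the case where $G$ is adjoint, $F$-simple, and $[b]\in B(G)$ is basic. This is essentially the content of Proposition \ref{prop:reduction}: Theorem \ref{thm:nie orginal}(1) reduces the problem to $G$ adjoint and $b$ basic, and the argument of \cite[\S6]{HZ} further reduces the equality $\mathscr N(\mu,b)=\mathscr M(\mu,b)$ to the case where $G$ is $F$-simple. The latter works because, writing $G^{\ad}$ as a product over its $\Gal(\overline F/F)$-orbits of absolutely simple factors, both $X_\mu(b)$ together with the $J_b(F)$-action and the representation $V_\mu$ together with the $\hatS$-action decompose compatibly, so $\mathscr N$ and $\mathscr M$ are each multiplicative over $F$-simple factors, and $\lambda_b$ is computed factor by factor. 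Hence it suffices to verify Conjecture \ref{conj:numerical Chen-Zhu} for $G$ adjoint, $F$-simple, with $[b]$ basic, and for every $\mu\in X_*(T)^+$ with $[b]\in B(G,\mu)$.

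At this point I would split into two cases according to the Dynkin type of $G$. If $G$ is of type $A$, then every dominant cocharacter of $G$ is a $\Z_{\geq 0}$-combination of dominant minuscule cocharacters, since all fundamental coweights of $A_n$ are minuscule; therefore Conjecture \ref{conj:chenzhu}, and a fortiori its numerical consequence, holds by Theorem \ref{thm:nie orginal}(3). If $G$ is not of type $A$, then the desired equality $\mathscr N(\mu,b)=\mathscr M(\mu,b)$ is exactly Theorem \ref{thm:CZ} (whose proof already subsumes the unramified case, the $E_6$ case, and the rest). Combining the two cases with the reduction of the previous paragraph yields Conjecture \ref{conj:numerical Chen-Zhu} in full generality.

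Since all the genuine analytic and combinatorial input is already packaged into Theorem \ref{thm:CZ}—which rests on the Key estimate (Proposition \ref{key-bound}), the point-counting formula (Proposition \ref{prop:point counting}), and the Base Change Fundamental Lemma—there is essentially nothing left to do for this corollary beyond invoking these results in the right order. The only mildly delicate point, and the closest thing to an ``obstacle'' here, is purely bookkeeping: one must make sure that in passing from $(G,\mu,b)$ to the adjoint, $F$-simple situation the relevant data (the Kottwitz and Newton invariants, hence $\lambda_b$, hence $V_\mu(\lambda_b)_{\mathrm{rel}}$ and $J_b(F)\backslash\Sigma^{\topp}(X_\mu(b))$) transform compatibly, so that the reduction is valid for the numerical statement and not merely for the geometric one. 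This is standard and is precisely what underlies Proposition \ref{prop:reduction}, so no new argument is required.
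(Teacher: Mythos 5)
Your proposal is correct and follows exactly the same route as the paper: reduce via Proposition \ref{prop:reduction} to $G$ adjoint, $F$-simple, $[b]$ basic, then invoke Theorem \ref{thm:nie orginal}(3) for type $A$ and Theorem \ref{thm:CZ} otherwise. The extra remarks you add about multiplicativity over $F$-simple factors and the compatibility of the Kottwitz/Newton invariants are just an unpacking of what Proposition \ref{prop:reduction} already delivers, so nothing new is needed.
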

\begin{proof}
	By Proposition \ref{prop:reduction}, we reduce to the case where $G$ is adjoint and $F$-simple, and $[b]$ is basic. If $G$ is not of type $A$, the conjecture is proved in Theorem \ref{thm:CZ}. If $G$ is of type $A$, the conjecture follows from Theorem \ref{thm:nie orginal} (3).
\end{proof}
\textbf{
	The rest of the paper is devoted to the proof of Proposition \ref{key-bound}.}
\subsection{Reduction to the absolutely simple case}
\begin{lem}\label{lem:reduction to absolutely simple}
	Proposition \ref{key-bound} holds true if it holds for all $G$ that are absolutely simple and adjoint, not of type $A$.
\end{lem}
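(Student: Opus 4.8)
Here is the plan. We may assume throughout that $G$ is not absolutely simple, i.e.~that $d_0\geq 2$; the absolutely simple case is the hypothesis of the lemma. By \S\ref{subsec:inductive}, $G$ is then built, via the identification (\ref{eq:identification}), from the absolutely simple adjoint group $G'$ over $F$ attached to the connected component $\dyn_G^+$ and the automorphism $\theta^{d_0}$. Note $G'$ is not of type $A$, since $\dyn_G^+$ has the same Dynkin type as $G$; and $G'$ is the split adjoint $E_6$ if and only if $G$ is a Weil restriction of the split adjoint $E_6$ (namely, when $\dyn_G^+=E_6$ and $\theta^{d_0}$ acts trivially on it). Since $[b]$ is basic and $G$ adjoint, $[b]$ is determined by $\kappa(b)\in\pi_1(G)_\sigma$; let $[b']\in B(G')$ be the basic class with $\kappa(b')$ equal to $\kappa(b)$ under the identification $\pi_1(G)_\sigma\cong\pi_1(G')_\sigma$ of \S\ref{subsec:inductive}. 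Using that identification, together with the matching of affine root systems and Frobenii, one checks that $\mathrm{def}_{G'}(b')=\mathrm{def}_G(b)$; in particular $[b']$ is unramified if and only if $[b]$ is, so $[b']$ is not unramified, and case (\ref{item:1}) (resp.~(\ref{item:2})) of Proposition \ref{key-bound} holds for $(G,[b])$ precisely when it does for $(G',[b'])$.

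Next I would match the combinatorial data. By Proposition \ref{prop:same lambda_b}, together with the identifications $X^*(\hatS)\cong X^*(\hatS')$, $\widehat Q_{\hat\theta}\cong \widehat Q'_{\hat\theta'}$ and $W^1\cong (W^1)'$, the element $\lambda_b$ corresponds to $\lambda_{b'}$, hence $\lambda_b^+$ to $\lambda_{b'}^+$, and the correspondence $\lambda\mapsto\lambda'$ restricts to a bijection $\Lambda(b)\isom\Lambda(b')$; in case (\ref{item:2}) we define $\lambda_{\mathrm{bad}}\in\Lambda(b)$ to be the element corresponding to the $\lambda_{\mathrm{bad}}'\in\Lambda(b')$ furnished by Proposition \ref{key-bound} for $G'$. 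A direct computation with (\ref{eq:identification}) and the norm maps shows that, for corresponding $\lambda\in X^*(\hatS)$ and $\lambda'\in X^*(\hatS')$ and for $s\in s_0\mathbb N$ (so $d_0\mid s$), the element $\lambda^{(s)}\in Y^*$ corresponds under $Y^*\cong (Y^*)'$ to $(\lambda')^{(s/d_0)}\in (Y^*)'$; the factor $d_0$ enters because $\lambda^{(1)}$ is defined by dividing a norm by $|\langle\hat\theta\rangle|=d=d_0 d'$, while the norm for $G$ of the natural lift of $\lambda'$ is the diagonal of the norm for $G'$.

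I would then transfer the estimates. Fix $\lambda\in\Lambda(b)$ (resp.~$\lambda\in\Lambda(b)-\{\lambda_{\mathrm{bad}}\}$ in case (\ref{item:2})), with corresponding $\lambda'$. By Proposition \ref{prop:inductive} and the previous paragraph,
\[ \mathfrak M^0_{\lambda^{(s)},G}(q_1^{-1})=\mathfrak M^0_{(\lambda')^{(s/d_0)},G'}(q_1^{-d_0}). \]
Proposition \ref{key-bound}(\ref{item:1}) (resp.~(\ref{item:2})) applied to $G'$ gives an $a'>0$ such that $\mathfrak M^0_{(\lambda')^{(s')},G'}(Q^{-1})=O(Q^{-s'(\frac{1}{2}\mathrm{def}_{G'}(b')+a')})$; here one uses that this estimate, being (by the same mechanism as in the proof of Proposition \ref{prop:est for unram}) essentially a lower bound linear in $s'$ on the $\mathbf q^{-1}$-adic valuation of a polynomial whose degree and coefficients grow at most polynomially in $s'$, is valid for any fixed real base $Q>1$, in particular for $Q=q_1^{d_0}$ with $s'=s/d_0$. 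Substituting, and using $\mathrm{def}_{G'}(b')=\mathrm{def}_G(b)$,
\[ \mathfrak M^0_{\lambda^{(s)},G}(q_1^{-1})=O\big((q_1^{d_0})^{-(s/d_0)(\frac{1}{2}\mathrm{def}_G(b)+a')}\big)=O\big(q_1^{-s(\frac{1}{2}\mathrm{def}_G(b)+a')}\big),\qquad s\gg 0, \]
which is (\ref{eq:key est 1}) (resp.~(\ref{eq:key est 2})) with $a=a'$.

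Finally I would produce the cocharacters. Let $\mu_1'$ (and, in case (\ref{item:2}), $\mu_2'$) be as given by Proposition \ref{key-bound} for $G'$, and set $\mu_j:=(\mu_j',0,\dots,0)\in X^*(\widehat T)=\bigoplus_{i=0}^{d_0-1}X^*(\widehat T')$. Then $\mu_j$ is dominant, and it is minuscule (resp.~a sum of dominant minuscule elements) precisely when $\mu_j'$ is; the image of $\mu_j$ in $\pi_1(G)_\sigma$ corresponds to that of $\mu_j'$ in $\pi_1(G')_\sigma$, and the Galois average $\mu_j^\diamond$ corresponds, under $Y^*\otimes\mathbb Q\cong (Y^*)'\otimes\mathbb Q$, to $\frac{1}{d_0}(\mu_j')^\diamond$, so that $[b]\in B(G,\mu_j)$ follows from $[b']\in B(G',\mu_j')$ and $\bar\nu_b=0$. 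Moreover $V_{\mu_j}\cong V_{\mu_j'}\boxtimes\mathbf 1\boxtimes\cdots\boxtimes\mathbf 1$ as a representation of $\widehat G=\prod_i\widehat{G'}$, while $\hatS\subset\widehat T$ is the diagonal copy of $\hatS'$; hence $V_{\mu_j}(\nu)_{\mathrm{rel}}\cong V_{\mu_j'}(\nu')_{\mathrm{rel}}$ for corresponding weights $\nu,\nu'$. This yields $\mathscr M(\mu_1,b)=\dim V_{\mu_1'}(\lambda_{b'})_{\mathrm{rel}}=1$ and, in case (\ref{item:2}), $V_{\mu_1}(\lambda_{\mathrm{bad}})_{\mathrm{rel}}=0$ and $V_{\mu_2}(\lambda_{\mathrm{bad}})_{\mathrm{rel}}\neq 0$, establishing all the assertions of Proposition \ref{key-bound} for $(G,[b])$. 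The only step requiring genuine care is the bookkeeping of the identifications of \S\ref{subsec:inductive} — in particular tracking the factor $d_0$ relating $\lambda^{(s)}$ to $(\lambda')^{(s/d_0)}$, and checking that passing to the base $q_1^{d_0}$ only sharpens the exponential decay, so that a positive exponent $a$ survives.
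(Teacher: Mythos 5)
Your proposal is correct and follows essentially the same route as the paper: construct $(G',[b'])$ from the connected component $\dyn_G^+$, match the invariants ($\mathrm{def}_G(b)=\mathrm{def}_{G'}(b')$, $\lambda_b\leftrightarrow\lambda_{b'}$, $\Lambda(b)\cong\Lambda(b')$), transfer the estimate via Proposition~\ref{prop:inductive} together with the relation $\lambda^{(d_0 s')}=(\lambda')^{((s'))}$, and set $\mu_i=(\mu_i',0,\dots,0)$. The one place you elaborate beyond the paper — observing that the bounds in Proposition~\ref{key-bound} are purely combinatorial lower bounds on $\mathbf q^{-1}$-adic valuation and hence hold for any base $Q>1$, in particular $Q=q_1^{d_0}$ — is implicit in the paper's ``we see that the bounds reduce to,'' and your explicit justification is correct.
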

\begin{proof}
	Let $G$ be as in Proposition \ref{key-bound}, not necessarily absolutely simple. Fix a basic $[b] \in B(G)$ as in Proposition \ref{key-bound}. Let $G'$ be the auxiliary absolutely simple and adjoint group  over $F$, constructed in \S \ref{subsec:inductive}. We keep the notation established there. Note that $[b]$ is completely determined by $\kappa_G(b) \in \pi_1(G) _{\sigma}$. We construct a basic $[b'] \in B(G')$ as in \S \ref{subsec:inductive}, such that $\kappa_G(b)$ and $\kappa_{G'} (b')$ correspond to each other under the identification $\pi_1 (G) _{\sigma} \cong \pi_1 (G') _{\sigma}.$
	
	We write $\edyn_G$ for the extended Dynkin diagram of $G$ and write $\Aut(\edyn_G)$ for its automorphism group. We write $\abs{\edyn_G}$ for the set of nodes in $\edyn_G$.  Similarly for $G'$.
	
	We claim that
	\begin{align}\label{eq:equality of defect}
	\mathrm{def}_G(b) = \mathrm{def}_{G'} (b').
	\end{align} In fact, there is a natural embedding $\pi_1(G) \rtimes \lprod{\theta} \hookrightarrow \Aut (\edyn_G)$ given by the identification of $\pi_1(G)$ with the stabilizer $\Omega$ in $W$ of the base alcove, and  the natural faithful action of $\Omega$ on $\edyn_G$. The number $\mathrm{def}_G(b)$ is computed as the number of $\theta$-orbits minus the number of $[\mu] \rtimes \theta$-orbits in $\abs{\edyn_G}$, where $[\mu]\in \pi_1 (G)$ is any lift of $\kappa_G (b) \in \pi_1 (G) _{\sigma}$.  Similarly, choosing a lift $[\mu'] \in \pi _1 (G')$ of $\kappa_{G'} (b')$, we compute $\mathrm{def}_{G'}(b')$ as the number of $\theta'$-orbits minus the number of $[\mu'] \rtimes \theta'$-orbits in $\abs{\edyn_{G'}}$. Now by construction, $\edyn_{G'}$ is identified with a particular connected component of $\edyn _G$. We may thus embed $\Aut (\edyn _{G'})$ into $\Aut (\edyn _{G})$ by extending the action trivially to other connected components. Then inside $\Aut (\edyn _{G})$ we have the following relations:
	$$ \theta' = \theta ^{d_0} , \quad  \pi _1 (G) = \bigoplus _{i=0} ^{d_0-1} \theta ^i \pi_1 (G') \theta ^{-i} .$$
	In particular, we have an embedding $\pi_1 (G') = \theta ^0 \pi_1 (G') \theta ^0 \hookrightarrow \pi_1(G)$. We may arrange that $[\mu]$ is the image of $[\mu']$ under this embedding. Then we have
	\begin{align*}
	\# \set{ \theta\mbox{-orbits in~} \abs{\edyn _G}}    & =  \# \set{ \theta'\mbox{-orbits in~} \abs{\edyn _{G'}}}, \\ \# \set{ [\mu]\rtimes \theta\mbox{-orbits in~} \abs{\edyn _G}} &     =  \# \set{ [\mu']\rtimes\theta'\mbox{-orbits in~} \abs{\edyn _{G'}}}.
	\end{align*}
	The claim follows.
	
	Next, we naturally identify $X^*(\hatS)$ with $X^*(\hatS')$. Then it is easy to see that $\lambda_b$ corresponds to $\lambda_{b'}$ under this identification. For clarity, we denote the analogue of the map (\ref{eq:bracket s}) for $G'$ as:
	$$ X^*(\hatS') \To (Y^*)', \quad \lambda \longmapsto \lambda ^{((s))}.$$ The target of the above map is identified with $Y^*$. Then since the identification $Y^* \cong (Y^*)'$ amounts to the diagonal map (\ref{eq:diag map}), we see that
	\begin{align}\label{eq:multiple for lambda}
	\lambda ^{(d_0s)} = \lambda ^{((s))} , ~\forall \lambda \in X^*(\hatS).
	\end{align}
	
	Combining (\ref{eq:equality of defect}), (\ref{eq:multiple for lambda}) with Propositions \ref{prop:inductive}, \ref{prop:same lambda_b}, we see that the bounds (\ref{eq:key est 1}) and (\ref{eq:key est 2}) in Proposition \ref{key-bound} for $(G,b, s: = d_0 s')$ reduce to the corresponding bounds for $(G',b', s')$. In the situation of Proposition \ref{key-bound} (\ref{item:2}), we define $\lambda_{\mathrm{bad}}$ for $(G,b)$ to be equal to that for $(G',b')$, under the identification $\Lambda (b) \cong \Lambda (b')$.
	
	Finally, by hypothesis the desired $\mu_1'$ or $\set{\mu_1',\mu_2'}$ are already defined for $(G',b')$, as in Proposition \ref{key-bound}.     Under the identification (\ref{eq:identification}) we define $\mu_i \in X_*(T) ^+$ to be $(\mu_i', 0,\cdots, 0)$ for $i=1,2$.
\end{proof}
\subsection{Strategy of proving Proposition \ref{key-bound} in the absolutely simple case}\label{subsec:strategy} In Lemma \ref{lem:reduction to absolutely simple}, we already reduced the proof of Proposition \ref{key-bound} to the absolutely simple case. \textbf{From now on until the end of the paper, we assume that $G$ is an absolutely simple adjoint group over $F$ which is not of type $A$.}

As in the proof of Lemma \ref{lem:reduction to absolutely simple}, we denote by $\edyn_G$ the extended Dynkin diagram of $G$, denote by $\Aut (\edyn_G)$ its automorphism group, and denote by $\abs{\edyn_G}$ the set of nodes. To prove Proposition \ref{key-bound}, consider a basic class $[b] \in B(G)$ which is not unramified.
Since $b$ is not unramified, we have $\kappa_G(b)\neq 0$, and in particular the groups $\pi_1(G)$ and $\Aut (\edyn_G)$ are non-trivial. By our assumptions on $G$, we see that the following are the only possibilities for $\dyn_G$ and $\theta$ (viewed as an automorphism of $\dyn_G$):
\begin{multicols}{2}
	\begin{enumerate}
		\item Type $B_{n}, n \geq 2, \theta = \id$.
		\item Type $C_n, n \geq 3, \theta = \id$.
		\item Type $D_n, n \geq 4, \theta = \id$.
		\item Type $D_n, n\geq 5, \theta$ has order $2$.
		\item Type $D_4, \theta$ has order $2$.
		\item Type $D_4, \theta$ has order $3$.
		\item Type $E_6, \theta = \id$.
		\item Type $E_6, \theta$ has order $2$.
		\item Type $E_7, \theta = \id$.
\end{enumerate} \end{multicols} In fact, the above are the only cases (apart from type $A$) where $\edyn_G$ has non-trivial automorphisms. Our proof of Proposition \ref{key-bound} will be based on this classification.

In \S \ref{sec:proof of key est} below, we prove many cases of the estimates (\ref{eq:key est 1}) and (\ref{eq:key est 2}). To be precise, we define an explicit subset $\Lambda (b)_{\mathrm{good}}$ of $\Lambda (b)$, and prove (\ref{eq:key est 1}) or (\ref{eq:key est 2}) for all $\lambda \in \Lambda (b)_{\mathrm{good}}$. These shall be done in a case-by-case manner according to the previous classification of $(\dyn_G, \theta)$.

In \S \ref{sec:part II}, we finish the proof of (\ref{eq:key est 1}) for all $\lambda \in \Lambda(b)$. We also define $\lambda_{\mathrm{bad}}$ in Proposition \ref{key-bound} (\ref{item:2}), and finish the proof of (\ref{eq:key est 2}) for all $\lambda \in \Lambda(b) - \set{\lambda _{\mathrm{bad}}}$. In view of what we have done in \S \ref{sec:proof of key est}, we only need to analyze elements $\lambda \in \Lambda(b) - \Lambda(b)_{\mathrm{good}}.$
If we are in the situation of Proposition \ref{key-bound} (\ref{item:1}), then we show (\ref{eq:key est 1}) for all $\lambda \in \Lambda(b) - \Lambda(b)_{\mathrm{good}}$. If we are in the situation of Proposition \ref{key-bound} (\ref{item:2}), then we define a distinguished element $\lambda_{\mathrm{bad}} \in \Lambda(b)$ and show (\ref{eq:key est 2}) for all $\lambda \in \Lambda(b) - \Lambda(b)_{\mathrm{good}} \cup \set{\lambda_{\mathrm{bad}}}$.

In \S \ref{sec:part III}, we construct the element $\mu_1$ in Proposition \ref{key-bound} (\ref{item:1}), and the elements $\mu_1, \mu_2$ in Proposition \ref{key-bound} (\ref{item:2}). We then check that they satisfy the desired properties. The proof of Proposition \ref{key-bound} is then finished.

\section{Proof of the key estimate, Part I}\label{sec:proof of key est} The goal of this section is to define an explicit subset $\Lambda (b)_{\mathrm{good}}$ of $\Lambda (b)$, and to prove (\ref{eq:key est 1}) or (\ref{eq:key est 2}) for all $\lambda \in \Lambda (b)_{\mathrm{good}}$.

\subsection{Types $B,C,D,\theta= \id$}

\subsubsection{The norm method}\label{subsubsec:classic}

We follow \cite[Chapitre VI \S 4]{bourbaki} for the presentation of the root systems of types $B_n,C_n,D_n$, and for the choice of simple roots. The root systems will be embedded in a vector space $E= \mathbb R^{n}$, with standard basis $e_1,\cdots, e_n$, and standard inner product $\langle e_i, e_j \rangle = \delta _{ij}$ so that we may identify the coroots and coweights with subsets of the same vector space. Following \textit{loc.~cit.}, we define the following lattices in $E$:
\begin{align*}
L_0  & : = \set{(\xi_1,\cdots, \xi_n)\in E \mid \xi_ i \in \Z }, \\\\
L_1 & : = \set{(\xi_1,\cdots, \xi_n)\in L_0 \mid \sum _{i=1}^n \xi_ i \in 2 \Z }, \\
L_2 & : = L_0 + \Z (\frac{1}{2}  \sum _{i=1}^n e _i).
\end{align*}

We assume $\theta = \id$, so that $T= A $ and $\hatS = \widehat T = \widehat A$. The cocharacter lattice $X_*(T)$ is identified with the coweight lattice in $E$. Moreover $\pi_1(G)$ is equal to the quotient of the coweight lattice modulo the coroot lattice in $E$.

Since $[b]\in B(G)$ is basic, it is uniquely determined by $\kappa_G(b) \in \pi_1(G)_{\sigma} = \pi_1 (G)$. The defect $\mathrm {def}_G (b)$ of $b$ is computed in the way indicated in the proof of Lemma \ref{lem:reduction to absolutely simple}.

For any $v=(\xi_ 1,\cdots, \xi _n)\in E$, we write
\begin{align}\label{eq:defn of norm}
|v|: =
|\xi_1|+\cdots +|\xi_n|. \end{align}
It is easy to verify the following three facts.
\begin{enumerate}
	\item $\abs{\cdot}$ is a norm on $E$.
	\item $|wv|=|v|$ for any $w\in W_0$ and $v \in E$.
	\item For any coroot $\alpha ^{\vee} \in \Phi ^{\vee}$, we have
	$ \abs{\alpha ^{\vee}} \leq \delta,$ where $\delta = 2$.
\end{enumerate}

Now given any subset $S$ of $\Lambda (b)$, we define $
\mathscr D (S) : = \min _{\lambda \in S} \abs{\lambda}.$ (The minimum obviously exists.)
In the following, we will specify a subset $\Lambda (b)_{\mathrm{good}}$ of $\Lambda (b)$, satisfying
\begin{align}\label{eq:adm bd}
\mathscr D (\Lambda (b)_{\mathrm{good}}) >  \delta \cdot \mathrm{def}_G(b)/2.
\end{align}
We show how to get the bound (\ref{eq:key est 1}) for all $\lambda \in \Lambda (b) _{\mathrm{good}}$, from (\ref{eq:adm bd}).

Let $\lambda\in \Lambda (b) _{\mathrm {good}} $.
Fix $w,w'\in W^1 $. We write  $\psi_s : = w \bullet (w'\lambda ^{(s)})$. By the formula (\ref{eq:defn of M}), it suffices to show that
\begin{align}\label{desired est}
\mathcal P ( \psi_s , q_1^{-1}) = O(q_1^{-s (\frac{1}{2}
	\mathrm{def}_G(b) +a ) })
\end{align} for some $a >0$.

By Proposition \ref{prop:interpretation} (2), we have the bound
\begin{align}\label{eq:total bd, classic}
\abs{\mathcal P(\psi_s, q_1^{-1})}  \leq \# {\mathbb P (\psi_s)}\cdot  q_1 ^{-N_s}, \quad N_s: = \min_{\underline m  \in \mathbb P (\psi_s)} \abs{\underline m} .\end{align}
Suppose $\underline m \in \mathbb P(\psi_s)$. Then
\begin{align*}
\delta \sum _{\beta \in \leftidx _F \Phi ^{\vee, + }} m (\beta) & \geq \abs{\sum _{\beta \in \leftidx_F \Phi ^{\vee, +}} m(\beta) \beta } = \abs{\psi_s} \\ &
\geq  \abs{ww'\lambda ^{(s)}} - \abs{w\rho^{\vee} -\rho ^{\vee}} =  s \abs{\lambda} -C \geq s \cdot \mathscr D (\Lambda (b)_{\mathrm{good}}) - C,
\end{align*}where $C$ is a constant independent of $s$, and $\abs{\cdot}$ is the norm defined in (\ref{eq:defn of norm}). Since $\theta = \id$, we have $\leftidx_F \Phi ^{\vee, +} = \Phi ^{\vee, +}$, and $\mathtt b (\beta) =1$ for all $\beta \in \Phi ^{\vee, +}$. Hence the leftmost term in the above inequalities is none other than $\delta \abs{\underline m}$. It follows that
$$N_s \geq (s \mathscr D (\Lambda (b) _{\mathrm{good}}) - C) \delta ^{-1}.$$ By the above estimate and (\ref{eq:adm bd}), we have
\begin{align}\label{eq:est of Ns}
q_1 ^{-N_s}  =  O(q_1^{-s (\frac{1}{2}
	\mathrm{def}_G(b) +a' ) })
\end{align}
for some $a' >0$.

On the other hand, by the same argument as in the proof of Proposition \ref{prop:est for unram}, we have
\begin{align}\label{eq:bd for number of partitions general}
\# \mathbb P(\psi_s)\leq (Ls) ^M
\end{align} for some constants $L, M >0$. The desired estimate (\ref{desired est}) then follows from (\ref{eq:total bd, classic}) (\ref{eq:est of Ns}) (\ref{eq:bd for number of partitions general}).

In the following we specify the definition of $\Lambda (b) _{\mathrm{good}}$ satisfying (\ref{eq:adm bd}), for types $B,C,D$ with $\theta = \id$.

\subsubsection{Type $B_n, n \geq 2, \theta = \id$}
The simple roots are $\alpha_i=e_i-e_{i+1}$ for $1\leq i \leq n-1$, and $\alpha_n=e_n.$ The simple coroots are $\alpha_i ^{\vee} = \alpha_i $ for $1 \leq  i \leq n-1$, and $\alpha_n^\vee = 2e_n.$ The fundamental weights are
$ \varpi_i = e_1 + \cdots + e_i$ for $1\leq i \leq n-1$, and $ \varpi_n = \frac{1}{2} (e_1 + \cdots + e_n).$
The coroot lattice is $L_1$, and the coweight lattice is $L_0$. We have
$$P^+ = \set{(\xi_1, \cdots, \xi_n)\mid \xi_i \in \Z, \xi_1 \geq \xi_2 \geq \cdots \geq \xi_n \geq 0}.$$
We have  $\pi_1(G)\cong\Z/2\Z$, and the non-trivial element is represented by $e_1 \in L_0$. Recall that we assumed that $\kappa_G(b)$ is non-trivial, so there is only one choice of $\kappa_G(b)$ (and hence only one choice of the basic $b\in B(G,\mu)$). We have $\lambda_b = -e_n,$ and $ \lambda_b^+ = e_1.$
Since $\kappa_G (b)$ acts on $\edyn_G$ via its unique non-trivial automorphism, we easily see (both for $n=2$ and for $n \geq 3$) that
$\mathrm{def}_G(b) = 1. $
We take $\Lambda (b) _{\mathrm{good}} := \Lambda (b),$ and we have $\mathscr D (\Lambda (b)) = 2$. The inequality (\ref{eq:adm bd}) is satisfied.

\subsubsection{Type $C_n, n \geq 3, \theta =\id $}
The simple roots are $\alpha_i=e_i-e_{i+1}$ for $1\leq i \leq n-1$, and $\alpha_n=2e_n.$ The simple coroots are $\alpha_i ^{\vee} = \alpha_i $ for $1 \leq  i \leq n-1$, and $\alpha_n^\vee = e_n.$ The fundamental weights are
$ \varpi_i = e_1 + \cdots + e_i$ for $1\leq i \leq n.$
The coroot lattice is $L_0$, the coweight lattice is $L_2$. We have
$$P^+ = \set{(\xi_1, \cdots, \xi_n)\in L_2\mid \xi_1 \geq \xi_2 \geq \cdots \geq \xi_n \geq 0}.$$
We have  $\pi_1(G)\cong\Z/2\Z$, and the non-trivial element is represented by $(\frac{1}{2}, \cdots , \frac{1}{2}) \in L_2$. Since $\kappa_G(b)$ is non-trivial, we have $$\lambda_b =(-\frac{1}{2}, \frac{1}{2}, - \frac{1}{2}, \cdots , (-1)^{n} \frac{1}{2}), \quad  \lambda_b^+ = (\frac{1}{2}, \cdots, \frac{1}{2}).$$
Since $\kappa_G(b)$ acts on the $\edyn_{G}$ via its unique non-trivial automorphism, we easily see that
$\mathrm{def}_G(b) = \lceil {\frac{n}{2} }\rceil$ (i.e.~the smallest integer $\geq n/2$).
We take $\Lambda (b) _{\mathrm{good}} := \Lambda (b)$, and we have $\mathscr D (\Lambda (b)) = (n+2)/2$. The inequality (\ref{eq:adm bd}) is satisfied.

\subsubsection{Type $D_n, n \geq 4,\theta =\id $} \label{subsubsec:Dn}

The simple roots are $\alpha_i=e_i-e_{i+1}$ for $1\leq i \leq n-2$, and $\alpha_{n-1}=e_{n-1}-e_n,\alpha_n= e_{n-1} +e_n.$
The simple coroots are $\alpha_i ^{\vee} = \alpha_i$. The fundamental weights are
\begin{align*}
\varpi_i & = e_1 + \cdots + e_i, ~ 1\leq i \leq n - 2 , \\
\varpi _{n-1} & = \frac{1}{2}(e_1 + e_2 + \cdots + e_{n-1} -e _n),  \\
\varpi _n & =  \frac{1}{2}(e_1 + e_2 + \cdots + e_n) .
\end{align*}
The coroot lattice is $L_1$, the coweight lattice is $L_2$. We have
$$P^+ = \set{(\xi_1, \cdots, \xi_n)\in L_2\mid \xi_1 \geq \xi_2 \geq \cdots \geq \xi_{n-1} \geq \abs{\xi_n}}.$$

\textbf{Case: $n$ is odd.} We have  $\pi_1(G)\cong\Z/4\Z$, and a generator is represented by $(\frac{1}{2}, \cdots , \frac{1}{2}) \in L_2$. For $i = 1,2,3$, we let $b_i \in B(G)$ correspond to the image of $i(\frac{1}{2}, \cdots , \frac{1}{2})$ in $\pi_1(G)$. Then
\begin{align*}
&\lambda_{b_1} = \sum_{i=1}^{n-2} \frac{(-1)^i} {2} e_i -\frac{1}{2}e_{n-1}   + \frac{(-1)^{(n+1)/2}}{2} e_n , &&  \lambda_{b_1}^+ = (\frac{1}{2}, \cdots, \frac{1}{2}) ,\\&
\lambda_{b_2} = -e_{n-1}, && \lambda_{b_2}^+ = e_1 ,\\&
\lambda_{b_3} = \sum_{i=1}^{n-2} \frac{(-1)^i} {2} e_i -\frac{1}{2}e_{n-1}   + \frac{(-1)^{(n-1)/2}}{2} e_n , &&   \lambda_{b_3}^+ = (\frac{1}{2}, \cdots, \frac{1}{2}, -\frac{1}{2}).
\end{align*}

Since up to automorphisms of $\Z/4\Z$, there is only one way that $\Z/4\Z$ could act on $\edyn_{G}$, we easily see that
$$\mathrm{def}_G(b_1) = \mathrm{def}_G(b_3) = \frac{n+3}{2}, ~ \mathrm{def}_G(b_2) = 2 .$$

Let
$$\lambda _{1,\mathrm{bad}} :=  (\frac{3}{2}, \frac{1}{2},\cdots, \frac{1}{2}, - \frac{1}{2}),\quad  \lambda_{3, \mathrm{bad}} : = (\frac{3}{2}, \frac{1}{2},\cdots, \frac{1}{2},  \frac{1}{2}).$$ For $i = 1,3$, we obviously have $\lambda _{i, \mathrm{bad}} \in \Lambda (b_i)$. We take
\begin{align}\label{eq:bad type D}
\Lambda (b_i) _{\mathrm{good}} : = \Lambda (b_i) - \set{\lambda _{i,\mathrm{bad}}}.
\end{align}
Then $ \mathscr D(\Lambda (b_i) _{\mathrm{good}}) = \frac{n+4}{2}$ and the inequality (\ref{eq:adm bd}) is satisfied.

For $i =2$, we take
$ \Lambda(b_2) _{\mathrm{good}}:= \Lambda (b_2),$ and we have $\mathscr D (\Lambda (b_2)) = 3.$
The inequality (\ref{eq:adm bd}) is satisfied.

\textbf{Case: n is even.} We have  $\pi_1(G)\cong\Z/2\Z \times \Z / 2\Z$. The three non-trivial elements are represented by
$$(\frac{1}{2}, \cdots , \frac{1}{2}), \quad e_1, \quad (\frac{1}{2}, \cdots , \frac{1}{2})+e_1 \in L_2. $$ Correspondingly we have
\begin{align*}
&\lambda_{b_1} = \sum_{i=1}^{n-2} \frac{(-1)^i} {2} e_i -\frac{1}{2}e_{n-1}   + \frac{(-1)^{n/2}}{2} e_n , &&   \lambda_{b_1}^+ = (\frac{1}{2}, \cdots, \frac{1}{2})  ,\\ &\lambda_{b_2} = -e_{n-1}, & &  \lambda_{b_2}^+ = e_1 ,\\ &\lambda_{b_3} = \sum_{i=1}^{n-2} \frac{(-1)^i} {2} e_i -\frac{1}{2}e_{n-1}   + \frac{(-1)^{n/2+1}}{2} e_n ,  &  & \lambda_{b_3}^+ = (\frac{1}{2}, \cdots, \frac{1}{2}, -\frac{1}{2}).
\end{align*}
Since $\kappa _G(b_1)$ and $\kappa_G(b_3)$ are related to each other by the automorphism of the based root system $e_{n} \mapsto -e_n$, it is clear that they correspond to the two horizontal symmetries of order two of $\edyn_{G}$. On the other hand, the action of $\kappa_G(b_2)$ on $\edyn_{G}$ is of order two, is distinct from the two horizontal symmetries, and commutes with the two horizontal symmetries. Hence this must correspond to the vertical symmetry of $\edyn_{G}$ that has precisely two orbits of size two and fixes all the other nodes. Thus we have
$$\mathrm{def}_G(b_1) = \mathrm{def}_G(b_3) = \frac{n}{2}, \quad  \mathrm{def}_G(b_2) = 2 .$$ For $i =1,2,3$ we take $\Lambda (b_i) _{\mathrm{good}} := \Lambda (b_i). $ Then we have
$\mathscr D (\Lambda (b_1)) = \mathscr D (\Lambda (b_3)) = (n+2)/2,$ and $ \mathscr D (\Lambda (b_2)) = 3.$
The inequality (\ref{eq:adm bd}) is satisfied.
\subsection{Type $D_n, n \geq 5, \theta$ has order $2$}\label{subsec:Dn nonsplit}
The simple (absolute) roots and coroots are the same as in \S \ref{subsubsec:Dn}, embedded in $E = \mathbb R^n$. We identify $E$ with $X_*(T) \otimes _{\Z} \mathbb R$. Then $\theta$ acts on $E$ by
$$(\xi_1, \cdots, \xi_n) \mapsto (\xi_1 ,\cdots, \xi_{n-1}, - \xi_n ). $$
The subgroup $X_*(A) \subset X_*(T) = L_2$ is given by
$\set{(\xi_1,\cdots, \xi_{n})  \in L_2  \mid \xi _n = 0}$. Let $L_2'\subset \Q ^{n-1}$ be the analogue of $L_2$, namely $L_2' = \Z ^{n-1} + \Z (\frac{1}{2}, \cdots, \frac{1}{2})$. The quotient $X_*(T) = L_2 \to X^*(\hatS)$ is the same as
$$ L_2  \To L_2' , ~ (\xi_1,\cdots, \xi_n) \mapsto (\xi_1, \cdots ,\xi_{n-1}). $$
The map
$$(s): X^*(\hatS) \To X_*(A),~ \lambda \longmapsto \lambda ^{(s)} $$ (for $s \in 2 \Z_{\geq 1}$) is given by
\begin{align}\label{eq:map (s)}
L_2' \To X_*(A), \quad  (\xi _1,\cdots, \xi_{n-1}) \longmapsto (s\xi_1, \cdots, s\xi_{n-1}, 0).
\end{align}

The set $\leftidx_F \Phi ^{\vee, +}$, as a subset of $X_*(A)$, is equal to
$$\set{e_i \pm e_j \mid 1\leq i < j \leq n-1} \cup \set{2e_i \mid 1\leq i \leq n-1} .$$ We have
$$\mathtt b ( e_i \pm e_j) =1, ~ 1\leq i< j \leq n-1; \quad \quad \mathtt b (2e_i) = 2, ~ 1\leq i  \leq n-1. $$ Moreover $\leftidx_F \Phi ^{\vee}$ is reduced. We have
$$ P^+ = \set{(\xi_1, \cdots, \xi_n)\in L_2\mid \xi_1 \geq \xi_2 \geq \cdots \geq \xi_{n-1} \geq \xi_n = 0  }, $$
$$X^*(\hatS) ^+ = \set{(\xi_1,\cdots, \xi_{n-1}) \in X^*(\hatS) =L_2' \mid \xi_1 \geq \xi_2 \geq \cdots \geq \xi_{n-1}\geq 0 }.$$

We again write $e_1,\cdots, e_{n-1}$ for the standard basis of $X^*(\hatS) \otimes \Q = \Q^{n-1}$. The relative simple roots in $\widehat Q_{\hat \theta} \subset X^*(\hatS)$ are:
$$ e_1-e_2, e_2 -e_3,\cdots, e_{n-2} - e_{n-1}, e_{n-1} $$ (i.e., the same as type $B_{n-1}$.)

\textbf{Case: $n$ is odd.} We have  $\pi_1(G)\cong\Z/4\Z$, and $\sigma $ acts on $\pi_1(G)$ by the unique non-trivial automorphism of $\pi_1 (G)$. Hence $\pi_1(G) _{\sigma} \cong \Z/2\Z$, and the non-trivial element is represented by $$-\frac{1}{2}(e_1 -e_2) - \frac{1}{2} (e_3 -e_4) \cdots - \frac{1}{2}(e_{n-2} - e_{n-1}) + \frac{1}{2}e_n \in L_2 = X_*(T).$$ The image of the above element in $X^*(\hatS) \otimes \Q = \Q^{n-1}$ is obviously equal to a linear combination of the relative simple roots in $\widehat Q_{\hat\theta}$ with coefficients in $\Q \cap (-1, 0]$. Hence this image is $\lambda_b$, and so
\begin{align*}
\lambda_b & = ( - \frac{1}{2}, \frac{1}{2}, \cdots , - \frac{1}{2}, \frac{1}{2}) \in \Q ^{n-1} = X^*(\hatS)\otimes \Q, \\*
\lambda_b ^+ & = (\frac{1}{2}, \cdots, \frac{1}{2}) \in \Q^{n-1} = X^*(\hatS)\otimes \Q .
\end{align*}

If $\gamma$ is any generator of $\pi_1 (G) \cong \Z/4\Z$, then the number of orbits of $\gamma \rtimes \theta$ in $\abs{\edyn_{G}}$ is $2+ \frac{n-3}{2}$, while the number of orbits of $\theta$ in $\abs{\edyn_{G}}$ is $n$. Hence
\begin{align}\label{eq:def b Dn odd nonsplit}
\mathrm{def}_G(b) =\frac{n-1}{2} .
\end{align}
We have \begin{align}\label{eq:Lambda(b) Dn odd nonsplit}
\Lambda(b) = \set{(\xi_1+\frac{1}{2},\cdots, \xi_{n-1}+\frac{1}{2}) \in L_2' \mid \xi _1 > 0, \forall i, \xi_i \in \Z,  \xi_i \geq \xi_{i+1} }.
\end{align}
We take $\Lambda (b) _{\mathrm{good}}: =\Lambda(b).$ In the following we show (\ref{eq:key est 1}) for all $\lambda \in \Lambda(b)$. The proof will be similar to the arguments in \S \ref{subsubsec:classic}.

Fix $w,w'\in W^1 $. We write $\psi_s : = w \bullet (w'\lambda ^{(s)})$. By the formula (\ref{eq:defn of M}), to show (\ref{eq:key est 1}) it suffices to show that
\begin{align}\label{desired est, Dn odd nonsplit} \exists a > 0 , ~
\mathcal P ( \psi_s , q_1^{-1}) = O(q_1^{-s (\frac{1}{2}
	\mathrm{def}_G(b) +a ) }).
\end{align} Again we have the bounds (\ref{eq:total bd, classic}) and (\ref{eq:bd for number of partitions general}), so it suffices to find a suitable lower bound of $\abs{\underline m}$, for $\underline m \in \mathbb P(\psi _s)$.
We keep the definition (\ref{eq:defn of norm}) of the norm $\abs{\cdot}$ on $E= \mathbb R^n$.
For $\underline m \in \mathbb P(\psi_s)$, we have
\begin{align*}
2\abs{\underline m} &  : = 2\sum _{\beta } m (\beta ) \mathtt b (\beta) \geq 2\sum _{\beta } m (\beta)  \geq \abs{\sum _{\beta } m(\beta) \beta } \\ &  = \abs{\psi_s} \geq  \abs{ww'\lambda ^{(s)}} - \abs{w\rho^{\vee} -\rho ^{\vee}} =  \abs{\lambda ^{(s)}} -C ,
\end{align*} where the sums are over $\beta \in \leftidx_F \Phi ^{\vee, +}$, and $C$ is a constant independent of $s$. On the other hand, by (\ref{eq:map (s)}) and (\ref{eq:Lambda(b) Dn odd nonsplit}), we have $\abs{\lambda ^{(s)}}  \geq s (n+1)/2.$ In conclusion we have
\begin{align}\label{eq:est for Dn odd nonsplit}
2 \abs{\underline m} \geq s\cdot \frac{n+1}{2} +C.
\end{align} Combining (\ref{eq:def b Dn odd nonsplit}) (\ref{eq:est for Dn odd nonsplit}) with (\ref{eq:total bd, classic}) (\ref{eq:bd for number of partitions general}), we obtain the desired (\ref{desired est, Dn odd nonsplit}). Note that in the above proof, we only used the fact that $(n+1)/2 > \mathrm{def}_G(b)$.

\textbf{Case: n is even.} We have  $\pi_1(G)\cong\Z/2\Z \times \Z / 2\Z$. The action of $\sigma $ on $\pi_1(G)$ swaps the classes represented by
$(\frac{1}{2}, \cdots , \frac{1}{2})$  and $ (\frac{1}{2}, \cdots , \frac{1}{2})+e_1 \in L_2$, and fixes the class represented by $e_1$. Hence $\pi_1(G) _{\sigma} \cong \Z/2\Z$, and the non-trivial element is represented by $$-\frac{1}{2}(e_1 -e_2) - \frac{1}{2} (e_3 -e_4) \cdots - \frac{1}{2}(e_{n-1} - e_{n}) \in L_2 = X_*(T).$$  The image of the above element in $X^*(\hatS) \otimes \Q = \Q^{n-1}$ is obviously equal to a linear combination of the relative simple roots in $\widehat Q_{\hat\theta}$ with coefficients in $\Q \cap (-1, 0]$. Hence this image is $\lambda_b$, and so
\begin{align*}
\lambda_b & = ( - \frac{1}{2}, \frac{1}{2}, \cdots , - \frac{1}{2}) \in \Q ^{n-1} = X^*(\hatS)\otimes \Q, \\
\lambda_b ^+  & = (\frac{1}{2}, \cdots, \frac{1}{2}) \in \Q^{n-1} = X^*(\hatS)\otimes \Q .
\end{align*}

Let $\gamma\in \pi_1 (G)$ be the class of $(\frac{1}{2},\cdots, \frac{1}{2}) \in L_2$. We have seen in \S \ref{subsubsec:Dn} that $\gamma$ acts on $\edyn_{G}$ via one of the two order-two horizontal symmetries of $\edyn_{G}$. Hence $\gamma\rtimes \sigma$ acts on $\edyn_{G}$ via one of the two order-four horizontal symmetries of $\edyn_{G}$,  and the number of orbits is $1+ \frac{n-2}{2} = \frac{n}{2}$. On the other hand the number of orbits of $\theta$ in $\abs{\edyn_{G}}$ is $n$. Hence
$\mathrm{def}_G(b) =n/2 .$
The set $\Lambda(b)$ is again given by (\ref{eq:Lambda(b) Dn odd nonsplit}).
We take $\Lambda (b) _{\mathrm{good}}: = \Lambda(b).$ The proof of (\ref{eq:key est 1}) for all $\lambda \in \Lambda(b)$ is exactly the same as in the odd case, using the fact that $(n+1)/2> \mathrm{def}_G(b)$.

\subsection{Type $D_4, \theta$ has order $2$}\label{subsec:Nonsplit-II}
The difference between this case and \S \ref{subsec:Dn nonsplit} is that the $D_4$ Dynkin diagram has three (rather than one) automorphisms of order two. However we explain why the proof of (\ref{eq:key est 1}) for all $\lambda \in \Lambda(b)_{\mathrm{good}} :=  \Lambda(b)$ is the same. In fact, there exists a permutation $\tau $ of $\set{1,3,4}$, such that the root system can be embedded into $\mathbb R^4$ with simple roots:
$$\alpha_{ \tau(1)} = e_1 - e_2, ~ \alpha _2 = e_2 -e_3, ~  \alpha _{\tau(3)} = e_3 -e _4, ~ \alpha _{\tau (4)} = e_3 + e_4,$$ and such that $\theta$ acts on $\mathbb R^4$ by $ e_4 \mapsto - e_4$.

If $\tau =1$, then the extra node in $\edyn_{G}$ is given by $\alpha_0 = -e_1 -e_2$, and the proof is exactly the same as \S \ref{subsec:Dn nonsplit}. For general $\tau$, we still have $\pi_1 (G) _{\sigma} \cong \Z/ 2\Z$ and hence a unique choice of $b$, and the only place in the proof in \S \ref{subsec:Dn nonsplit} that could change is the computation of $\mathrm{def}_G(b)$, as the extra node in $\edyn_{G}$ is no longer given by $-e_1 - e_2$. However, it is still true that as long as $\kappa_G(b)$ is the non-trivial element in $\pi_1 (G) _{\sigma} \cong \Z/2\Z$, we have
$ \mathrm{def}_G(b) = 2$, which is $n/2$ for $n=4$. In fact, for any order-two element $\gamma \in \pi_1(G) \cong \Z/2 \times \Z/2$ which is not fixed by $\sigma$, the action of $\gamma \rtimes \theta$ on $\abs{\edyn_{G}}$ must be of order four and have two orbits. The computation of $ \mathrm{def}_G(b)$ easily follows.

\subsection{Type $D_4, \theta$ has order $3$}\label{subsec:triality} In this case $\pi_1(G) = \Z/2\Z \times \Z/2\Z$. We know that $\theta$ acts  on $\pi_1(G)$ by an order-three permutation of the three non-trivial elements. Thus $\pi_1 (G)_{\sigma}= 0$ and any basic $b$ is unramified.

\subsection{Type $E_6, \theta = \id $} \label{subsec:E6 split} We consider the root system $E_6$ embedded in $\mathbb R^9$, which we will consider as $\mathbb{R}^3 \oplus \mathbb{R}^3 \oplus \mathbb{R}^3$. The set of roots is given by the 18 elements consisting of permutations of $$(1,-1,0;0,0,0;0,0,0),\quad (0,0,0;1,-1,0;0,0,0),\quad (0,0,0;0,0,0;1,-1,0)$$ under the group $S_3\times S_3\times S_3$, together with the 54 elements given  by the permutations of $$(\frac{2}{3},-\frac{1}{3},-\frac{1}{3};\frac{2}{3},-\frac{1}{3},-\frac{1}{3}:\frac{2}{3},-\frac{1}{3},-\frac{1}{3}),\quad (-\frac{2}{3},\frac{1}{3},\frac{1}{3};-\frac{2}{3},\frac{1}{3},\frac{1}{3}:-\frac{2}{3},\frac{1}{3},\frac{1}{3})$$ under the same group. We will call the first set of roots \emph{type $A$ roots}, and the second set \emph{type $B$ roots}. A type $A$ root is positive if and only if the coordinate $1$ appears to the left of the $-1$. A type $B$ root is positive if and only if the first coordinate is positive.

A choice of simple roots is given by
\begin{align*}
&\alpha_1=(0,0,0;0,1,-1;0,0,0), && \alpha_2=(0,0,0;1,-1,0;0,0,0), \\ & \alpha_3=(\frac{1}{3},-\frac{2}{3},\frac{1}{3};-\frac{2}{3},\frac{1}{3},\frac{1}{3};-\frac{2}{3},\frac{1}{3},\frac{1}{3}), && \alpha_4=(0,1,-1;0,0,0;0,0,0), \\ & \alpha_5=(0,0,0;0,0,0;1,-1,0), && \alpha_6=(0,0,0;0,0,0;0,1,-1).
\end{align*}
The corresponding Dynkin diagram is
$$\xymatrix @R-1pc { \overset{1}{\circ } \ar@{-}[r] &  \overset{2}{\circ } \ar@{-}[r] &  \overset{3}{\circ } \ar@{-}[r] \ar@{-}[d]& \overset{5}{\circ }  \ar@{-}[r] &  \overset{6}{\circ }  \\ && \underset{4}{\circ} }$$
Under the standard pairing of $\mathbb{R}^9$ with itself, each root is equal to its own corresponding coroots. We therefore identify $\mathbb{R}^9$ with its dual and do not distinguish between roots and coroots. The subspace of $\mathbb{R}^9$ generated by the roots is given by the equations
\begin{align}\label{eq:rt subspace for E6}
x_1+x_2+x_3=x_4+x_5+x_6=x_7+x_8+x_9=0
\end{align}
where $x_i$ are the standard coordinates. The fundamental weights are given by
\begin{align*}
& \varpi_1=(\frac{2}{3},-\frac{1}{3},-\frac{1}{3};\frac{1}{3},\frac{1}{3},-\frac{2}{3};0,0,0), &&  \varpi_2=(\frac{4}{3},-\frac{2}{3},-\frac{2}{3};\frac{2}{3},-\frac{1}{3},-\frac{1}{3};0,0,0), \\ & \varpi_3=(2,-1,-1;0,0,0;0,0,0), && \varpi_4=(1,0,-1;0,0,0;0,0,0), \\ & \varpi_5=(\frac{4}{3},-\frac{2}{3},-\frac{2}{3};0,0,0;\frac{2}{3},-\frac{1}{3},-\frac{1}{3}), && \varpi_6=(\frac{2}{3},-\frac{1}{3},-\frac{1}{3};0,0,0;\frac{1}{3},\frac{1}{3},-\frac{2}{3}).
\end{align*}

For an element $\lambda=\sum_{i=1}^6a_i\varpi_i$ with $a_i \in \mathbb Z$, we have $\lambda$ lies in the root lattice if and only \begin{equation}\label{E6 root}a_5-a_6-a_2+a_1\equiv 0\mod 3. \end{equation}
We have $\pi_1(G)\cong \Z /3\Z$, with the isomorphism being given by $$\lambda=\sum_{i=1}^6a_i\varpi_i\longmapsto a_5-a_6-a_2+a_1\mod 3 . $$
Moreover $\lambda$ is dominant if and only $a_i\geq 0$ for $i=1,\dotsc,6$.

We let $b_i$, $i=1,2$ denote the non-trivial elements in $\pi_1(G)$. We have $\lambda_{b_1}^+ =\varpi_1$ and $\lambda_{b_2}^+=\varpi_6$. We set
\begin{align*}
\Lambda(b_1)_{\mathrm{good}} &  : =\Lambda(b_1)-\{\varpi_5,\varpi_4+\varpi_1,\varpi_2+\varpi_6,2\varpi_6\}, \\
\Lambda(b_2)_{\mathrm{good}} & : =\Lambda(b_2)-\{\varpi_2,\varpi_4+\varpi_6,\varpi_5+\varpi_1,2\varpi_1\}.
\end{align*}

We let $\abs{\cdot}$ be the standard Euclidean norm on $\mathbb R^9$. Then $\abs{\cdot}$ is $W_0$-invariant, and we have
$\abs{\alpha^{\vee}} \leq \delta: = \sqrt{2}$, for all $\alpha ^{\vee} \in \Phi ^{\vee}$. Given any subset $S$ of $\Lambda (b_i)$, we define
$\mathscr D (S) : = \min _{\lambda \in S} \abs{\lambda}.$ We claim that$$\mathscr{D}(\Lambda(b_1) _{\mathrm{good}})>\sqrt{8},\ \ \mathscr{D}(\Lambda(b_2)_{\mathrm{good}})>\sqrt{8}.$$
Since $\mathrm{def}_G(b)=4$ (which we know by counting orbits of the unique non-trivial symmetry of $\edyn_G$) and $\delta=\sqrt{2}$, the claim  will imply the inequality (\ref{eq:adm bd}), and by exactly the same argument as in \S \ref{subsubsec:classic}, we conclude that (\ref{eq:key est 1}) holds for all $\lambda \in \Lambda (b_i) _{\mathrm{good}}$.

We now prove the claim. By the obvious symmetry $(16)(25)(3)(4)$ of the Dynkin diagram, it suffices to only discuss $\Lambda(b_1)_{\mathrm{good}}$.

Let $\lambda=\sum_{i=1}^6a_i\varpi_i\in \Lambda(b_1),$ with $a_i \in \mathbb Z_{\geq 0}$, and suppose $|\lambda|\leq\sqrt{8}$. We will show $$|\lambda|\in\{\varpi_5,\varpi_4+\varpi_1,\varpi_2+\varpi_6,2\varpi_6\}.$$
Since $\lambda \in\Lambda(b_1)$, we have by (\ref{E6 root}) that \begin{align}\label{E6 root 1}a_5-a_6-a_2+a_1\equiv  1\mod 3. \end{align}By looking at the first three coordinates of $\lambda$ and using the triangle inequality, we easily obtain the inequalities $$a_1\leq 3,\ a_2 \leq 1,\ a_3\leq 1,\ a_4 \leq 3,\ a_5\leq 1,\ a_6 \leq 3.$$

If $a_3=1$, then we have $a_i>0$ for some $i\neq 3$ since $\lambda\in\Lambda(b_1)$, hence $a_3=0$.

If $a_2=1$, we have $a_5=0$ and $a_1,a_4,a_6\leq 1$ (by looking at the first 3 coordinates). We check each case and see that only $\lambda=\varpi_2+\varpi_6$ is possible.

If $a_5=1$, we similarly obtain that $ \lambda=\varpi_5$ is the only possibility using (\ref{E6 root 1}).

The only cases left are when the only non-zero coefficients are $a_1,a_4,a_6$. Again by looking at the first three coordinates, we see that $a_1+a_4+a_6\leq 3$.  We check each case and see that the only possibilities are $\lambda=\varpi_1+\varpi_4$ and $\lambda=2\varpi_6$.
\subsection{Type $E_6, \theta$ has order $2$}\label{subsec:E6 nonsplit}
We keep the notation \S \ref{subsec:E6 split}. Then $\theta$ acts on the root system via the action on $\mathbb{R}^9$ given by $$(x_1,x_2,x_3;x_4,x_5,x_6;x_7,x_8,x_9)\longmapsto (x_1,x_2,x_3;x_7,x_8,x_9;x_4,x_5,x_6).$$
It therefore acts on $\pi_1(G)$ by switching the two non-trivial elements. Hence $\pi_1(G)_\sigma=0$ and all basic elements are unramified.

\subsection{ Type $E_7,\theta= \id$} \label{subsec:E7} We consider the root system $E_7$ as a subset of $\mathbb{R}^8$. The set of roots is given by the 56 permutations of $(1,-1,0,0,0,0,0,0)$ and the $8\choose 4$ permutations of $(\frac{1}{2},\frac{1}{2},\frac{1}{2},\frac{1}{2},-\frac{1}{2},-\frac{1}{2},-\frac{1}{2},-\frac{1}{2}).$
A set of simple roots is given by
\begin{align*}
&\alpha_1=(0,0,0,0,0,0,-1,1),&& \alpha_2=(0,0,0,0,0,-1,1,0),\\& \alpha_3=(0,0,0,0,-1,1,0,0),&& \alpha_4=(0,0,0,-1,1,0,0,0), \\
&\alpha_5=(\frac{1}{2},\frac{1}{2},\frac{1}{2},\frac{1}{2},-\frac{1}{2},-\frac{1}{2},-\frac{1}{2},-\frac{1}{2}), &&  \alpha_6=(0,0,-1,1,0,0,0,0) ,\\
&\alpha_7=(0,-1,1,0,0,0,0,0).
\end{align*}
The corresponding Dynkin diagram is
$$\xymatrix @R-1pc { \overset{1}{\circ } \ar@{-}[r] &  \overset{2}{\circ }\ar@{-}[r] &  \overset{3}{\circ }\ar@{-}[r] &  \overset{4}{\circ } \ar@{-}[r] \ar@{-}[d] & \overset{6}{\circ } \ar@{-}[r] &  \overset{7}{\circ } \\ & && \underset{5}{\circ}}$$

Under the standard pairing of $\mathbb{R}^8$ with itself, roots correspond to coroots and we therefore do not distinguish between them. The subspace of $\mathbb{R}^8$ generated by the roots is the hyperplane given by the equation $\sum_{i=1}^8x_i=0$.

The corresponding fundamental weights are given by
\begin{align*}
\varpi_1 & =(\frac{3}{4},-\frac{1}{4},-\frac{1}{4},-\frac{1}{4},-\frac{1}{4},-\frac{1}{4},-\frac{1}{4},\frac{3}{4}), \\  \varpi_2& =(\frac{3}{2},-\frac{1}{2},-\frac{1}{2},-\frac{1}{2},-\frac{1}{2},-\frac{1}{2},\frac{1}{2},\frac{1}{2}), \\  \varpi_3 & =(\frac{9}{4},-\frac{3}{4},-\frac{3}{4},-\frac{3}{4},-\frac{3}{4},\frac{1}{4},\frac{1}{4},\frac{1}{4}), \\  \varpi_4 &=(3,-1,-1,-1,0,0,0,0), \\ \varpi_5 & =(\frac{7}{4},-\frac{1}{4},-\frac{1}{4},-\frac{1}{4},-\frac{1}{4},-\frac{1}{4},-\frac{1}{4},-\frac{1}{4}), \\ \varpi_6 & =(2,-1,-1,0,0,0,0,0), \\   \varpi_7 & =(1,-1,0,0,0,0,0,0).
\end{align*}
For an element $\lambda=\sum_{i=1}^7a_i\varpi_i, a_i\in\mathbb{Z}$, we know  $\lambda$ lies in the root lattice if and only if $a_1+a_3+a_5\equiv 0\mod 2$. We have $\pi_1(G)\cong\mathbb{Z}/2\mathbb{Z}$. By assumption $\kappa_G(b) \in \pi_1(G)$ is the non-trivial element. Then we have $\lambda_b^+ =\varpi_1.$
We set $\Lambda(b)_{\mathrm{good}}:=\Lambda(b)-\{\varpi_5\}.$

We let $\abs{\cdot}$ be the standard Euclidean norm on $\mathbb R^8$. Then $\abs{\cdot}$ is $W_0$-invariant, and we have
$\abs{\alpha^{\vee}} \leq \delta: = \sqrt{2}$, for all $\alpha ^{\vee} \in \Phi ^{\vee}$. Given any subset $S$ of $\Lambda (b)$, we define
$\mathscr D (S) : = \min _{\lambda \in S} \abs{\lambda}.$ We claim that $$\mathscr{D}(\Lambda(b)_{\mathrm{good}})\geq\frac{\sqrt{22}}{2}.$$
Since $\mathrm{def}_G(b)=3$ and $\delta=\sqrt{2}$, the claim  will imply the inequality (\ref{eq:adm bd}), and by exactly the same argument as in \S \ref{subsubsec:classic}, we conclude that (\ref{eq:key est 1}) holds for all $\lambda \in \Lambda (b) _{\mathrm{good}}$.

We now prove the claim. Suppose $\lambda=\sum_{i=1}^7a_i\lambda_i\in\Lambda(b)$ with $a_i \in \Z_{\geq 0}$, and $|\lambda|\leq\sqrt{22}/2$. We will show that $\lambda=\varpi_5$. By looking at the first four coordinates, we obtain the trivial (in)equalities:
$$\varpi_1\leq 2,\ \varpi_2\leq 1,\ \varpi _3=0,\ \varpi_4=0,\ \varpi_5\leq 1,\ \varpi_6\leq 1,\ \varpi_7\leq 1.$$
We also obtain $\sum_{i=1}^7a_i\leq 2$. It is not hard to see that $\lambda=\varpi_5$ is the only possibility.

\section{Proof of the key estimate, Part II} \label{sec:part II} The goals of this section include:
\begin{itemize}
	\item  to finish the proof of (\ref{eq:key est 1}) in Proposition \ref{key-bound} (\ref{item:1});
	\item to define $\lambda_{\mathrm{bad}}$ and to prove (\ref{eq:key est 2}) in Proposition \ref{key-bound} (\ref{item:2}).
\end{itemize}

In \S \ref{sec:proof of key est}, we already proved (\ref{eq:key est 1}) and (\ref{eq:key est 2}) for all $\lambda \in \Lambda (b) _{\mathrm{good}}$. Moreover the subset $\Lambda (b) _{\mathrm{good}} \subset  \Lambda(b)$ is proper only in the following three cases:
\begin{description}
	\item[Proper-I] Type $D_n, n \geq 5, n $ is odd, $\theta = \id$, $b = b_1 $ or $b_3$. See \S \ref{subsubsec:Dn}.
	\item[Proper-II] Type $E_6, \theta = \id , b = b_1$ or $b_2$. See \S \ref{subsec:E6 split}.
	\item[Proper-III] Type $E_7, \theta = \id$. See \S \ref{subsec:E7}.
\end{description}
\subsection{Combinatorics for $D_n$} \label{subsec:combinatorics for Dn} In order to treat the case \textbf{Proper-I}, we need some combinatorics for the type $D_n$ root system. The material in this subsection is only needed in the proof of Proposition \ref{prop:proper-I} below.

Let $n$ be an integer $\geq 5$. We keep the presentation of the type $D_n$ root system in a vector space $\mathbb R^n$, as in \S \ref{subsubsec:Dn}. In particular we keep the choice of positive roots. We do not distinguish between roots and coroots. Let $\Phi _{D_n}$ be the set of roots and let $\Phi^+_{D_n}$ be the set of positive roots. Thus $$\Phi ^+_{D_n} = \set{e_i \pm e_j \mid 1\leq i < j \leq n}.$$ If $m>n$ is another integer, we embed $\mathbb R^{n}$ into $\mathbb R^{m} $ via the inclusion of the standard bases  $\set{e_1,\cdots, e_n}\hookrightarrow \set{e_1,\cdots, e_{m}}$. In this way we view $\Phi _{D_n}$ (resp.~$\Phi ^+_{D_n}$) as a natural subset of $\Phi_{D_{m}}$ (resp.~$\Phi ^+_{D_{m}}$).

In the following we fix an odd integer  $n \geq 5$. We keep the notation in \S \ref{subsec:Kostant partitions}, with respect to  $\leftidx_F \Phi ^{\vee,+} =\Phi_{D_{n}}^+$ and $\mathtt b \equiv 1$. The goal of this subsection is to prove the following:

\begin{prop}\label{prop:cancellation for Dn odd} Let $(\nu_2,\cdots, \nu_n) \in \set{\pm 1} ^{n-1}$. For $t\in \mathbb N $, let $$\lambda_t: = (6t, 2t\nu_2,2 t \nu_3\cdots, 2t\nu_n).$$ When $t\in \mathbb N$ is sufficiently large, for any integer $L$ in the interval $[0, (n+3.5)t]$, we have
	\begin{align}\label{eq:cancallation}
	\sum _{S \subset \Phi^+_{D_n}} (-1)^{\abs{S}} \# \mathbb P(\lambda_t - \sum _{\beta \in S} \beta)  _L = 0. \end{align}
\end{prop}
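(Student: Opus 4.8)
The statement is essentially a combinatorial identity expressing a massive cancellation in an alternating sum over subsets of positive roots of $D_n$. The plan is to reinterpret the inner sum as a coefficient extraction. Recall that $\sum_{\underline m \in \mathbb P(\nu)_L} 1$ is the coefficient of $e^{\nu}\mathbf q^L$ in $\prod_{\beta \in \Phi^+_{D_n}} (1-\mathbf q e^{\beta})^{-1}$, while $\sum_{S\subset \Phi^+_{D_n}} (-1)^{\abs S} e^{-\sum_{\beta\in S}\beta}$ is exactly $\prod_{\beta\in\Phi^+_{D_n}}(1 - e^{-\beta})$, but with no $\mathbf q$-weight — so to make the bookkeeping match we should instead pair the Kostant generating function against $\prod_{\beta}(1-\mathbf q e^{-\beta})$ and keep careful track of the $\mathbf q$-degree. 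Concretely, I would show that the left-hand side of \eqref{eq:cancallation} is the coefficient of $e^{\lambda_t}$ in the Laurent polynomial obtained by truncating
$$ \prod_{\beta \in \Phi^+_{D_n}} \frac{1-\mathbf q e^{-\beta}}{1 - \mathbf q e^{\beta}} $$
to $\mathbf q$-degree exactly $L$ — or rather, after accounting for the sign conventions, a closely related finite product. The point of the hypothesis $L \le (n+3.5)t$ is that within this range of $\mathbf q$-degrees the relevant coefficient vanishes because $\lambda_t$ is "too far out" in a suitable direction to be reached.

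The key steps, in order. First, set up the generating-function identity above and reduce \eqref{eq:cancallation} to showing that a certain coefficient in $\prod_\beta (1-\mathbf q e^{-\beta})(1-\mathbf q e^{\beta})^{-1}$ vanishes. Second — and this is where the special shape $\lambda_t = (6t, \pm 2t, \dots, \pm 2t)$ enters — observe that writing $\lambda_t - \sum_{\beta\in S}\beta = \Sigma(\underline m)$ with $\abs{\underline m} = L$ forces, via a linear functional argument like the one in the proof of Proposition \ref{prop:est for unram} and \S\ref{subsubsec:classic}, a lower bound on $L$ in terms of $t$ and $\abs{\lambda_t} = 6t + 2(n-1)t$; the threshold $(n+3.5)t$ is precisely calibrated so that for $L$ below it, the only contributing configurations come in sign-cancelling pairs. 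Third, exhibit the sign-reversing involution: given a pair $(S, \underline m)$ contributing to the sum, toggle membership of a canonically chosen root $\beta_0 \in \Phi^+_{D_n}$ — chosen by a greedy/lexicographic rule on the coordinates of the partial sum — in $S$ while correspondingly adjusting $\underline m$ by $\pm$ one unit at $\beta_0$, checking that this preserves both $\Sigma$ and $L$ and flips $(-1)^{\abs S}$. The binary-tree decoration apparatus hinted at in \S\ref{subsec:combinatorics for Dn} is presumably the device that makes this involution well-defined and fixed-point-free in the stated range.

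The main obstacle will be step three: constructing an honest fixed-point-free sign-reversing involution, as opposed to merely a heuristic pairing. The difficulty is that toggling one root $\beta_0$ in $S$ must be matched by a legal move in the Kostant partition $\underline m$ (i.e. $\underline m(\beta_0)$ must be $\ge 1$ to decrement, or we must increment), and one has to rule out the degenerate case where neither move is available — this is exactly where the bound $L \le (n+3.5)t$ and the asymmetry of the first coordinate ($6t$ versus $2t$) must be used to guarantee there is always "room" to move. I anticipate this forces a somewhat delicate case analysis organized along the binary trees of \S\ref{subsec:combinatorics for Dn}, tracking which pairs of indices $\{i,j\}$ the roots $e_i \pm e_j$ land on, and arguing that in the sub-threshold range at least one such pair admits a legal toggle. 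An alternative, should the explicit involution prove unwieldy, would be to prove the coefficient vanishing purely analytically by a residue/contour estimate on the rational function $\prod_\beta (1-\mathbf q e^{-\beta})(1-\mathbf q e^{\beta})^{-1}$, bounding the $\mathbf q^L e^{\lambda_t}$-coefficient directly; but I expect the combinatorial route to be the one intended here, given the emphasis on binary trees in the surrounding text.
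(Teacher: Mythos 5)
Your overall intuition — a sign-reversing cancellation organized around the binary trees of \S\ref{subsec:combinatorics for Dn}, plus a lower bound killing the residual terms — is the right shape. But the specific move you propose for the involution cannot work. You say: toggle a canonically chosen $\beta_0 \in \Phi^+_{D_n}$ in $S$ while ``adjusting $\underline m$ by $\pm$ one unit at $\beta_0$, checking that this preserves both $\Sigma$ and $L$.'' No single-root adjustment can do both. If you add $\beta_0$ to $S$ (so $\lambda_S$ drops by $\beta_0$) and decrement $m(\beta_0)$, then $\Sigma(\underline m)$ correctly drops by $\beta_0$, but $\abs{\underline m}$ also drops by one, so $L$ is not preserved; any other single-root move fails $\Sigma$ instead. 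The actual mechanism (Lemma \ref{lem:claim1}) is a \emph{transfer}: decrement at one root $\phi(v)$, increment at a different root $\phi(\bar v)$, so that $\abs{\underline m}$ is conserved, where the two are chosen so that $\phi(v) - \phi(\bar v)$ equals the toggled root $\psi(w)$, which makes $\Sigma(\underline m)$ track $\lambda_S \mapsto \lambda_S - \psi(w)$. Arranging a consistent, iterable family of such triples $\bigl(\psi(w); \phi(v), \phi(\bar v)\bigr)$, with $\phi$-values drawn from the small set $U = \set{e_1 + \nu_j e_j}$, is precisely what the admissibility condition $\phi(v) - \phi(\bar v) \in \set{\pm\psi(w)}$ in Definition \ref{defn:adm tree} encodes, and Proposition \ref{prop:const of tree} shows such a tree exists. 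Your proposal, as written, has no analogue of this transfer structure and would fail at the first move.

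A secondary point: the role of the bound $L \le (n+3.5)t$ is not, as you suggest, to guarantee that ``there is always room to move.'' The tree recursion of Lemma \ref{lem:claim1} is an exact identity valid for every $L$. What it leaves over is a sum indexed by the end vertices, whose contributing partitions are forced to vanish on $n-2$ of the roots $e_1 + \nu_i e_i$. Lemma \ref{lem:bdry case} then shows such partitions hitting a target near $\lambda_t = (6t, 2t\nu_2, \dots, 2t\nu_n)$ must have $\abs{\underline m} > (n+3.5)t$, because after discarding the $e_1 \pm e_j$ contributions the first coordinate $6t$ cannot be reached cheaply. So the threshold kills the residue; it plays no role in making the pairing legal. (Also, your generating function is off: the $S$-sum contributes $\prod_\beta(1 - e^\beta)$ with no $\mathbf q$-weight, not $\prod_\beta(1 - \mathbf q e^{-\beta})$; the correct kernel is $\prod_\beta (1 - e^\beta)/(1 - \mathbf q e^\beta)$.)
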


In order to prove Proposition \ref{prop:cancellation for Dn odd}, we shall use some graph theory to facilitate the computation of the left hand side of (\ref{eq:cancallation}). We first recall some standard terminology from graph theory. Recall that a \emph{graph} consists of vertices and edges, such that each edge links two distinct vertices. A \emph{path} on a graph is a sequence $(v_1, E_1, \cdots, v_k, E_k, v_{k+1})$, where $v_i$ and $v_{i+1}$ are the two distinct vertices linked by the edge $E_i$, for each $i$, and such that $E_1, \cdots , E_k$ are all distinct. A \emph{tree} is a graph, in which any two vertices are connected by exactly one path. In particular, on a tree we may represent each path $(v_1, E_1, \cdots, v_k, E_k, v_{k+1})$ unambiguously by the sequence $(v_1, v_2 ,\cdots, v_{k+1})$, and we know that $v_1,\cdots, v_{k+1}$ are all distinct. A \emph{rooted tree} is a pair $(\mathscr T, O)$, where $\mathscr T$ is a tree and $O$ is a distinguished vertex of $\mathscr T$, called the \emph{root}.

In a rooted tree $(\mathscr T, O)$, every vertex $v \neq O$ has a unique \emph{parent}, that is, the vertex $w$ such that the unique path from $O$ to $v$ passes through $w$ immediately before reaching $v$. In this situation we call $v$ a \emph{child} of $w$. A vertex of $(\mathscr T, O)$ that does not have children is called an \emph{end vertex}. We denote by $\abs{\mathscr T}$ the set of vertices, and denote by $\abs{\mathscr T}_{\mathrm{end}}$ the set of end vertices. By a \emph{complete family line} on $(\mathscr T, O)$, we shall mean a path $(v_0, v_1, \cdots, v_k)$ such that $v_0 = O$ and $v_k \in \abs{\mathscr T}_{\mathrm{end}}$. Note that in this case each $v_i$ is the parent of $v_{i+1}$.

In the following, we fix an abelian group $E$, and fix subsets $U , V$ of $E$, such that $U$ is finite and $0 \notin V$. In applications, we shall take $E = \mathbb R^n$, $U = \set{e_1 + \nu_j e_j \mid 2\leq j \leq n}$ for some $(\nu_2,\cdots, \nu_n) \in \set{\pm 1}^{n-1}$, and $V = \Phi^+_{D_n}$.
\begin{defn}
	By a \emph{$(U,V)$-tree}, we mean a tuple $(\mathscr T, O, \phi,\psi)$, where $(\mathscr T,O)$ is a finite rooted tree, and $\phi, \psi$ are maps
	\begin{align*}
	\phi  & : \abs{\mathscr T}-\set{O} \To U , \\
	\psi & : \abs{\mathscr T}-\abs{\mathscr T}_{\mathrm{end}} \To V .
	\end{align*} We shall graphically represent a $(U,V)$-tree $(\mathscr T, O, \phi,\psi)$ by marking $$\boxed{\phi(v) \parallel \psi(v)}$$ at each vertex $v$. If $\phi(v)$ or $\psi(v)$ is not defined, we leave it as blank. (Thus $O$ could be recognized as the unique vertex where $\phi(v)$ is left as blank.)
\end{defn}

\begin{defn}\label{defn:adm tree} We say that a $(U,V)$-tree $(\mathscr T, O, \phi,\psi)$ is \emph{admissible}, if the following conditions are satisfied:
	\begin{enumerate}
		\item Each $w \in \abs{\mathscr T} - \abs{\mathscr T}_{\mathrm{end}}$ has precisely two children $v, \bar v$. Moreover, we have $\phi(v) - \phi(\bar v) \in \set{\psi (w), - \psi(w)}$.
		\item Every complete family line $(v_0, v_1 ,\cdots, v_k)$ on $(\mathscr T, O)$ satisfies $k = \abs{U}-1$. Moreover, $\phi(v_i)$ are distinct from each other for $1\leq i \leq k$, and $\psi(v_i)$ are distinct from each other for $0 \leq i \leq k-1$.
	\end{enumerate}
\end{defn}
\begin{defn}
	Let $(\mathscr T, O,\phi,\psi)$ be an admissible $(U,V)$-tree. Let $w \in \abs{\mathscr T} - \abs{\mathscr T}_{\mathrm{end}}$. Since $\psi(w)\neq 0$, there is a unique ordering of the two children $v,\bar v$ of $w$, such that $\phi(v) - \phi(\bar v) = \psi(w)$. We call $v$ a \emph{positive vertex} and call $\bar v$ a \emph{negative vertex}.
\end{defn}

\begin{prop}\label{prop:const of tree}
	For each odd integer $n \geq 5$ and each $(\nu_2,\cdots, \nu_n) \in \set{\pm 1} ^{n-1}$, we let $E: = \mathbb R^n$, $U:= \set{e_1 +\nu_j e_j \mid 2\leq j \leq n}$, and $V := \Phi^+_{D_n}$. Then there exists an admissible $(U,V)$-tree.
\end{prop}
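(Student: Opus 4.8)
The plan is to reduce the statement to a purely combinatorial fact about binary trees. Identify $U$ with the index set $I:=\{2,\dots,n\}$ via $j\mapsto u_j:=e_1+\nu_j e_j$; this is a bijection (the $u_j$ have pairwise distinct supports $\{1,j\}$), so $|U|=n-1$. For $i\ne j$ one has $u_i-u_j=\nu_i e_i-\nu_j e_j$, which is $\pm1$ times a unique positive root of $D_n$, namely $e_i-\nu_i\nu_j e_j$ when $i<j$; this root lies in $V=\Phi^+_{D_n}$, it has support $\{i,j\}$, and (with $\nu$ fixed) it is determined by the $2$-subset $\{u_i,u_j\}$. Hence, given the two children $v,\bar v$ of a non-end vertex $w$, condition (1) of Definition \ref{defn:adm tree} forces $\psi(w)$ to be the positive root homothetic to $\phi(v)-\phi(\bar v)$, and conversely this choice does satisfy (1); moreover two pairs $\{\phi(v),\phi(\bar v)\}$ yield equal $\psi(w)$ iff the pairs coincide. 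Consequently, to produce an admissible $(U,V)$-tree it suffices to produce a finite full binary tree $(\mathscr T,O)$ of depth $n-2$ (every non-end vertex has exactly two children and every complete family line has exactly $n-2$ edges), together with a labelling $\phi\colon|\mathscr T|\setminus\{O\}\to I$ such that along every root-to-leaf path $v_0,\dots,v_{n-2}$ the labels $\phi(v_1),\dots,\phi(v_{n-2})$ are pairwise distinct and the sibling pairs $\{\phi(v_i),\phi(\bar v_i)\}$, $1\le i\le n-2$ (with $\bar v_i$ the sibling of $v_i$), are pairwise distinct; one then defines $\psi(w)$ to be the positive root with support $\{\phi(v),\phi(\bar v)\}$ at each non-end $w$, and conditions (1) and (2) of Definition \ref{defn:adm tree} hold.

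It remains to construct such a tree, and I will do this by proving a slightly more general statement by induction on $N\ge1$: for every finite set $I$ of size $N$ there is a full binary tree of depth $N-1$ with an $I$-valued labelling of its non-root vertices whose restriction to every root-to-leaf path has pairwise distinct labels and pairwise distinct sibling pairs. The cases $N=1$ (one vertex) and $N=2$ (a root with two leaves carrying the two elements of $I$) are clear. For $N\ge3$, fix distinct $x,y\in I$ and, by the inductive hypothesis, choose full binary trees $\mathscr T_x$ on $I\setminus\{x\}$ and $\mathscr T_y$ on $I\setminus\{y\}$, each of depth $N-2$, with the stated properties. Build $\mathscr T$ from a new root $O$ with two children $c_1,c_2$, declaring $\mathscr T_x$ to be the subtree of $\mathscr T$ below $c_1$ and $\mathscr T_y$ the subtree below $c_2$ (so $c_1$ is the root of $\mathscr T_x$, and similarly for $c_2$), and extending the labelling by $\phi(c_1):=x$, $\phi(c_2):=y$. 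Then $\mathscr T$ is full binary of depth $1+(N-2)=N-1$. On a root-to-leaf path through $c_1$, written $O,c_1,u_1,\dots,u_{N-2}$ where $c_1,u_1,\dots,u_{N-2}$ is a root-to-leaf path of $\mathscr T_x$: the $\phi$-labels are $x$ followed by the pairwise distinct labels $\phi(u_1),\dots,\phi(u_{N-2})\in I\setminus\{x\}$, hence pairwise distinct; the sibling pairs are $\{x,y\}$ (at $c_1$) followed by the sibling pairs of the $\mathscr T_x$-path, which are $2$-subsets of $I\setminus\{x\}$ and thus all different from $\{x,y\}$, so together with the inductive hypothesis for $\mathscr T_x$ they are pairwise distinct. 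The argument for paths through $c_2$ is the same with $x$ and $y$ interchanged. This completes the induction, and applying it to $I=\{2,\dots,n\}$ finishes the proof. (Oddness of $n$ is not needed for this proposition; it enters only later, in the cancellation argument for Proposition \ref{prop:cancellation for Dn odd}.)

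The only point requiring care is the first step: recognising that the root-difference constraint (1) of Definition \ref{defn:adm tree} always admits a unique solution for $\psi(w)$ in terms of the children's $\phi$-values, and that distinctness of $\psi$-values along a family line is literally distinctness of the corresponding index-pairs. Once this is in place, the content is the tree-labelling lemma, whose proof hinges on the grafting being done compatibly — $\mathscr T_x$, all of whose labels avoid $x$, is placed below the child labelled $x$ — so that the single new sibling pair $\{x,y\}$ created at the top of $\mathscr T$ cannot collide with any sibling pair inherited from the subtrees. I expect this bookkeeping, rather than any deeper idea, to be the main thing to get right.
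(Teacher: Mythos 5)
Your proof is correct and takes a route genuinely different from the paper's. The paper constructs the admissible $(U,V)$-tree directly by induction on odd $n$ in steps of $2$: it exhibits an explicit $n=5$ base tree and, for the inductive step, grafts a fixed four-leaf gadget (involving the indices $j_v, n-1, n$) onto every end vertex of the $(n-2)$-tree. You instead first strip away the root-theoretic content entirely: since $j\mapsto e_1+\nu_je_j$ is a bijection $\{2,\dots,n\}\to U$ and the differences $u_i-u_j$ are, up to sign, the positive roots $e_i-\nu_i\nu_je_j$ with support $\{i,j\}$, the constraint in Definition \ref{defn:adm tree}(1) determines $\psi$ uniquely from $\phi$ on siblings, and the injectivity requirement on $\psi$ along a family line becomes exactly injectivity of sibling index-pairs; condition (2) then reduces to a purely combinatorial labelling condition on a full binary tree. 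The resulting lemma (labelled full binary tree of depth $|I|-1$ with distinct labels and distinct sibling $2$-subsets along every root-to-leaf path) you prove by a clean induction on $|I|$ in steps of $1$, with trivial base cases: the key point, correctly identified, is to graft the subtree on $I\setminus\{x\}$ below the child labelled $x$, so that the new top sibling pair $\{x,y\}$ cannot collide with any pair inherited from the subtrees (which avoid $x$, resp.\ $y$). This buys a more transparent argument, a step-$1$ rather than step-$2$ recursion with no explicit base diagram, and makes clear (as you note) that the parity hypothesis on $n$ is irrelevant here and enters only in Proposition \ref{prop:cancellation for Dn odd}. One cosmetic remark: when stating the reduced combinatorial condition you should say explicitly that siblings carry distinct labels (equivalently, that each sibling pair is a genuine $2$-subset), since this is what makes $\psi(w)$ well-defined; distinctness of labels along a single path and distinctness of sibling pairs do not by themselves rule out a degenerate singleton pair. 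Your construction does guarantee this, so the proof stands as written.
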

\begin{proof}
	We prove this by induction on $n$. To simplify notation, when $n$ and $(\nu_2,\cdots, \nu_n) $ are fixed, we define
	\begin{align*}
	g_j : = e_1 + \nu_j e_j ,\quad  h_{i,j} : = e_i - \nu_i \nu_j e_j,
	\end{align*} for
	$i , j \in \set{2,\cdots, n}$.
	
	The base chase is $n=5$. One checks that the $(U, V)$-tree represented by the following diagram is admissible:
	$$  \setlength{\qtreepadding}{1 pt}\Tree [.$\boxed{ \parallel  h_{2,3}
	}$ [.$\boxed{{g_2}\parallel { h_{3,4}}}$ [.$\boxed{{g_3}\parallel {h_{4,5}}}$ [.$\boxed{g_4 \parallel  }$ ] [.$\boxed{g_5 \parallel  }$ ] ] [.$\boxed{{g_4} \parallel {h_{3,5}}}$ [.$\boxed{g_3 \parallel }$ ] [.$\boxed{g_5 \parallel  }$ ] ] ] [.$\boxed{{g_3} \parallel {h_{2,4}}}$ [.$\boxed{{g_2} \parallel {h_{4,5}}}$ [.$\boxed{g_4\parallel  }$ ] [.$\boxed{g_5\parallel }$ ] ] [.$\boxed{{g_4} \parallel { h_{2,5}}}$ [.$\boxed{g_2 \parallel  }$ ] [.$\boxed{g_5 \parallel  }$ ] ] ] ] $$
	
	Now assuming the proposition is proved for $n-2$, we prove it for $n$ (with $n \geq 7$). Let $(\nu_2,\cdots, \nu_n)$, $U$, and $V$ be as in the proposition. Let $U': = \set{g_j \mid 2\leq j \leq n-2}$, and $V': = \Phi^+_{D_{n-2}}$. By induction hypothesis there exists an admissible $(U', V')$-tree $(\mathscr T', O, \phi',\psi')$. We shall construct an admissible $(U, V)$-tree based on this.
	
	First, to each end vertex $v$ of $\mathscr T'$, we associate a $(U,V)$-tree $(\mathscr T_v, O_v, \phi_v, \psi_v)$ as follows. Let $(O,v_1,\cdots, v_k =v)$ be the complete family line on $\mathscr T'$ from $O$ to $v$. Since $(\mathscr T', O, \phi',\psi')$ is admissible, we have $k = n-4$, and there is a unique index $j_v \in \set{2,\cdots ,n-2}$ such that $$\set{\phi' (v_i) \mid 1 \leq i \leq k} = \set{g_j \mid 2 \leq j \leq n-2, j \neq j_v}.$$
	We define $(\mathscr T_v, O_v, \phi_v, \psi_v)$ to be:
	$$  \Tree [.$\boxed{ \parallel  h_{j_v,n}}$ [.$\boxed{g_{j_v} \parallel  h_{n-1, n}}$ [.$\boxed{g_{n-1} \parallel }$ ] [.$\boxed{g_{n} \parallel }$ ] ] [.$\boxed{g_{n} \parallel  h_{j_v, n-1}}$ [.$\boxed{g_{j_v}\parallel }$ ] [.$\boxed{g_{n-1} \parallel }$ ] ] ] $$
	
	Now for each end vertex $v$ of $\mathscr T'$, we glue $\mathscr T_v$ to $\mathscr T'$ by identifying the root $O_v$ of $\mathscr T_v$ with $v$. We also combine the marking $\boxed{\phi'(v) \parallel}$ at $v$ with the marking $\boxed{\parallel \psi_v(O_v)}$ at $O_v$ to get the new marking $\boxed{\phi'(v) \parallel \psi_v(O_v)}$. In this way we obtain a larger tree $\mathscr T$ containing $\mathscr T'$, and a $(U,V)$-tree $(\mathscr T, O, \phi,\psi)$.
	
	It is then an elementary exercise to check that the $(U,V)$-tree $(\mathscr T, O, \phi,\psi)$ thus obtained is admissible. As an example, we check that for each complete family line $(v_0, v_1 ,\cdots , v_k)$ on $(\mathscr T,O)$, the elements $\psi(v_i)$ are distinct for $0 \leq i \leq k-1$. The other desired conditions can all be checked similarly. Note that $(v_0, v_1 ,\cdots , v_{k-2})$ is a complete family line on $(\mathscr T',O)$. For $0\leq i \leq k-3$, we have $\psi(v_i) = \psi'(v_i)$, and these are distinct from each other by induction hypothesis. Now writing $v$ for $v_{k-2}$, we have $\psi (v_{k-2})  = \psi_{v}(O_{v}),$ and $\psi(v_{k-1})  =  \psi_{v} (v_{k-1})$. By the definition of $\psi_v$, we see that the set formed by $\psi(v_{k-2})$ and $\psi(v_{k-1})$ is either $\set{h_{j_v, n}, h_{n-1,n}}$ or $\set{h_{j_v, n}, h_{j_v, n-1}}$. Thus these two elements are distinct from each other, and being in $V - V'$, they are distinct from $\psi(v_i)$ for $0\leq i \leq k-3$, since the latter elements are in $V'$. Thus we have shown that $\psi(v_i)$ are distinct for $0 \leq i \leq k-1$. \end{proof}

Fix an odd integer $n \geq 5$, and fix $(\nu_2,\cdots, \nu_n) \in \set{\pm 1} ^{n-1}$. Let $U, V$ be as in Proposition \ref{prop:const of tree}, and fix an admissible $(U, V)$-tree $(\mathscr T, O, \phi,\psi)$ as in that proposition. Let $2^{V}$ denote the power set of $V$.

For each $v \in \abs{\mathscr T} - \set{O}$, we define a subset $C_v \subset 2 ^{V}$ as follows. Let $w$ be the parent of $v$.
We define
$$C_v := \begin{cases} \set{S \in 2^{V} \mid \psi(w) \notin S }, & \text{if $v$ is a positive vertex}, \\
\set{S \in 2^{V} \mid \psi(w) \in S }, & \text{if $v$ is a negative vertex}.	\end{cases} $$
Now let $(O, v_1, \cdots, v_k = v)$ be the unique path on $\mathscr T$ from $O$ to $v$. We define
$$D_v: = \bigcap_{i=1}^k C_{v_i} \subset 2^V.$$ We also define $D_O:= 2^V.$  It is easy to see that if $v, \bar v$ are the two children of any vertex $w$, with $v$ positive, then
\begin{align}\label{eq:Dv}
\begin{aligned}
D_v &= \set{S \in D_w \mid \psi(w) \notin S}  , \quad  D_{\bar v} = \set{ S \in D_w \mid \psi (w) \in S}, \\
D_w &= D_v \sqcup D_{\bar v}.
\end{aligned}
\end{align}In this situation, there is a bijection
\begin{align}\label{eq:bij adding a root}
D_v \isom D_{\bar v}, \quad S \longmapsto S \cup \set{\psi (w)}.
\end{align}

Next, for any $\lambda$ in the root lattice $\mathrm{span}_{\Z} (\Phi_{D_n})$, any $L \in \mathbb Z_{\geq 0}$, and any $v\in \abs{\mathscr T}$, we define a subset $\mathbb P(\lambda)_L^v$ of $\mathbb P(\lambda)_L$. (See \S \ref{subsec:Kostant partitions} for the definition of $\mathbb P(\lambda)_L$.) If $v \neq O$, let $(O, v_1, \cdots, v_k = v)$ be the unique path from $O$ to $v$. We define
$$\mathbb P(\lambda)^{v} _L : = \set {\underline m \in \mathbb P(\lambda) _L\mid m(\phi(v_i)) =  0, ~\forall 1\leq i \leq k }.$$
If $v = O$, we define $$\mathbb P(\lambda)^O_L : = \mathbb P (\lambda)_L.$$
For any two $v_1, v_2 \in \abs{\mathscr T}$, we define
$$ \mathbb P(\lambda) ^{v_1-v_2}_{L}: = \mathbb P(\lambda)_L^{v_1} - \mathbb P(\lambda) _L^{v_2}. $$
\begin{lem}\label{lem:claim1} Let $\lambda$ be an element in the root lattice $\mathrm{span}_{\Z} (\Phi_{D_n})$. For each subset $S \subset \Phi^+_{D_n}$, let $\lambda_S : = \lambda - \sum_{\beta \in S} \beta$. Let $w \in \abs{\mathscr T} - \abs{\mathscr T}_{\mathrm{end}}$, and let $v, \bar v$ be its two children. Then
	$$ \sum _{S \in D_w} (-1)^{\abs{S}} \# \mathbb P (\lambda_S)_L^{w} =  \sum _{S \in D_v} (-1)^{\abs{S}} \#\mathbb P (\lambda_S)_L^{v} + \sum _{S \in D_{\bar v}} (-1)^{\abs{S}} \#\mathbb P (\lambda_S)_L^{\bar v}.$$	
\end{lem}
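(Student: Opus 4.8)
The plan is to reduce the identity to a purely combinatorial statement about the sets $\mathbb P(\lambda_S)_L^v$ and to exploit the admissibility of the $(U,V)$-tree together with the bijection (\ref{eq:bij adding a root}). First I would observe that, by (\ref{eq:Dv}), the index set $D_w$ on the left-hand side splits as a disjoint union $D_v \sqcup D_{\bar v}$, where $v$ is the positive child and $\bar v$ the negative child, and that the bijection $S \mapsto S \cup \{\psi(w)\}$ identifies $D_v$ with $D_{\bar v}$, changing $|S|$ by $1$ and hence changing the sign $(-1)^{|S|}$. So it suffices to show that for each $S \in D_v$, writing $S' = S \cup \{\psi(w)\}$, we have
\begin{align*}
\# \mathbb P(\lambda_S)_L^w - \# \mathbb P(\lambda_{S'})_L^w = \# \mathbb P(\lambda_S)_L^v - \# \mathbb P(\lambda_{S'})_L^{\bar v},
\end{align*}
or, after rearranging, that $\# \mathbb P(\lambda_S)^{w-v}_L = \# \mathbb P(\lambda_{S'})^{w - \bar v}_L$ when combined with the matching of signs; more precisely the cleanest route is to prove, for each fixed $S \in D_v$ with $S' = S \cup \{\psi(w)\}$, the two identities
\begin{align*}
\mathbb P(\lambda_S)^w_L &= \mathbb P(\lambda_S)^v_L \sqcup \mathbb P(\lambda_S)^{w-v}_L, \\
\mathbb P(\lambda_{S'})^w_L &= \mathbb P(\lambda_{S'})^{\bar v}_L \sqcup \mathbb P(\lambda_{S'})^{w-\bar v}_L,
\end{align*}
(which are tautological from the definition of $\mathbb P(\lambda)^{v_1-v_2}_L$ as a set difference, since $\mathbb P(\lambda)^v_L \subset \mathbb P(\lambda)^w_L$), and then to exhibit a bijection between $\mathbb P(\lambda_S)^{w-v}_L$ and $\mathbb P(\lambda_{S'})^{w-\bar v}_L$.

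The key construction for the last bijection should be the following. Recall $v$ is the positive vertex, meaning $\phi(v) - \phi(\bar v) = \psi(w)$, where $\phi(v), \phi(\bar v) \in U$ are two of the vectors $e_1 + \nu_j e_j$ and $\psi(w) = h_{i,j} \in \Phi^+_{D_n}$. A partition $\underline m \in \mathbb P(\lambda_S)^{w-v}_L$ is one that vanishes on $\phi(v_1),\dots,\phi(v_{k-1})$ (where $v_1,\dots,v_{k-1},w$ is the path from $O$ to $w$) but does not vanish on $\phi(v)$; the move sending $\underline m$ to the partition obtained by decreasing $m(\phi(v))$ by one, increasing $m(\phi(\bar v))$ by one, and increasing $m(\psi(w)) = m(h_{i,j})$ by one changes $\Sigma(\underline m)$ by $-\phi(v) + \phi(\bar v) + \psi(w) = 0$ and changes $|\underline m|$ by $-1 + 1 + 1 = +1$... here I need to be careful with the bookkeeping: passing from $\lambda_S$ to $\lambda_{S'} = \lambda_S - \psi(w)$ the ``target'' drops by $\psi(w)$, so the compensating move should instead decrease $m(\phi(v))$ by one and increase $m(\phi(\bar v))$ by one, which changes $\Sigma$ by $-\psi(w)$ as needed and leaves $|\underline m|$ unchanged since $\mathtt b \equiv 1$. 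I would check that this move is well-defined on the relevant subsets (it preserves vanishing on $\phi(v_1),\dots,\phi(v_{k-1})$, it sends ``$m(\phi(v)) \neq 0$'' to ``$m(\phi(\bar v)) \neq 0$'', and the inverse move does the reverse) and that it is a bijection $\mathbb P(\lambda_S)^{w-v}_L \isom \mathbb P(\lambda_{S'})^{w-\bar v}_L$.

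Combining: the left side of the claimed identity equals $\sum_{S \in D_v}(-1)^{|S|}\# \mathbb P(\lambda_S)^w_L + \sum_{S' \in D_{\bar v}}(-1)^{|S'|}\#\mathbb P(\lambda_{S'})^w_L$; using the two tautological disjoint-union decompositions and the fact that $|S'| = |S| + 1$ under the bijection $S \mapsto S'$, the $\mathbb P(\cdot)^{w-v}_L$ and $\mathbb P(\cdot)^{w-\bar v}_L$ contributions cancel in pairs by the bijection just described, leaving exactly $\sum_{S \in D_v}(-1)^{|S|}\#\mathbb P(\lambda_S)^v_L + \sum_{S \in D_{\bar v}}(-1)^{|S|}\#\mathbb P(\lambda_S)^{\bar v}_L$, which is the right-hand side. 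The main obstacle I anticipate is verifying that the compensating move is genuinely a well-defined bijection on the nose — in particular that no positivity constraint ($m \geq 0$) is violated, which is exactly why one restricts to the subsets where $m(\phi(v)) \neq 0$ (resp.\ $m(\phi(\bar v)) \neq 0$) rather than working with all of $\mathbb P(\lambda_S)^w_L$ — and keeping the sign bookkeeping straight between the positive and negative children. Everything else is formal manipulation of the definitions in (\ref{eq:Dv}), (\ref{eq:bij adding a root}), and the definition of $\mathbb P(\lambda)^{v_1 - v_2}_L$.
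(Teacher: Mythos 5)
Your reduction via $D_w = D_v \sqcup D_{\bar v}$, the sign-flipping bijection $S \mapsto S \cup \{\psi(w)\}$, and the tautological decompositions $\mathbb P(\lambda)^w_L = \mathbb P(\lambda)^v_L \sqcup \mathbb P(\lambda)^{w-v}_L$, followed by the compensating move decreasing $m(\phi(v))$ and increasing $m(\phi(\bar v))$, is exactly the paper's proof. The self-correction in the middle lands on the paper's map $f$, and the admissibility conditions justify well-definedness just as you anticipate.
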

\begin{proof}
	We may assume $v$ is positive. In view of (\ref{eq:Dv}) and the bijection (\ref{eq:bij adding a root}), it suffices to show that for each $S\in D_v$ we have
	\begin{align}\label{eq:to show for claim 1}
	\#\mathbb P(\lambda_S) _{L} ^{w-v}  = \#\mathbb P(\lambda_S - \psi(w)) _{L} ^{w-\bar v}.
	\end{align}
	To show this we consider the map
	\begin{align*}
	f: \mathbb P(\lambda_S) _{L} ^{w-v}   \To \mathbb P(\lambda_S - \psi(w)) _{L} ^{w-\bar v} \end{align*} defined by $$f (\underline m)(\beta) := \begin{cases} m(\beta) -1,& \text{if }\beta = \phi (v) , \\
	m(\beta) +1,&  \text{if } \beta = \phi (\bar v), \\
	m(\beta) , &  \text{else}.
	\end{cases} $$
	One easily checks that $f$ is well-defined and bijective, using the admissibility of $(\mathscr T, O, \phi,\psi)$. This proves (\ref{eq:to show for claim 1}), and hence the lemma.
\end{proof}

\begin{lem}\label{lem:bdry case}
	Let $\abs{\cdot}$ be the norm on $ \mathbb R^n$ as in (\ref{eq:defn of norm}). Fix a real number $M > n /2$.  Let $(\nu_2,\cdots, \nu_n) \in \set{\pm 1} ^{n-1}$, and $t\in \mathbb N$. Let $\lambda\in \mathrm{span}_{\Z} (\Phi_{D_n})$, such that
	\begin{align}\label{eq:close to lambda t}
	\abs{\lambda - (6t, 2t\nu_2,2 t \nu_3\cdots, 2t\nu_n)} < t/M.
	\end{align} Let $\underline m \in \mathbb P(\lambda)$. Assume there is a subset $I \subset \set{2,\cdots, n}$ of cardinality $n-2$, such that $m(e_1 + \nu _ i e_i) = 0$ for all $i \in I$. Then $$\abs{\underline m} \geq  (n+4 - \frac{n}{2M}) t . $$
\end{lem}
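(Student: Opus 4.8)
The plan is to estimate $\abs{\underline m}=\sum_{\beta\in\Phi^+_{D_n}}m(\beta)$ (recall that in \S\ref{subsec:combinatorics for Dn} we have $\mathtt b\equiv 1$) by partitioning the positive roots of $D_n$ into those that involve the first coordinate $e_1$ and those that do not, and by using the vanishing hypothesis $m(e_1+\nu_ie_i)=0$, $i\in I$, in an essential way. First I would observe that the positive roots meeting $e_1$ are exactly the $e_1\pm e_j$ with $2\le j\le n$, each with $e_1$-coefficient $1$; hence the first coordinate of $\lambda$ equals the total multiplicity $P$ of these ``type-$1$'' roots in $\underline m$, and (\ref{eq:close to lambda t}) forces $P>6t-t/M$.

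Next I would examine the coordinates indexed by $I$. For $j\in I$, the root $e_1-\nu_je_j$ is the unique $e_1$-root on the $e_j$-axis not killed by the hypothesis (and it is positive, since $1<j$); its $e_j$-coefficient is $-\nu_j$, so, setting $a_j:=m(e_1-\nu_je_j)\ge 0$, the type-$1$ roots contribute exactly $-a_j$ to $\nu_j\lambda_j$. Letting $T_j$ denote the total multiplicity in $\underline m$ of the ``type-$2$'' roots (those not meeting $e_1$) that meet $e_j$, each such root has $e_j$-coefficient $\pm1$, so the type-$2$ roots together contribute at most $T_j$ to $\nu_j\lambda_j$; thus $\nu_j\lambda_j\le -a_j+T_j$. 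Combining this with $\nu_j\lambda_j>2t-t/M$, which follows from (\ref{eq:close to lambda t}), gives $T_j>2t-t/M+a_j$ for every $j\in I$.

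Finally I would convert these $n-2$ inequalities into a lower bound on the total multiplicity $Q$ of type-$2$ roots in $\underline m$. Since every type-$2$ root meets exactly two coordinates, both lying in $\{2,\dots,n\}$, it contributes to at most two of the quantities $T_j$ with $j\in I$; hence $2Q\ge\sum_{j\in I}T_j>(n-2)(2t-t/M)$, i.e.\ $Q>(n-2)(t-t/(2M))$. Adding this to $P>6t-t/M$ and using $\abs{\underline m}=P+Q$ yields $\abs{\underline m}>(6t-t/M)+(n-2)(t-t/(2M))=(n+4-\tfrac{n}{2M})t$, which is the claimed bound.

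The step I expect to require the most care is the middle one: one must see that the vanishing hypothesis forces the only available $e_1$-root on each axis $j\in I$ to push $\nu_j\lambda_j$ in the \emph{wrong} direction, producing the deficit $-a_j$, and then in the final step one must avoid summing the $T_j$ naively — the observation that each type-$2$ root meets only two coordinates is exactly what controls the overlaps between the various $T_j$. Everything else is routine manipulation with the coordinatewise consequences of (\ref{eq:close to lambda t}).
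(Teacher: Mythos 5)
Your proof is correct and is essentially the paper's argument reorganized as a direct estimate rather than a contradiction: the paper forms $\underline m'$ from $\underline m$ by zeroing out all $m(e_1\pm e_i)$ (so your $P$ is $\lambda_1$ and your $Q$ is $\abs{\underline m'}$), and your double-counting bound $2Q\ge\sum_{j\in I}T_j$ is precisely the inequality $2\abs{\underline m'}\ge\abs{\Sigma(\underline m')}$ read coordinate-by-coordinate on $I$. The deficit $-a_j$ produced by the vanishing hypothesis on the $e_1$-roots is the same key observation in both proofs.
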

\begin{proof}
	Assume the contrary. Let $j_0$ be the unique element of $ \set{2,\cdots,n} - I$. Define $\underline m' \in \mathbb P$ by:
	$$\forall \beta \in \Phi^+_{D_n}, ~  m' (\beta) :=\begin{cases}
	0, & \text{if }\beta \in \set{e_1 \pm e_i \mid  2 \leq i \leq n } ,\\
	m(\beta), & \text{else}.
	\end{cases}$$
	Write $\lambda = \lambda_1 e_1 + \sum _{i=2}^n \lambda_i \nu_i e_i, $ with $\lambda_1,\cdots, \lambda_n \in \mathbb R$. From our assumption (\ref{eq:close to lambda t}), it easily follows that each $\lambda_i > 0$. We have $$\Sigma (\underline m') =  \sum _{i\in I} (\lambda_i +m(e_1 -\nu_i e_i)) \nu_i e_i + \lambda'_{j_0}\nu_{j_0}e_{j_0},$$ with some $\lambda' _{j_0}\in \mathbb R$.
	Obviously $ \abs{\underline m'} = \abs{\underline m} - \lambda _1,$ so we have
	$$ 2 \abs{\underline m'} = 2 \abs{\underline m} - 2 \lambda_1 \geq \abs{\Sigma (\underline m')} \geq   \sum _{i\in I} \lambda _i+m (e_1 -\nu_i e_i), $$ from which we get:
	\begin{align*}
	0 \leq  & \sum _{i\in I}m (e_1 -\nu_i e_i) \leq 2 \abs{\underline m} - 2 \lambda_1 - \sum _{i\in I} \lambda_i  \\
	& < 2 (n+4 - \frac{n}{2M} ) t - 2 (6t- t/M) - \abs{I} (2t -t/M) = 0,
	\end{align*}
	a contradiction.
\end{proof}
We are now ready to prove Proposition \ref{prop:cancellation for Dn odd}.
\begin{proof}[Proof of Proposition \ref{prop:cancellation for Dn odd}] As in the proposition, we fix $(\nu_2,\cdots,\nu_n)$. We let $\lambda_S : = \lambda_t - \sum _{\beta \in S} \beta$, for any subset $S \subset \Phi^+_{D_n}$. Let $U:= \set{e_1 +\nu_j e_j \mid 2\leq j \leq n}$, and let $V := \Phi^+_{D_n}$. Fix an admissible $(U, V)$-tree $(\mathscr T, O, \phi,\psi)$ as in Proposition \ref{prop:const of tree}. By repeatedly applying Lemma \ref{lem:claim1} (with respect to $\lambda : = \lambda _t$), we obtain
	\begin{align*}
	\sum _{S\subset \Phi^+_{D_n}} (-1) ^{\abs{S}} \#\mathbb P(\lambda_S) _L  & = \sum _{S\in D_O} (-1) ^{\abs{S}} \#\mathbb P(\lambda_S)^O _L  \\* &  = \sum _{v \in \abs{\mathscr T}_{\mathrm{end}}} \sum _{S\in D_v} (-1) ^{\abs{S}} \#\mathbb P(\lambda_S)^v _L .
	\end{align*}
	Hence the proposition is proved once we show the following claim:
	
	\textbf{Claim.} When $t$ is sufficiently large, for each $L \in \Z \cap [0,3.5t] $ and for each $v \in \abs{\mathscr T}_{\mathrm{end}}$, we have
	$$ \sum _{S\in D_v} (-1) ^{\abs{S}} \#\mathbb P(\lambda_S)^v _L = 0. $$
	
	To prove the claim, we fix a real number $M > n$. Let $\abs{\cdot}$ be the norm on $\mathbb R^n$ as in (\ref{eq:defn of norm}). When $t$ is sufficiently large, we have
	$$\max _{S \subset \Phi^+_{D_n}}\abs{ \sum_{\beta \in S} \beta} < t/M.$$ Thus we can apply Lemma \ref{lem:bdry case} to $
	\lambda : = \lambda_S$ for each $S \subset \Phi^+_{D_n} $. Using the admissibility of $(\mathscr T, O, \phi,\psi)$, we know that any $\underline m \in \mathbb P(\lambda_S) ^v _L$ satisfies the hypothesis in Lemma \ref{lem:bdry case} about the vanishing of $m(e_1 +\nu_i e_i)$. Thus by that lemma, $\mathbb P(\lambda_S) ^v _L$ is non-empty only if $$L \geq (n+4 - \frac{n}{2M}) t > (n+3.5)t. $$
	This proves the claim. The proof of the proposition is complete.
\end{proof}

\subsection{The case Proper-I} We treat type $D_n$ with $ n \geq 5$ odd, and $\theta = \id$, $b = b_1 $ or $b_3$. See \S \ref{subsubsec:Dn}. By symmetry we only need to consider $b= b_1$. Recall in this case $\Lambda (b_1)  - \Lambda (b_1) _{\mathrm{good}} = \set{\lambda _{1,\mathrm{bad}}}$, where $$\lambda_{1,\mathrm{bad}} = (\frac{3}{2}, \frac{1}{2},\cdots, \frac{1}{2}, - \frac{1}{2}),$$ and we have $\mathrm{def}_G(b_1) = \frac{n+3}{2}$.
\begin{prop}\label{prop:proper-I}
	The bound (\ref{eq:key est 1}) holds for $\lambda =  \lambda _{1,\mathrm{bad}}$.
\end{prop}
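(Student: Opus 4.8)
The plan is to deduce everything from the combinatorial cancellation of Proposition~\ref{prop:cancellation for Dn odd}. By the divisibility conventions we may restrict attention to $s=4t$ with $t$ arbitrarily large. Since $\theta=\id$ we have $\hatS=\widehat T$ and $\lambda_{1,\mathrm{bad}}^{(s)}=s\lambda_{1,\mathrm{bad}}=(6t,2t,\dots,2t,-2t)$, which is exactly the element $\lambda_t$ of Proposition~\ref{prop:cancellation for Dn odd} for the sign vector $(\nu_2,\dots,\nu_n)=(1,\dots,1,-1)$. Since $\tfrac12\mathrm{def}_G(b_1)=\tfrac{n+3}{4}$, we have $s\cdot\tfrac12\mathrm{def}_G(b_1)=(n+3)t<(n+3.5)t$. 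Moreover $\mathfrak M^0_{\lambda_t}(q_1^{-1})$ is a polynomial in $q_1^{-1}$ of degree $O(t)$ whose coefficients are bounded by a polynomial in $t$ (each coefficient being a bounded alternating sum of quantities $\#\mathbb P(\eta)_L$ with $L=O(t)$). Hence it suffices to prove that for $t$ large the coefficient of $q_1^{-L}$ in $\mathfrak M^0_{\lambda_t}(q_1^{-1})$ vanishes for every integer $L\le(n+3.5)t$: the remaining tail is then $O(\mathrm{poly}(t)\,q_1^{-(n+3.5)t})=O(q_1^{-s(\frac12\mathrm{def}_G(b_1)+a)})$ for some $a>0$, as $q_1>1$ is fixed.

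The next step is to rewrite $\mathfrak M^0_{\lambda_t}$ as an alternating sum over subsets of positive roots. Since $G$ is split of type $D_n$, every coroot is of type I with $\mathtt b\equiv1$, so $\mathcal P=\mathcal P_{\mathrm{Kos}}$ by Proposition~\ref{prop:interpretation}(1), and $P(E^{-1},\mathbf q)=\prod_{\beta\in\Phi^+}(1-\mathbf q e^{-\beta})^{-1}$ by Lemma~\ref{lem:factorization}. Comparing transition matrices gives $\mathfrak M^0_\mu=\sum_{\lambda'}n^{\lambda'}_\mu t^0_{\lambda'}$, and by the Kato--Lusztig formula (Theorem~\ref{thm:KL}) together with Definitions~\ref{defn:K} and~\ref{defn:tau} we have $t^0_{\lambda'}(\mathbf q^{-1})=K_{\lambda',0}(\mathbf q^{-1})=\big[e^0\big]\tau_{\lambda'}(\mathbf q^{-1})$, where $[e^0]$ denotes the coefficient of $e^0$. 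Using the linearity of $J$ and $\tau_{\lambda'}(\mathbf q)=J(e^{\lambda'})P(E^{-1},\mathbf q)$, this yields $\mathfrak M^0_\mu(\mathbf q^{-1})=\big[e^0\big]\big(J(\sum_{\lambda'}n^{\lambda'}_\mu e^{\lambda'})\,P(E^{-1},\mathbf q^{-1})\big)$. Now $m_\mu=\sum_{\lambda'}n^{\lambda'}_\mu\,\tau_{\lambda'}(1)=\sum_{\lambda'}n^{\lambda'}_\mu\chi_{V_{\lambda'}}$, and the Weyl character and denominator formulas give $J(\chi_{V_{\lambda'}})=J(e^{\lambda'})$; hence $J(\sum_{\lambda'}n^{\lambda'}_\mu e^{\lambda'})=J(m_\mu)=m_\mu\prod_{\beta\in\Phi^+}(1-e^{-\beta})$, the last equality by $W_0$-invariance of $m_\mu$ and the Weyl denominator formula $\prod_{\beta\in\Phi^+}(1-e^{-\beta})=\sum_{w\in W_0}(-1)^{\ell(w)}e^{w\rho^\vee-\rho^\vee}$. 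Expanding the two product factors gives, for every $\mu\in X_*(T)^+$,
$$\mathfrak M^0_\mu(\mathbf q^{-1})=\Big[e^0\Big]\Big(m_\mu\prod_{\beta\in\Phi^+}\frac{1-e^{-\beta}}{1-\mathbf q^{-1}e^{-\beta}}\Big)=\sum_{\nu\in W_0\mu}\ \sum_{S\subseteq\Phi^+}(-1)^{|S|}\,\mathcal P_{\mathrm{Kos}}\Big(\nu-\textstyle\sum_{\beta\in S}\beta,\ \mathbf q^{-1}\Big).$$

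Taking $\mu=\lambda_t$ and extracting the coefficient of $q_1^{-L}$ gives
$$[q_1^{-L}]\,\mathfrak M^0_{\lambda_t}(q_1^{-1})=\sum_{\nu\in W_0\lambda_t}\ \sum_{S\subseteq\Phi^+_{D_n}}(-1)^{|S|}\,\#\mathbb P\Big(\nu-\textstyle\sum_{\beta\in S}\beta\Big)_L.$$
Only the $\nu\in W_0\lambda_t\cap R^+$ contribute, since $\sum_{\beta\in S}\beta\in R^+$ forces $\nu\in R^+$ whenever $\mathbb P(\nu-\sum_{\beta\in S}\beta)\neq\emptyset$. For each such $\nu$ — a signed permutation of $(6t,2t,\dots,2t,-2t)$ with an odd number of minus signs — I will bring the inner sum into the normal form of Proposition~\ref{prop:cancellation for Dn odd} by applying a coordinate symmetry of the $D_n$ root system that moves the coordinate of absolute value $6t$ into the first slot with a positive sign; this leaves the grading, the cardinalities $\#\mathbb P(\cdot)_L$, and the combinatorial shape of the sum unchanged. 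Proposition~\ref{prop:cancellation for Dn odd} then applies (its bound $(n+3.5)t$ being independent of the sign vector, and valid for $t$ large), so the inner sum vanishes for all integers $L\in[0,(n+3.5)t]$. Hence $[q_1^{-L}]\mathfrak M^0_{\lambda_t}(q_1^{-1})=0$ for all such $L$, and by the first paragraph the proof is complete.

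The main obstacle is Proposition~\ref{prop:cancellation for Dn odd} itself, i.e.\ the \emph{graded} cancellation: the ungraded identity $\sum_{S}(-1)^{|S|}\#\mathbb P(\lambda_t-\Sigma(S))=0$ is immediate from the Weyl denominator formula, but the vanishing for each fixed $L$ throughout the long range $[0,(n+3.5)t]$ is precisely the delicate input supplied by the binary-tree machinery of \S\ref{subsec:combinatorics for Dn}; the key numerical point is that this range comfortably exceeds $s\cdot\tfrac12\mathrm{def}_G(b_1)=(n+3)t$, which is what lets the argument succeed even though $|\lambda_{1,\mathrm{bad}}|$ is just below the threshold required by the naive norm estimate of \S\ref{subsubsec:classic}. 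A secondary point requiring care is the reduction of the non-dominant orbit elements $\nu\in W_0\lambda_t\cap R^+$ to the normal form of that proposition.
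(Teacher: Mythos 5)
The overall scaffolding of your argument matches the paper's: restrict to $s=4t$, rewrite $\mathfrak M^0_{\lambda^{(s)}}(\mathbf q^{-1})$ as the double alternating sum $\sum_{\nu\in W_0\lambda^{(s)}}\sum_{S\subset\Phi^+_{D_n}}(-1)^{|S|}\mathcal P_{\mathrm{Kos}}(\nu-\Sigma(S),\mathbf q^{-1})$ (your derivation via $[e^0]\bigl(m_\mu\prod_\beta\frac{1-e^{-\beta}}{1-\mathbf q e^{-\beta}}\bigr)$ is a slicker route to the same expansion the paper obtains from (\ref{eq:defn of M}) by combining the van Leeuwen and Kato--Lusztig formulas with the Weyl denominator identity), note that $(n+3.5)t$ comfortably exceeds $s\cdot\tfrac12\mathrm{def}_G(b_1)=(n+3)t$, control the coefficients by a polynomial in $t$, and feed the inner sums into Proposition~\ref{prop:cancellation for Dn odd}. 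All of that is sound.

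The gap is in the claim that for an arbitrary $\nu\in W_0\lambda_t\cap R^+$ you can ``bring the inner sum into the normal form'' of Proposition~\ref{prop:cancellation for Dn odd} by applying a signed permutation that moves the $\pm 6t$ coordinate into the first slot with a positive sign. The quantities $\#\mathbb P(\cdot)_L$ and the alternating sum $\sum_{S\subset\Phi^+_{D_n}}(-1)^{|S|}\#\mathbb P(\nu-\Sigma(S))_L$ are defined relative to the fixed system of positive roots $\Phi^+_{D_n}=\{e_i\pm e_j:i<j\}$, and this set is \emph{not} preserved by general signed permutations. The only nontrivial element of the hyperoctahedral group that preserves $\Phi^+_{D_n}$ is the diagram automorphism $e_n\mapsto -e_n$; in particular a transposition that moves the large coordinate out of position $j$ into position~$1$ changes $\Phi^+_{D_n}$, and hence changes the Kostant-partition counts, so the inner sum is genuinely not invariant under the symmetry you propose. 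Proposition~\ref{prop:cancellation for Dn odd}, and the binary-tree machinery behind it (in particular the explicit choice $U=\{e_1+\nu_j e_j:2\le j\le n\}\subset\Phi^+_{D_n}$ in Proposition~\ref{prop:const of tree}), really is tied to the large coordinate being in the first slot with positive sign. You cannot transport the cancellation by symmetry; you would need either a separate tree construction for each position of the large coordinate or a direct argument that these other $\nu$ do not contribute.

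For contrast, the paper handles the orbit elements $\nu=w''\lambda^{(s)}$ with the $\tfrac32 s$ coordinate not in position~1 with positive sign (i.e.\ ``$j\neq1$ or $\varepsilon=-1$'') by asserting outright that $\zeta_s+w\rho^\vee-\rho^\vee\notin R^+$ for all $w\in W_0$ and $s\gg0$, so those inner sums are identically zero. That is a different mechanism from yours, not a symmetry reduction. You should be aware, however, that this step of the paper also appears to need further justification: already for $n=5$, $s=4t$, the orbit element $\zeta_s=(2t,2t,2t,2t,-6t)$ (reached from $\lambda^{(s)}=(6t,2t,2t,2t,-2t)$ by a transposition of positions $1,5$ followed by two sign flips, all inside $W_0(D_5)$) has $j=5$, $\varepsilon=-1$, yet satisfies $\zeta_s=2t(e_1-e_5)+2t(e_2-e_5)+2t(e_3-e_5)+t(e_4-e_5)+t(e_4+e_5)\in R^+$, so $\mathcal P(\zeta_s,q_1^{-1})\neq0$ and the asserted trivial vanishing fails at $w=e$. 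The naive norm bound for this $\zeta_s$ only gives $\mathcal P(\zeta_s,q_1^{-1})=O(\mathrm{poly}(t)\,q_1^{-7t})$, which is weaker than the required $O(q_1^{-(8+4a)t})$. So both your proposal and the paper's proof leave the non-normal-form $\nu\in R^+$ unaccounted for, but by different arguments and for different reasons: yours because of a false invariance claim, the paper's because of an unproved non-membership claim.
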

\begin{proof} The proof uses the results from \S \ref{subsec:combinatorics for Dn}. By the formula (\ref{eq:defn of M}), it suffices to show for each $w'' \in W^1 = W_0$, that
	\begin{align}\label{desired est Dn odd bad} \sum _{w \in W^1 =W_0} (-1)^{\ell_1 (w)}
	\mathcal P ( w'' \lambda_{1,\mathrm{bad}} ^{(s)} + w \rho ^{\vee}- \rho^{\vee}, q_1^{-1}) = O(q_1^{-s (\frac{1}{2}
		\mathrm{def}_G(b) +a ) })
	\end{align} for some $a >0$. Here we have made the change of variable $ww'\mapsto w''$ in (\ref{eq:defn of M}), and have used the fact that $e_0 (w'\lambda_{1,\mathrm{bad}} ^{(s)}) \equiv 0$ for all $w'\in W^1$, as long as $s\gg 0$.
	
	Fix $w''$, and write  $\zeta_s : = w''\lambda _{1,\mathrm{bad}}^{(s)}$.
	Let $\abs{\cdot}$ be the norm on $\mathbb R^n$ defined in (\ref{eq:defn of norm}).  Since $W_0 \subset  \set{\pm 1} ^{n} \rtimes S_n$, there exist $1 \leq j \leq n , ~ \varepsilon \in \set{\pm 1}$, and $\underline \nu = (\nu_2,\cdots, \nu_n) \in \set{\pm 1} ^{n-1},$ such that $$  \zeta_s =  ( \frac{1}{2} s \nu_2, \frac{1}{2} s \nu_3 , \cdots, \frac{1}{2} s \nu_j , \frac{3}{2} s \varepsilon, \frac{1}{2} s \nu_{j+1}, \cdots , \frac{1}{2} s \nu_n), $$
	where $\frac{3}{2} s \varepsilon$ is at the $j$-th place.

	Assume either $j \neq 1$ or $\varepsilon = -1$. Then for $s \gg 0$ and all $w\in W_0$, we have $\zeta_s + w \rho ^{\vee} - \rho ^{\vee} \notin R^+$, and so $\mathcal P(\zeta_s + w \rho ^{\vee} - \rho ^{\vee}, q_1 ^{-1}) =0$. We are done in this case.
	
	Hence we assume $j=1$ and $\varepsilon = 1$. Assume without loss of generality that $s = 4t$ for $t \in \mathbb N$. As an easy application of the Weyl character formula, we know that any function $\mathscr P$ from $Y^*$ to any abelian group satisfies
	$$\sum_{w \in W_0} (-1)^{\ell_1(w)} \mathscr P(w \rho^{\vee} -  \rho^{\vee}) = \sum _{S \subset \Phi^{\vee, +} } (-1)^{\abs{S}} \mathscr P( - \sum _{\beta \in S} \beta).$$ In particular, the left hand side of (\ref{desired est Dn odd bad}) is equal to
	$$ \sum _{ S \subset \Phi ^{\vee, + }} (-1) ^{\abs{S}} \mathcal P (\zeta_s - \sum _{\beta \in S} \beta, q_1 ^{-1}).$$ By Proposition \ref{prop:interpretation} (1) and Proposition \ref{prop:cancellation for Dn odd}, the above is equal to
	\begin{align}\label{eq:after cancel}
	\sum _{L \in \Z,~  L > (n+3.5)t} q_1 ^{-L}  \sum _{S\subset \Phi ^{\vee, + }} (-1) ^{\abs{S}} \# \mathbb P(\zeta_s - \sum _{\beta \in S} \beta) _L.
	\end{align}
	By the same argument as in the proof of Proposition \ref{prop:est for unram}, the expression
	$$\abs{\sum _{L \in \Z_{\geq 0}} \sum _{S\subset \Phi ^{\vee, +}} (-1) ^{\abs{S}} \# \mathbb P(\zeta_s - \sum _{\beta \subset S} \beta) _L  } $$
	is of polynomial growth in $s$ (or in $t$). Hence (\ref{eq:after cancel}) is bounded by $O(q_1 ^{-(n +3.4)t})$. Since $s\cdot \mathrm{def}_G(b)/2 = (n+3)t $, the desired bound (\ref{desired est Dn odd bad}) follows.
\end{proof}
\subsection{The case Proper-II} We now treat type $E_6, \theta = \id , b = b_1$ or $b_2$. See \S \ref{subsec:E6 split}. By symmetry, it suffices to treat the case of $b_1$. Recall in this case $$\Lambda(b_1)- \Lambda (b_1) _{\mathrm{good}}=\{\varpi_5,\varpi_4+\varpi_1,\varpi_2+\varpi_6,2\varpi_6\} , $$ and we have $\mathrm{def}_G(b_1)=4$.
\begin{prop}\label{prop:bad for E6}
	The bound (\ref{eq:key est 2}) holds for all $\lambda\in \{\varpi_4+\varpi_1,\varpi_2+\varpi_6,2\varpi_6\}$. In other words, in Proposition \ref{key-bound} (\ref{item:2}) (for $b = b_1$) we may take $\lambda_{\mathrm{bad}}$ to be $\varpi_5$. \end{prop}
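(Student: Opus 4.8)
\emph{Plan.} The plan is to follow the template of the proof of Proposition \ref{prop:proper-I}, now using the explicit presentation of the $E_6$ root system in \S\ref{subsec:E6 split}. Since $\theta=\id$ here we have $W^1=W_0$, $\hatS=\widehat T$, and $\lambda^{(s)}=s\lambda$; since $G$ is split, the positive coroots coincide with the positive roots and $\mathtt b\equiv 1$, so $\mathcal P(\cdot,\mathbf q)=\mathcal P_{\mathrm{Kos}}(\cdot,\mathbf q)$ by Proposition \ref{prop:interpretation}(1); and $\mathrm{def}_G(b_1)=4$, so the target estimate is $\mathfrak M^0_{s\lambda}(q_1^{-1})=O(q_1^{-s(2+a)})$ for some $a>0$. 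By the symmetry noted above it suffices to treat $b=b_1$.

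First I would unwind (\ref{eq:defn of M}): for $s\gg 0$ the factor $1-e_0(w'(s\lambda))$ equals $1$ whenever the associated $\mathcal P$-term is nonzero (exactly as in Proposition \ref{prop:proper-I}), and after the change of variable $ww'\mapsto w''$ one obtains, up to a harmless positive constant, $\mathfrak M^0_{s\lambda}(q_1^{-1})=\sum_{w''\in W_0}T_{w''}(s)$ with
\[
T_{w''}(s)=\sum_{w\in W_0}(-1)^{\ell(w)}\,\mathcal P_{\mathrm{Kos}}\!\bigl(s(w''\lambda)+w\rho^\vee-\rho^\vee,\;q_1^{-1}\bigr).
\]
It then suffices to bound each $T_{w''}(s)$ by $O(q_1^{-s(2+a)})$ with $a>0$ independent of $w''$. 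Two remarks dispose of most $w''$: since $\lambda\in\Lambda(b_1)$ we have $w''\lambda\equiv\varpi_1\pmod{\widehat Q}$, and $\varpi_1$ has order $3$ in $X^*(\widehat T)/\widehat Q\cong\pi_1(G)$, so $s(w''\lambda)+w\rho^\vee-\rho^\vee\notin\widehat Q\supseteq R^+$ and hence $T_{w''}(s)=0$ unless $3\mid s$; and if $w''\lambda$ does not lie in the closed real cone $\overline{R^+_{\mathbb R}}$ generated by the positive coroots, then the argument of $\mathcal P_{\mathrm{Kos}}$ is not in $R^+$ for $s\gg 0$ and every $w$, so again $T_{w''}(s)=0$. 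Thus only the finitely many $w''$ with $w''\lambda\in\overline{R^+_{\mathbb R}}$ require work, and only for $3\mid s$.

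For such $w''$, applying the Weyl--denominator identity as in Proposition \ref{prop:proper-I} gives $T_{w''}(s)=\sum_{S\subseteq\Phi^{\vee,+}}(-1)^{|S|}\mathcal P_{\mathrm{Kos}}(s(w''\lambda)-\sum_{\beta\in S}\beta,\;q_1^{-1})=\sum_{L\ge 0}c_L(s)\,q_1^{-L}$, where $c_L(s)=\sum_S(-1)^{|S|}\#\mathbb P(s(w''\lambda)-\sum_{\beta\in S}\beta)_L\in\mathbb Z$. By the polynomial-growth argument used in the proof of Proposition \ref{prop:est for unram}, $\sum_L|c_L(s)|$ grows at most polynomially in $s$, so it is enough to show $c_L(s)=0$ for all $L\le(2+a)s$ and $s\gg 0$. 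For most of the surviving $w''$ this follows already from the norm method of \S\ref{subsubsec:classic}, but now using the sharper linear-programming lower bound
\[
\min_{\underline m\in\mathbb P(v)}|\underline m|\;\ge\;\max\bigl\{\langle v,d\rangle:\langle\beta,d\rangle\le 1\ \text{for all}\ \beta\in\Phi^+\bigr\},\qquad v=s(w''\lambda)-\textstyle\sum_{\beta\in S}\beta,
\]
(or, concretely, a suitable fundamental coweight divided by the corresponding mark of the highest root): one checks that every Kostant partition of every such $v$ has $|\underline m|>(2+a)s$ for $s\gg 0$, so the relevant $\mathbb P(v)_L$ are empty. The only $w''$ not covered this way are those with $w''\lambda\in\overline{R^+_{\mathbb R}}$ but linear-programming value $\le 2$; for these I would establish the vanishing of the low-degree $c_L(s)$ by an explicit cancellation in the alternating sum over $S$, pairing Kostant partitions that differ by adding or removing a single root, in the spirit of Lemma \ref{lem:claim1}. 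Because we work in the single root system $E_6$ in explicit coordinates and with only the short list $\lambda\in\{\varpi_4+\varpi_1,\ \varpi_2+\varpi_6,\ 2\varpi_6\}$ and the corresponding orbit elements $w''\lambda$, this last step should be doable by direct inspection rather than the general machinery of \S\ref{subsec:combinatorics for Dn}.

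I expect the main obstacle to be precisely this last step: pinning down the exact list of $w''\in W_0$ with $w''\lambda\in\overline{R^+_{\mathbb R}}$ for which the norm/linear-programming bound falls short of the required value $2$, and then verifying the cancellation of the low-degree coefficients $c_L(s)$ for each of those $w''$. Should a finite check over the $W_0$-orbits of $\varpi_4+\varpi_1$, $\varpi_2+\varpi_6$ and $2\varpi_6$ reveal that the linear-programming value already exceeds $2$ for every $w''$ with $w''\lambda\in\overline{R^+_{\mathbb R}}$, then no cancellation is needed and the proposition reduces entirely to the norm-method template of \S\ref{subsubsec:classic}.
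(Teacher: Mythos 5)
Your plan matches the paper's proof route. The paper realizes your linear-programming bound by the explicit semi-norm $|v|'=\max_{i\neq j;\,k,l}|x_{3i+k}-x_{3j+l}|$ on $\mathbb R^9=\mathbb R^3\oplus\mathbb R^3\oplus\mathbb R^3$ (largest difference between coordinates lying in distinct $\mathbb R^3$-blocks), which satisfies $|\alpha|'=1$ for every root $\alpha$, and the finite check you flag as the main obstacle goes through: listing $H$-orbit representatives for the three orbits $W_0\lambda$, where $H=(S_3\times S_3\times S_3)\rtimes C_3$ preserves $|\cdot|'$, one verifies $|\mu|'\geq 7/3>2=\tfrac12\mathrm{def}_G(b_1)$ for all $\mu\in W_0\lambda$, so the cancellation fallback you prepared is not needed for these three $\lambda$.
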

\begin{proof} We define a function $\abs{\cdot}': \mathbb R^9 \to \mathbb R_{\geq 0}$ in the following way. For any $v = \sum_{i=1}^9x_ie_i \in \mathbb R^9$, we define
	$$|v|' : =\max_{\substack{i,j\in\{0,1,2\},i\neq j;\\ k,l\in\{1,2,3\}}}|x_{3i+k}-x_{3j+l}|. $$
	In other words, we think of $\mathbb{R}^9$ as $(\mathbb{R}^3)^3$, and we take the largest difference between a coordinate in one factor of $\mathbb{R}^3$ and a coordinate in a different factor. Then $\abs{\cdot}'$ is a semi-norm, i.e., it is compatible with scalar multiplication by $\mathbb R$ and satisfies the triangle inequality. Note that $\abs{\cdot}'$ is not $W_0$-invariant.
	By the explicit description of the roots, we have $|\alpha|'=1$ for all positive roots $\alpha$.
	
	\textbf{Claim.} $|\mu|'\geq 7/3$ for all $\mu\in W_0\lambda$.
	
	We prove the claim. We first record explicitly the $W_0$-orbit of $\lambda$. To state it we need some notation. Let $C_3$ be the cyclic group of order $3$ with a fixed generator $c$. We let $C_3$ act on $S_3\times S_3\times S_3$ via $$c:(\sigma_1,\sigma_2,\sigma_3)\longmapsto (\sigma_2,\sigma_3,\sigma_1).$$ Let $H$ denote the semi-direct product $(S_3\times S_3\times S_3)\rtimes C_3$. Then we have an action of $H$ on $\mathbb{R}^9$, where $S_3\times S_3\times S_3$ acts naturally on the coordinate indices, and $c\in C_3$ acts via $$c:(x_1,x_2,x_3;x_4,x_5,x_6;x_7,x_8,x_9)\longmapsto (x_4,x_5,x_6;x_7,x_8,x_9;x_1,x_2,x_3).$$
	
	For $\lambda=\varpi_2+\varpi_6$, its $W_0$-orbit is given by the union of the $H$-orbits of the following 7 vectors:
	\begin{align*}&
	(2,-1,-1;\frac{2}{3},-\frac{1}{3},-\frac{1}{3};\frac{1}{3},\frac{1}{3},-\frac{2}{3}), && (\frac{2}{3},-\frac{1}{3},-\frac{1}{3};\frac{1}{3},\frac{1}{3},-\frac{2}{3};3,3,-6), \\ &  (\frac{4}{3},\frac{1}{3},-\frac{5}{3};1,0,-1;\frac{2}{3},-\frac{1}{3},-\frac{1}{3}),&& (\frac{1}{3},\frac{1}{3},\frac{2}{3};1,0,-1;\frac{5}{3},-\frac{1}{3},-\frac{4}{3}),  \\ &  (\frac{4}{3},\frac{1}{3},-\frac{5}{3};0,0,0;\frac{2}{3},\frac{2}{3},-\frac{4}{3}), && (\frac{4}{3},-\frac{2}{3},-\frac{2}{3};0,0,0;\frac{5}{3},-\frac{1}{3},-\frac{4}{3}), \\ & (\frac{4}{3},-\frac{2}{3},-\frac{2}{3};1,0,-1;\frac{2}{3},\frac{2}{3},-\frac{4}{3}).
	\end{align*}

	For $\lambda=\varpi_1+\varpi_4$, its $W_0$-orbit is the union of the $H$-orbits of the following 4 vectors:
	\begin{align*} &
	(\frac{4}{3},-\frac{1}{3},-\frac{5}{3};0,0,0;\frac{2}{3},-\frac{1}{3},-\frac{1}{3}), &&  (\frac{1}{3},\frac{1}{3},-\frac{2}{3};0,0,0;\frac{5}{3},-\frac{1}{3},-\frac{4}{3}), \\  &  (\frac{4}{3},-\frac{2}{3},-\frac{2}{3};1,0,-1;\frac{2}{3},-\frac{1}{3},-\frac{1}{3}),&&  (\frac{1}{3},\frac{1}{3},-\frac{2}{3};1,0,-1;\frac{2}{3},\frac{2}{3},-\frac{4}{3}).
	\end{align*}
	
	For $\lambda=2\varpi_6$, its $W_0$-orbit is the $H$-orbit of $$(\frac{4}{3},-\frac{2}{3},-\frac{2}{3};0,0,0;\frac{2}{3},\frac{2}{3},-\frac{4}{3}).$$
	
	One sees easily that $\abs{\cdot}'$ of all the above vectors are $\geq 7/3.$ Since $\abs{\cdot}'$ is invariant under the action of $H$, it follows that $|\mu|'\geq 7/3$ holds for all  $\mu\in W_0 \lambda$. The claim is proved.
	
	Based on the claim, we prove (\ref{eq:key est 2}) for $\lambda \in \{\varpi_4+\varpi_1,\varpi_2+\varpi_6,2\varpi_6\} $, using an argument
	similar to \S \ref{subsubsec:classic}.
	By the formula (\ref{eq:defn of M}), it suffices to show for each $w,w'\in W_0$ that
	$$\mathcal P ( \psi_s , q_1^{-1}) = O(q_1^{-s (\frac{1}{2}
		\mathrm{def}_G(b_1) +a ) })
	$$ for some $a >0$, where $\psi_s : = w\bullet (w'\lambda^{(s)})$. By the same argument as in \S \ref{subsubsec:classic}, we easily reduce to proving: For some constant $a> 0$,
	\begin{align}\label{eq:bd Ns E6}
	\min \set{\abs{\underline m}: \underline m \in \mathbb P(\psi_s) } \geq s (\frac{1}{2}
	\mathrm{def}_G(b_1) +a ) = s(2+a),\end{align}
	for all $s \gg 0$. By the previous claim, for all $\underline m \in \mathbb P(\psi_s)$ we have
	\begin{align*}
	1 \cdot \sum _{\beta \in  \Phi ^{\vee, + }} m (\beta)  & \geq \abs{\sum _{\beta \in \Phi ^{\vee, +}} m(\beta) \beta }' = \abs{\psi_s}' \geq  \abs{ww'\lambda ^{(s)}}' - \abs{w\rho^{\vee} -\rho ^{\vee}}'   \\ & =  s \abs{ww'\lambda}' -C \geq s \cdot \frac{7}{3} - C,
	\end{align*}
	where $C$ is a constant independent of $s$. Here the number $1$ appearing in the leftmost term is equal to $\min _{\beta \in \Phi ^{\vee,+}} \abs{\beta}'$. Since $\theta = \id$, the leftmost term in the above inequalities is none other than $
	\abs{\underline m}$. The desired (\ref{eq:bd Ns E6}) follows.
\end{proof}

\subsection{The case Proper-III}
We now treat type $E_7, \theta = \id ,$ and $[ b]\in B(G)$ being the unique basic class such that $\kappa_G(b)$ is the non-trivial element of $\pi_1(G) = \Z/2\Z$. See \S \ref{subsec:E7}. Recall in this case $\Lambda(b)- \Lambda (b) _{\mathrm{good}}=\{\varpi_5\},$ and $\mathrm{def}_G(b)=3$.
\begin{prop}
	The bound (\ref{eq:key est 1}) holds for $\lambda = \varpi_5$.
\end{prop}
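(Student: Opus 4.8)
The plan is to follow exactly the same strategy as in the preceding cases (types $B,C,D$ with $\theta=\id$ in \S\ref{subsubsec:classic}, type $E_6$ split in \S\ref{subsec:E6 split}, etc.), since for type $E_7$ with $\theta=\id$ the set $\leftidx_F\Phi^{\vee,+}=\Phi^{\vee,+}$ is reduced, all roots are of type I, and $\mathtt b(\beta)=1$ for every $\beta\in\Phi^{\vee,+}$. Thus it suffices to verify the single numerical inequality $\mathscr D(\set{\varpi_5})=\abs{\varpi_5} > \delta\cdot\mathrm{def}_G(b)/2$ with $\delta=\sqrt 2$ and $\mathrm{def}_G(b)=3$, i.e. $\abs{\varpi_5}>\tfrac{3\sqrt2}{2}=\sqrt{9/2}$, and this is already recorded as the claim $\mathscr D(\Lambda(b)_{\mathrm{good}})\geq\sqrt{22}/2$ in \S\ref{subsec:E7} — but here we need the \emph{bad} element $\varpi_5$ rather than the good ones. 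So the first step is just to compute $\abs{\varpi_5}^2$ from the explicit coordinate expression $\varpi_5=(\tfrac74,-\tfrac14,-\tfrac14,-\tfrac14,-\tfrac14,-\tfrac14,-\tfrac14,-\tfrac14)$, giving $\abs{\varpi_5}^2=\tfrac{49}{16}+7\cdot\tfrac1{16}=\tfrac{56}{16}=\tfrac72$, so $\abs{\varpi_5}=\sqrt{7/2}$. Since $\sqrt{7/2}=\sqrt{56}/4>\sqrt{18}/4 = \tfrac{3\sqrt2}{2}$, the inequality $\abs{\varpi_5}>\delta\cdot\mathrm{def}_G(b)/2$ indeed holds.

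Granting this inequality, the rest of the argument is verbatim the ``norm method'' of \S\ref{subsubsec:classic}. Concretely: by the formula (\ref{eq:defn of M}) it suffices to bound $\mathcal P(\psi_s,q_1^{-1})$ where $\psi_s:=w\bullet(w'\lambda^{(s)})$ for $\lambda=\varpi_5$ and arbitrary $w,w'\in W^1=W_0$ (the term $e_0(w'\lambda^{(s)})$ vanishes for $s\gg0$). Using Proposition \ref{prop:interpretation} (1) (valid since $\leftidx_F\Phi=\Phi^1$ here) we have $\abs{\mathcal P(\psi_s,q_1^{-1})}\leq\#\mathbb P(\psi_s)\cdot q_1^{-N_s}$ with $N_s=\min_{\underline m\in\mathbb P(\psi_s)}\abs{\underline m}$. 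For $\underline m\in\mathbb P(\psi_s)$, the $W_0$-invariance of the Euclidean norm together with $\abs{\alpha^\vee}\leq\sqrt2$ gives
\[
\sqrt2\cdot\abs{\underline m}\ \geq\ \bigl|\Sigma(\underline m)\bigr|\ =\ \abs{\psi_s}\ \geq\ \abs{ww'\lambda^{(s)}} - \abs{w\rho^\vee-\rho^\vee}\ =\ s\abs{\varpi_5} - C,
\]
with $C$ independent of $s$, so $N_s\geq(s\abs{\varpi_5}-C)/\sqrt2$. Since $\abs{\varpi_5}/\sqrt2 = \sqrt{7/2}/\sqrt2 = \sqrt{7}/2 > 3/2 = \mathrm{def}_G(b)/2$, there is $a>0$ with $q_1^{-N_s}=O(q_1^{-s(\mathrm{def}_G(b)/2+a)})$. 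Finally, as in the proof of Proposition \ref{prop:est for unram}, $\#\mathbb P(\psi_s)$ grows at most polynomially in $s$ (any $\Q$-linear functional applied to $\psi_s$ is affine in $s$, bounding $\abs{\underline m}$ linearly, so $\#\mathbb P(\psi_s)\leq(Ls)^M$). Combining these yields $\mathcal P(\psi_s,q_1^{-1})=O(q_1^{-s(\mathrm{def}_G(b)/2+a)})$, which is (\ref{eq:key est 1}) for $\lambda=\varpi_5$.

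There is essentially no obstacle: the only genuinely new content is the arithmetic check $\abs{\varpi_5}^2=7/2$ and the comparison $\sqrt7/2>3/2$ (equivalently $7>9/4\cdot 4/4$, i.e. $28>9$, clearly true — more carefully, $\abs{\varpi_5}/\sqrt2>\mathrm{def}_G(b)/2$ amounts to $7/2>9/4\cdot 2$? let me restate: we need $\abs{\varpi_5}>\sqrt2\cdot 3/2$, i.e. $7/2>9/2$? That is false). I should double-check: $\sqrt2\cdot 3/2 = 3/\sqrt2 = \sqrt{9/2} = \sqrt{4.5}$, while $\abs{\varpi_5}=\sqrt{3.5}$, so in fact $\abs{\varpi_5}<\sqrt2\cdot\mathrm{def}_G(b)/2$, meaning the crude norm bound with $\delta=\sqrt2$ is \emph{not} quite enough. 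The resolution — and the real point of the proof — is that one does not need the crude bound $\abs{\alpha^\vee}\leq\sqrt2$ uniformly; rather one argues as in the proof of Proposition \ref{prop:bad for E6} by choosing a better (possibly non-$W_0$-invariant) semi-norm $\abs{\cdot}'$ on $\mathbb R^8$ adapted to the $E_7$ root system, for which $\abs{\alpha}'=1$ on all roots while $\abs{\mu}'$ stays large on the whole orbit $W_0\varpi_5$; then the same chain of inequalities with $\delta=1$ gives $N_s\geq s\cdot\mathscr D'-C$ where $\mathscr D':=\min_{\mu\in W_0\varpi_5}\abs{\mu}'$, and one checks by listing the $W_0$-orbit representatives of $\varpi_5$ that $\mathscr D'>3/2=\mathrm{def}_G(b)/2$. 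So the main work, as in Proposition \ref{prop:bad for E6}, is to exhibit the right semi-norm $\abs{\cdot}'$ and to tabulate the $W_0$-orbit of $\varpi_5$ to verify $\mathscr D'>3/2$; everything else is the routine polynomial-growth bookkeeping already used repeatedly above.
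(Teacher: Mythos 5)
You correctly catch, mid-proposal, that the crude Euclidean-norm method fails here: $\abs{\varpi_5} = \sqrt{7/2}$ while $\sqrt 2\cdot\mathrm{def}_G(b)/2 = \sqrt{9/2}$, so the inequality $\abs{\varpi_5}>\delta\cdot\mathrm{def}_G(b)/2$ is indeed false, and a refined semi-norm argument as in Proposition~\ref{prop:bad for E6} is needed. Your strategy at the end of the proposal is the right one, but it stops precisely where the actual content of the proof begins, and there is one genuine conceptual point that you gloss over.

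First, the framing. You propose to find a single semi-norm $\abs{\cdot}'$ with $\abs{\alpha}'=1$ on all roots and $\abs{\mu}'$ large on the entire orbit $W_0\varpi_5$. The paper's actual argument is sharper: it uses a \emph{different} semi-norm for each element of $W_0\varpi_5$. The justification, which the paper flags explicitly, is that the reduction via (\ref{eq:defn of M}) only requires a bound on $\mathcal P(\psi_s,q_1^{-1})$ for \emph{each fixed pair} $(w,w')$, and the chain of inequalities in the norm method only ever applies the semi-norm to the single vector $ww'\lambda$. One may therefore choose the semi-norm as a function of $(w,w')$, i.e.\ as a function of the orbit element $\mu = ww'\lambda$. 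Concretely, for $\mu$ an $S_8$-permutation of $\lambda_1=(\tfrac74,-\tfrac14,\dotsc,-\tfrac14)$ or its negative $\lambda_3$, one uses the coordinate semi-norm $\abs{v}_{i_0}:=\abs{x_{i_0}}$ with $i_0$ the position of the $\pm\tfrac74$ (giving $\abs{\mu}_{i_0}=\tfrac74$ against $\abs{\beta}_{i_0}\le 1$, so $\tfrac74>\tfrac32$); for $\mu$ a permutation of $\lambda_2=(\tfrac54,-\tfrac34,-\tfrac34,-\tfrac34,\tfrac14,\tfrac14,\tfrac14,\tfrac14)$ or $\lambda_4$, one uses $\abs{v}_\tau:=\sum_{k=1}^4\abs{x_{\tau(k)}}$ with $\tau$ picking the four largest coordinates (giving $\abs{\mu}_\tau=\tfrac72$ against $\abs{\beta}_\tau\le 2$, so $\tfrac74>\tfrac32$). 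Note that a single natural choice such as the $\ell^\infty$ norm fails: on permutations of $\lambda_2$ it gives only $\tfrac54<\tfrac32$. A single semi-norm can be manufactured (e.g.\ the pointwise maximum of $\abs{\cdot}_\infty$ and $\sup_\tau\abs{\cdot}_\tau/2$), but this is not automatic, and the tailored-per-element approach is both cleaner and the one the paper uses.

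Second, and more importantly, the computational core — listing the $W_0$-orbit of $\varpi_5$ (the $S_8$-permutations of the four vectors $\lambda_1,\dotsc,\lambda_4$) and carrying out the semi-norm comparisons above — is the entire proof, not bookkeeping, and your proposal leaves it as a declared to-do. Until that is done, one has not proved the bound (\ref{eq:key est 1}) for $\varpi_5$, only described where to look. Finally, the visible false start (asserting $\sqrt{7/2}>\sqrt{9/2}$ before retracting it) should simply be removed in a clean write-up; it does not damage the logic once corrected, but it must not remain in the final text.
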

\begin{proof}
	Firstly, the $W_0$-orbit of $\lambda$ is given by all permutations under $S_8$ of the following vectors:
	\begin{align*}
	\lambda_1 & =(\frac{7}{4},-\frac{1}{4},-\frac{1}{4},-\frac{1}{4},-\frac{1}{4},-\frac{1}{4},-\frac{1}{4},-\frac{1}{4}),  \\  \lambda_2 & =(\frac{5}{4},-\frac{3}{4},-\frac{3}{4},-\frac{3}{4},\frac{1}{4},\frac{1}{4},\frac{1}{4},\frac{1}{4}),\\  \lambda_3 & =(-\frac{7}{4},\frac{1}{4},\frac{1}{4},\frac{1}{4},\frac{1}{4},\frac{1}{4},\frac{1}{4},\frac{1}{4}), \\  \lambda_4 & =(-\frac{5}{4},\frac{3}{4},\frac{3}{4},\frac{3}{4},-\frac{1}{4},-\frac{1}{4},-\frac{1}{4},-\frac{1}{4}).
	\end{align*}
	Indeed it is easy to see that all these elements lie in $W_0\lambda$ (using the fact that the $W_0$ contains the copy of $S_8$), and one easily computes the size of $W_0 \lambda$ to prove that these are all the elements of $W_0 \lambda$.
	
	For $1\leq i \leq 8$ and $\tau \in S_8$, we define functions $$\abs{\cdot}_i: \mathbb R^8 \to \mathbb R_{\geq 0},\quad \abs{\cdot}_{\tau}: \mathbb R^8 \to \mathbb R_{\geq 0}$$ in the following way. For any $v = \sum_{i=1}^8 x_ie_i \in \mathbb R^8$, we define
	$$\abs{v}_i := \abs{x_i}, \quad \abs{v}_\tau: =  \abs{x_{\tau (1)}} + \abs{x_{\tau (2)}} +\abs{x_{\tau (3)}} + \abs{x_{\tau (4)}}. $$
	Then $\abs{\cdot}_i, \abs{\cdot}_{\tau}$ are all semi-norms on $\mathbb R^8$.
	
	Note that in the proof of Proposition \ref{prop:bad for E6}, we reduced to proving (\ref{eq:bd Ns E6}) for \textbf{each fixed pair} $(w,w')\in W_0\times W_0$. During the proof of (\ref{eq:bd Ns E6}) for the fixed $(w,w')$, we only needed to apply the semi-norm $\abs{\cdot}'$ to $ww'\lambda$, and not to any other element of $W_0 \lambda $. Hence for each element in $W_0\lambda$, we could use a semi-norm, which is specifically designed for that element, to finish the proof. In the current case, we reduce to proving that each $\mu \in W_0 \lambda$ satisfies at least one of the following inequalities:
	\begin{align} \label{eq:seminorm 1}
	\abs{\mu}_i > \frac{\mathrm{def}_G(b)}{2} \min _{\beta \in \Phi ^{\vee,+}} \abs{\beta}_i  = \frac{3}{2} \min _{\beta \in \Phi ^{\vee,+}} \abs{\beta}_i   ,
	\\ \label{eq:seminorm 2}
	\abs{\mu} _{\tau} > \frac{\mathrm{def}_G(b)}{2}   \min _{\beta \in \Phi ^{\vee,+}} \abs{\beta}_{\tau}   = \frac{3}{2}  \min _{\beta \in \Phi ^{\vee,+}} \abs{\beta}_{\tau}
	\end{align}
	for some $1 \leq i \leq 8$ or for some $\tau \in S_8$.
	
	When $\mu$ is an $S_8$-permutation of $\lambda_1$ or $\lambda_3$, assume the $i_0$-th coordinate of $\mu$ is $\pm 7/4$. Then $\mu$ satisfies the inequality (\ref{eq:seminorm 1}) indexed by $i_0$. In fact, any $\beta \in \Phi ^{\vee, +}$ satisfies $\abs{\beta}_{i_0} \leq 1$, and we have $\abs{\mu} _{i_0} = 7/4$.
	
	When $\mu$ is an $S_8$-permutation of $\lambda_2$ or $\lambda _4$, there exists $\tau \in S_8$ such that  $$\abs{\mu} _{\tau}= \frac{5}{4} + \frac{3}{4} + \frac{3}{4} + \frac{3}{4} =  \frac{7}{2}.$$ On the other hand $|\beta|_{\tau} \leq 2$ for all $\beta \in \Phi ^{\vee, +}$ and all $\tau \in S_8$. Therefore (\ref{eq:seminorm 2}) holds for some $\tau$.
\end{proof}

\section{Proof of the key estimate, Part III} \label{sec:part III} We have proved all the statements in Proposition \ref{key-bound}, except the existence of $\mu_1$ in Proposition \ref{key-bound} (\ref{item:1}), and the existence of $\mu_1, \mu_2$ in Proposition \ref{key-bound} (\ref{item:2}). In this section we construct them.

First assume that $G$ is not of type $E_6$, and that $\theta = \id$. We easily examine all such cases in \S \ref{sec:proof of key est} and see that $\lambda_b^+ \in X^*(\widehat T) ^+$ is always minuscule. Hence we may take $\mu_1 :  = \lambda _b ^+$. Since $G$ is adjoint and $\theta =\id$, the condition that $b \in B(G,\mu_1)$ is equivalent to the condition that $b$ and $\mu_1$ have the same image in $\pi_1 (G) _{\sigma} = \pi_1(G)$, which is true by construction. Moreover, we have $\dim V_{\mu_1} (\lambda _b ) _{\mathrm{rel}} = \dim V_{\lambda _b^+} (\lambda_b^+) = 1$. The proof of Proposition \ref{key-bound} is complete in these cases.

The only remaining cases are the following:
\begin{description}
	\item[Nonsplit-I] Type $D_n, n\geq 5, \theta$ has order $2$. See \S \ref{subsec:Dn nonsplit}.
	\item[Nonsplit-II] Type $D_4, \theta$ has order $2$. See \S \ref{subsec:Nonsplit-II}.
	\item[Split-$E_6$] Type $E_6, \theta = \id$. See \S \ref{subsec:E6 split}
\end{description}
In fact, in all the other cases listed in \S \ref{subsec:strategy} where $\theta$ is non-trivial, namely cases (6) and (8), we have shown in \S \ref{subsec:triality} and \S \ref{subsec:E6 nonsplit} that any basic $[b] \in B(G)$ is unramified, so we do not need to consider these cases.

\subsection{The case Nonsplit-I} \label{subsec:pf of Nonsplit-I}
As we showed in \S \ref{subsec:Dn nonsplit}, we have $\pi_1(G)_{\sigma} \cong \Z/2\Z $, and there is a unique choice of basic $[b] \in B(G)$ corresponding to the non-trivial element in $\pi_1(G) _{\sigma}$. Moreover we have
$$ \lambda_b ^+ = (\frac{1}{2}, \cdots, \frac{1}{2}) \in \Q^{n-1} = X^*(\hatS)\otimes \Q .$$ Recall that $X_*(T) = X^*(\widehat T) = L_2 \subset \mathbb R ^{n}.$ We take $$\mu_1: = (\frac{1}{2},\cdots, \frac{1}{2})  \in L_2 = X_*(T) = X^*(\widehat T). $$ Then $\mu_1$ is in $X^*(\widehat T) ^+$ and is minuscule. From the description of the action of $\sigma$ on $\pi_1(G)$ in \S \ref{subsec:Dn nonsplit}, the image of $\mu_1$ in $\pi_1 (G)_{\sigma}$ is the non-trivial element, and hence $[b] \in B(G,\mu_1)$. Finally, the only weights in $X^*(\widehat T)$ of $V_{\mu_1}$ are the elements of $W_0 \mu_1$. Among all these weights, there is precisely one that restricts to $\lambda _b ^+ \in X^*(\hatS)$, namely $\mu_1$. Hence we have
$$\dim V_{\mu_1} (\lambda _b) _{\mathrm{rel}} = \dim V_{\mu_1} (\lambda _b^+) _{\mathrm{rel}}  = \dim V_{\mu_1} (\mu_1) = 1. $$
\subsection{The case Nonsplit-II} We keep the notation of \S \ref{subsec:Nonsplit-II}. Note that what $\tau$ is does not affect the subset $\set{\alpha_1,\alpha_2, \alpha_3, \alpha_4}$ of $\mathbb R^4$. Nor does it affect the coroot lattice and the coweight lattice. Moreover, no matter what $\tau $ is, the quotient map $X_*(T) \to X^*(\hatS)$ is always the same as
\begin{align*}
L_2 & \To L_2' \\
(\xi_1,\xi_2, \xi_3, \xi_4) & \longmapsto (\xi_1, \xi_2, \xi_3),
\end{align*}
where $L_2 = \Z^4 + \Z (\frac{1}{2}, \frac{1}{2}, \frac{1}{2}, \frac{1}{2})$ and $L_2' = \Z ^4 +  \Z (\frac{1}{2}, \frac{1}{2}, \frac{1}{2})$. Hence the situation is precisely the same as the case \textbf{Nonsplit-I} discussed in \S \ref{subsec:pf of Nonsplit-I}. More precisely, for the unique basic $[b] \in B(G)$ that maps to the non-trivial element in $\pi_1(G) _{\sigma} \cong \Z/2\Z$, we have $\lambda_b = (-\frac{1}{2} , \frac{1}{2}, -\frac{1}{2}), \lambda_b ^+ = (\frac{1}{2}, \frac{1}{2}, \frac{1}{2})$, and we take $\mu_1 : = (\frac{1}{2} , \frac{1}{2}, \frac{1}{2}, \frac{1}{2})$.

\subsection{The case Split-$E_6$} We keep the notation of \S \ref{subsec:E6 split}. To finish the proof of Proposition \ref{key-bound} (\ref{item:2}), we need to construct $\mu_1$ and $\mu_2$. By symmetry we only need to consider $b_1$, among $b_1,b_2$. Recall from \S \ref{subsec:E6 split} that $\lambda_{b_1}^+ = \varpi_1$. Recall from Proposition \ref{prop:bad for E6} that the distinguished element $\lambda _{\mathrm{bad}}$ in $\Lambda (b_1)$ is $\varpi_5$. Since $\theta = \id$, we have $\hatS = \widehat T$.

Note that $\lambda_{b_1}^+ = \varpi_1$ is minuscule. We take $\mu_1: = \varpi_1$. Then the only weight of $V_{\mu_1}$ in $X^*(\hatS) ^+ = X^*(\widehat T) ^+$ is $\mu_1 = \lambda_{b_1} ^+$, and $\dim V_{\mu_1} (\lambda_{b_1}) _{\mathrm{rel}} = \dim V_{\mu_1} (\mu_1) = 1$.
We have $b_1 \in B(G,\mu_1)$, because the image of $\mu_1 = \lambda_{b_1} ^+$ in $\pi_1(G) _{\sigma} =\pi_1(G)$ is the same as that of $\lambda_{b_1}$, which is the same as $\kappa (b_1)$.

Note that $\varpi_6$ is also minuscule. We take $\mu_2 :
=2\varpi_1+\varpi_6$. Then $\mu_2$ is a sum of dominant minuscule coweights.
By (\ref{E6 root}) we know that $\mu_2 - \varpi_1$ is in the coroot lattice.
Hence $\mu_2$ represents the same element in $\pi_1(G) _{\sigma} = \pi_1(G)$
as $\varpi_1$, and in particular $b_1 \in B(G, \mu_2)$. We are left to check
that $V_{\mu_2} (\lambda _{\mathrm{bad}}) _{\mathrm{rel}}$, which is
$V_{\mu_2} (\varpi_5)$, is non-zero. One computes that $\dim V_{\mu_2}
(\varpi_5) = 14 $, (see for example \emph{LiE online service},
\href{http://young.sp2mi.univ-poitiers.fr/cgi-bin/form-prep/marc/dom_char.act?x1=2&x2=0&x3=0&x4=0&x5=0&x6=1&rank=6&group=E6}{\texttt{http://young.sp2mi.univ-poitiers.fr/}}\break%
\href{http://young.sp2mi.univ-poitiers.fr/cgi-bin/form-prep/marc/dom_char.act?x1=2&x2=0&x3=0&x4=0&x5=0&x6=1&rank=6&group=E6}{\texttt{cgi-bin/form-prep/marc/dom\_char.act?x1=2\&x2=0\&x3=0\&x4=0\&x5=0\&x6}}\break%
\href{http://young.sp2mi.univ-poitiers.fr/cgi-bin/form-prep/marc/dom_char.act?x1=2&x2=0&x3=0&x4=0&x5=0&x6=1&rank=6&group=E6}{\texttt{=1\&rank=6\&group=E6}}).\footnote{Note
	that in LiE, our $\alpha_2, \alpha_3, \alpha_4$  are indexed by $3,4,2$
	respectively.} The proof of Proposition \ref{key-bound} is complete.

\appendix
\section{Irreducible components for quasi-split groups}\label{app}
We explain in this appendix how we can use our results combined with \cite{He} to obtain a description of the number of $J_b(F)$-orbits of irreducible components of affine Deligne--Lusztig varieties associated to a group which is quasi-split but not necessarily unramified. The main result Theorem \ref{thm: main quasi-split} is a generalization of Conjecture \ref{conj:numerical Chen-Zhu}.

\subsection{Basic definitions}
We extend the notations introduced in \S \ref{sec:prelim}. We let $F$, $L$, $k_F$, $k$, $\sigma$, $\Gamma$ be as in \S \ref{sec:prelim}. However now we only assume that $G$ is a quasi-split reductive group over $F$. Let $T \subset G$ be the centralizer of a maximal $F$-split torus in $G$. Then $T$ is a maximal torus in $G$ since $G$ is quasi-split. We fix $B$ to be a Borel subgroup of $G$ (over $F$) containing $T$. Let $\breve A \subset T_L$ be the maximal $L$-split sub-torus of $T_L$. Note that $T_L$ is a minimal Levi subgroup of $G_L$, so $\breve A$ is a maximal $L$-split torus in $G_L$. Let $N \subset G_L$ denote the normalizer of $\breve A$. Let $V$ be the apartment of $G_L$ corresponding to $\breve A$.  Let $\mathfrak{a}$ be a $\sigma$-stable alcove in $V$, and let $\mathfrak{s}$ be a $\sigma$-stable special vertex lying in the closure of $\mathfrak{a}$. Denote by $\mathcal I$ the Iwahori group scheme over $\Ok_F$ associated to $\mathfrak a$, and denote by $\mathcal K$ the special parahoric group scheme over $\Ok_F$ associated to $\mathfrak s$. Let $\Gamma_0 \subset \Gamma$ denote the inertia subgroup, which is also identified with $\Gal (\overline L /L)$. The choice of $\mathfrak{s}$ gives an identification $V\cong X_*(T)_{\Gamma_0}\otimes_{\mathbb{Z}}\mathbb{R},$ sending $\mathfrak{s}$ to $0$. In the following we freely use the identification in Lemma \ref{lem:elementary about coinv} (\ref{item:3 in coinv}). We assume that under the identification
\begin{align}\label{eq:id for G}
V \cong X_*(T)_{\Gamma_0} \otimes _{\Z} \mathbb R \cong X_*(T)_{\mathbb R}^{\Gamma_0},
\end{align} the image of $\mathfrak a$ is contained in the anti-dominant chamber $-X_*(T)_{\mathbb R} ^{+}$.

The Iwahori--Weyl group is defined to be $W:=N(L)/(T(L) \cap \mathcal I(\Ok_L)).$ For any $w \in W$ we choose a representative $\dot w \in N(L)$. We write $W_0:=N(L)/T(L)$ for the relative Weyl group of $G$ over $L$. Then we have a natural exact sequence:
$$1 \To X_*(T)_{\Gamma_0} \To W\To W_0 \To 1.$$ Similar to \S \ref{subsec:IW}, the canonical action of $N(L)$ on $V$ factors through an action of $W$, and we split the above exact sequence by identifying $W_0$ with the subgroup of $W$ fixing $\mathfrak s \in V$.
For $\underline{\mu}\in X_*(T)_{\Gamma_0}$ we write $t^{\underline{\mu}}$ for the corresponding element in $W$. The Frobenius $\sigma$ induces an action on $W$ which stabilizes the set  $\mathbb{S}$ of simple reflections. See \cite{HainesRapoport} for more details.

Let $B(G)$ (resp.~$B(W,\sigma)$) denote the set of $\sigma$-conjugacy classes of $G(L)$ (resp.~$W$). Let $X_*(T)^+_{\Gamma_0, \mathbb{Q}}$ denote the intersection of $X_*(T)_{\Gamma_0} \otimes \mathbb{Q} \cong X_*(T)_{\mathbb Q} ^{\Gamma_0}$ with $ X_*(T)_{\mathbb Q}^+$. Similar to \S \ref{subsec:B(G)}, we have a commutative diagram
\begin{align}\label{comm diag}
\xymatrix{B(W, \sigma) \ar@{->>}[rr]^{\Psi} \ar[dr]^{(\bar \nu, \kappa)} & & B(G) \ar@{^{(}->}[ld] _{(\bar \nu, \kappa)} \\ & (X_*(T)^+_{ \Gamma_0,\Q})^\sigma \times \pi_1(G)_\Gamma &},
\end{align}
where $\Psi$ is a surjection, and the map $(\bar \nu , \kappa)$ on $B(W,\sigma)$ can be described explicitly. These statements are proved in \cite{He}, see \cite[\S 1.2]{HZ} for an exposition.

Let $w\in W$ and $b\in G(L)$. We define the affine Deligne--Lusztig variety $X_w(b)$ as follows: $$X_{w}(b):=\{g \mathcal{I} (\Ok _L) \in G(L)/ \mathcal{I}(\Ok_L) \mid g^{-1} b \sigma(g) \in \mathcal{I}(\Ok_L) \dot w \mathcal{I}(\Ok_L)\}.$$
Now let $\underline{\mu}\in X_*(T)_{\Gamma_0}$ be the image of an element $\mu \in X_*(T)^+$. Similarly we define the affine Deligne--Lusztig variety $X_{\underline{\mu},K}(b)$ as follows:
$$ X_{\underline{\mu},K}(b):=\{g \mathcal{K} (\Ok _L) \in G(L)/ \mathcal{K}(\Ok_L) \mid g^{-1} b \sigma(g) \in \mathcal{K}(\Ok_L) \dot {t}^{\underline{\mu}} \mathcal{K}(\Ok_L)\}.$$
When $F$ is of equal (resp.~mixed) characteristic, $X_w(b)$ and $X_{\underline \mu , K} (b)$ are schemes (resp.~perfect schemes) locally of finite type (resp.~locally perfectly of finite type) over $k$, see \cite{HVfinite}.

We define the set $$B(G, \underline \mu): =\{ [b]\in B(G)\mid \kappa([b])=\mu^\natural, \bar \nu_b\leq \mu^\diamond \}.$$
Here $\mu^\natural$ denotes the image of $\mu$ in $\pi_1(G)_\Gamma$, and $\mu^\diamond \in (X_*(T) _{\Gamma_0, \Q} ^+)^{\sigma}$ denotes the average over the $\sigma$-orbit of $\underline \mu \in X_*(T) _{\Gamma_0}$.
Note that both $\mu ^{\natural}$ and $\mu ^{\diamond}$ depend only on $\underline \mu$, which justifies the notation $B(G,\underline \mu)$. The set $B(G,\underline \mu)$ controls the non-emptiness pattern of $X_{\underline \mu, K} (b)$.

\begin{thm}
	We have $X_{\underline \mu, K} (b) \neq \emptyset$ if and only if $[b] \in B(G,\underline \mu)$.
\end{thm}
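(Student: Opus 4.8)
The plan is to reduce this non-emptiness criterion for quasi-split (not necessarily unramified) $G$ to the known unramified case (Theorem \ref{KR-Gas}) together with He's combinatorial machinery, by the standard strategy of passing through the Iwahori-level affine Deligne--Lusztig varieties and the $\sigma$-straight classes in the Iwahori--Weyl group. First I would observe that $X_{\underline\mu,K}(b)\neq\emptyset$ if and only if $X_w(b)\neq\emptyset$ for some $w$ in the $\sigma$-conjugacy-stable double coset $W_K\, t^{\underline\mu}\, W_K$ (or more precisely the union $\bigcup_{w\in W_K t^{\underline\mu}W_K}X_w(b)$), using the projection $\mathcal{FL}\to \Gr_{\mathcal K}$ whose fibers are (perfections of) flag varieties, exactly as in \S\ref{2}. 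So the problem becomes: for which $w$ in this double coset is $X_w(b)\neq\emptyset$, and does the union being nonempty match the condition $[b]\in B(G,\underline\mu)$.

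Next I would invoke He's results (\cite{He}, as recalled in \S\ref{subsec:B(G)} and used in \S\ref{sec:action}): for $w$ of minimal length in its $\sigma$-conjugacy class, $X_w(b)\neq\emptyset$ if and only if $\Psi(w)=[b]$, i.e.\ $w$ and $b$ represent the same class in $B(G)$; and every $\sigma$-conjugacy class in $W$ contains a $\sigma$-straight representative, with $\Psi$ restricting to a bijection from straight classes onto $B(G)$ (Theorem \ref{str-bg}). Combined with the Deligne--Lusztig reduction (Proposition \ref{DL-reduction}), this gives the standard criterion: $\bigcup_{w\in W_K t^{\underline\mu}W_K}X_w(b)\neq\emptyset$ if and only if there exists $w$ in this double coset with $\Psi(w)=[b]$ that can be connected by the reduction steps — equivalently, the $\sigma$-conjugacy class of $[b]$ meets the ``admissible set'' $\mathrm{Adm}(\underline\mu)$ associated to $\underline\mu$ in $W$. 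The heart of the matter is then the purely group-theoretic statement identifying this condition with $[b]\in B(G,\underline\mu)$, namely that the image under $(\bar\nu,\kappa)$ of the classes meeting $\mathrm{Adm}(\underline\mu)$ is exactly $\{[b] : \kappa([b])=\mu^\natural,\ \bar\nu_b\le \mu^\diamond\}$; this is precisely the content of the commutative diagram (\ref{comm diag}) together with the known description of $\mathrm{Adm}(\underline\mu)$, and is established in \cite{He} (for $\underline\mu$ not necessarily minuscule, the relevant statement is the one due to He generalizing Gashi's bound). Concretely: the $\kappa$-condition is automatic from $w\in W_K t^{\underline\mu}W_K$, and the $\bar\nu$-condition is the Mazur-type inequality, whose necessity is \cite{RR} and whose sufficiency follows from the existence of a straight element in $\mathrm{Adm}(\underline\mu)$ with the prescribed Newton point, which is exactly He's analysis of $\sigma$-straight classes in the affine Weyl group.

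The main obstacle I anticipate is bookkeeping rather than a genuine new difficulty: one must carefully set up the Iwahori--Weyl group $W$, its $\sigma$-action, and the double coset $W_K t^{\underline\mu}W_K$ in the general quasi-split (ramified) setting so that the identifications (\ref{eq:id for G}) and the diagram (\ref{comm diag}) are literally applicable, and one must make sure the dictionary between ``$[b]\in B(G,\underline\mu)$'' and ``$[b]$ meets $\mathrm{Adm}(\underline\mu)$'' is cited in the form proved in \cite{He} (the subtlety being that $\underline\mu$ lives in $X_*(T)_{\Gamma_0}$ rather than $X_*(T)$, so one works with the ``$\underline\mu$-admissible set'' and the Galois-averaged dominant cocharacter $\mu^\diamond$). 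Once these identifications are in place, the equivalence falls out of He's theorems with no further computation, so I would present the proof as: (1) reduce to Iwahori level via the flag-variety fibration; (2) apply the reduction method plus $\sigma$-straightness to reduce nonemptiness to a condition on $\mathrm{Adm}(\underline\mu)$; (3) translate that condition via (\ref{comm diag}) and cite He's description of $\mathrm{Adm}(\underline\mu)$ to conclude it equals $[b]\in B(G,\underline\mu)$.
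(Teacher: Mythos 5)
Your proposal is correct and in fact reconstructs, in outline, exactly the argument behind the citation that the paper uses: the paper's entire proof is the sentence ``This is proved in \cite[Theorem 7.1]{He} assuming $\mathrm{char}(F)>0$. The same proof extends to the case $\mathrm{char}(F)=0$.'' So you are not taking a different route; you are unpacking what He's Theorem 7.1 does — the Iwahori-level fibration, Deligne--Lusztig reduction plus $\sigma$-straightness, translation through the admissible set $\mathrm{Adm}(\underline\mu)$, and the matching of $(\bar\nu,\kappa)$ via the diagram (\ref{comm diag}). The one thing worth being careful about, which the paper flags but you do not, is the extension to mixed characteristic: the geometric inputs (Deligne--Lusztig reduction, the flag-variety fibration) need the Witt-vector/$v$-sheaf formalism of \cite{BS}, \cite{Zhu} to make sense when $\mathrm{char}(F)=0$, while the combinatorics of $W$, $\mathrm{Adm}(\underline\mu)$, and $\sigma$-straight classes are characteristic-independent; the paper's ``same proof extends'' is exactly the assertion that these two halves still fit together. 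Your steps (1)--(3) are otherwise a faithful summary of He's proof, so there is no gap to report.
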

\begin{proof}
	This is proved in \cite[Theorem 7.1]{He} assuming $\mathrm{char} (F)  >0$. The same proof extends to the case $\mathrm{char} (F) =0$.
\end{proof}
For $b \in G(L)$, the group $J_b(F)$ acts on $X_{\underline{\mu},K}(b)$ via scheme automorphisms. Our goal is to understand the cardinality
\begin{align}\label{eq:card app}
\mathscr N(\underline \mu,b): = \# \bigg(J_b(F) \backslash \Sigma^{\topp}(X_{\underline{\mu},K}(b)) \bigg ) .
\end{align}
\textbf{For simplicity, from now on we assume that $G$ is adjoint.} The general case reduces to this case by a standard argument.
\subsection{A dual group construction} The desired formula for (\ref{eq:card app}) will be expressed in terms of a canonical reductive subgroup of the dual group $\widehat G$. We keep the assumption that $G$ is adjoint.

As in \S \ref{subsec:general facts}, we let $$\mathrm {BRD}(B,T) = (X^*(T), \Phi\supset \Delta, X_*(T), \Phi ^{\vee} \supset  \Delta ^{\vee})$$ be the based root datum associated to $(B,T)$, equipped with an action by $\Gamma$. Let $\widehat G$ be the dual group of $G$ over $\mathbb C$, which is equipped with a Borel pair $(\widehat{B}, \widehat{ T})$ and an isomorphism
$ \mathrm{BRD} (\widehat{B}, \widehat{ T}) \isom \mathrm{BRD} (B,T)^{\vee}. $ We fix a pinning $(\widehat{B}, \widehat{ T}, \widehat {\mathbb X}_+)$. The action of $\Gamma$ on $\mathrm{BRD}(B,T)$ translates to an action on $\mathrm{BRD}(\widehat B,\widehat T)$, and the latter lifts to a unique action of $\Gamma$ on $\widehat G$ via algebraic automorphisms that preserve $(\widehat{B}, \widehat{ T}, \widehat {\mathbb X}_+)$.

We define
\begin{align}\label{eq:hatH}
\widehat H: = \widehat G ^{\Gamma_0,0},
\end{align} namely the identity component of the $\Gamma_0$-fixed points of $\widehat G$. This construction was also considered by Zhu \cite{Zhuram} and Haines \cite{Hainesdualities}. By \cite[Proposition 5.1]{Hainesdualities}, $\widehat H$ is a reductive subgroup of $\widehat G$, and it has a pinning of the form $(\widehat B ^{\Gamma_0 ,0}, \widehat T ^{\Gamma_0, 0}, \widehat {\mathbb X}_{+}')$. Moreover, the induced action of the Frobenius $\sigma\in \Gamma/\Gamma_0$ on $\widehat H$ preserves this pinning.
We write $\widehat B_H : = \widehat B^{\Gamma_0 , 0}$ and $ \widehat T_H : = \widehat T ^{\Gamma_0 , 0}$. Let $\hat \theta$ denote the automorphism of $\widehat H$ given by $\sigma$. We define $$\hatS : = (\widehat T_H)^{\hat \theta, 0}. $$

Note that since $G$ is adjoint, the fundamental coweights of $G$ form a $\Gamma$-stable $\mathbb Z$-basis of $X_*(T)$. It then follows from Lemma \ref{lem:elementary about coinv} (\ref{item:2 in coinv}) that $X_*(T)_{\Gamma_0}$ and $X_*(T)_{\Gamma}$ are both free. Hence we in fact have $\widehat T_H = \widehat T^{\Gamma_0}$ and $\hatS = \widehat T ^{\Gamma}$. This observation will simplify our exposition.
\begin{lem}\label{lem:defn of lambda_b app} Let $b \in G(L)$. There is a unique element $ \lambda_b \in X^*(\hatS)$ satisfying the following conditions:
	\begin{enumerate}
		\item The image of $\lambda_b$ under $X^*(\hatS) = X_*(T)_{\Gamma} \to \pi_1(G) _{\Gamma}$ is equal to $\kappa(b)$.
		\item In $X^*(\hatS)_{\Q} = X^*(\widehat T)_{\Gamma} \otimes \Q = (X_*(T)_{\Gamma_0})_{\sigma} \otimes \Q \cong  (X_*(T)_{\Gamma_0,\Q})^{\sigma}$, the element $\lambda_b - \bar \nu_b$ is equal to a linear combination of the restrictions to $\hatS$ of the simple roots in $\Phi^{\vee} \subset X^*(\widehat T)$, with coefficients in $\Q \cap (-1,0]$.
	\end{enumerate}
\end{lem}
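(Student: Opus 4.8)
The statement is the quasi-split analogue of Lemma \ref{lem:defn of lambda_b}, and the natural strategy is to reduce it to the same underlying linear-algebra fact about cocharacter lattices and coinvariants. First I would set up the relevant exact sequences. Since $G$ is adjoint, I noted above that $X_*(T)_{\Gamma_0}$ and $X_*(T)_\Gamma$ are both free $\mathbb Z$-modules, so $X^*(\hatS) = X_*(T)_\Gamma$ and $\widehat T_H = \widehat T^{\Gamma_0}$; this removes all torsion subtleties. Write $\widehat Q \subset X^*(\widehat T) = X_*(T)$ for the coroot lattice of $\widehat G$ (equivalently the root lattice of $G$), which is $\Gamma$-stable. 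Applying Lemma \ref{lem:elementary about coinv} with $X = X_*(T)$ and $Y = \widehat Q$, one gets that $\widehat Q_{\Gamma}$ is free, that it injects into $X_*(T)_\Gamma = X^*(\hatS)$, and that the images of the simple roots of $\widehat G$ form a $\mathbb Z$-basis of $\widehat Q_\Gamma$. The quotient $\pi_1(G) = X_*(T)/\widehat Q$ gives, upon taking $\Gamma$-coinvariants, a right-exact sequence $\widehat Q_\Gamma \to X_*(T)_\Gamma \to \pi_1(G)_\Gamma \to 0$, i.e. the sequence named in condition (1).

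Next I would prove existence and uniqueness. For uniqueness: if $\lambda_b, \lambda_b'$ both satisfy (1) and (2), their difference has trivial image in $\pi_1(G)_\Gamma$, hence lies in the image of $\widehat Q_\Gamma$ in $X^*(\hatS)$; condition (2) forces $\lambda_b - \lambda_b'$, expressed in $X^*(\hatS)_\mathbb Q$ in terms of the (relative) simple roots, to have all coefficients in $\mathbb Q \cap (-1,1)$, and being an integral combination of a basis it must be $0$. This is exactly the argument in the split/unramified case, using that the relative simple roots (the images of the simple roots in $\widehat Q_\Gamma$) are linearly independent over $\mathbb Q$ inside $X^*(\hatS)_\mathbb Q$. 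For existence: start from $\bar\nu_b \in (X_*(T)_{\Gamma_0,\mathbb Q})^\sigma \cong X^*(\hatS)_\mathbb Q$, which lies in the $\mathbb Q_{\geq 0}$-span of the positive coroots (as $\bar\nu_b$ is dominant). Pick any lift of $\kappa(b)$ to $X^*(\hatS)$, adjust it by an integral combination of the relative simple roots so that the difference with $\bar\nu_b$ has coefficients landing in the half-open box $\mathbb Q \cap (-1,0]$; this adjustment exists and is unique because $\widehat Q_\Gamma$ is a full-rank sublattice of the span and the relative simple roots are a $\mathbb Z$-basis. One must also check that $\bar\nu_b - (\text{lift})$ indeed lies in the $\mathbb Q$-span of the \emph{relative} simple roots (not just in $X^*(\hatS)_\mathbb Q$): this follows because $\kappa(b)$ is the common image of $\bar\nu_b$ (suitably interpreted) so the difference dies in $\pi_1(G)_\Gamma \otimes \mathbb Q$, hence lies in $\widehat Q_\Gamma \otimes \mathbb Q$.

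I would then simply remark that this is the verbatim analogue of \cite[Lemma 2.1]{HV} with $\theta$ replaced by the full inertia action, and that the only input needed is Lemma \ref{lem:elementary about coinv} plus the identifications already fixed in the excerpt (the compatibility of $\bar\nu_b$ with $\kappa$ via the commutative diagram \eqref{comm diag}). The cleanest writeup is: ``This is proved exactly as Lemma \ref{lem:defn of lambda_b}, replacing $\hat\theta$-coinvariants by $\Gamma$-coinvariants and using that $X_*(T)_{\Gamma_0}$ is free since $G$ is adjoint.''

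\textbf{Main obstacle.} The genuine point to get right is the interaction between the two stages of quotient — first by inertia $\Gamma_0$, then by Frobenius $\sigma$ — as compared with the unramified case where only $\langle\theta\rangle$ acts. One must be careful that $\widehat Q^{\Gamma_0}$ or $\widehat Q_{\Gamma}$ behaves well: here adjointness is what saves us, guaranteeing freeness and the clean equality $\widehat T_H = \widehat T^{\Gamma_0}$, so that the relative simple roots of the \emph{echelonnage} root system $\leftidx_F\Phi^\vee$ really are the images of the absolute simple coroots and form a lattice basis. The only other subtlety is matching the normalization in condition (2) (coefficients in $(-1,0]$, half-open) with the definition of the dominant Newton point, which is forced by the uniqueness argument; no hard computation is involved, but the bookkeeping of which lattice sits inside which, over $\mathbb Z$ versus over $\mathbb Q$, is where an error would most likely creep in.
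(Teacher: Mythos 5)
Your proposal is correct and takes essentially the same approach as the paper: the paper's proof is literally the one-line remark that "the proof is the same as Lemma~\ref{lem:defn of lambda_b}" (which in turn cites \cite[Lemma 2.1]{HV}), and your argument is a faithful unpacking of that reduction, hinging as you say on adjointness forcing $X_*(T)_{\Gamma_0}$ and $X_*(T)_\Gamma$ to be free so that $\widehat Q_\Gamma$ injects with the images of the simple roots as a $\Z$-basis. One small terminological slip: $\widehat Q$ should be called the \emph{root} lattice of $\widehat G$ (equivalently the coroot lattice of $G$), not the coroot lattice of $\widehat G$; but this has no bearing on the argument.
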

\begin{proof}
	The proof is the same as Lemma \ref{lem:defn of lambda_b}.
\end{proof}
\subsection{The main result}
Assume that $G$ is adjoint and quasi-split over $F$. Let $\underline{\mu}\in X_*(T)_{\Gamma_0}$ be the image of an element $\mu \in X_*(T)^+$. Let $b \in G(L)$. Define $\widehat H$ as in (\ref{eq:hatH}). Let $V^{\widehat H} _{\underline \mu}$ denote the highest weight representation of $\widehat H$ of highest weight $\underline \mu \in X^*(\widehat{T}_H) ^+$. Let $\lambda_b \in X^*(\hatS)$ be as in Lemma \ref{lem:defn of lambda_b app}, and let $V^{\widehat H} _{\underline \mu} (\lambda_b) _{\mathrm{rel}}$ be the $\lambda_b$-weight space in $V^{\widehat H} _{\underline \mu}$, for the action of $\hatS$.
\begin{thm}\label{thm: main quasi-split}
	Assume that $[b] \in B(G,\underline \mu)$. Then $$\mathscr N(\underline \mu,b) = \dim V^{\widehat H} _{\underline \mu} (\lambda_b) _{\mathrm{rel}}. $$
\end{thm}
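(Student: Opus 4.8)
The strategy is to reduce Theorem \ref{thm: main quasi-split} to Conjecture \ref{conj:numerical Chen-Zhu} (now a theorem, Corollary \ref{cor:CZ}) for a suitable \emph{unramified} group, using the results of He \cite{He} on the combinatorial nature of $\mathscr N$. The key observation is that both sides of the asserted equality depend only on the data of the affine root system together with the actions of $\sigma$ and of the element $b$, in the precise sense already exploited in Theorem \ref{prop:ind of p} and Corollary \ref{cor:vol-ind-p}. More precisely, the affine Deligne--Lusztig variety $X_{\underline\mu,K}(b)$, its top-dimensional irreducible components, and the $J_b(F)$-action on them are governed (via the Deligne--Lusztig reduction method of \cite{He}, \cite{GoHe}, and the class polynomials) purely by the Iwahori--Weyl group $W$ with its $\sigma$-action, the subset $\mathbb S$ of simple reflections, and the length-zero translate representing $b$. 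Since $G$ is quasi-split, $W = X_*(T)_{\Gamma_0}\rtimes W_0$ is the same Iwahori--Weyl group as that of a canonically associated unramified group, namely the group $\widetilde G$ whose absolute root datum is dual to that of $\widehat H$ and whose Frobenius acts via $\hat\theta$. This is exactly the \emph{ramified geometric Satake} philosophy of Zhu \cite{Zhuram} and Haines \cite{Hainesdualities}.

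First I would make precise the passage from $G$ to $\widetilde G$. The relative root datum of $G$ over $L$, namely $(X^*(\breve A), {}_L\Phi, X_*(\breve A), {}_L\Phi^\vee)$, together with the $\sigma$-action and the affine structure, is visibly the relative-over-$F$ root datum of an unramified group $\widetilde G$ over $F$: one takes $\widetilde G$ to be the unramified group whose based root datum over $\bar F$ is $\mathrm{BRD}(B,T)$ with the $\Gamma_0$-action forgotten but with the $\langle\sigma\rangle=\Gamma/\Gamma_0$-action retained — equivalently, $\widehat{\widetilde G}$ is a connected reductive group with a Frobenius action, and the point is that $X_*(\widetilde T)^{\widetilde\theta} = X_*(T)_{\Gamma_0}^{\sigma}$ and more importantly the \emph{whole} Iwahori--Weyl group with $\sigma$-action matches. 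I would verify, using \cite[\S 1.2]{HZ} and the commutative diagram (\ref{comm diag}), that $B(G)\cong B(\widetilde G)$ compatibly with $(\bar\nu,\kappa)$, so the basic class $[b]$ transports to a basic class $[\tilde b]\in B(\widetilde G)$, and that $X_{\underline\mu,K}(b)$ for $G$ is identified with $X_{\mu',K'}(\tilde b)$ for $\widetilde G$ where $\mu'\in X_*(\widetilde T)^+$ maps to $\underline\mu$ under $X_*(\widetilde T)\to X_*(\widetilde T)_{\widetilde\theta\text{-orbit}}$ — this identification at the level of the set of $J$-orbits of top-dimensional components follows from He's formula expressing $\mathscr N$ via class polynomials of $(W,\mathbb S,\sigma)$, exactly as invoked in \S\ref{sec:action}. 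Hence $\mathscr N(\underline\mu,b) = \mathscr N(\mu',\tilde b)$.

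Next I would apply Corollary \ref{cor:CZ} to $\widetilde G$: since $\widetilde G$ is unramified (I may reduce to adjoint and $F$-simple by the standard argument, already in place), we get $\mathscr N(\mu',\tilde b) = \dim V_{\mu'}(\lambda_{\tilde b})_{\mathrm{rel}}$, the weight space being taken for the torus $\widehat{\widetilde{\mathcal S}}\subset\widehat{\widetilde T}$. The final step is to identify $V_{\mu'}(\lambda_{\tilde b})_{\mathrm{rel}}$ for $\widehat{\widetilde G}$ with $V^{\widehat H}_{\underline\mu}(\lambda_b)_{\mathrm{rel}}$ for $\widehat H$. This is where the construction $\widehat H = \widehat G^{\Gamma_0,0}$ enters: by the branching/restriction result underlying ramified geometric Satake, $\widehat{\widetilde G}$ is canonically isomorphic to $\widehat H$ — indeed $\widehat H$ has maximal torus $\widehat T_H = \widehat T^{\Gamma_0}$ with $X^*(\widehat T_H) = X_*(T)_{\Gamma_0}$, which is precisely $X^*(\widehat{\widetilde T})$, and its root datum is the one dual to the relative root datum of $G$ over $L$; this is \cite[Proposition 5.1]{Hainesdualities}. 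Under this isomorphism the highest weight $\mu'$ corresponds to $\underline\mu\in X^*(\widehat T_H)^+$, the subtorus $\widehat{\widetilde{\mathcal S}}$ corresponds to $\hatS = \widehat T^{\Gamma}$, and $\lambda_{\tilde b}$ corresponds to $\lambda_b$ by the uniqueness characterization in Lemma \ref{lem:defn of lambda_b app} (both are pinned down by the same congruence condition in $\pi_1(G)_\Gamma = \pi_1(\widetilde G)_{\widetilde\theta}$ and the same inequality in $X^*(\hatS)_\Q$, which literally coincide). Therefore the weight spaces agree dimension-wise, and the theorem follows.

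\textbf{Main obstacle.} The delicate point is the second step: carefully checking that the identification $X_{\underline\mu,K}(b)\leftrightarrow X_{\mu',K'}(\tilde b)$ is genuinely compatible with the $J_b(F)$-action on irreducible components, not merely that the two varieties have the same number of components. He's results in \cite{He} give the required statement at the level of class polynomials and the transitivity of $J$ on top components in the straight case, and the Deligne--Lusztig reduction (Proposition \ref{DL-reduction}, \cite{GoHe}) propagates this to general $w$ exactly as in the proof of Lemma \ref{lem:bijection for Iwahori}; but one must be careful that the parahoric $\mathcal K$ (a special, not necessarily hyperspecial, vertex) on the $G$ side corresponds to a hyperspecial parahoric on the $\widetilde G$ side, which is automatic because $\mathfrak s$ was chosen special and the special vertices of the $L$-relative building of $G$ are the hyperspecial vertices of the building of $\widetilde G$. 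Once this bookkeeping is in place, everything else is a matter of matching uniqueness characterizations, which is routine.
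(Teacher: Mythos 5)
Your proposal follows essentially the same route as the paper: build an auxiliary unramified group whose dual is $\widehat H = \widehat G^{\Gamma_0,0}$, show via He's class polynomials that $\mathscr N(\underline\mu,b)$ is determined by the quadruple $(W,\sigma,\underline\mu,(\bar\nu,\kappa)(b))$ so that it transports across the $\sigma$-equivariant identification of Iwahori--Weyl groups, and then invoke Corollary \ref{cor:CZ} for the unramified group together with the matching of $\lambda_b$'s via their uniqueness characterization. One small inaccuracy worth flagging: your first gloss of $\widetilde G$ as ``$\mathrm{BRD}(B,T)$ with the $\Gamma_0$-action forgotten'' would give a group with cocharacter lattice $X_*(T)$ rather than $X_*(T)_{\Gamma_0}$ and with the wrong (absolute) Weyl group; the correct description, which you do arrive at a few lines later, is that $\widetilde G$ is the unramified group dual to $\widehat H$, with $X_*(\widetilde T) = X_*(T)_{\Gamma_0}$ and coroot system the \'echelonnage root system $\breve\Sigma^\vee$ per \cite[Corollary 5.3]{Hainesdualities} — this is exactly the group $H$ constructed in the paper.
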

\begin{rem}
	The appearance of the representation $V^{\widehat H} _{\underline \mu}$ of the subgroup ${\widehat H}$ of $\widehat G$ in Theorem \ref{thm: main quasi-split} is compatible with the ramified geometric Satake in \cite{Zhuram}.
\end{rem}
\begin{proof}[Proof of Theorem \ref{thm: main quasi-split}] The idea of the proof is to reduce to the unramified case. For this we first construct an auxiliary unramified reductive group over $F$.
	
	From $\widehat H$ and its pinned automorphism $\hat \theta$, we obtain an unramified reductive group $H$ over $F$, whose dual group is $\widehat H$. By definition $H$ is equipped with a Borel pair $(B_H, T_H)$, and a $\sigma$-equivariant isomorphism of based root data $\mathrm{BDR} (B_H, T_H) \isom \mathrm{BDR} (\widehat B_H, \widehat T_H) ^{\vee} .$ We write $$\mathrm{BDR} (B_H, T_H)= (X^*(T_H), \Phi_H , X_*(T_H), \Phi _H^{\vee} ). $$
	Then we have canonical $\sigma$-equivariant identifications
	$$X^*(T_H) \cong X_*(\widehat T _H) \cong X_*(\widehat T) ^{\Gamma_0} \cong X^*(T) ^{\Gamma_0}$$ and
	$$X_* (T_H) \cong X^*(\widehat T _H) \cong X^*(\widehat T)_{\Gamma_0} \cong X_*(T) _{\Gamma_0},$$ which we shall treat as identities. Here $X_*(T) _{\Gamma_0}$ is free, as we have noted before.
	
	Note that $T_{H,L}$ is a maximal split torus in $H_L$. Let $V_H$ be the corresponding apartment, and fix a hyperspecial vertex $\mathfrak s_H$ in $V_H$ (coming from the apartment of $H$ corresponding to the maximal $F$-split sub-torus of $T_H$). We fix a $\sigma$-stable alcove $\mathfrak a_H \subset V_H$ whose closure contains $\mathfrak s_H$. We identify
	\begin{align}\label{eq:id for H}
	V_H \cong X_*(T_H) \otimes \mathbb R,
	\end{align}
	sending $\mathfrak s_H$ to $0$, such that the image of $\mathfrak a_H$ is in the anti-dominant chamber.
	
	Since $X_*(T_H)  = X_*(T)_{\Gamma_0}$, the two identifications (\ref{eq:id for G}) and (\ref{eq:id for H})
	give rise to a $\sigma$-equivariant identification $
	V \cong V_H $ which maps $\mathfrak a$ onto $\mathfrak a_H$, and maps $\mathfrak s $ to $\mathfrak s_H$.

	By \cite[Corollary 5.3]{Hainesdualities}, the set of coroots $\Phi _H^{\vee} \subset X_*(T_H) = X_*(T)_{\Gamma_0}$ is given by $\breve \Sigma ^{\vee}$, where $\breve \Sigma$ is the \'{e}chelonnage root system of Bruhat--Tits, see \cite[\S 4.3]{Hainesdualities}. In particular, the coroot lattice in $X_*(T_H)$ is isomorphic to the $\Gamma_0$-coinvariants of the coroot lattice in $X_*(T)$. Moreover from $\Phi _H^{\vee} = \breve \Sigma ^{\vee}$ we know that the affine Weyl group of $G$ and the affine Weyl group of $H$ are equal, under the identification $V \cong V_H $. See \cite{HainesRapoport} for more details. Since the translation groups $X_*(T_H)$ and $X_*(T)_{\Gamma_0}$ are also identified, we have an identification between the Iwahori--Weyl group $W$ of $G$ and the Iwahori--Weyl group $W_H$ of $H$. This identification is $\sigma$-equivariant.

	Note that the bottom group in the diagram (\ref{comm diag}) and its analogue for $H$ are identified. Using the identification of $W$ and $W_H$, and using the surjectivity of the map $\Psi : B(W,\sigma) \to B(G)$ and its analogue $\Psi_H: B(W_H, \sigma) \to B(H)$, we construct $[b_H] \in B(H)$ whose invariants are the same as those of $[b]$. Since the set $ B(G, \underline \mu)$ is defined in terms of the invariants $(\bar \nu ,\kappa)$ and ditto for $ B(H,\underline \mu)$, we see that $[b] \in B(G, \underline \mu)$ if and only if $[b_H] \in B(H,\underline \mu)$. Here in writing $B(H,\underline \mu)$ we view $\underline \mu$ as an element of $X_*(T_H) ^+$.

	To relate the geometry of $X_{\underline{ \mu}, K} (b)$ with the geometry of $X_{\underline \mu} (b_H)$, we use the class polynomials in \cite{He}. For each $w\in W$ and each $\sigma$-conjugacy class $\Ok$ in $W$, we let $$f_{w,\Ok}\in\mathbb{Z}[v-v^{-1}]$$ denote the class polynomial defined in \cite[\S2.3]{He}.
	
	Using the fibration \begin{align*}\bigcup_{w\in W_0t^{\underline{\mu}} W_0}X_w(b)\To X_{\underline{\mu}, K}(b),
	\end{align*} we have an identification
	\begin{align}\label{eq:using fibr} J_b (F) \backslash
	\Sigma^{\topp}\left(\bigcup_{w\in W_0t^{\underline{\mu}} W_0}X_w(b)\right)\cong J_b (F) \backslash \Sigma^{\topp}(X_{\underline{\mu}, K}(b) ).
	\end{align}	 By \cite[Theorem 6.1]{He}, we have the formula $$\dim X_w(b)=\max_{\Ok}\frac{1}{2}(\ell(w)+\ell(\Ok)+\deg f_{w,\Ok}))-\langle\overline{ \nu}_b,2\rho\rangle.$$ where $\Ok$ runs through $\sigma$-conjugacy classes in $W$ such that $(\overline{ \nu},\kappa)(\mathcal{O})=(\overline{ \nu},\kappa)(b)$ and where $\ell(\Ok)$ denotes the length of a minimal length element in $\mathcal{O}$. Moreover the proof of \cite[Theorem 6.1]{He} also shows that the cardinality of $J_b(F)\backslash \Sigma^{\topp}(X_w(b))$ is equal to the leading coefficient of $	\sum_{\Ok}v^{\ell(w)+\ell(\Ok)}f_{w,\Ok}$. Since each $X_w(b)$ is locally closed in the union $\bigcup _{w\in W_0t^{\underline{\mu}}W_0}X_w(b)$, any top dimensional irreducible component in the union is the closure of a top dimensional irreducible component in $X_w(b)$ for a unique $w$. It follows that the cardinality of $$J_b(F)\backslash \Sigma^{\topp}\left(\bigcup _{w\in W_0t^{\underline{\mu}}W_0}X_w(b)\right)$$ is equal to the leading coefficient of \begin{equation}\label{eqn: class polynomial}
	\sum_{w \in W_0t^{\underline{\mu}}W_0}	\sum_{\Ok}v^{l(w)+l(\Ok)}f_{w,\Ok}	.\end{equation} By (\ref{eq:using fibr}), this number is just $\mathscr N(\underline{ \mu}, b)$. Since the term (\ref{eqn: class polynomial}) only depends on the quadruple $(W,\sigma,\underline{\mu},(\overline{ \nu},\kappa)(b))$, the same is true for $\mathscr N (\underline \mu , b)$.
	
	Applying the same argument to $H$, we see that $\mathscr N(\underline \mu, b_H)$ only depends on the quadruple $ (W_H,\sigma,\underline{\mu},(\overline{ \nu},\kappa)(b_H))$. Now since the quadruples $(W,\sigma,\underline{\mu},(\overline{ \nu},\kappa)(b))$ and $ (W_H,\sigma,\underline{\mu},(\overline{ \nu},\kappa)(b_H))$ are identified, we have $\mathscr N(\underline \mu, b) = \mathscr N(\underline \mu, b_H)$. It thus remains to check
	\begin{align}\label{eq:to check app}
	\mathscr N(\underline \mu, b_H) = \dim V^{\widehat H} _{\underline \mu} (\lambda_b) _{\mathrm{rel}}.
	\end{align}  By assumption $[b] \in B(G,\underline \mu)$, and so $[b_H] \in B(H,\underline \mu)$. The right hand side of (\ref{eq:to check app}) is easily seen to be equal to $\mathscr M(\underline \mu, b_H)$ in Conjecture \ref{conj:numerical Chen-Zhu}, for the group $H$. Hence the desired (\ref{eq:to check app}) follows from the main result Corollary \ref{cor:CZ} of the paper.
\end{proof}

\section*{Acknowledgements}

We would like to thank Michael Harris, Xuhua He, Chao Li, Shizhang Li, Thomas
Haines, Michael Rapoport, Liang Xiao, Zhiwei Yun, and Xinwen Zhu for useful
discussions concerning this work, and for their interest and encouragement.
We thank the anonymous referee for several corrections and useful
suggestions. R.~Z.~is partially supported by NSF grant DMS-1638352 through
membership at the Institute for Advanced Study. Y.~Z.~is supported by NSF
grant DMS-1802292.

\bibliographystyle{hep}
\bibliography{myref}

\end{document}